\numberwithin{equation}{section}
\numberwithin{figure}{section}
\theoremstyle{plain}
\newtheorem{thm}{\protect\theoremname}[section]
  \theoremstyle{definition}
  \newtheorem{defn}[thm]{\protect\definitionname}
  \theoremstyle{remark}
  \newtheorem{rem}[thm]{\protect\remarkname}
  \theoremstyle{plain}
  \newtheorem{prop}[thm]{\protect\propositionname}
  \theoremstyle{definition}
  \newtheorem{example}[thm]{\protect\examplename}
  \theoremstyle{plain}
  \newtheorem{conjecture}[thm]{\protect\conjecturename}
  \theoremstyle{plain}
  \newtheorem{cor}[thm]{\protect\corollaryname}
  \theoremstyle{plain}
  \newtheorem{lem}[thm]{\protect\lemmaname}
  \theoremstyle{remark}
  \newtheorem{notation}[thm]{\protect\notationname}
  \providecommand{\conjecturename}{Conjecture}
  \providecommand{\corollaryname}{Corollary}
  \providecommand{\definitionname}{Definition}
  \providecommand{\examplename}{Example}
  \providecommand{\lemmaname}{Lemma}
  \providecommand{\notationname}{Notation}
  \providecommand{\propositionname}{Proposition}
  \providecommand{\remarkname}{Remark}
\providecommand{\theoremname}{Theorem}
\def\Qp{\mathbb Q_p}
\def\Zp{\mathbb{Z}_p}
\def\val{\mathrm{val}}
\def\ac{\mathrm{ac}}
\def\complex{\mathbb{C}}
\def\reals{\mathbb{R}}
\def\nats{\mathbb{N}}
\def\ints{\mathbb{Z}}
\def\rats{\mathbb{Q}}
\def\supp{\mathrm{supp}}
\def\VF{\mathrm{VF}}
\def\RF{\mathrm{RF}}
\def\VG{\mathrm{VG}}
\def\spec{\mathrm{Spec}}
\def\Ldp{\mathcal{L}_\mathrm{DP}}
\newcommand\curly[1]{\{ #1 \}}
\begin{document}
\author{Itay Glazer and Yotam I. Hendel}
\title{On singularity properties of convolutions of algebraic morphisms}
\maketitle
\begin{abstract}
Let $K$ be a field of characteristic zero, $X$ and $Y$ be smooth
$K$-varieties, and let $V$ be a finite dimensional $K$-vector space.
For two algebraic morphisms $\varphi:X\rightarrow V$ and $\psi:Y\rightarrow V$
we define a convolution operation, $\varphi*\psi:X\times Y\to V$,
by $\varphi*\psi(x,y)=\varphi(x)+\psi(y)$. We then study the singularity
properties of the resulting morphism, and show that as in the case
of convolution in analysis, it has improved smoothness properties.

Explicitly, we show that for any morphism $\varphi:X\rightarrow V$
which is dominant when restricted to each irreducible component of
$X$, there exists $N\in\mathbb{N}$ such that for any $n>N$ the
$n$-th convolution power $\varphi^{n}:=\varphi*\dots*\varphi$ is
a flat morphism with reduced geometric fibers of rational singularities
(this property is abbreviated (FRS)). By a theorem of Aizenbud and Avni,
for $K=\mathbb{Q}$, this is equivalent to good asymptotic behavior
of the size of the $\mathbb{Z}/p^{k}\mathbb{Z}$-fibers of $\varphi^{n}$
when ranging over both $p$ and $k$.

More generally, we show that given a family of morphisms $\{\varphi_{i}:X_{i}\rightarrow V\}$
of complexity $D\in\mathbb{N}$ (i.e. that the number of variables
and the degrees of the polynomials defining $X_{i}$ and $\varphi_{i}$
are bounded by $D$), there exists $N(D)\in\mathbb{N}$ such that
for any $n>N(D)$, the morphism $\varphi_{1}*\dots*\varphi_{n}$ is
(FRS).
 
\end{abstract}

\tableofcontents{}

\section{Introduction}

\subsection{Motivation}

Given functions $f,g\in L^{1}(\reals^{n})$, their convolution is
defined by $(f\ast g)(x)=\int_{\reals^{n}}f(x-t)g(t)dt$. It is a  basic fact  
 that $f*g$ has better smoothness properties than both
$f$ and $g$; 
indeed, if in addition we assume that $f\in{C}^{k}(\reals^{n})$ and
$g\in{C}^{l}(\reals^{n})$, then $(f*g)'=f'*g=f*g'$ and therefore
$f*g\in C^{k+l}(\reals^{n})$. In particular, if either $f$ or $g$
is smooth then $f*g$ is smooth. An interesting question is whether
this phenomenon has an analogue in the setting of algebraic geometry.
In this paper we give a partial answer to this question
by exploring the following operation, as proposed by A. Aizenbud and
N. Avni: 
\begin{defn}
\label{def:convolution} Let $X$ and $Y$ be algebraic varieties,
$G$ an algebraic group and let $\varphi:X\to G$ and $\psi:Y\to G$
be algebraic morphisms. We define their convolution $\varphi*\psi:X\times Y\to G$
by $\varphi*\psi(x,y)=\varphi(x)\cdot\psi(y)$. In particular, the
$n$-th convolution power of $\varphi$ is $\varphi^{n}(x_{1},\ldots,x_{n})=\varphi(x_{1})\cdot\ldots\cdot\varphi(x_{n})$. 
\end{defn}

\begin{rem}
\label{remark 1.2} This operation is related to the classical notion
of convolution in the following way. 
Given a finite ring $A$, the above morphisms $\psi$ and $\varphi$
induce maps $\varphi_{A}:X(A)\rightarrow G(A)$ and $\psi_{A}:Y(A)\rightarrow G(A)$.
By defining $F_{\varphi_{A}}(t):=\left|\varphi_{A}^{-1}(t)\right|$,
we have,
\[
F_{\varphi_{A}\ast\psi_{A}}(t)=\sum_{s\in G(A)}F_{\varphi_{A}}(s)\cdot F_{\psi_{A}}(s^{-1}t)=F_{\varphi_{A}}*F_{\psi_{A}}(t).
\]
\end{rem}

Recall that a morphism $\varphi:X\to Y$ between $K$-varieties is
smooth if it is flat and the fiber $\varphi^{-1}\circ\varphi(x)$
is smooth for all $x\in X(\overline{K})$. Analogously to the analytic
situation, convolving a smooth morphism with any morphism
 whose domain is a smooth variety yields a smooth morphism. More generally,
the convolution operation preserves properties of morphisms which
are stable under base change and compositions. 
\begin{prop}[see Proposition \ref{prop:convolution preserves} for a proof]
Let $X$ and $Y$ be varieties over a field $K$, let $G$ be an
algebraic group over $K$ and let $S$ be a property of morphisms
that is preserved under base change and compositions. If $\varphi:X\to G$
is a morphism that satisfies the property $S$, the natural map $i_{K}:Y\to\mathrm{Spec}(K)$
has property $S$ and $\psi:Y\to G$ is arbitrary, then $\varphi*\psi$
has property $S$. 
\end{prop}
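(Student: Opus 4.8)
The plan is to realize $\varphi*\psi$ as a composition of two morphisms, each of which is obtained by base change from $\varphi$ or from $i_K$; since $S$ is preserved under base change and composition, this suffices. Throughout write $\mathrm{pr}$ for the relevant projections and $m\colon G\times G\to G$, $\mathrm{inv}\colon G\to G$ for multiplication and inversion on $G$.

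First I would factor $\varphi*\psi=\beta\circ\alpha$, where $\beta\colon G\times Y\to G$ is the projection $\mathrm{pr}_G$ and $\alpha\colon X\times Y\to G\times Y$ is the morphism $\alpha(x,y)=(\varphi(x)\cdot\psi(y),\,y)$ (this is a morphism because $\varphi$, $\psi$ and $m$ are). Indeed $\beta(\alpha(x,y))=\varphi(x)\cdot\psi(y)=(\varphi*\psi)(x,y)$. The morphism $\beta=\mathrm{pr}_G$ fits into a Cartesian square with $i_K\colon Y\to\mathrm{Spec}(K)$, the structure morphism $G\to\mathrm{Spec}(K)$ and $\mathrm{pr}_Y\colon G\times Y\to Y$ (as $G\times Y$ is precisely the fiber product $G\times_{\mathrm{Spec}(K)}Y$), so $\beta$ is the base change of $i_K$ along $G\to\mathrm{Spec}(K)$; by hypothesis $i_K$ has property $S$, hence so does $\beta$.

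The key step is to see that $\alpha$ is a base change of $\varphi$. Conceptually, $\alpha$ is the composition of $\varphi\times\mathrm{id}_Y\colon X\times Y\to G\times Y$ with the ``shearing'' automorphism $(g,y)\mapsto(g\psi(y),y)$ of $G\times Y$; to avoid having to argue that this automorphism lies in $S$, I would instead exhibit $\alpha$ directly as a pullback. Define $\gamma\colon G\times Y\to G$ by $\gamma(g,y)=g\cdot\psi(y)^{-1}$ (again a morphism, since $m$, $\mathrm{inv}$ and $\psi$ are). Then the square
\[
\begin{array}{ccc}
X\times Y & \xrightarrow{\ \ \alpha\ \ } & G\times Y\\
{\scriptstyle\mathrm{pr}_X}\downarrow & & \downarrow{\scriptstyle\gamma}\\
X & \xrightarrow{\ \ \varphi\ \ } & G
\end{array}
\]
commutes, since $\gamma(\alpha(x,y))=\varphi(x)\psi(y)\psi(y)^{-1}=\varphi(x)$, and it is Cartesian: the induced morphism $X\times Y\to X\times_G(G\times Y)$, $(x,y)\mapsto(x,(\varphi(x)\psi(y),y))$, is an isomorphism, because on the fiber product the $G$-coordinate is forced to equal $\varphi(x)\psi(y)$ and may therefore be dropped. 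Hence $\alpha$ is the base change of $\varphi$ along $\gamma$, and since $\varphi$ has property $S$ so does $\alpha$.

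Combining the two steps, $\varphi*\psi=\beta\circ\alpha$ is a composition of morphisms with property $S$, and hence has property $S$. I do not expect a serious obstacle: the only non-formal ingredient is guessing the twisting morphism $\gamma$ that turns the twisted product $\alpha$ into an honest base change of $\varphi$; everything else is the routine verification that the two explicit squares above are Cartesian. Note finally that $\psi$ enters only through the auxiliary morphisms $\alpha$ and $\gamma$ and is never required to possess property $S$, which is exactly why it may be arbitrary.
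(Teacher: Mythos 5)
Your proof is correct and follows essentially the same route as the paper: the paper also writes $\varphi*\psi=\pi_G\circ\alpha$ with $\alpha(x,y)=(\varphi(x)\psi(y),y)$, realizes $\pi_G$ as a base change of $i_K$, and exhibits $\alpha$ as the base change of $\varphi$ along the same twisting map $(g,y)\mapsto g\cdot\psi(y)^{-1}$. Your write-up just spells out the Cartesian verifications slightly more explicitly.
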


A natural question is then whether any morphism $\varphi:X\to G$
becomes smooth after sufficiently many convolution powers. Clearly,
we must first ensure that $\varphi$ becomes dominant when raised
to high enough convolution power, but even if this is indeed the case,
we might have that $\varphi^{n}$ is not smooth for all $n\in\nats$,
as shown in the next example. 
\begin{example}
\label{Example- non-smooth morphism} Consider the map $\varphi(x)=x^{2}$.
Then $d\varphi_{(0,\ldots,0)}^{n}$ is not surjective for every $n\in\nats$,
and thus $\varphi^{n}$ is not smooth for all $n$. 
\end{example}

Although in Example \ref{Example- non-smooth morphism} above $\varphi^{n}$
is not smooth at $(0,\ldots,0)$ for all $n\in\nats$, it has better
singularity properties as $n$ grows. While the singular locus of
$\varphi$ is of codimension $1$ and its fiber over $0$ is non-reduced,
$\varphi^{2}$ has reduced fibers and its singular locus is of codimension
$2$. For $n=3$ we get that the singular locus of $\varphi^{3}$
is of codimension $3$, and that the  non-smooth fiber $(\varphi^3)^{-1}(0)$
is reduced and has rational singularities (for rational singularities
see Definition \ref{def:ratlsings}(2)).
In the case where $G=(\mathbb{A}^1,+)$ as above, this 
is analogous to 
the following Thom-Sebastiani type result; if $f_1,f_2 \in \complex[x_1,\ldots,x_{n}]$, then 
 $\mathrm{lct}(f_1*f_2)=\min\{1,\mathrm{lct}(f_1)+\mathrm{lct}(f_2)\}$, 
 where $\mathrm{lct}(h)$ denotes the log-canonical threshold of the hypersurface defined by $h \in \complex[x_1,\ldots,x_t]$ in $ \complex^t$ 
(e.g.~\cite[Corollary 1]{MSS}, see also \cite{Mu12}). 

This motivates us to introduce the (FRS) property of morphisms as defined in \cite{AA16} (see Subsection
\ref{subsection: Further discussion} for further discussion of the
(FRS) property). From now on, we assume that $K$ is a field of characteristic
zero. 
\begin{defn}
\label{def:(FRS)} Let $X$ and $Y$ be smooth $K$-varieties. We
say that a morphism $\varphi:X\rightarrow Y$ is (FRS) if it is flat
and if every geometric fiber of $\varphi$ is reduced and has rational
singularities. 
\end{defn}

This property plays a key role in this paper, and as seen in the above
example, although we cannot hope to obtain smooth morphisms after
sufficiently many convolution powers, we might be able to get morphisms
with the (FRS) property. It is conjectured by Aizenbud and Avni that
such a phenomenon occurs, under adequate assumptions, for a general
algebraic group $G$: 
\begin{conjecture}
\label{main conjecture} Let $X$ be a smooth, absolutely irreducible
$K$-variety, $G$ be an algebraic $K$-group and let $\varphi:X\to G$
be a morphism such that $\varphi(X)\not\subseteq gH$ for any translation
of an algebraic subgroup $H\leq G$ by an element $g\in G(\overline{K})$.
Then there exists $N\in\mathbb{N}$ such that for any $n>N$, the
$n$-th convolution power $\varphi^{n}$ is (FRS). 
\end{conjecture}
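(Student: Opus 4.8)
This statement is a conjecture, so what follows is a plan of attack rather than a proof; the heuristic is that, exactly as in analysis, convolution should turn a morphism into the algebraic analogue of a ``smooth kernel'', provided one records not the morphism itself but the family of measures it pushes forward on the completions of $K$. As a first step, reduce to the dominant case: the closures $\overline{\varphi(X)\cdot\varphi(X)\cdots\varphi(X)}$ (with $n$ factors) form an increasing chain of closed subvarieties of $G$ and hence stabilize, and a standard argument involving the translation stabilizer of the limiting subvariety shows the limit is a coset $gH$ of an algebraic subgroup $H\leq G$; the hypothesis $\varphi(X)\not\subseteq gH$ then forces $H=G$. Fix $n_1$ with $\varphi^{n_1}$ dominant. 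Then for every local field $F\supseteq K$ and every smooth compactly supported measure $\mu$ on $X(F)$, the pushforward of $\mu^{\otimes n_1}$ under $\varphi^{n_1}$ is a compactly supported measure on $G(F)$ that is absolutely continuous with respect to (left) Haar measure, with density $h_F$.

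Next, invoke an analytic criterion for the (FRS) property. For targets that are vector groups, \cite{AA16} and \cite[Theorem A]{AA} identify the (FRS) property of a morphism of smooth varieties with uniform boundedness of such pushforward densities over all completions of $K$ (equivalently, with the correct asymptotics of the $\mathbb{Z}/p^{k}\mathbb{Z}$-point counts of the fibres). Since (FRS) is \'etale-local on the target, one expects this characterization to persist for an arbitrary algebraic group $G$; establishing it in that generality, with the uniformity in $F$ preserved, is the first genuine task. Granting it, it suffices to produce $N$ such that for every $n>N$ and every $F$ (with the finitely many places of small residue characteristic, and the archimedean ones, treated separately), the density of $(\varphi^{n})_{\ast}(\mu^{\otimes n})$ lies in $L^{\infty}(G(F))$ with a bound independent of $F$.

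Now run the convolution-smoothing estimate. By Hironaka resolution applied to $\varphi^{n_1}$, the density $h_F$ is, locally on $G(F)$, a finite sum of terms of the shape $\prod_i|x_i|_F^{a_i}$ times bounded powers of $\val(x_i)$ in suitable coordinates, with every exponent $a_i>-1$ because the relevant log-canonical thresholds are positive, and the $a_i$ depend only on the resolution, not on $F$; hence $h_F\in L^{p_0}(G(F))$ for a fixed rational $p_0>1$, with $\|h_F\|_{p_0}$ bounded uniformly in $F$ (again a motivic-integration statement). By Remark~\ref{remark 1.2} the density of $(\varphi^{n_1 m})_{\ast}(\mu^{\otimes n_1 m})$ is the $m$-fold convolution $h_F^{\ast m}$ on the locally compact group $G(F)$, so applying Young's inequality on $G(F)$ — valid even without unimodularity, up to the locally bounded modular character, which is harmless on compact supports — and using $\tfrac1{p_0}-1<0$, a fixed number $m_0$ of steps gives $h_F^{\ast m_0}\in L^{\infty}(G(F))$ with $\|h_F^{\ast m_0}\|_\infty\le\|h_F\|_{p_0}^{m_0}$, uniformly in $F$. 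Put $N:=n_1 m_0$: for general $n>N$ write $\varphi^{n}=\varphi^{N}\ast\varphi^{\,n-N}$ and use that the convolution of an $L^{\infty}$ density with a finite measure is again $L^{\infty}$ with the same supremum bound; this yields the required uniform bound for all $n>N$, and hence, by the criterion of the second step, that $\varphi^{n}$ is (FRS).

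The main obstacle is that analytic criterion: the transfer between the (FRS) property of a morphism into a general algebraic group $G$ and uniform point counts / density bounds over all completions of $K$. For $G$ a vector group this is exactly the machinery built in the present paper together with \cite{AA,AA16}; for general $G$ one must either extend that machinery — dealing in particular with the non-reductive (unipotent and abelian-variety) constituents of $G$ and with non-unimodular subgroups — or replace it by a direct geometric argument, e.g. a Bertini-type statement showing that after enough convolutions the fibres of $\varphi^{n}$ form a flat family of complete intersections whose log-canonical threshold exceeds the threshold ensuring rational singularities. A secondary, more technical, obstacle is making the uniformity in $F$ in the second and third steps rigorous, which requires the model-theoretic/motivic integration formalism rather than resolution of singularities over a single field.
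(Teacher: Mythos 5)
This statement is Conjecture \ref{main conjecture}, which the paper explicitly leaves open: the paper proves it only when $G$ is a vector group (Theorem \ref{Main result} and its uniform variants), so there is no proof in the paper against which to measure you, and your text is, as you say yourself, a programme rather than a proof. As a programme it tracks the paper's treatment of the vector-group case quite closely: reduce to a (strongly) dominant morphism, push forward Schwartz measures on $X(F)$, show that finitely many convolutions force regularity of the resulting density uniformly in $F$, and conclude via the analytic criterion; your replacement of the Fourier-decay step (Theorem \ref{thm:L^1 constructible has polynomial decaying Fourier transform}) by an $L^{1+\epsilon}$-plus-Young's-inequality argument is indeed the natural substitute on a nonabelian group, where no abelian Fourier transform is available.

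Three concrete points, the last two being genuine gaps. First, the obstacle you single out as the main one is largely illusory: Theorem \ref{Analytic condition for (FRS)} (\cite[Theorem 3.4]{AA16}) is already stated for morphisms between arbitrary smooth varieties, in particular for any algebraic group target, so no extension of the criterion to general $G$ is needed; but it demands \emph{continuous} density of the pushforward, not uniform $L^{\infty}$ bounds, so your Young endpoint is one notch too weak as stated (repairable, since a convolution of functions in dual exponents $L^{p}$ and $L^{p'}$ is continuous), and the point-count reformulation via \cite{AA} cannot stand in for it for general $G$. Moreover no uniformity over archimedean or over all non-archimedean completions is required: as in Proposition \ref{Reduction to many Qp}, for each geometric point one only needs some $\Qp$ with $p$ large, which is why uniformity for almost all $p$ suffices in the paper. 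Second, the genuinely hard step is precisely the one you assert rather than prove: a uniform-in-$F$ bound $\|h_F\|_{L^{p_0}}<C$ with a fixed $p_0>1$ for the pushforward density. Your justification via a single resolution producing monomial exponents $a_i>-1$ is heuristic — when $\dim G>1$ the density is a fiber integral, not literally a finite sum of monomial terms — and making it rigorous (e.g.\ showing that $L^1$ motivic densities are automatically $L^{1+\epsilon}$, uniformly in $F$) is the heart of the matter and is not supplied by anything in this paper. Third, the reduction from ``generating'' to eventually dominant, Proposition \ref{prop:morphismdom}, is proved here only for commutative $G$ (commutativity is used in the final step of its proof), so for noncommutative $G$ your ``standard argument'' still needs to be written down, even granting that $X$ is absolutely irreducible. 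In short: a sensible plan, essentially an extrapolation of the paper's method, but with the decisive analytic input and the noncommutative reduction both left unproved.
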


\subsection{Main results}

In this paper we verify Conjecture \ref{main conjecture} for the
case of a vector group. Namely, we prove the following theorem: 
\begin{thm}
\label{Main result} Let $K$ be a field of characteristic zero. Let $X$ be a smooth $K$-variety 
with absolutely
irreducible components $X_{1},\ldots,X_{l}$, 
 and
let $V$ be a $K$-vector space of finite dimension. Then for every
morphism $\varphi:X\to V$ such that $\varphi(X_i)$ is not
contained in any proper affine subspace of $V$ for all $i$, there exists $N\in\mathbb{N}$
such that for any $n>N$ the $n$-th convolution power $\varphi^{n}$
is (FRS). 
\end{thm}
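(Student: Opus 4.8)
The plan is to reduce the (FRS) property to a pointwise analytic/local criterion and then show that enough convolution powers force that criterion at every point. First I would use the characterization of (FRS) morphisms in terms of the existence of a nonzero ``$\log$-canonical'' or, more usefully, in terms of the condition that for each point the fiber has rational singularities; by a standard result (Elkik, Aizenbud--Avni) a morphism $\varphi\colon X\to V$ between smooth varieties is (FRS) if and only if it is flat with fibers that are local complete intersections having, at each point, rational singularities, and this in turn can be detected by positivity of certain jet-growth invariants. Concretely I expect the proof to go through the \emph{log canonical threshold} (or the closely related invariant measuring integrability of $|\varphi^{*}\omega|$ near a point): $\varphi^{n}$ is (FRS) near $(x_{1},\dots,x_{n})$ once the relevant threshold exceeds $\dim V$, and convolution \emph{adds} these thresholds just as convolution adds orders of differentiability in the analytic picture. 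So the key inequality to establish is a sub-additivity/super-additivity statement of the form
\[
\mathrm{inv}\bigl(\varphi*\psi,(x,y)\bigr)\ \ge\ \mathrm{inv}\bigl(\varphi,x\bigr)+\mathrm{inv}\bigl(\psi,y\bigr),
\]
for the appropriate invariant $\mathrm{inv}$, together with the fact that $\mathrm{inv}(\varphi,x)>0$ at \emph{every} $x$ as soon as $\varphi$ is dominant on the component through $x$ (here the hypothesis that $\varphi(X_i)$ spans an affine subspace, i.e.\ is not contained in a proper affine subspace, is exactly what guarantees dominance of some convolution power onto $V$).

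Second, I would set up the reduction to the dominant case: replacing $\varphi$ by $\varphi^{m}$ for a fixed $m$ (using that each $\varphi(X_i)$ is not contained in a proper affine subspace, so the iterated sumset eventually spans and $\varphi^{m}$ is dominant on each component of $X^{m}$), we may assume $\varphi$ is dominant when restricted to every irreducible component. Then flatness of $\varphi^{n}$ for large $n$ is the ``miracle flatness'' part: a dominant morphism from a smooth variety to the smooth variety $V$ is flat iff all fibers have the expected dimension $\dim X - \dim V$; I would argue that convolving increases the dimension of the generic fiber linearly while the jump loci (where fiber dimension is too big) have codimension that also grows, so for $n$ large the fiber dimension is constant. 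The cleaner route is to first prove the statement after restricting to a suitable open dense set, and then handle the bad locus by an inductive/stratification argument on $\dim X$, convolving the ``bad'' strata separately; this stratification bookkeeping is where the complexity bound $D$ in the more general statement gets used, but for Theorem 1.10 itself one just needs finiteness of the stratification.

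Third, the heart of the argument is the local estimate at the \emph{worst} points, e.g.\ points where $d\varphi$ drops rank maximally (as in Example 1.7 with $\varphi(x)=x^2$ at the origin). Here I would work with a $p$-adic or motivic integral: by the Aizenbud--Avni dictionary (cited as \cite[Theorem A]{AA}), being (FRS) is equivalent to the family of functions $x\mapsto |\varphi^{-1}(x)|$ over $\mathbb{Z}/p^{k}$ being uniformly bounded, and the convolution of these counting functions is literally the counting function of the convolved morphism (Remark 1.2). So one reduces to: convolving a bounded-away-from-pathological family of functions on $V(\mathbb{Z}_p)$ with itself enough times yields a bounded family — a Fourier-analytic statement, where the relevant ``Fourier transform'' is an oscillatory/exponential-sum estimate on $V$, and the spanning hypothesis ensures the transform decays in every direction. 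The main obstacle, I expect, is making the local invariant genuinely additive under convolution in a way that is uniform over all points simultaneously — controlling the invariant at the bad locus of $\varphi^{n}$ (which is not simply the product of the bad loci of the factors) and showing it still grows linearly in $n$; this is exactly the step that fails to be ``routine'' and where the geometry of sums of subvarieties of a vector space (as opposed to a general group) is what makes the vector-group case tractable.
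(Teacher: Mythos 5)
The heart of your proposal is the asserted super-additivity $\mathrm{inv}(\varphi*\psi,(x,y))\geq\mathrm{inv}(\varphi,x)+\mathrm{inv}(\psi,y)$ for an lct-type local invariant that detects (FRS), together with pointwise positivity under dominance. You flag this yourself as the step that ``fails to be routine,'' and indeed it is the entire content of the theorem: nothing in your sketch supplies it. For maps to $\mathbb{A}^{1}$ such additivity is a Thom--Sebastiani-type statement for minimal exponents, but for maps to a general $V$, at arbitrary (possibly non-rational) points, no invariant with both properties is set up, and the paper does not construct one either. Instead the paper bypasses algebro-geometric additivity entirely: by the analytic criterion of Aizenbud--Avni (Theorem \ref{Analytic condition for (FRS)}), (FRS) at a point is detected by a non-negative Schwartz measure whose pushforward has continuous density, so ``additivity'' is just the fact that Fourier transforms multiply under convolution; the real work is to produce a decay exponent $\alpha<0$ for $\mathcal{F}(\varphi_{*}\mu_{p})$ that is uniform in $p$ and over $V$.

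This is the second genuine gap: your claim that ``convolving a bounded-away-from-pathological family enough times yields a bounded family, since the spanning hypothesis makes the transform decay in every direction'' is false for general compactly supported $L^{1}$ densities --- such a measure need not acquire continuous density after any finite number of self-convolutions, and even when it does, the number needed can depend on $p$. These are precisely the two difficulties the paper isolates in Subsection \ref{subsec:Sketch-of-the} and resolves by making everything motivic: the pushforward of a motivic Schwartz measure under a strongly dominant map is again motivic with $L^{1}$ density (Theorem \ref{thm:pushofconstructiblefunctionunderdominant}), and a compactly supported motivic $L^{1}$ density has Fourier transform bounded by $d(F)\cdot\min\{|y|^{\alpha},1\}$ with $\alpha<0$ uniform over $\mathrm{Loc}_{M}$ (Theorem \ref{thm:L^1 constructible has polynomial decaying Fourier transform}), proved via Denef--Pas quantifier elimination, uniform cell decomposition and Presburger linearity --- none of which appears in your sketch, and without which the uniform $N$ does not exist. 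In addition, your route through point counts over $\ints/p^{k}\ints$ ties you to $K=\rats$ (and requires lci fibers plus uniformity in both $p$ and $k$), whereas the theorem is over arbitrary $K$ of characteristic zero; the reduction in Section \ref{sec:Reduction-to-k=00003D00003D00003D00003DQ} (spreading out, the diagonal-plus-H\"older trick of Proposition \ref{prop:reduction to (FRS) at diagonal}, Noetherian induction, restriction of scalars) is absent from your proposal. Your reduction to strong dominance and the observation that fiber-counting commutes with convolution do match the paper, but the ``miracle flatness plus growing codimension of jump loci'' step is both unsubstantiated and unnecessary, since flatness comes packaged with the analytic criterion.
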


In fact, we are able to prove a uniform analogue of Theorem \ref{Main result}
for families of algebraic morphisms. 
\begin{defn}
We call a morphism $\varphi:X\to Y$ \textit{strongly
dominant} if $\varphi$ is dominant when restricted to each absolutely
irreducible component of $X$. 
\end{defn}

\begin{thm}
\label{generalization of the main result} Let $K$ and $V$ be as
in Theorem \ref{Main result}, let $Y$ be a $K$-variety, let $\widetilde{X}$
be a family of varieties over $Y$, and let $\widetilde{\varphi}:\widetilde{X}\rightarrow V\times Y$
be a $Y$-morphism. Denote by $\widetilde{\varphi}_{y}:\widetilde{X}_{y}\rightarrow V$
the fiber of $\widetilde{\varphi}$ at $y\in Y$. Then,
\begin{enumerate}
\item The set $Y':=\{y\in Y:\widetilde{X}_{y}\text{ is smooth and }\widetilde{\varphi}_{y}:\widetilde{X}_{y}\rightarrow V\text{ is strongly dominant}\}$
is constructible.
\item There exists $N\in\nats$ such that for any $n>N$, and any $n$ points
$y_{1},\ldots,y_{n}\in Y'(K)$, the morphism $\widetilde{\varphi}_{y_{1}}*\dots*\widetilde{\varphi}_{y_{n}}:\widetilde{X}_{y_{1}}\times\dots\times\widetilde{X}_{y_{n}}\rightarrow V$
is (FRS). 
\end{enumerate}
\end{thm}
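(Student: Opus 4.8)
The plan is to reduce the uniform statement (Theorem~\ref{generalization of the main result}) to the absolute case (Theorem~\ref{Main result}) by a Noetherian/constructibility argument, using the fact that the ``complexity'' of the fibers $\widetilde\varphi_y$ is uniformly bounded as $y$ ranges over $Y$. First I would treat part (1): smoothness of $\widetilde X_y$ is an open condition on $Y$ by generic smoothness, and for each irreducible component the locus where $\widetilde\varphi_y$ restricted to it is dominant is constructible by Chevalley's theorem applied to the image of $\widetilde\varphi$; intersecting finitely many such constructible sets (after stratifying $Y$ so that the irreducible components of $\widetilde X_y$ vary nicely) gives that $Y'$ is constructible. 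The subtle point here is that the number and nature of the absolutely irreducible components of $\widetilde X_y$ can jump, so I would first pass to a finite stratification of $Y$ (or a finite surjective base change) over which the components are ``constant'' in an appropriate relative sense, prove constructibility on each stratum, and then glue.

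For part (2), the key observation is that all the fibers $\widetilde X_y$ and morphisms $\widetilde\varphi_y$ have complexity bounded by some fixed $D\in\nats$ depending only on the family $\widetilde\varphi$ (the number of variables and the degrees of the defining polynomials are bounded since $\widetilde X \to Y$ is a single finite-type family). So the content of part (2) is precisely the uniform-in-complexity statement announced in the abstract: there exists $N(D)$ such that for $n > N(D)$ and \emph{any} strongly dominant morphisms $\varphi_1 : X_1 \to V, \ldots, \varphi_n : X_n \to V$ with each $X_i$ smooth of complexity $\le D$, the convolution $\varphi_1 * \cdots * \varphi_n$ is (FRS). I would prove this by a compactness/spreading-out argument: suppose not, so for each $N$ there is a bad tuple $(\varphi_1^{(N)}, \ldots, \varphi_n^{(N)})$ of complexity $\le D$; since morphisms of bounded complexity $X \to V$ with $X$ smooth of bounded complexity are themselves parametrized by a fixed finite-type scheme $\mathcal{P}_D$ over $K$ (a suitable Hilbert-scheme-type / jet-space parameter space), one can organize these counterexamples into a single family and extract a contradiction with Theorem~\ref{Main result} applied fiberwise, using that the (FRS) locus is constructible in families (this last fact should follow from the openness of the (FRS) property, e.g.\ via \cite{AA16}, together with Chevalley). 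Alternatively, and perhaps more cleanly, I would prove the uniform statement directly by inspecting the proof of Theorem~\ref{Main result} and checking that the bound $N$ it produces depends only on $\dim V$ and the complexity bound $D$ — which is the likely structure, since the proof of Theorem~\ref{Main result} presumably goes through dimension counts, jet-space estimates and $\log$-canonical-threshold bounds that are all complexity-controlled.

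Granting the uniform statement over $K$, part (2) follows: given $n > N(D)$ and points $y_1, \ldots, y_n \in Y'(K)$, each $\widetilde\varphi_{y_i} : \widetilde X_{y_i} \to V$ is a strongly dominant morphism from a smooth $K$-variety of complexity $\le D$ (here I use the definition of $Y'$ from part (1) to guarantee smoothness and strong dominance), so $\widetilde\varphi_{y_1} * \cdots * \widetilde\varphi_{y_n}$ is (FRS) by the uniform theorem. One technical point to be careful about: the convolution $\widetilde\varphi_{y_1} * \cdots * \widetilde\varphi_{y_n}$ has domain $\widetilde X_{y_1} \times \cdots \times \widetilde X_{y_n}$, whose complexity grows with $n$, so the uniform statement must genuinely allow the $n$ factors to each have complexity $\le D$ while $n$ is large — i.e.\ the bound $N(D)$ controls how many convolutions are needed, not the complexity of the output. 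I should also note that if one wants $y_i \in Y'(\overline K)$ rather than $Y'(K)$ the same argument works after base change to $\overline K$, since (FRS) is a geometric property.

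The main obstacle I anticipate is establishing the uniform-in-complexity version of Theorem~\ref{Main result} with a bound $N(D)$ that is genuinely independent of the particular morphisms — in other words, making the ``compactness'' step rigorous. The cleanest route is to build an honest finite-type parameter scheme $\mathcal{P}_D$ over $K$ whose points are pairs $(X, \varphi)$ of complexity $\le D$ with $X$ smooth, together with a universal family $\mathcal{X} \to \mathcal{P}_D$ and universal morphism $\Phi : \mathcal{X} \to V \times \mathcal{P}_D$; then apply part (1) of the theorem to this universal family to get that the strongly-dominant locus $\mathcal{P}_D' \subseteq \mathcal{P}_D$ is constructible, and apply Theorem~\ref{Main result} together with openness of (FRS) in families to the fibered $n$-fold convolution of $\Phi$ over $\mathcal{P}_D' \times \cdots \times \mathcal{P}_D'$ ($n$ times). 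The existence and finite-typeness of $\mathcal{P}_D$ — essentially a bounded piece of a Hilbert scheme together with a bounded space of morphisms to $V$ — is standard but needs to be set up carefully so that ``complexity $\le D$'' is captured by finitely many numerical invariants; this bookkeeping, rather than any deep geometry, is where the real work lies, and it is what lets one deduce the uniform statement (hence both the abstract's second claim and part (2) here) from the absolute Theorem~\ref{Main result}.
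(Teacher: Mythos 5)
Your overall strategy for the uniform bound --- parametrize all pairs $(X,\varphi)$ of complexity $\le D$ by a finite-type scheme, apply Theorem \ref{Main result} at generic points, spread out using openness of the (FRS) locus, and conclude by Noetherian induction --- is essentially how the paper handles the \emph{self}-convolution case (it works directly over $Y$, using generic flatness, \cite[Corollary 2.2]{AA16} and a lemma saying that an open set containing the generic fiber contains the preimage of an open set in the base; the parameter-space $Y_D$ construction only appears afterwards, to deduce Corollary \ref{Generalization for varieties of bounded complexity} from Theorem \ref{generalization of the main result}, i.e.\ in the opposite logical order to yours, which is fine in principle). The genuine gap is that your proposal gives no mechanism for convolutions of $n$ \emph{pairwise distinct} morphisms with an $N$ independent of $n$ and of the tuple. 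The compactness argument as stated fails: a counterexample of length $n>N$ lives in $(\mathcal{P}_D)^n$, a space that changes with $n$, so the counterexamples cannot be organized into a single finite-type family. Running the generic-point/Noetherian argument over $(\mathcal{P}'_D)^n$ for a fixed $n$ does not help either: at the generic point Theorem \ref{Main result} only says that some \emph{further} convolution power of the generic mixed convolution is (FRS), not the mixed convolution itself, so one gets no statement about length-$n$ convolutions, let alone a bound uniform in $n$. And the self-convolution bound (which is what the fixed-parameter-space induction genuinely yields) does not transfer: the only monotonicity available is that an (FRS) sub-convolution forces the whole convolution to be (FRS) (Corollary \ref{cor:convolution presrves (FRS)}), and a tuple of distinct points contains no repeated factor to which the self-convolution bound could be applied.

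The paper closes exactly this gap analytically (Lemma \ref{lem7.3:Theorem--implies}, following Proposition \ref{prop:reduction to (FRS) at diagonal}): if $\widetilde\varphi_{y_i}^{N}$ is (FRS) for each $i$, then by the analytic criterion the pushforward $(\widetilde\varphi_{y_i})_*(\mu_i)$ of a suitable Schwartz measure has $N$-th convolution power with continuous (hence $L^2$) density, so $\mathcal{F}\bigl((\widetilde\varphi_{y_i})_*\mu_i\bigr)\in L^{2N}$; H\"older then makes the product of $2N$ such Fourier transforms $L^1$, so the convolution of $2N$ \emph{distinct} factors has continuous density and is (FRS), and Corollary \ref{cor:convolution presrves (FRS)} handles all larger $n$. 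Some step of this kind (or a substitute) is indispensable and is absent from your proposal; your fallback suggestion of extracting an effective $N(D)$ by inspecting the proof of Theorem \ref{Main result} is not available either, since that proof proceeds via motivic integration and Noetherian-induction arguments that do not track such constants. Two minor points: the locus $\{y:\widetilde X_y \text{ smooth}\}$ is only constructible, not open (the paper proves this via semicontinuity of $\dim_x\widetilde X_{\pi(x)}$ and $\dim T_x\widetilde X_{\pi(x)}$), and the constructibility of the strong-dominance locus requires the finite \'etale base change making the irreducible components of the generic fiber absolutely irreducible --- you flag this but it is where most of the work in part (1) lies.
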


\begin{defn}
\label{def:complexity}An affine variety $X$ has \emph{complexity
at most} $D\in\nats$, if $X$ can be written as $K[X]=K[x_{1},\ldots,x_{m}]/\langle f_{1},\ldots,f_{k}\rangle$,
where $m,k$ and the maximal degree of all polynomials, $\max\limits _{i}\{f_{i}\}$,
is at most $D$. The notion of complexity can be similarly defined
for non-affine varieties and for morphisms (see Section \ref{sec:Convolution-properties-of}). 
\end{defn}

As a corollary of Theorem \ref{generalization of the main result},
one can then show that given a complexity class $D\in\nats$, there
exists $N(D)\in\nats$ such that the convolution of any $n>N(D)$
morphisms of complexity at most $D$ is (FRS). 
\begin{cor}
\label{Generalization for varieties of bounded complexity}Let $V$
be a $K$-vector space of dimension not greater than $D \in \nats$. Then 
there exists an integer $N(D)\in\nats$ such that for every $n>N(D)$
the morphism $\varphi_{1}*\dots*\varphi_{n}$ is (FRS) for any $n$
strongly dominant morphisms $\varphi_{i}:X_{i}\rightarrow V$ of complexity
at most $D$, from smooth $K$-varieties $X_{i}$ to $V$. 
\end{cor}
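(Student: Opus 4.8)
The plan is to deduce Corollary \ref{Generalization for varieties of bounded complexity} from Theorem \ref{generalization of the main result} by a standard ``bounded complexity implies membership in a single finite-type family'' argument. The key observation is that, once we fix the complexity bound $D$ and the $K$-vector space $V$ of dimension $\dim V\le D$, there are only finitely many ``combinatorial shapes'' a strongly dominant morphism $\varphi:X\to V$ of complexity at most $D$ can have: the number of ambient variables $m\le D$, the number of defining equations $k\le D$, and the number of coordinate polynomials of $\varphi$ (namely $\dim V\le D$) are all bounded, and each such polynomial has degree at most $D$. Hence all the defining data fit into a fixed finite-dimensional space of coefficients.

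Concretely, first I would build a single parameter variety $Y_D$ parametrizing all tuples of coefficients of (i) the $k\le D$ polynomials $f_1,\dots,f_k$ in $m\le D$ variables of degree $\le D$ cutting out $X_i$ inside $\mathbb{A}^m$, and (ii) the $\dim V\le D$ polynomials of degree $\le D$ in those same variables defining $\varphi_i$; ranging $m,k$ over their finitely many allowed values and taking a disjoint union handles the finitely many shapes at once. Over $Y_D$ there is a tautological family $\widetilde X\subseteq \mathbb{A}^m\times Y_D$ together with a tautological $Y_D$-morphism $\widetilde\varphi:\widetilde X\to V\times Y_D$, and by construction every strongly dominant $\varphi_i:X_i\to V$ from a smooth $X_i$ of complexity at most $D$ arises (after choosing a presentation realizing the complexity bound) as a fiber $\widetilde\varphi_{y}$ for some $y\in Y_D(K)$ lying in the set $Y_D'$ of Theorem \ref{generalization of the main result}(1). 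Then I would simply invoke Theorem \ref{generalization of the main result}(2) applied to this family: it furnishes an $N = N(D)\in\nats$ such that for all $n>N$ and all $y_1,\dots,y_n\in Y_D'(K)$, the convolution $\widetilde\varphi_{y_1}*\cdots*\widetilde\varphi_{y_n}$ is (FRS). Since every strongly dominant $\varphi_i:X_i\to V$ of complexity $\le D$ is such a fiber, taking $N(D)$ as produced gives the claim, and $N(D)$ depends only on $D$ (through $Y_D$, which depends only on $D$).

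The only genuine subtlety — and the step I expect to require the most care — is the bookkeeping needed to pass from the intrinsic notion of complexity in Definition \ref{def:complexity} to a single finite-type $Y_D$: one must make sure the construction of $\widetilde X$ and $\widetilde\varphi$ over $Y_D$ matches the notion of ``family of varieties'' and ``$Y$-morphism'' demanded by Theorem \ref{generalization of the main result}, and that ``$X_i$ smooth and $\varphi_i$ strongly dominant'' is exactly what forces $y\in Y_D'$. The smoothness of $X_i$ is used to land in the hypotheses of the theorem (which requires $\widetilde X_y$ smooth), and strong dominance is precisely the second defining condition of $Y_D'$; constructibility of $Y_D'$ is not even needed here, only that our given $K$-points lie in it. Everything else — the disjoint union over the finitely many values of $(m,k)$, the identification of a presentation with a point of $Y_D$ — is routine, and no new geometric input beyond Theorem \ref{generalization of the main result} is required.
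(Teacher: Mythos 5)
Your proposal is correct and is essentially the paper's own argument: the paper likewise builds a parameter variety $Y_{D}$ of all bounded-complexity data (a disjoint union over the finitely many shapes $(m,k)$), forms the tautological family $\widetilde{X}=\{(w,(W,\varphi))\}$ with $\widetilde{\varphi}(w,(W,\varphi))=(\varphi(w),(W,\varphi))$, and invokes Theorem \ref{generalization of the main result} to get a single $N(D)$. The only point to note is that the paper's notion of complexity (Definition in Section \ref{sec:Convolution-properties-of}) also allows non-affine $X_{i}$ glued from at most $D$ affine charts with transition maps of complexity at most $D$, so the parameter space must also encode this gluing data -- exactly the bookkeeping subtlety you flag, and one the paper itself dispatches with a brief ``by a similar argument.''
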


\subsection{A brief discussion of the (FRS) property}

\label{subsection: Further discussion} The notion of rational singularities
can be regarded as a certain approximation of smoothness of varieties.
Thus, we can view the (FRS) property, which is its relative analogue,
as an approximation to smoothness of morphisms. 
In this sense, Theorem \ref{Main result} supports the claim that
the convolution operation improves the singularity properties of morphisms.
To 
explain our particular interest in the (FRS) property, we present it from several
different points of view: 
\begin{enumerate}
\item \textbf{The number-theoretic point of view:} Let $X$ be a finite
type $\ints$-scheme such that $X_{\rats}=X\times_{\spec(\ints)}\spec(\rats)$
is a local complete intersection. By \cite[Theorem 3.0.3]{AA18} and
\cite[Theorem 1.3]{Gla19}, which are based on works of Denef \cite{Den87}, Igusa \cite{Igu00} and Mustata \cite{Mus01}, it turns out that 
$X_{\rats}$
has rational singularities 
if and only if there exists a positive constant $C\in\reals$ such that for any
prime $p$ and any $k\in\mathbb{N}$ we have
\begin{align}
\frac{\left|X(\ints/p^{k}\ints)\right|}{p^{k\cdot\dim(X{}_{\rats})}}<C.\label{formula:asymptotic point count}
\end{align}
Now, given a $\mathbb{\ints}$-morphism $\varphi:X\to Y$,
between finite type, reduced $\mathbb{Z}$-schemes such that $\varphi_{\mathbb{Q}}:=\varphi\times_{\spec(\ints)}\spec(\rats)$
is (FRS), then for any element $y\in Y(\mathbb{Z})$, the fiber $X_{y}$
satisfies that $\left(X_{y}\right)_{\rats}$ is a reduced, local complete
intersection variety with rational singularities. By the above arguments
we see that the size of the fiber $X_{y}(\ints/p^{k}\ints)$,
where $y\in Y(\ints)$, 
behaves asymptotically as if $\varphi$ were smooth. 
Combining the above with Remark \ref{remark 1.2} allows one to interpret
the (FRS) property from a probabilistic point of view. 
\item \textbf{The probabilistic point of view:} Let $X$ be a smooth finite
type $\ints$-scheme, let $G$ be an algebraic group over $\ints$,
let $\varphi:X\rightarrow G$ be a morphism, and let $A$ be a finite
ring. Recall we have defined $F_{\varphi_{A}}(t)=|\varphi_{A}^{-1}(t)|$.
We then saw that taking the size of the fiber commutes with convolution,
that is $F_{\varphi_{A}^{n}}=F_{\varphi_{A}}* \ldots *F_{\varphi_{A}} $. Now, taking the
uniform probability measure $1_{X(A)}$ on $X(A)$ gives rise to a
random walk on $G(A)$ with probability distribution $\varphi_{*}(1_{X(A)})$,
whose $m$-th step has the following probability distribution: 
\[
\varphi_{*}(1_{X(A)})^{m}=\varphi_{*}^{m}(1_{X(A)}\times\dots\times1_{X(A)})=\frac{F_{\varphi_{A}^{m}}}{\left|X(A)\right|^{m}}.
\]
Thus, for $A=\ints/p^{k}\ints$, existence of $m$ large enough such
that $\varphi_{\rats}^{m}$ is (FRS) would imply by Formula \ref{formula:asymptotic point count}
that after $m$ steps the probability distribution of the random walk
is not too far from being uniform on $G(A)$. Consequentially, verifying
Conjecture \ref{main conjecture} leads to interesting uniform results
on random walks on families of finite groups and compact $p$-adic groups. Further work in this
direction is in progress, and will appear in upcoming papers. 
\item \textbf{The analytic point of view:} There is a useful analytic characterization
of the (FRS) property of a morphism $\varphi:X\to Y$
defined over a non-Archimedean local field $F$ of characteristic
$0$. Explicitly, $\varphi$ is (FRS) if and only if for every locally
constant and compactly supported measure $\mu_{X}$ on $X(F)$, the pushforward
$\varphi_{*}(\mu_{X})$ has continuous density with respect to any smooth
non-vanishing measure $\mu_{Y}$ on $Y(F)$ (see Theorem \ref{Analytic condition for (FRS)}
or \cite[Theorem 3.4]{AA16}). 
Such a characterization exists also for smooth and strongly dominant
morphisms: 
\begin{enumerate}
\item If $\varphi$ is strongly dominant, then $\varphi_{*}(\mu_{X})$ is
absolutely continuous with respect to any smooth non-vanishing measure
$\mu_{Y}$ on $Y(F)$, and hence it has an ${L}^{1}$-density (\cite[Corollary 3.6]{AA16}).
\item If $\varphi$ is a smooth morphism, then $\varphi_{*}(\mu_{X})$ has
a smooth density with respect to any smooth non-vanishing measure
$\mu_{Y}$ on $Y(F)$ (see \cite[Proposition 3.5]{AA16}). 
\end{enumerate}
\end{enumerate}

The above approaches to the (FRS) property have some very interesting
applications. In \cite{AA16}, Aizenbud and Avni show that for any
semisimple algebraic group $G$ over a field $K$ of characteristic
$0$, the commutator map $\varphi:G\times G\rightarrow G$ via $\varphi(g,h)=ghg^{-1}h^{-1}$
becomes (FRS) after finitely many self-convolutions (see \cite[Theorem VIII]{AA16}).
As an application, they give a polynomial bound (in $N$) on the number
of irreducible $N$-dimensional representations of open compact subgroups
of $G(F)$, for any non-Archimedean local field $F$ of characteristic
$0$ (see \cite[Theorem A]{AA16}), and the same for arithmetic groups
of higher rank (see \cite[Theorem B]{AA18}). 

These applications and different characterizations supply us with
enough evidence that the (FRS) property encodes valuable information,
and that Conjecture \ref{main conjecture} is of interest. We believe
that the case of a vector group $G=V$ (Theorems \ref{Main result}
and \ref{Generalization for varieties of bounded complexity}) is
an important step towards proving Conjecture \ref{main conjecture}.

\subsection{\label{subsec:Sketch-of-the}Sketch of the proof of the main result
and structure of the paper}

In Section 2 we recall relevant background material: in Subsections
2.1 and 2.2 we give necessary preliminaries from model theory, and
in Subsection 2.3 we mainly review required notions from algebraic
geometry and analysis on manifolds. The definition of the (FRS) property
and the statement of the equivalent analytic criterion for the (FRS)
property also appear in Subsection 2.3. 

The scheme of the proof of the main result, Theorem \ref{Main result},
is as follows. 
Our goal is to show that when raised to high enough convolution power,
$\varphi:X\rightarrow\mathbb{A}_{K}^{m}$ satisfies the analytic criterion
for the (FRS) property as given in Theorem \ref{Analytic condition for (FRS)}.
We first prove Proposition \ref{prop:morphismdom} to reduce our problem
to the case of a strongly dominant morphism.
We then prove the theorem in the case where $K=\rats$ in the following
way. We construct a family of non-negative Schwartz measures $\{\mu_{p}\}_{p\text{ prime}}$
such that $\mu_{p}$ is a measure on $\mathbb{Q}_{p}$ and $\mathrm{\supp}(\mu_{p})=X(\Zp)$
for each prime $p$ (this is Proposition \ref{prop:existsmotivicschwartz}).
By \cite[Corollary 3.6]{AA16}, for any $p$, we deduce that $\varphi_{*}(\mu_{p})$
is a compactly supported measure which is absolutely continuous with
respect to the normalized Haar measure $\lambda$ on $\Qp^{m}$ and
consequentially its density lies in $L^{1}(\Qp^{m})$.

We now want to show there exists $n\in\nats$, such that for
any prime $p$, the pushforward under the $n$-th convolution power
$\varphi_{*}^{n}(\mu_{p}\times\dots\times\mu_{p})=\varphi_{*}(\mu_{p})*\dots*\varphi_{*}(\mu_{p})$
has continuous density with respect to $\lambda$. This implies Theorem
\ref{Main result} for $K=\rats$, since for any point $x\in(X\times\dots\times X)(\overline{\mathbb{Q}})$
there exists a prime $p$ such that $x\in\supp(\mu_{p}\times\ldots\times\mu_{p})$,
and we can then apply the analytic criterion for the (FRS) property
as given in Theorem \ref{Analytic condition for (FRS)} (see Proposition
\ref{Reduction to many Qp} for the precise statement). Two main difficulties
now arise: 
\begin{enumerate}
\item It is not true in general that every compactly supported, $L^{1}$-measure
on $\Qp^{m}$ results in a measure with continuous density after finitely
many self convolutions. This means we need to choose a measure $\mu_{p}$
that is well behaved with respect to pushing forward by $\varphi$. 
\item The required number of self convolutions needed for $\varphi_{*}(\mu_{p})$
to become (FRS) might depend on the prime $p$, while we want this
number to be independent of $p$. Thus, we need to construct a collection
of measures $\{\mu_{p}\}_{p\text{ prime}}$ that behave well in a
uniform manner. 
\end{enumerate}
We deal with these two difficulties using methods from the theory of motivic integration; in
Subsection \ref{subsec:Motivic-measures-on}, we define a notion of
a motivic measure on a $\rats$-variety, based on the notion of motivic
functions in the Denef-Pas language (see \cite{CL08,CL10,CGH14-2,CGH16},
or Subsection \ref{subsec: Denef-Pas}). The Denef-Pas language allows
us to obtain results over $\Qp$, which are uniform in $p$ for $p$
large enough, using specialization arguments (see e.g \cite[Section 4]{CGH14-2}
and Lemma \ref{specialization arguments}), thus dealing with the
second difficulty mentioned above.

To overcome the first difficulty, we show in Theorem \ref{thm:pushofconstructiblefunctionunderdominant}
that the class of motivic measures behaves well under pushforward
by algebraic morphisms. We then show that any motivic measure $\sigma$
on $\Qp^{m}$ whose density lies in $L^{1}(\Qp^{m})$ and is compactly
supported has continuous density after sufficiently many self convolutions.
We prove this by studying the decay properties of the Fourier transform
$\mathcal{F}(\sigma)$ of such motivic measures (this is Theorem \ref{thm:L^1 constructible has polynomial decaying Fourier transform}). An application of the results to $\sigma=\varphi_{*}(\mu_{p})$ finishes the proof
of Theorem \ref{Main result} for the case $K=\rats$. An
alternative proof of Theorem \ref{thm:L^1 constructible has polynomial decaying Fourier transform}, as
suggested by the anonymous referee, is given in Section
\ref{subsec:Alternative-proof-of Theorem 5.2} using results of
\cite{CGH18} on motivic exponential functions.

Finally, we show in Section \ref{sec:Reduction-to-k=00003D00003D00003D00003DQ},
that it is indeed enough to prove Theorem \ref{Main result} for $K=\rats$,
thus finishing the proof of the theorem. In Section \ref{sec:Convolution-properties-of},
we prove the relative version of Theorem \ref{Main result}, i.e Theorem
\ref{generalization of the main result} and as a consequence, obtain
Corollary \ref{Generalization for varieties of bounded complexity}.

\subsection{Conventions}

Throughout the paper we use the following conventions: 
\begin{itemize}
\item Unless explicitly stated otherwise, $K$ is a field of characteristic
$0$ and $F$ is a non-Archimedean local field of characteristic $0$
whose ring of integers is $\mathcal{O}_{F}$. 
\item For an integral subscheme $A\subseteq X$ we denote
by $K(A)$ its function field. 
\item For a morphism $\varphi:X\rightarrow Y$ of algebraic varieties, the
scheme theoretic fiber at $y\in Y$ is denoted by either $\varphi^{-1}(y)$
or $\spec(K(\curly{y}))\times_{Y}X$. 
\item For a field extension $K'/K$ and a $K$-variety $X$ (resp. $K$-morphism
$\varphi:X\rightarrow Y$), we denote the base change of $X$ (resp.
$\varphi$) by $X_{K'}:=X\times_{\spec(K)}\spec(K')$ (resp. $\varphi_{K'}:X_{K'}\rightarrow Y_{K'}$). 
\item If $X$ is a $K$-variety, we set the $N$-fold product of $X$ by
$X^{N}:=X\times\dots\times X$. 
\item If $\varphi:X\to G$ is a morphism to an algebraic group $G$ we denote
its $n$-th convolution power by $\varphi^{n}:=\varphi*\ldots*\varphi$. 
\end{itemize}

\subsection{Acknowledgements}

We thank Moshe Kamenski and Raf Cluckers for enlightening conversations
about the model theoretic settings. We thank Nir Avni for numerous
helpful discussions, as well as for proposing this problem together
with Rami Aizenbud. A large part of this work was carried out while
visiting the mathematics department at Northwestern university, we
thank them and Nir for their hospitality. Finally we wish to thank
our teacher Rami Aizenbud for answering various questions and for
helping to shape many of the ideas in this paper. We benefited from
his guidance deeply.

We also wish to thank the anonymous referees for their insightful comments and remarks, and in particular for suggesting  
the alternative proof of Theorem \ref{thm:L^1 constructible has polynomial decaying Fourier transform} (see Section \ref{subsec:Alternative-proof-of Theorem 5.2}).

Both  authors were  partially supported by ISF grant 687/13, BSF grant 2012247 and a Minerva foundation grant.
\section{Preliminaries}

In this section we review definitions and results on which we rely
in this work. For this, we mostly follow the definitions and notations
of \cite[Section 3]{BDOP13}, \cite[Section 4]{CGH14-2}, \cite{CGH16}
and \cite{Pas89} in Sections 2.2 and 2.3 and \cite[Sections 3.1-3.3]{AA16}
and \cite{AA18} in Section 2.4.

\subsection{The Denef-Pas language, definable sets and functions, motivic functions and motivic integration}

\label{subsec: Denef-Pas} In this section, we recall the definitions
of the Denef-Pas language, definable sets, definable functions and
motivic functions, and review an integration result concerning motivic
functions.

\subsubsection{The Denef-Pas language}

The Denef-Pas language, denoted $\Ldp$, is a first order language
with three sorts of variables: the valued field sort $\VF$, the residue
field sort $\RF$, and the value group sort $\VG$. The $\Ldp$ language
consists of the following: 
\begin{itemize}
\item The language of rings $\mathcal{L}_{\mathrm{Val}}=(+,-,\cdot,0,1)$
for the valued field sort $\VF$. 
\item The language of rings $\mathcal{L}_{\mathrm{Res}}=(+,-,\cdot,0,1)$
 for the residue field sort $\RF$. 
\item The language $\mathcal{L}_{\mathrm{Pres}}^{\infty}=\mathcal{L}_{\mathrm{Pres}}\cup\curly{\infty}$
for the value group sort $\VG$, where $\infty$ is a constant, and
$\mathcal{L}_{\mathrm{Pres}}=(+,-,\leq,\{\equiv_{\mathrm{mod}~n}\}_{n>0},0,1)$
is the Presburger language consisting of the language of ordered abelian
groups along with constants $0,1$ and a family of $2$-relations
$\{\equiv_{\mathrm{mod}~n}\}_{n>0}$ of congruence modulo $n$. 
\item A function $\val:\VF\rightarrow\VG$ for a valuation map. 
\item A function $\ac:\VF\rightarrow\RF$ for an angular component map. 
\end{itemize}
Altogether, we write 
\[
{\Ldp}=\{\mathcal{L}_{\mathrm{Val}},\mathcal{L}_{\mathrm{Res}},\mathcal{L}_{\mathrm{Pres}}^{\infty},\val,\ac\}.
\]

\subsubsection{The angular component map}

Let $K$ be a complete, discretely valued field, with valuation map $\val:K\to\Gamma\cup\{\infty\}$,
where $\Gamma$ is an ordered abelian group which we usually take
to be $\ints$. Let $\mathcal{O}_{K}$ be the ring of integers of
$K$ with maximal ideal $\mathfrak{m}_{K}=\{x\in K:\val(x)>0\}$,
and let $k_{K}=\mathcal{O}_{K}/\mathfrak{m}_{K}$ denote its residue
field and $\mathrm{Res}:\mathcal{O}_{K}\to k_{K}$ be the canonical
quotient map. An \textit{angular component} map $\mathrm{ac}:K\to k_{K}$
is a map satisfying the following: 
\begin{enumerate}
\item $\mathrm{ac}(0)=0$. 
\item $\mathrm{ac}|_{\mathcal{O}_{K}^{\times}}=\mathrm{Res}|_{\mathcal{O}_{K}^{\times}}$. 
\item $\mathrm{ac}|_{K^{\times}}$ is a multiplicative morphism from $K^{\times}$
to $k_{K}^{\times}$. 
\end{enumerate}
It is unique up to choosing a uniformizer $\pi\in\mathcal{O}_{K}$ and setting $\mathrm{ac}(\pi)=1$.

\subsubsection{Definable sets, definable functions and motivic functions}

 Any pair $(F,\pi)$ of a non-Archimedean local field $F$ and a uniformizer
$\pi$ of $\mathcal{O}_{F}$ has a natural ${\Ldp}$-structure. Let
$\mathrm{Loc}$ be the set of all such pairs, and $\mathrm{Loc}_{M}$
be the set of $(F,\pi)\in\mathrm{Loc}$ such that $F$ has residue
characteristic larger than $M$. Usually, we just write $F\in\mathrm{Loc}$,
omitting the second term, as our results are independent of the choice
of a uniformizer.

Given a formula $\phi$ in ${\Ldp}$, with $n_{1}$ free valued field
variables, $n_{2}$ free residue field variables and $n_{3}$ free
value group variables we can naturally interpret it in $F\in\mathrm{Loc}$,
yielding a subset $\phi(F)\subseteq F^{n_{1}}\times k_{F}^{n_{2}}\times\mathbb{Z}^{n_{3}}$.
We would like to study families of subsets of $F^{n_{1}}\times k_{F}^{n_{2}}\times\mathbb{Z}^{n_{3}}$
which arise from a fixed ${\Ldp}$-formula $\phi$. To do so, we introduce
the following definitions: 
\begin{defn}[{{{{See \cite[Definitions 2.3-2.6]{CGH16}}}}}]
\label{def:motivic function} Let $n_{1},n_{2},n_{3}$ and $M$ be
natural numbers. 
\begin{enumerate}
\item A collection $X=(X_{F})_{F\in\mathrm{Loc}_{M}}$ of subsets $X_{F}\subseteq F^{n_{1}}\times k_{F}^{n_{2}}\times\mathbb{Z}^{n_{3}}$
is called a \textit{definable set} if there is an ${\Ldp}$-formula
$\phi$ and $M'\in\nats$ such that $X_{F}=\phi(F)$ for every $F\in\mathrm{Loc}_{M'}$. 
\item We denote by $\mathrm{VF}$ and $\mathrm{RF}$ the definable sets
$(F)_{F\in\mathrm{Loc}}$ and $(k_{F})_{F\in\mathrm{Loc}}$ respectively. 
\item Let $X$ and $Y$ be definable sets. A \textit{definable function}
is a collection $f=(f_{F})_{F\in\mathrm{Loc}_{M}}$ of functions $f_{F}:X_{F}\rightarrow Y_{F}$,
such that the collection of their graphs $\{\Gamma_{f_{F}}\}_{F\in\mathrm{Loc}_{M}}$
is a definable set. 
\item Let $X$ be a definable set. A collection $h=(h_{F})_{F\in\mathrm{Loc}_{M}}$
of functions $h_{F}:X_{F}\rightarrow\mathbb{R}$ is called a \textit{motivic}
(or \textit{constructible}) function on $X$, if there exists $M'\in\nats$
such that for all $F\in\mathrm{Loc}_{M'}$ it can be written in the
following way (for every $x\in X_{F}$): 
\[
h_{F}(x)=\sum_{i=1}^{N}|Y_{i,F,x}|q_{F}^{\alpha_{i,F}(x)}\left(\prod_{j=1}^{N'}\beta_{ij,F}(x)\right)\left(\prod_{l=1}^{N''}\frac{1}{1-q_{F}^{a_{il}}}\right),
\]
where, 
\begin{itemize}
\item $N,N'$ and $N''$ are integers and $a_{il}$ are non-zero integers. 
\item $\alpha_{i}:X\rightarrow\mathbb{Z}$ and $\beta_{ij}:X\rightarrow\mathbb{Z}$
are definable functions.
\item $Y_{i,F,x}=\{y\in k_{F}^{r_{i}}:(x,y)\in{Y}_{iF}\}$ is the fiber over
$x$ where ${Y}_{i}\subseteq X\times\mathrm{RF}^{r_{i}}$ are definable
sets and $r_{i}\in\nats$. 
\item The integer $q_{F}$ is the size of the residue field $k_{F}$. 
\end{itemize}
The set of motivic functions on a definable set $X$ forms a ring,
which we denote by $\mathcal{C}(X)$. 
\end{enumerate}
\end{defn}

\subsubsection{Integration of motivic functions}

\label{subsect: Motivic integration} We want to show that the ring
of motivic functions is preserved under integration. In order to do
so, we first need to explain what does it mean to integrate a motivic
function. 

Take $\{\mu_{1,F}\}_{F\in\mathrm{Loc}}$ to be the family of normalized
Haar measures on $\VF$, that is $(\mu_{1,F})_{F}(\mathcal{O}_{F})=1$
for every $F\in\mathrm{Loc}$, and let $\{\mu_{2,F}\}_{F\in\mathrm{Loc}}$
and $\{\mu_{3,F}\}_{F\in\mathrm{Loc}}$ be the families of counting
measures on $\RF$ and $\ints$ respectively. 
Given a definable set $X\subseteq\VF^{n_{1}}\times\RF^{n_{2}}\times\ints^{n_{3}}$,
we can consider the family of measures $(\mu_{F})_{F\in\mathrm{Loc}}$
induced from the collection $(\mu_{1,F}^{n_{1}}\times\mu_{2,F}^{n_{2}}\times\mu_{3,F}^{n_{3}})_{F}$
for each $F\in\mathrm{Loc}$. We call measures such as $(\mu_F)_{F\in\mathrm{Loc}}$
\textit{motivic measures}. For other ways to obtain motivic measures,
see \cite[Section 8]{CL08}, \cite[Section 2.3]{CGH16} or \cite[Section 2.5]{CGH14-1}.
Using $(\mu_{F})_{F\in\mathrm{Loc}}$, we can integrate motivic functions.
 The following theorem shows that the ring of motivic functions is
preserved under integration with respect to any motivic measure: 
\begin{thm}[{{{{See \cite[Theorem 4.3.1]{CGH14-2} }}}}]
\label{integration of motivic} Let $X$ and $Y$ be ${\Ldp}$-definable
sets, $f$ be in $\mathcal{C}(X\times Y)$ and let $\mu$ be a motivic
measure on $Y$. Then there exist a function $g\in\mathcal{C}(X)$
and an integer $M>0$ such that for every $F\in\mathrm{Loc}_{M}$
and $x_{0}\in X_{F}$, if $f_{F}(x_{0},y)\in L^{1}(Y_{F})$ then 
\[
g_{F}(x)=\int_{Y_{F}}f_{F}(x,y)d\mu_{F}.
\]
\end{thm}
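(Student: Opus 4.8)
The plan is to obtain the theorem from the structure theory of motivic functions in the Denef--Pas framework, following the argument of \cite{CL08} in the uniform-in-$p$ form of \cite{CGH14-2}. The core idea is to reduce, by a Fubini-type argument, the integral over $Y$ to an iterated integral taken one sort at a time and one variable at a time, and then to treat the three resulting elementary operations separately: integration over a single $\VF$-variable against the normalized Haar measure, summation over a single $\RF$-variable against the counting measure, and summation over a single $\ints$-variable against the counting measure. In each case the claim to verify is that the result is again a motivic function of the remaining variables $x\in X_F$, with a single threshold $M$ valid for all $F\in\mathrm{Loc}_M$; the $L^1$-hypothesis enters only to guarantee that the (absolutely convergent) formal manipulations below actually compute the integral, and that the interchanges of summation and integration are legitimate exactly at the points $x_0$ where $f_F(x_0,\cdot)\in L^1(Y_F)$.

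I would first dispose of the $\RF$-direction. Expanding $f_F$ in the normal form of Definition \ref{def:motivic function} and using the orthogonality of the residue field and value group sorts --- a definable function $k_F\to\ints$ has finite image with definable fibers --- the sum $\sum_{y\in k_F} f_F(x,y)$ becomes a finite $\ints$-linear combination of cardinalities of definable subsets of $X\times\RF^{r}$ fibered over $x$, times powers of $q_F$, which is again of the form prescribed by Definition \ref{def:motivic function}; hence $\mathcal{C}(X)$ is preserved. For the $\ints$-direction one is reduced to Presburger summation: one must sum over $n$ in a Presburger-definable subset of $\ints$ an expression that is a polynomial in $n$ times $q_F^{an+b}$, with $a,b$ affine in the parameters. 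Decomposing the index set into finitely many arithmetic progressions and rays, each contribution is a finite combination of series $\sum_{n\ge 0}P(n)q_F^{an}$, which for $a\neq 0$ evaluate to rational functions of $q_F$ whose denominators are products of factors $1-q_F^{a}$ --- precisely the shape allowed in the normal form --- and the $L^1$-hypothesis places us in the convergent range $a<0$. This is exactly why the factors $\tfrac{1}{1-q_F^{a}}$ are built into the definition of $\mathcal{C}$.

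The main step is the $\VF$-direction, and here I would invoke the Denef--Pas cell decomposition theorem \cite{Pas89}, uniformly over $\mathrm{Loc}_M$: the domain decomposes into finitely many cells, on each of which the valued-field variable $t$ is written as $t=c(x)+u$ with the pair $(\ac u,\val u)$ ranging over a set cut out by $\RF$- and $\ints$-conditions in $x$, and after this change of variables the integrand is pulled back from the residue field and value group sorts. Integrating $dt$ over a single cell then produces a factor $q_F^{-\val u}$ from the Haar measure of the corresponding ball, multiplied by residue-field data and summed over the admissible $\ac u\in k_F$ and $\val u\in\ints$ --- that is, it reduces to the two cases already handled. Summing the finitely many cell contributions yields the desired $g\in\mathcal{C}(X)$, and $M$ is the largest of the finitely many thresholds produced along the way.

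The principal obstacle is the uniformity in the $\VF$-step: one needs a single $\Ldp$-definable cell decomposition --- centers, cell conditions, and the measure/Jacobian bookkeeping of $t\mapsto(\ac u,\val u)$ --- that works for every $F$ of sufficiently large residue characteristic while keeping all intermediate quantities inside the motivic class. This is the technical heart of \cite[Section 4]{CGH14-2} (building on \cite{Pas89,CL08}), and a genuinely self-contained proof would essentially have to reproduce it. A secondary point to be careful about is that $g_F$, being a motivic function, is defined at \emph{every} $x\in X_F$, whereas the equality $g_F(x)=\int_{Y_F}f_F(x,y)\,d\mu_F$ is asserted only at those $x$ where $f_F(x,\cdot)\in L^1(Y_F)$; verifying that the splittings of Presburger domains and the exchanges of sum and integral above are valid precisely under that hypothesis is routine but necessary, and follows from absolute convergence.
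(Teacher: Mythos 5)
The paper does not prove this theorem: it is imported verbatim as \cite[Theorem~4.3.1]{CGH14-2}, so there is no in-paper argument to compare against. Your sketch correctly identifies the standard proof strategy from the Cluckers--Loeser / Cluckers--Gordon--Halupczok machinery that the cited reference uses --- Fubini reduction to one variable at a time, residue-field summation collapsing to cardinality counting by orthogonality of sorts, Presburger summation producing the $\tfrac{1}{1-q_F^{a}}$ denominators, and Denef--Pas cell decomposition for the valued-field variable, all made uniform over $\mathrm{Loc}_M$.

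Two imprecisions are worth flagging, though neither is a fatal gap. First, in the Presburger step you say the $L^1$-hypothesis ``places us in the convergent range $a<0$''; after decomposing the Presburger index set into finitely many rays and finite pieces, you need $a<0$ on rays unbounded above, $a>0$ on rays unbounded below, and no constraint on finite pieces --- and the sign of $a$ is not forced a priori. Rather, the $L^1$-hypothesis at $x_0$ tells you the formal series converges \emph{there}, and one must separately check that the locus of $x$ where it converges is itself definable, so that $g$ can be set (say, to $0$) on the complement while remaining a motivic function. This definability of the integrability locus is an actual ingredient of the cited theorem and is glossed over in your account. Second, the Haar measure of the annulus $\{u:\val(u)=n,\ \ac(u)=\xi\}$ is $q_F^{-n-1}$, not $q_F^{-n}$; this is harmless but the bookkeeping matters when one wants to land exactly inside the normal form of Definition~\ref{def:motivic function}. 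You are right that a genuinely self-contained proof would essentially have to reproduce the uniform cell decomposition and integration theory of \cite[Section 4]{CGH14-2}, which is the technical heart of the matter and is exactly why the present paper cites it rather than proving it.
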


\begin{rem}
{In Section \ref{subsec:Motivic-measures-on} we extend the definition of motivic functions
and motivic measures to smooth algebraic $\rats$-varieties. }
\end{rem}

\subsection{Elimination of quantifiers and uniform cell decomposition}

In this subsection we state a quantifier elimination result and a
uniform cell decomposition theorem for the $\Ldp$-theory $\mathcal{T}_{\mathrm{H},\ac,0}$
which is defined below, and a Presburger cell decomposition theorem.

\subsubsection{Uniform cell decomposition in $\mathcal{T}_{\mathrm{H},\ac,0}$}

Denote by $\mathcal{T}_{\mathrm{H},\ac,0}$ the ${\Ldp}$-theory of
Henselian valued fields $K$ of residue characteristic zero such that
there is an angular component map $\mathrm{ac}:K\to k_{K}$. The theory
$\mathcal{T}_{\mathrm{H},\ac,0}$ has quantifier elimination (in the
valued field sort), and there is a uniform cell decomposition theorem.
This allows us, using specialization arguments, to obtain uniform
results about non-Archimedean local fields with residue characteristic
large enough: 
\begin{lem}[{{{See e.g. \cite[Lemma 3.5]{BDOP13}}}}]
\label{specialization arguments}Let $\phi$ be a sentence in ${\Ldp}$.
Assume that $\phi$ holds in all models of $\mathcal{T}_{\mathrm{H},\ac,0}$,
then there exists $M=M(\phi)\in\nats$ such that $\phi$ holds in
all $F\in\mathrm{Loc}_{M}$. 
\end{lem}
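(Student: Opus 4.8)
The plan is a standard compactness argument. Recall that $\mathcal{T}_{\mathrm{H},\ac,0}$ is axiomatized by a recursive set of $\Ldp$-sentences: the axioms for Henselian valued fields, the axioms asserting that $\ac$ is an angular component map, and the infinite scheme $\{\chi_p\}_{p\text{ prime}}$, where $\chi_p$ expresses that $p\cdot 1\neq 0$ in the residue field sort (so that jointly the $\chi_p$ force residue characteristic $0$). The key observation is that for any $(F,\pi)\in\mathrm{Loc}_M$ the associated $\Ldp$-structure --- with $\val$ the valuation, $k_F$ the residue field, and $\ac$ the angular component map constructed above --- is a model of every axiom of $\mathcal{T}_{\mathrm{H},\ac,0}$ \emph{except possibly} those $\chi_p$ with $p\le M$; indeed a non-Archimedean local field is complete, hence Henselian, its valuation is discrete, and $\ac$ satisfies the defining clauses (1)--(3).

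Now suppose, for contradiction, that the conclusion fails for the sentence $\phi$. Then for every $M\in\nats$ there is $F_M\in\mathrm{Loc}_M$ with $F_M\models\neg\phi$. Fix a non-principal ultrafilter $\mathcal{U}$ on $\nats$ and form the ultraproduct $\mathbf{F}:=\prod_M F_M/\mathcal{U}$ as an $\Ldp$-structure. By {\L}o\'s's theorem, $\mathbf{F}\models\neg\phi$. On the other hand, $\mathbf{F}$ is a model of $\mathcal{T}_{\mathrm{H},\ac,0}$: Henselianity and the $\ac$-axioms are (schemes of) first-order sentences true in each $F_M$, hence true in $\mathbf{F}$; and for each prime $p$ all but finitely many $F_M$ (those with $M\ge p$) satisfy $\chi_p$, so $\mathbf{F}\models\chi_p$, whence $\mathbf{F}$ has residue characteristic $0$. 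This contradicts the hypothesis that $\phi$ holds in all models of $\mathcal{T}_{\mathrm{H},\ac,0}$.

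One may instead argue proof-theoretically via G\"odel's completeness theorem: from the hypothesis, $\mathcal{T}_{\mathrm{H},\ac,0}\vdash\phi$, and since a formal derivation is finite it invokes only finitely many of the axioms $\chi_p$; taking $M=M(\phi)$ larger than every prime $p$ occurring among them, the sub-theory consisting of the Henselian and $\ac$ axioms together with $\{\chi_p:p\le M\}$ already proves $\phi$, and every $F\in\mathrm{Loc}_M$ is a model of this sub-theory, so $F\models\phi$.

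There is no genuine obstacle here; the only points requiring care are (i) writing down an explicit recursive axiomatization of $\mathcal{T}_{\mathrm{H},\ac,0}$ in which ``residue characteristic $0$'' appears as an infinite axiom scheme rather than a single sentence --- this is exactly what makes finiteness of proofs (equivalently, the ultraproduct having residue characteristic $0$) relevant --- and (ii) verifying that the $\Ldp$-structures attached to elements of $\mathrm{Loc}_M$ really are models of all the remaining axioms, in particular that completeness yields Henselianity and that the angular component map constructed above is well-defined and satisfies its axioms. Both are routine.
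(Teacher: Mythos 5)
The paper does not give its own proof of this lemma---it simply cites \cite[Lemma 3.5]{BDOP13}---and your compactness argument (in either its ultraproduct or G\"odel-completeness form) is exactly the standard proof one finds there, so the approach is the intended one and the argument is correct. One small slip in the first paragraph: a structure $F\in\mathrm{Loc}_M$ has residue characteristic $q>M$, so it satisfies $\chi_p$ precisely for $p\neq q$; in particular it satisfies \emph{all} $\chi_p$ with $p\le M$ and fails only $\chi_q$ for the single prime $q>M$, so you should have written ``except possibly those $\chi_p$ with $p>M$.'' This does not affect either version of your argument (the {\L}o\'s step correctly uses that $\{M:M\ge p\}$ is cofinite, and the G\"odel version correctly uses that $F\in\mathrm{Loc}_M$ models $\{\chi_p:p\le M\}$), but the sign of the inequality in that sentence is reversed.
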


We now wish to state the uniform cell decomposition theorem. 
Let $f_{i}:\VF^{m}\times\VF\to\VF$ be functions who are polynomial
in their second variable, and whose coefficients are definable functions
in the first variable. The uniform cell decomposition theorem allows
us to decompose an $\Ldp$-definable set in $\VF^{m}\times\VF$ into
a disjoint union of smaller $\Ldp$-definable sets called cells, such
that on each cell $\val(f_{i})$ and $\ac(f_{i})$ have simpler description,
which depends on one less valued field variable.

For the sake of consistency and in order to avoid confusion, we follow
the definitions and notations of \cite[Section 3]{BDOP13} and \cite{Pas89}
with only minor changes. We start with the definition of a cell: 
\begin{defn}[{{{{Cells, see \cite[Definition 3.1]{BDOP13} or \cite[Definition 2.9]{Pas89}}}}}]
\label{def:Cell} ~ 
\begin{enumerate}
\item Let $y\in\VF$ and $x=(x_{1},\ldots,x_{m})\in\VF^{m}$ be valued field
sort variables, $\xi=(\xi_{1},\ldots,\xi_{n})\in\RF^{n}$ be residue
field sort variables and let $\lambda\in\ints_{>0}$. Furthermore
take a definable subset $C\subseteq\VF^{m}\times\RF^{n}$, and definable
functions $b_{1},b_{2},c:C\to\VF$. We also denote by $\square_{1},\square_{2}$
the relations $<,\leq$ or no condition. For each $\xi\in\RF^{n}$,
let $A(\xi)$ denote the definable set 
\[
\left\{ (x,y)\in\VF^{m}\times\VF:\,(x,\xi)\text{\ensuremath{\in}}C\wedge\val(b_{1}(x,\xi))\square_{1}\lambda\cdot\val(y-c(x,\xi))\square_{2}\val(b_{2}(x,\xi))\wedge\ac(y-c(x,\xi))=\xi_{1}\right\} .
\]
\item Suppose that the definable sets $A(\xi)$ are distinct; then $A=\bigcup\limits _{\xi\in\RF^{n}}A(\xi)$
is called a \textit{cell} in $\VF^{m}\times\VF$ with parameters $\xi$
and center $c$ and we call $A(\xi)$ a fiber of the cell $A$. Furthermore,
we denote by $\Theta(A)=(C,b_{1}(x,\xi),b_{2}(x,\xi),c(x,\xi),\lambda)$
the \textit{datum} of the cell $A$. 
\end{enumerate}
\end{defn}

\begin{defn}[{{{{Uniform cell decomposition, see \cite[Definition 3.2]{BDOP13}
or \cite[Theorem 3.2]{Pas89}}}}}]
\label{def:UCD} Let $y\in\VF$ and $x=(x_{1},\ldots,x_{m})\in\VF^{m}$,
and take $f_{1}(x,y),\ldots,f_{r}(x,y)$ to be polynomials in $y$
whose coefficients are definable functions in $x$, i.e. expressions
of the form $\sum\limits _{k}a_{k}(x)y^{k}$ where each $a_{k}$ is
a definable function. We say that a collection of valued fields $\mathcal{F}$
have a uniform cell decomposition of dimension $m$ with respect to
the functions $\curly{f_{i}}_{i=1}^{r}$ if, for some positive integer
$N$, there exist the following: 
\begin{itemize}
\item A non-negative integer $n$ and definable sets $C_{i}\subseteq\VF^{m}\times\RF^{n}$, 
\item Definable functions $b_{i,1}(x,\xi),b_{i,2}(x,\xi),c_{i}(x,\xi)$
and $h_{ij}(x,\xi)$ from $\VF^{m}\times\RF^{n}$ into $\VF$, 
\item Positive integers $\lambda_{i}$, 
\item Non-negative integers $w_{ij}$, 
\item Functions $\mu_{i}:\{1,\ldots,r\}\rightarrow\{1,\ldots,n\}$, where
$1\leq i\leq N$ and $1\leq j\leq r$, 
\end{itemize}
such that, 
\begin{itemize}
\item For any $F\in\mathcal{F}$, we have $F^{m}\times F=\bigcup\limits _{i=1}^{N}A_{i,F}$,
where the cells $A_{i}$ are defined by the cell data $\Theta(A_{i})=(C_{i},b_{i,1},b_{i,2},c_{i},\lambda_{i})$
for $1\leq i\leq N$. 
\item For every $1\leq i\leq N$ and $1\leq j\leq r$, and all $(x,y)\in A_{i}(\xi)$,
we have 
\[
\val(f_{j}(x,y))=\val(h_{ij}(x,\xi)(y-c(x,\xi))^{w_{ij}})\text{ and }\ac(f_{j}(x,y))=\xi_{\mu_{i}(j)},
\]
where the integers $w_{ij}$ and the maps $\mu_{i}$ do not depend
on $x,\xi$ and $y$. 
\end{itemize}
\end{defn}

The following uniform cell decomposition theorem was proved by Pas
for the ${\Ldp}$ language:
\begin{thm}[{{{{Uniform cell decomposition theorem, see \cite[Theorems 3.3, 3.6]{BDOP13}
or \cite[Theorems 3.1-3.2, Remark 3.3]{Pas89}}}}}]
\label{thm:(Uniform-cell-decomposition)} Let $y$ and $x=(x_{1},\ldots,x_{m})$
be valued field sort variables, and $f_{1}(x,y),\ldots,f_{r}(x,y)$
be polynomials in $y$ whose coefficients are definable functions
in $x$. Then, 
\begin{enumerate}
\item The class of all models of $\mathcal{T}_{\mathrm{H},\ac,0}$ has uniform
cell decomposition of dimension $m$ with respect to the functions
$\{f_{i}\}_{i=1}^{r}$. 
\item There is a constant $M=M(m;f_{1},\ldots,f_{r})$ such that $\mathrm{Loc}_{M}$
has uniform cell decomposition of dimension $m$ with respect to the
functions $\{f_{i}\}_{i=1}^{r}$. 
\end{enumerate}
\end{thm}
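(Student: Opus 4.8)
The plan is to deduce part (2) from part (1) by specialization, and to prove part (1) by an induction on the $y$-degrees of the polynomials, with quantifier elimination for $\mathcal{T}_{\mathrm{H},\ac,0}$ as the essential tool. For the reduction of (2) to (1): once one fixes the integer $N$, the residue-field dimension $n$, the $\Ldp$-formulas defining the $C_i$ and the graphs of $b_{i,1},b_{i,2},c_i,h_{ij}$, the integers $\lambda_i,w_{ij}$, and the finite maps $\mu_i$, the assertion that these data satisfy the two displayed conditions of Definition \ref{def:UCD} (the covering and disjointness clauses being finite disjunctions) is a single $\Ldp$-sentence $\Phi$ quantifying only over the variables $x,y,\xi$. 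Part (1) asserts that $\Phi$ holds in every model of $\mathcal{T}_{\mathrm{H},\ac,0}$, so Lemma \ref{specialization arguments} yields an $M$ for which $\Phi$ is valid on all $F\in\mathrm{Loc}_M$, which is part (2).

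For part (1) I would induct on $d=\max_j\deg_y f_j$, treating the family $\{f_j\}_{j=1}^r$ simultaneously and, at each stage, reducing the polynomial of top $y$-degree. In the base case $d\le 1$, one decomposes $\VF^m\times\VF$ according to the positions — in valuation and angular component — of $y$ relative to the roots $-a_{j,0}(x)/a_{j,1}(x)$ of the linear $f_j$; on each resulting piece one such root serves as the center $c$, and for every $j$ either $\val f_j=\val(a_{j,1}(x))+\val(y-c)$ or $\val f_j$ is constant on the piece (so $w_{ij}=0$), with $\ac f_j$ equal in either case to a residue-field coordinate after a further harmless refinement of the parameter set; the loci where some $a_{j,1}(x)$ vanishes are absorbed by the inductive hypothesis applied to the $a_{j,0}(x)$.

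For the inductive step one first decomposes along the vanishing loci of the leading coefficients (each such locus lowers the relevant $y$-degree), then locates the roots of the top-degree $f_j$ in an algebraic closure via a Newton-polygon/Hensel analysis, clusters them by valuation, and on each piece translates $y$ by a definable center $c(x,\xi)$ approximating a chosen cluster; near such a center $f_j$ factors, up to a unit, as $h(x,\xi)\,(y-c(x,\xi))^{w}$ for the appropriate exponent $w$, which gives the prescribed form of $\val f_j$, while $\ac f_j$ is read off a fresh residue-field parameter, so $w_{ij}$ and $\mu_i$ are determined. The residue-field variables $\xi$ are there precisely to absorb the finitely many combinatorial choices (which cluster, which angular component, which branch); the derivative $\partial f_j/\partial y$, of degree $d-1$, together with the inductive hypothesis, is what makes the Hensel step effective while keeping all the data $\Ldp$-definable.

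The main obstacle is the uniformity demanded by Definition \ref{def:UCD}: the centers $c_i$, exponents $w_{ij}$, and maps $\mu_i$ must be chosen once and for all, independently of the valued field, and the ingredients must be genuinely $\Ldp$-definable rather than definable model by model. This is exactly Pas's theorem, and it is here that quantifier elimination for $\mathcal{T}_{\mathrm{H},\ac,0}$ in the valued-field sort is indispensable — it is what converts the a priori algebraic, root-theoretic description of the cells into an honest $\Ldp$-formula with parameters in $x$ and $\xi$. The remaining technical points — the bookkeeping of the iterated decompositions along vanishing leading coefficients, and arranging the distinctness of cell fibers required by Definition \ref{def:Cell} — I would carry out following \cite{Pas89} (or \cite[Section 3]{BDOP13}).
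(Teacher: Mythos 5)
The paper offers no proof of this theorem: it is imported verbatim from the literature (the stated citations to \cite{Pas89} and \cite{BDOP13}), so the only meaningful comparison is with those sources, and at that level your outline is the standard one. Your reduction of (2) to (1) — fixing the decomposition datum, expressing its correctness (covering, distinctness of fibers, and the valuation/angular-component identities) as a single $\Ldp$-sentence, and invoking Lemma \ref{specialization arguments} — is exactly how \cite{BDOP13} passes from their Theorem 3.3 to Theorem 3.6, and your part (1) sketch (induction on the $y$-degree, splitting off vanishing leading coefficients, Hensel/Newton-polygon preparation near definable centers, residue-field parameters absorbing the finite combinatorial choices) is the Cohen--Denef--Pas argument that those references carry out. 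One caution about your framing: in Pas's development the logical order is the opposite of what you suggest — the cell decomposition of Theorem \ref{thm:(Uniform-cell-decomposition)} is proved directly by the degree induction, with definability of the centers and cell data built into the construction, and Theorem \ref{Quantifier elimination theorem} is then \emph{deduced} from it. If you treat quantifier elimination as an indispensable ingredient of the cell decomposition proof, you must rely on a proof of QE that does not itself pass through cell decomposition (such proofs exist, via embedding-test/relative QE arguments, but Pas's Theorem 4.1 — one of the sources the paper cites for QE — is not one of them); otherwise the argument is circular. This is easily repaired, either by citing an independent proof of QE or by following Pas in establishing definability of the cell data directly within the induction, but it should be said explicitly.
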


\subsubsection{Elimination of quantifiers in $\mathcal{T}_{\mathrm{H},\ac,0}$}

The following theorem is due to Denef and Pas: 
\begin{thm}[{{{{Quantifier elimination theorem, see \cite[Theorem 3.4]{BDOP13},
\cite[Theorem 2.1.1]{CL08} or \cite[Theorem 4.1]{Pas89}}}}}]
\label{Quantifier elimination theorem} The theory $\mathcal{T}_{\mathrm{H},\ac,0}$
admits elimination of quantifiers in the valued field sort. More explicitly,
every ${\Ldp}$-formula ${\psi(x,\xi,k) \subseteq \VF^{n_1} \times \RF^{n_2} \times \VG^{n_3}}$ 
is $\mathcal{T}_{\mathrm{H},\ac,0}$-equivalent to a finite
disjunction of formulas of the form 
\[
\chi(\ac(g_{1}(x)),\ldots,\ac(g_{s}(x)),\xi)\wedge\theta(\val(g_{1}(x)),\ldots,\val(g_{s}(x)),k)
\]
where $\chi$ is an $\mathcal{L}_{\mathrm{Res}}$ formula, $\theta$
is an $\mathcal{L}_{\mathrm{Pres}}$ formula and $g_{i}\in\mathbb{Z}[x_{1},\ldots,x_{n_1}]$. 
\end{thm}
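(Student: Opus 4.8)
The plan is to prove the statement by induction on the number of quantified valued-field variables, reducing everything to the elimination of a single existential $\VF$-quantifier and then feeding the relevant polynomials into the uniform cell decomposition theorem (Theorem \ref{thm:(Uniform-cell-decomposition)}).

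\emph{Step 1: normal form for the matrix.} Since $\val$ and $\ac$ are the only function symbols whose domain is $\VF$ but whose codomain is another sort, every $\VF$-sorted $\Ldp$-term in variables $x_1,\dots,x_m,y$ is just a polynomial in $\mathbb Z[x,y]$, and every atomic $\Ldp$-formula is either a polynomial equality $p(x,y)=0$, an $\mathcal{L}_{\mathrm{Res}}$-atom in the values $\ac(q_i(x,y))$ and the residue variables $\xi$, or an $\mathcal{L}_{\mathrm{Pres}}$-atom in the values $\val(q_i(x,y))$ and the value-group variables $k$. Rewriting $p(x,y)=0$ as $\val(p(x,y))=\infty$ (using $\infty\in\mathcal{L}_{\mathrm{Pres}}^{\infty}$) absorbs all equalities into $\val$-conditions. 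Hence, after pulling existential $\VF$-quantifiers in front one at a time, it suffices to eliminate $y$ from $\exists y\,\Phi(x,y,\xi,k)$, where $\Phi$ is a Boolean combination of conditions of the form "$\ac(f_j(x,y))$ satisfies a residue atom" and "$\val(f_j(x,y))$ satisfies a Presburger atom", for finitely many polynomials $f_1,\dots,f_r\in\mathbb Z[x][y]$, viewed as polynomials in $y$ whose coefficients are polynomials (hence $\Ldp$-terms) in $x$.

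\emph{Step 2: cell decomposition and elimination of $y$.} I would apply Theorem \ref{thm:(Uniform-cell-decomposition)} to $f_1,\dots,f_r$ in the variable $y$, obtaining finitely many cells $A_i\subseteq\VF^m\times\VF$ covering $\VF^m\times\VF$, with data $\Theta(A_i)=(C_i,b_{i,1},b_{i,2},c_i,\lambda_i)$ and auxiliary definable functions $h_{ij}$, such that on $A_i(\xi)$ one has $\val(f_j(x,y))=\val(h_{ij}(x,\xi)(y-c_i(x,\xi))^{w_{ij}})$ and $\ac(f_j(x,y))=\xi_{\mu_i(j)}$. On each cell, $\Phi$ then becomes a condition involving only $x$, the residue parameters $\xi$, and the single value-group quantity $t:=\val(y-c_i(x,\xi))$; moreover membership in $A_i(\xi)$ forces $\ac(y-c_i(x,\xi))=\xi_1$ and the Presburger constraint $\val(b_{i,1}(x,\xi))\,\square_1\,\lambda_i t\,\square_2\,\val(b_{i,2}(x,\xi))$. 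Now I would use that in any model of $\mathcal{T}_{\mathrm{H},\ac,0}$ the valuation is surjective onto the value group and $\ac$ is surjective from $\{\val=\gamma\}$ onto $k^{\times}$ for every $\gamma$ (by multiplicativity of $\ac$): hence for any $(x,\xi)\in C_i$ and any admissible value of $t$ there actually exists a $y$ realizing $\val(y-c_i)=t$ and $\ac(y-c_i)=\xi_1$. Therefore $\exists y\,[(x,y)\in A_i(\xi)\wedge\Phi]$ is $\mathcal{T}_{\mathrm{H},\ac,0}$-equivalent to "$(x,\xi)\in C_i$ together with an $\mathcal{L}_{\mathrm{Pres}}$-condition $\exists t\,(\dots)$" — a formula with \emph{no} remaining valued-field quantifier. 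Taking the disjunction over the finitely many cells eliminates $y$.

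\emph{Step 3: collecting into the stated normal form, and the main obstacle.} After Step 2 the variables $x$ occur only through $\val(g(x))$ and $\ac(g(x))$ for $\Ldp$-terms $g$, combined by Boolean operations with $\mathcal{L}_{\mathrm{Res}}$- and $\mathcal{L}_{\mathrm{Pres}}$-structure on $\xi$ and $k$ (the residue-field and value-group quantifiers need not be removed, as $\chi,\theta$ are allowed to contain them). Since no atomic $\Ldp$-formula connects a residue-field element with a value-group element, passing to disjunctive normal form and splitting each conjunct into its $\ac$-part and its $\val$-part gives exactly a finite disjunction of $\chi(\ac(g_1(x)),\dots,\ac(g_s(x)),\xi)\wedge\theta(\val(g_1(x)),\dots,\val(g_s(x)),k)$, with $g_i\in\mathbb Z[x_1,\dots,x_m]$. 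The one point still requiring the induction is that the cell data $c_i,b_{i,1},b_{i,2},h_{ij}$ are a priori only definable functions of $(x,\xi)$, not polynomials; applying the inductive hypothesis to their graphs (QE in fewer valued-field variables, carried out simultaneously with cell decomposition in the standard way) rewrites them through $\val$ and $\ac$ of polynomials, and substituting back keeps the required shape; the base case $m=0$ is trivial. The genuinely hard input is precisely the one I am quoting, Theorem \ref{thm:(Uniform-cell-decomposition)}: its proof rests on a Hensel's-lemma analysis of how $\val(f(x,y))$ and $\ac(f(x,y))$ vary with $y$ — locating the roots of $f(x,\cdot)$ and using \emph{crucially} that the residue characteristic is $0$, so there is no wild ramification or inseparability to spoil the root count — together with the bookkeeping that threads the QE normal form through the definable-function data. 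Granting Theorem \ref{thm:(Uniform-cell-decomposition)}, everything above is essentially formal; note also that no specialization argument (Lemma \ref{specialization arguments}) is needed here, since the assertion concerns the theory $\mathcal{T}_{\mathrm{H},\ac,0}$ itself rather than $\mathrm{Loc}_M$.
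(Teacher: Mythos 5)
The paper does not prove this statement at all: it is imported verbatim from the literature (Pas \cite[Theorem 4.1]{Pas89}, also \cite[Theorem 3.4]{BDOP13} and \cite[Theorem 2.1.1]{CL08}), so there is no in-paper argument to compare against. Your sketch in fact reproduces the route of the cited source: Pas proves cell decomposition first (the paper's Theorem \ref{thm:(Uniform-cell-decomposition)}) and then derives quantifier elimination by eliminating one valued-field variable at a time exactly as in your Steps 1--2, so there is no circularity in taking cell decomposition as the black box. The outline is correct: absorbing equalities via $\val(p)=\infty$, using surjectivity of $\val$ and of $\ac$ on units to replace $\exists y$ by an existential value-group quantifier over $t=\val(y-c_i(x,\xi))$, and then splitting into an $\mathcal{L}_{\mathrm{Res}}$-part and an $\mathcal{L}_{\mathrm{Pres}}$-part (the newly introduced $\RF$- and $\VG$-quantifiers may stay inside $\chi$ and $\theta$, and after a disjunctive normal form the separated quantifiers distribute over the conjunction as you implicitly use). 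The one place where your sketch leans on more than the quoted statement of Theorem \ref{thm:(Uniform-cell-decomposition)} is the point you flag yourself: the cell data $c_i,b_{i,1},b_{i,2},h_{ij}$ are only ``definable functions'' of $(x,\xi)$, and a definable function could a priori be defined by a formula with valued-field quantifiers, so the induction on the number of $\VF$-variables does not immediately apply to rewrite $\val(b_{i,1}(x,\xi))$, $\val(h_{ij}(x,\xi))$, etc., in the required form; in Pas's paper this is handled because the cell decomposition is established with the data in a controlled shape and the two results are threaded through a simultaneous induction. As a blind reconstruction of a cited classical theorem your proposal is sound in outline, with that bookkeeping step being the genuinely technical content you are (reasonably) outsourcing to \cite{Pas89}.
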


\begin{notation}
\label{nota:Given-an--formula} Given an ${\Ldp}$-formula $\psi$,
we write $\Xi(\psi)=(\{g_{j}\}_{j=1}^{s},\{\chi_{i}\}_{i=1}^{r},\{\theta_{i}\}_{i=1}^{r})$
for its data, 
where $\psi$ is equivalent to a formula 
\[
\bigvee_{i=1}^{r}\chi_{i}(\ac(g_{1}(x)),\ldots,\ac(g_{s}(x)),\xi)\wedge\theta_{i}(\val(g_{1}(x)),\ldots,\val(g_{s}(x)),k),
\]
$x,\xi,k,\{g_{j}\}_{j=1}^s$ are as in Theorem \ref{Quantifier elimination theorem},
$\{\theta_{i}\}_{i=1}^r$ are $\mathcal{L}_{\mathrm{Pres}}^{\infty}$-formulas
and $\{\chi_{i}\}_{i=1}^r$ are $\mathcal{L}_{\mathrm{Res}}$-formulas. 
\end{notation}

\subsubsection{Presburger cell decomposition}

We conclude our discussion of cell decomposition results with a result
in the Presburger language. We begin with a definition. 
\begin{defn}[{{{{See \cite[Definition 1]{Clu03}}}}}]
\label{def:-Linear function} Let $X\subset\VG^m$ be an $\mathcal{L}_{\mathrm{Pres}}$-definable
set. 
\begin{enumerate}
\item We call a definable function $f:X\to\VG$ \textit{linear} if there
is a constant $\gamma\in\VG$ and integers $a_{i}$ and $0\leq c_{i}<n_{i}$
for $i=1,\ldots,m$ such that $x_{i}-c_{i}\equiv0\text{ }\mathrm{mod}\text{ }n_{i}$
and $f(x)=\sum\limits _{i=1}^{m}a_{i}(\frac{x_{i}-c_{i}}{n_{i}})+\gamma$ {for every $x\in X$}. 
\item We call a definable function $f:X\to\VG$ \textit{piecewise linear}
if there exists a finite partition of $X=\bigcup\limits _{i=1}^{N}A_{i}$,
such that $f|_{A_{i}}$ is linear for all $i$. 
\end{enumerate}
The following cell decomposition result will be used in the proof
of Theorem \ref{thm:L^1 constructible has polynomial decaying Fourier transform}: 
\end{defn}

\begin{thm}[{{{Presburger cell decomposition, see {\cite[Theorem 1]{Clu03}}}}}]
\label{Presburger Cell decomposition} Let $X\subseteq\VG^{m}$ and
$f:X\to\VG$ be $\mathcal{L}_{\mathrm{Pres}}$-definable. Then there
exists a finite partition $P$ of $X$ into cells (see \cite[Definition 2]{Clu03}),
such that the restriction $f|_{A}:A\to\VG$ is linear for each cell
$A\in P$. 
\end{thm}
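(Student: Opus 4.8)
The plan is to derive the statement from elimination of quantifiers for Presburger arithmetic together with an induction on $m$. Recall that the theory of $(\ints,+,-,\leq,\{\equiv_{\mathrm{mod}\ n}\}_{n>0},0,1)$ admits quantifier elimination; consequently the definable set $X$ and the graph of the definable function $f$ are cut out by quantifier-free $\mathcal{L}_{\mathrm{Pres}}$-formulas, i.e.\ by finite Boolean combinations of atoms $h(x)\ \square\ 0$ with $\square\in\{<,\leq,=\}$ and of congruence atoms $h(x)\equiv 0\ \mathrm{mod}\ n$, where each $h$ is an affine $\ints$-linear form and $n\in\nats$. In particular every $\mathcal{L}_{\mathrm{Pres}}$-definable function is piecewise linear in the sense of Definition \ref{def:-Linear function}, so once we have a cell decomposition of the domain $X$ it suffices to refine it by the finite partition on whose parts $f$ is linear. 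Thus the whole content reduces to producing a cell decomposition of the $\mathcal{L}_{\mathrm{Pres}}$-definable set $X$.

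I would prove the latter by induction on $m$, distinguishing the last variable $x_m$. The case $m=1$ is immediate, since a quantifier-free formula in one variable defines a finite union of points and of intervals intersected with arithmetic progressions, which are precisely the cells in $\VG$. For the inductive step, separate the atoms of the formula defining $X$ into those not involving $x_m$ and those involving it; letting $L$ be the least common multiple of the nonzero coefficients of $x_m$ occurring in the latter, rescale each such atom so that $x_m$ enters only through $L x_m$. The order atoms then compare $L x_m$ with affine $\ints$-linear forms $g_1(x'),\dots,g_k(x')$ in $x'=(x_1,\dots,x_{m-1})$, while the congruence atoms confine $x_m$ to finitely many residue classes modulo a common modulus. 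The mutual order of the bounds $g_j$ is governed by the signs of the affine forms $g_j-g_{j'}$, which, together with the atoms not involving $x_m$, partition $\VG^{m-1}$ into finitely many definable pieces. Over each such piece the fiber of $X$ in the $x_m$-direction is a fixed finite union of one-dimensional cells (intervals with endpoints among the $g_j$, or points, half-lines, or empty, intersected with fixed congruence classes), whose bounding functions are linear in $x'$; taking these one-dimensional cells as the fibers of cells in $\VG^{m}$ lying over the piece, and refining each piece into cells by the induction hypothesis, yields the desired finite partition of $X$ into cells in the sense of \cite[Definition 2]{Clu03}. The same bookkeeping applied to the graph of $f$, with the output coordinate as the distinguished variable, forces that fiber to be a single point, which reads off $f$ on each cell of the partition.

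The step I expect to be most delicate is the interaction, in the inductive step, between the rescaling by $L$ and the moduli appearing in the congruence atoms: one must check that after normalisation the conditions imposed on $x_m$ really do amount to lying in fixed residue classes, and --- for the function statement --- that the endpoint and value functions one extracts are genuinely \emph{linear} over each cell (with modulus and offsets depending on the base only through the residue classes of its coordinates, as Definition \ref{def:-Linear function} permits), rather than merely piecewise linear. Matching this bookkeeping to the exact notion of cell used in \cite{Clu03} is the real work; everything else is the routine induction on $m$ sketched above. For the purposes of this paper one may of course simply invoke \cite[Theorem 1]{Clu03}.
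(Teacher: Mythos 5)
The paper does not prove this statement; it simply cites \cite[Theorem 1]{Clu03}, as you yourself note in your final sentence. Your sketch --- quantifier elimination for Presburger arithmetic, then induction on the number of variables with the last variable distinguished, normalizing the coefficients of $x_m$ by the least common multiple $L$ and partitioning the base $\VG^{m-1}$ by the mutual order of the rescaled bounds $g_j$ --- is essentially the standard argument, and it is close to Cluckers' own. The technical point you flag at the end is the right one to worry about: after rescaling, the fiber over $x'$ is an interval in $Lx_m$ with endpoints among the $g_j(x')$, so the bounding functions for $x_m$ involve $\lceil g_j(x')/L \rceil$ and $\lfloor g_j(x')/L \rfloor$, which are linear in the sense of Definition \ref{def:-Linear function} only after one further partitions the base by the residue classes of the $g_j$ modulo $L$ (and similarly for the moduli appearing in the congruence atoms). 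Your parenthetical remark that linearity here is only ``modulo residue classes of the coordinates'' shows you see this; making it explicit is exactly what closes the induction. In short: for the purposes of this paper one invokes \cite{Clu03}, and your sketch is a faithful account of why that theorem holds.
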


\subsection{Resolution of singularities, rational singularities and the (FRS)
property}
{In this section we recall necessary notions from algebraic geometry, as well as define the (FRS) property
and state an analytic criterion for the (FRS) property.} 

\subsubsection{Resolution of singularities, rational singularities and definition
of the (FRS) property}

\begin{defn}
\label{def:ratlsings} Let $X$ be an algebraic variety over a field
$K$. 
\begin{enumerate}
\item A \textit{resolution of singularities} of $X$ is a proper map $p:\widetilde{X}\rightarrow X$
such that $\widetilde{X}$ is smooth and $p$ is a birational equivalence.
A \textit{strong resolution of singularities} of $X$ is a resolution
of singularities $p:\widetilde{X}\rightarrow X$ which is an isomorphism
over the smooth locus of $X$, denoted $X^{\mathrm{sm}}$. It is a
theorem of Hironaka \cite{Hir64}, that any $X$ over a field $K$
of characteristic zero admits a strong resolution of singularities. 
\item (See \cite[I.3 pages 50-51]{KKMS73} or \cite[Definition 6.1]{AA16})
We say that $X$ has \textit{rational singularities} if for any (or
equivalently, for some) resolution of singularities $p:\widetilde{X}\rightarrow X$,
the natural morphism $\mathcal{O}_{X}\rightarrow{R}p_{*}(\mathcal{O}_{\widetilde{X}})$
is a quasi-isomorphism, where ${R}p_{*}(-)$ is the higher direct
image functor. 
\end{enumerate}
\end{defn}

We now define the notion of an (FRS) map, which is the
relative version (i.e. for morphisms) of having rational singularities: 
\begin{defn}[{{{{\cite[Section 1.2.1, Definition II]{AA16}}}}}]
\label{def:FRS} Let $\varphi:X\to Y$ be a morphism between smooth
varieties $X$ and $Y$. 
\begin{enumerate}
\item We say that $\varphi:X\to Y$ is (FRS) at $x\in X(K)$ if it is flat
at $x$, and there exists an open $x\in U\subseteq X$ such that $U\times_{Y}\{\varphi(x)\}$
is reduced and has rational singularities. 
\item We say that $\varphi:X\to Y$ is (FRS) if it is flat and it is (FRS)
at $x$ for all $x\in X(\bar{K})$. 
\end{enumerate}
\end{defn}

A useful theorem is given in \cite{Elk78}, and implies in particular
that the (FRS)-locus of a morphism is open. 
\begin{thm}[{{{{See \cite[Theorem 6.3]{AA16} or \cite[Theorem 4,5]{Elk78}}}}}]
\label{Lemma Elkik} Let $\varphi:X\rightarrow S$ be a flat morphism
of finite type $K$-schemes and let $x\in X$ be such that $\varphi(x)$
is a rational singularity in $S$. Assume that $x$ is a rational
singularity of its fiber $\varphi^{-1}(\varphi(x))$, then we have
the following: 
\begin{enumerate}
\item $x$ is a rational singularity in $X$. 
\item The set $\left\{ x\in X(K):x\text{ is a rational singularity of }\varphi^{-1}(\varphi(x))\right\} $
is open in $X(K)$. 
\end{enumerate}
\end{thm}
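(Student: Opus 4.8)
The plan is to derive both parts of the theorem from the following single core assertion, which I will call $(\star)$: if $Z$ is a reduced, finite type $K$-scheme, $\mathcal{O}$ is a discrete valuation ring arising as a localization of a finite type $K$-algebra, $\psi\colon Z\to\spec(\mathcal{O})$ is flat, and the closed fiber $Z_{0}$ has a rational singularity at a point $z$, then $Z$ has a rational singularity at $z$. To prove $(\star)$ I would use the theorem (due to Koll\'ar and Kov\'acs) that a variety has rational singularities if and only if it is Cohen--Macaulay and Du Bois, together with two deformation facts: that Cohen--Macaulayness ascends from the closed fiber to the total space of a flat morphism, and that the Du Bois property deforms over a discrete valuation ring (Kov\'acs--Schwede); alternatively one follows Elkik's original route, controlling the trace map $\pi_{*}\omega_{\widetilde{Z}}\to\omega_{Z}$ of a resolution via Grauert--Riemenschneider vanishing and local cohomology. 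I expect $(\star)$ to be the main obstacle. The rest is reduction, and I record the elementary fact --- used repeatedly below --- that for a variety $W_{0}$ the non-rational locus is closed: for a resolution $\pi\colon\widetilde{W_{0}}\to W_{0}$ it is the union of $\supp(R^{>0}\pi_{*}\mathcal{O}_{\widetilde{W_{0}}})$ with the non-normal locus of $W_{0}$, independently of $\pi$; in particular the rational locus of a variety is open.

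To reduce part (1) to $(\star)$, I would first pass to the case $S$ smooth. Shrinking $S$, assume it has rational singularities everywhere, and take a resolution $\rho\colon\widetilde{S}\to S$ with $R\rho_{*}\mathcal{O}_{\widetilde{S}}=\mathcal{O}_{S}$; set $\widetilde{X}:=X\times_{S}\widetilde{S}$, with projections $\pi_{X}\colon\widetilde{X}\to X$ and $\widetilde{\varphi}\colon\widetilde{X}\to\widetilde{S}$. Then $\widetilde{\varphi}$ is flat (a base change of $\varphi$), and over each $\widetilde{s}\in\rho^{-1}(s)$ its fiber is the base change of $\varphi^{-1}(s)$ along the field extension $\kappa(s)\hookrightarrow\kappa(\widetilde{s})$, hence still has a rational singularity at the points over $x$. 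Granting part (1) over the smooth base $\widetilde{S}$, and using openness of the rational locus to shrink $X$ around $x$, we may assume $\widetilde{X}$ has rational singularities everywhere; then a resolution $q\colon Y\to\widetilde{X}$ has $Rq_{*}\mathcal{O}_{Y}=\mathcal{O}_{\widetilde{X}}$, while flat base change along $\varphi$ gives $R\pi_{X*}\mathcal{O}_{\widetilde{X}}\cong\varphi^{*}R\rho_{*}\mathcal{O}_{\widetilde{S}}=\mathcal{O}_{X}$, so $\pi_{X}\circ q\colon Y\to X$ is a resolution with $R(\pi_{X}\circ q)_{*}\mathcal{O}_{Y}=\mathcal{O}_{X}$, i.e.\ $X$ has a rational singularity at $x$. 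Now suppose $S$ is smooth. Cutting $S$ by $\dim_{s}S-1$ general hyperplanes through $s$ yields a smooth curve which, localized at $s$, is the spectrum of a discrete valuation ring; because $\varphi$ is flat these hyperplanes pull back to a regular sequence in $\mathcal{O}_{X,x}$, i.e.\ to a descending chain of Cartier divisors through $x$ whose innermost term $X'$ is flat over that discrete valuation ring with closed fiber still $\varphi^{-1}(s)$. By $(\star)$, $X'$ has a rational singularity at $x$; and since $(\star)$ with base $(\mathbb{A}^{1},0)$ says precisely that a Cartier divisor having a rational singularity at a point forces the ambient scheme to have one there, we climb back up the chain of Cartier divisors and conclude that $X$ has a rational singularity at $x$.

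For part (2), the fact recorded above shows that the set $W$ in the statement meets every fiber of $\varphi$ in an open subset. Moreover $W$ is constructible: by Noetherian induction on $S$, over a suitable dense open $S_{0}\subseteq S$ a resolution of the total space restricts --- after shrinking $S_{0}$ --- to resolutions of the individual fibers, and generic flatness together with cohomology-and-base-change then identifies $W\cap\varphi^{-1}(S_{0})$ with the complement of a closed subset; one iterates over $\varphi^{-1}(S\setminus S_{0})$. Finally, $W$ is stable under generization: given a discrete valuation ring $\mathcal{O}$ and a morphism $\spec(\mathcal{O})\to X$ whose closed point maps into $W$, base change $\varphi$ along $\spec(\mathcal{O})\to S$ to get a flat family over $\spec(\mathcal{O})$ whose closed fiber has a rational singularity at the relevant point; since $\spec(\mathcal{O})$ is regular, part (1) makes the total space have a rational singularity there, hence --- by openness of the rational locus of that total space --- also at the point over the generic point of $\spec(\mathcal{O})$, and descending the field extension this shows the image of that point in the corresponding fiber of $\varphi$ lies in $W$. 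Thus the generic point of $\spec(\mathcal{O})$ maps into $W$, as required. A constructible set that is stable under generization is open, so $W$ is open in $X$, and intersecting with $X(K)$ gives the asserted openness in $X(K)$.
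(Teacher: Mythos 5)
This theorem is cited in the paper (to [Elk78, Th\'eor\`emes 4--5] and [AA16, Theorem 6.3]) rather than proved, so there is no argument of the paper's own to compare against; I assess your proposal on its merits. The global architecture is sound and closely tracks what the cited sources do: isolate the deformation statement over a discrete valuation ring, derive part (1) from it by passing to a smooth base via flat base change and the projection formula and then slicing the smooth base by hyperplanes through $\varphi(x)$, and derive part (2) by combining openness in the fibres, constructibility, and stability under generization. These reductions are correct modulo routine bookkeeping (one should check that $\pi_X=\rho\times_S X$ is proper and birational onto $X$, that $\widetilde{X}=X\times_S\widetilde{S}$ is reduced near $\pi_X^{-1}(x)$ after invoking the smooth-base case, and that the slicing hyperplanes can be chosen transverse to $S$ at $\varphi(x)$).

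The genuine gap is that $(\star)$ is not a supporting lemma here: it \emph{is} the theorem --- Elkik's Th\'eor\`eme 4 --- and neither of your proposed routes to it is actually carried out. Moreover the first route contains a real error as stated: rational singularities are characterized as \emph{normal}, Cohen--Macaulay, and Du Bois, not merely CM and Du Bois. A simple normal crossings divisor such as $V(xy)\subset\mathbb{A}^{3}$ is a hypersurface (hence CM) and Du Bois, but it is not normal, hence not rational. So your plan for $(\star)$ needs the additional deformation input that normality ascends from the closed fibre of a flat morphism over a regular local ring (this is true, via the $R_{1}+S_{2}$ criterion and the ascent of Serre's conditions along flat local homomorphisms with normal fibres and normal base), together with the ascent of CM and of Du Bois (Koll\'ar--Kov\'acs, Kov\'acs--Schwede). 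With that patch and the corrected characterization your first route does prove $(\star)$, but as written it would not. Part (2) is a reasonable outline; the constructibility step via simultaneous resolution and cohomology-and-base-change suppresses several points worth spelling out (generic reducedness and normality of the fibres, and birationality of the restricted resolution), though none of these is a fundamental obstruction.
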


\subsubsection{Measures on $p$-adic analytic varieties and an analytic criterion
for the (FRS) property}

Let $X$ be a $d$-dimensional smooth algebraic variety over $K$.
We denote by $\Omega_{X}^{r}$ the sheaf of differential $r$-forms
on $X$ and by $\Omega_{X}^{r}[X]$ (resp. $\Omega_{X}^{r}(X)$) the
regular $r$-forms (resp. rational $r$-forms). Given a non-Archimedean
local field $F\supset K$, then $X(F)$ has a structure of an $F$-analytic
manifold.

For $\omega\in\Omega_{X}^{\mathrm{top}}(X)$, we can define a measure
$\left|\omega\right|_{F}$ on $X(F)$ as follows. Let $U\subseteq X(F)$
be a compact open set and let $\phi$ be an $F$-analytic {diffeomorphism
from an open subset $W\subseteq F^{d}$ to $U$}. We can write
$\phi^{*}\omega=gdx_{1}\wedge{\ldots}\wedge dx_{d}$, for some $g:W\rightarrow F$,
and define 
\[
\left|\omega\right|_{F}(U)=\int_{W}\left|g\right|_{F}d\lambda,
\]
where $|\cdot|_{F}$ is the normalized absolute value on $F$ and
$\lambda$ is the normalized Haar measure on $F^{d}$ (i.e. $\lambda(\mathcal{O}_{F}^{d})=1$).
Note that this definition is independent of the diffeomorphism $\phi$,
and that the measure $|\omega|_{F}$ obtained in this way is unique
after fixing $\omega$.

\begin{defn}
\label{Schwartz measure} Let $X$ be as above. 
\begin{enumerate}
\item A measure $\mu$ on $X(F)$ is called \textit{smooth} if every point
$x\in X(F)$ has an analytic neighborhood $U$ and an ($F$-analytic)
diffeomorphism $\phi:U\rightarrow\mathcal{O}_{F}^{d}$ such that $\phi_{*}\mu|_{U}$
is a Haar measure on $\mathcal{O}_{F}^{d}$. 
\item A measure on $X(F)$ is called \textit{Schwartz} if it is smooth and
compactly supported. 
\item A measure $\mu$ on $X(F)$ has \textit{continuous density}, if there
is a smooth measure $\widetilde{\mu}$ and a continuous function $f:X(F)\rightarrow\mathbb{C}$
such that $\mu=f\cdot\widetilde{\mu}$. 
\end{enumerate}
\end{defn}

Schwartz measures and measures with continuous density can be characterized
in the following way: 
\begin{prop}[{\cite[Proposition 3.3]{AA16}}]
\label{Prop 2.13} Let $X$ be a smooth
variety over a non-Archimedean local field $F$. 
\begin{enumerate}
\item A measure $\mu$ on $X(F)$ is Schwartz if and only if it is a linear
combination of measures of the form $f\left|\omega\right|_{F}$, where
$f$ is a locally constant and compactly supported function on $X(F)$,
and $\omega\in\Omega_{X}^{\mathrm{top}}(X)$ has no zeroes or poles
in the support of $f$. 
\item A measure $\mu$ on $X(F)$ has continuous density if and only if
for every point $x\in X(F)$ there is a neighborhood $U$ of $x$,
a continuous function $f:U\rightarrow\mathbb{C}$, and $\omega\in\Omega_{X}^{\mathrm{top}}(X)$
with no poles in $U$ such that $\mu=f\left|\omega\right|_{F}$ {on $U$}. 
\end{enumerate}
\end{prop}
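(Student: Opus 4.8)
The plan is to deduce both statements from the definitions by passing to analytic charts and using a partition-of-unity argument. For part (1), the ``if'' direction is essentially immediate: if $\mu = \sum_k f_k |\omega_k|_F$ where each $f_k$ is locally constant and compactly supported and each $\omega_k\in\Omega_X^{\mathrm{top}}(X)$ has no zeroes or poles on $\mathrm{supp}(f_k)$, then on a small enough chart $\phi:U\xrightarrow{\sim}\mathcal{O}_F^d$ we may write $\phi_*(|\omega_k|_F|_U) = |g_k|_F\,d\lambda$ with $g_k$ a unit (no zeroes/poles), and after shrinking $U$ further we can arrange $|g_k|_F$ to be constant and $f_k$ to be constant, so $\mu|_U$ is a Haar measure up to a scalar; compact support is inherited from the $f_k$'s. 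For the ``only if'' direction, given a Schwartz measure $\mu$, cover $\mathrm{supp}(\mu)$ by finitely many charts $\phi_i:U_i\xrightarrow{\sim}\mathcal{O}_F^d$ on which $\phi_{i*}(\mu|_{U_i})$ is Haar, choose a locally constant partition of unity $\{\rho_i\}$ subordinate to $\{U_i\}$ (available since $F$ is non-Archimedean and the $U_i$ are compact open), and on each $U_i$ pick a nonvanishing top form $\omega_i$ (e.g. the pullback of $dx_1\wedge\dots\wedge dx_d$ under $\phi_i$, extended to a rational form on $X$ with controlled zero/pole locus, then corrected). Writing $\mu|_{U_i} = h_i|\omega_i|_F$ with $h_i$ locally constant (because both $\mu|_{U_i}$ and $|\omega_i|_F$ are, up to scalar, the pullback of Haar measure), we get $\mu = \sum_i \rho_i h_i |\omega_i|_F$, which is of the required form once we absorb $\rho_i h_i$ into a single locally constant compactly supported function.

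For part (2), the ``if'' direction again follows by working locally: if near each $x$ we have $\mu = f|\omega|_F$ with $f$ continuous and $\omega$ having no poles on $U$, then $|\omega|_F$ is a smooth measure on $U$ (it is the pullback of Haar times the locally constant $|g|_F$ where $\phi^*\omega = g\,dx_1\wedge\dots\wedge dx_d$, $g$ regular; after refining the chart $|g|_F$ becomes locally constant), so $\mu$ has continuous density on $U$; gluing these local continuous densities via a locally constant partition of unity yields a global continuous density with respect to a single smooth measure. For the ``only if'' direction, suppose $\mu = f\cdot\widetilde\mu$ with $\widetilde\mu$ smooth and $f$ continuous; near a point $x$, choose a chart in which $\widetilde\mu$ is Haar, and express Haar measure as $|\omega_0|_F$ for a suitable $\omega_0\in\Omega_X^{\mathrm{top}}(X)$ without poles near $x$ (possible after shrinking the neighborhood, since any nonvanishing rational top form differs from the pullback of $dx_1\wedge\dots\wedge dx_d$ by a unit on a small enough chart). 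Then $\mu = f|\omega_0|_F$ on that neighborhood, as required.

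The two technical points that need care — and which I would single out as the crux — are: (a) producing, on a given compact open chart, an algebraic (rational) top form $\omega$ whose associated measure $|\omega|_F$ coincides up to a locally constant factor with the pullback of Haar measure and which has no zeroes or poles on the relevant set; this uses that on a sufficiently fine chart the transition Jacobian is an analytic unit, together with the fact that $X$ is smooth so $\Omega_X^{\mathrm{top}}$ is a line bundle admitting enough rational sections, and then one shrinks to make absolute values locally constant; and (b) the gluing step, which relies on the existence of locally constant partitions of unity subordinate to finite covers of compact opens in $p$-adic manifolds — a standard fact, but the one place where the non-Archimedean hypothesis is genuinely used and without which the statement would fail. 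Everything else is bookkeeping with the definitions of smooth and Schwartz measures and with the change-of-variables formula defining $|\omega|_F$.
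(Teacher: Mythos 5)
The paper does not actually prove this proposition: it is stated with the citation \cite[Proposition~3.3]{AA16} and is used as a recalled background fact, so there is no ``paper's own proof'' to compare against. Your sketch is therefore judged on its own.

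Your overall strategy --- reduce to analytic charts where the measure is Haar, use that $X$ smooth implies $\Omega_X^{\mathrm{top}}$ is a line bundle trivializable on a Zariski-open neighborhood of every point (so algebraic nowhere-vanishing top forms exist locally), shrink charts until absolute values of regular units are locally constant, and glue with locally constant partitions of unity from disjoint compact-open refinements --- is the right one, and you correctly flag the two genuinely nontrivial inputs (producing a rational $\omega$ matching Haar up to a locally constant factor, and the non-Archimedean partition-of-unity device). Parts (1) ``if'' and ``only if'' and part (2) ``only if'' are fine in outline, modulo noting that ``Haar measure'' in the smoothness definition must be read as ``a scalar multiple of Haar'' for the linear-combination statement in (1) to parse.

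There is, however, a real slip in part (2) ``if''. There you write that if $\omega$ has no poles on $U$ then $|\omega|_F$ is a smooth measure on $U$, because ``after refining the chart $|g|_F$ becomes locally constant'' where $\phi^*\omega = g\,dx_1\wedge\dots\wedge dx_d$. But the hypothesis in statement~(2) allows $\omega$ to have zeros (only poles are excluded), and at a zero of $g$ the function $|g|_F$ is not locally constant in any neighborhood --- it tends to $0$ --- so $|\omega|_F$ fails to be a smooth measure wherever $\omega$ vanishes. The conclusion that $\mu$ has continuous density near $x$ is nevertheless true, but for a different reason: pick a genuinely nowhere-vanishing $\omega_0 \in \Omega_X^{\mathrm{top}}(X)$ on a Zariski neighborhood of $x$, write $\omega = h\,\omega_0$ with $h$ regular, and observe that $\mu = \bigl(f\cdot|h|_F\bigr)\,|\omega_0|_F$ with $f\cdot|h|_F$ continuous (continuity surviving the zeros of $h$) and $|\omega_0|_F$ smooth. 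In other words, the zeros of $\omega$ must be absorbed into the continuous density, not into the smooth reference measure. You in fact state the correct requirement (``no zeroes or poles'') in your closing discussion of technical point~(a), so this is an internal inconsistency rather than a missing idea; but as written the step is wrong and would need this repair.
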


We can now state an analytic criterion which is equivalent to the
(FRS) property. It is often much easier to use this criterion (specifically
the third condition) than to use Definition \ref{def:FRS} directly. 
\begin{thm}[{\cite[Theorem 3.4]{AA16}}]
\label{Analytic condition for (FRS)} Let
$\varphi:X\rightarrow Y$ be a map between smooth algebraic varieties
defined over a finitely generated field $K$ of characteristic $0$,
and let $x\in X(K)$. Then the following conditions are equivalent: 
\begin{enumerate}
\item $\varphi$ is (FRS) at $x$. 
\item There exists a Zariski open neighborhood $x\in U\subseteq X$ such
that, for any non-Archimedean local field $F\supseteq K$ and any
Schwartz measure $\mu$ on $U(F)$, the measure $(\varphi|_{U(F)})_{*}(\mu)$
has continuous density. 
\item For any finite extension $K'/K$, there exists a non-Archimedean local
field $F\supseteq K'$ and a non-negative Schwartz measure $\mu$
on $X(F)$ that does not vanish at $x$ such that $(\varphi|_{X(F)})_{*}(\mu)$
has continuous density. 
\end{enumerate}
\end{thm}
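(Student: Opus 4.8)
The plan is to reduce everything to the local analytic picture by choosing a convenient local model and then running the same Fourier/Igusa-type argument that identifies rational singularities of a hypersurface (or complete intersection) with $L^1$- or continuity-properties of the associated $p$-adic orbital density. The implication $(2)\Rightarrow(3)$ is formal: if the conclusion of $(2)$ holds on a Zariski open $U\ni x$, then for any finite extension $K'/K$ pick any non-Archimedean local field $F\supseteq K'$ and take $\omega\in\Omega_X^{\mathrm{top}}(X)$ with no zero or pole near $x$ together with a non-negative bump function $f$ supported in $U(F)$ that is positive at $x$; by Proposition \ref{Prop 2.13}(1) the measure $\mu=f|\omega|_F$ is a non-negative Schwartz measure not vanishing at $x$, and $(\varphi|_{U(F)})_*\mu$ has continuous density by hypothesis. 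So the real content is $(3)\Rightarrow(1)$ and $(1)\Rightarrow(2)$.

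For $(3)\Rightarrow(1)$: since $(\mathrm{FRS})$ at $x$ is, by Theorem \ref{Lemma Elkik} and Definition \ref{def:FRS}, a Zariski-open condition that can be tested after base change, one may localize near $x$ and near $y:=\varphi(x)$, choose étale coordinates identifying $Y$ with $\mathbb A^{\dim Y}$, and (using that $X$ is smooth of dimension $d$) cut $\varphi$ down to a morphism whose geometric fiber $X_y$ is defined inside a smooth ambient variety by $\dim Y$ equations. The density of $(\varphi|_{X(F)})_*(f|\omega|_F)$ at the point $y$ is, up to a smooth nonvanishing factor, the fiber integral $\int_{X_y(F)} f\,|\omega/\varphi^*dy|_F$, i.e. the canonical Weil measure of the fiber weighted by $f$; this is a standard computation with the coarea-type formula for $|\omega|_F$. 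The key input is then the $p$-adic characterization of rational singularities of a local complete intersection: a reduced local complete intersection $Z$ over a number-field-like base has rational singularities iff the Weil canonical measure $|\omega_Z|_F$ is locally finite (has $L^1_{\mathrm{loc}}$ density against a smooth measure) — more precisely one uses that continuity of the pushforward density at $y$ forces the Gelfand–Leray form on $X_y$ to be integrable near $x$, and by a resolution of singularities $p:\widetilde{X_y}\to X_y$ (Hironaka, Definition \ref{def:ratlsings}(1)) plus a change of variables the relevant integral converges iff the discrepancies of $p$ are all $>-1$, which is exactly the rational-singularities (indeed log-terminal, but for l.c.i. these coincide) condition. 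Flatness at $x$ comes for free from $X$ smooth and the fiber having the expected dimension $d-\dim Y$, which follows from reducedness of the nearby fibers forced by the continuity hypothesis. One subtlety: the hypothesis only gives a single $F$ over a single $K'$; but rational singularities can be detected over $\overline K$, and a standard spreading-out / Lefschetz-principle argument (the singularity type of $X_y$ is the same over any sufficiently large field, and $p$-adic integrability of the Gelfand–Leray form over one local field $F\supseteq K'$ is equivalent to the numerical discrepancy condition, which is field-independent) lets one conclude $(\mathrm{FRS})$ geometrically at $x$.

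For $(1)\Rightarrow(2)$: if $\varphi$ is $(\mathrm{FRS})$ at $x$, then by Theorem \ref{Lemma Elkik}(2) there is a Zariski-open $U\ni x$ on which $\varphi$ is flat with reduced geometric fibers having rational singularities at every point of $U$; shrinking, we may assume $\varphi|_U$ is $(\mathrm{FRS})$ everywhere on $U$. Now take any Schwartz measure $\mu$ on $U(F)$; by Proposition \ref{Prop 2.13}(1) it is a finite linear combination of $f_i|\omega_i|_F$ with $\omega_i$ nonvanishing on $\supp f_i$, so by linearity it suffices to treat $\mu=f|\omega|_F$. Locally on $U(F)$ factor $|\omega|_F$ through the fibration $\varphi$: the pushforward density at a point $y'$ near $y$ is $\int_{\varphi^{-1}(y')(F)} f\cdot \eta_{y'}$ where $\eta_{y'}$ is the Gelfand–Leray / canonical measure on the fiber induced by $\omega$ and a local trivialization of $\Omega_Y^{\mathrm{top}}$. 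Reducedness plus rational singularities of each fiber give, via resolution of singularities and the discrepancy computation above, that this integral is finite and — crucially — depends continuously on $y'$; the continuity is obtained by dominated convergence after pulling back to a simultaneous resolution of the family of fibers (possible on a neighborhood since the relevant log-resolution can be spread out over the base), using that the discrepancies stay $>-1$ uniformly. This yields that $(\varphi|_{U(F)})_*\mu$ has continuous density in the sense of Proposition \ref{Prop 2.13}(2).

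The main obstacle is the fiberwise/uniform control in $(1)\Rightarrow(2)$ and the integrability-vs-rational-singularities dictionary in $(3)\Rightarrow(1)$: one needs a version of the Gelfand–Leray coarea formula for the measures $|\omega|_F$ that behaves well in families, together with a resolution of singularities of the total space of the fibration that is adapted simultaneously to all fibers over a $p$-adic neighborhood, so that dominated convergence and the numerical discrepancy criterion can be applied uniformly. Making that "simultaneous" resolution and the attendant uniform bound precise — rather than the formal implications $(2)\Rightarrow(3)$ — is where all the work lies; this is essentially the content of \cite[Theorem 3.4]{AA16}, whose proof we follow.
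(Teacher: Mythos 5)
The paper does not prove this theorem; it is quoted directly from \cite[Theorem 3.4]{AA16} with no argument, so there is no internal proof to compare your sketch against. Your write-up reconstructs the standard Gelfand--Leray and resolution-of-singularities framework one expects behind that result: represent the pushforward density as a fiber integral of a canonical measure, detect rational singularities from convergence of that integral via a log resolution and the discrepancy condition (using that for l.c.i.\ Gorenstein fibers log terminal, canonical, and rational coincide), and use Elkik's openness theorem (Theorem \ref{Lemma Elkik} here) to pass from a point to a neighborhood. The formal implication $(2)\Rightarrow(3)$ is correct and complete as you state it.

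The two substantive implications, however, are outlined rather than argued, and the hard parts are exactly the ones you flag at the end. In $(3)\Rightarrow(1)$ the bootstrap from continuity of the density to the flat l.c.i.\ Gorenstein structure of the fiber (flatness, expected dimension, generic reducedness) is asserted rather than proved, and the order matters: the discrepancy-to-rational-singularities dictionary presupposes that Gorenstein structure, so you must first show separately that an over-dimensional or generically non-reduced fiber at $y$ would make the pushforward density infinite, not merely discontinuous, before the discrepancy computation can even be set up. In $(1)\Rightarrow(2)$, you invoke a ``simultaneous log resolution of the family of fibers'' over a $p$-adic neighborhood together with dominated convergence; simultaneous fiberwise resolutions are not available in any generality (restricting a resolution of the total space to nearby fibers does not resolve them), and the actual mechanism must go through a resolution adapted to the map $\varphi$ itself together with Elkik's openness and an explicit uniform estimate along the exceptional divisors. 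Since you already concede that these steps are ``where all the work lies,'' the proposal is an honest and reasonably accurate outline of the strategy behind \cite[Theorem 3.4]{AA16}, but it should not be mistaken for a proof.
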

\section{Reduction of Theorem \ref{Main result} to an analytic problem}

\subsection{Convolution of morphisms preserves smoothness properties}

We would like to show that the convolution operation preserves certain
properties of morphisms, and in particular that it preserves the (FRS)
property (see Definition \ref{def:(FRS)}). We use the following proposition: 
\begin{prop}
\label{prop:convolution preserves} Let $X$ and $Y$ be varieties
over a field $K$, let $G$ an algebraic $K$-group and let $S$ be
a property of morphisms that is preserved under base change and compositions.
If $\psi:Y\to G$ is arbitrary, $\varphi:X\to G$ is a morphism that
satisfies property $S$, and the natural map $i_{K}:Y\to\mathrm{Spec}(K)$
has property $S$, then $\varphi*\psi$ has property $S$. 
\end{prop}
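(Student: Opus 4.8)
The plan is to realize the convolution map $\varphi*\psi$ as a composition of morphisms, each of which is visibly either a base change of $\varphi$ or a base change of $i_K$, and then invoke the closure of $S$ under base change and composition. The key observation is that the multiplication map $m:G\times G\to G$ is what couples the two factors, so I want to factor $\varphi*\psi:X\times Y\to G$ through $G\times Y$ or $X\times G$ in a way that isolates $\varphi$ acting on the $X$-coordinate and $i_K$ acting on the $Y$-coordinate.

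Concretely, first I would write $\varphi*\psi$ as the composite
\[
X\times Y \xrightarrow{\ \varphi\times\mathrm{id}_Y\ } G\times Y \xrightarrow{\ \alpha\ } G,
\]
where $\alpha(g,y)=g\cdot\psi(y)$. The first arrow $\varphi\times\mathrm{id}_Y$ is the base change of $\varphi:X\to G$ along the projection $G\times Y\to G$ (equivalently, $\varphi\times_{\mathrm{Spec}(K)}Y$), so it has property $S$ because $S$ is stable under base change. For the second arrow, observe that $\alpha$ factors as $G\times Y \xrightarrow{\mathrm{id}_G\times\psi} G\times G \xrightarrow{m} G$; but a cleaner route is to note that $\alpha$ is itself obtained from the structure map $i_K:Y\to\mathrm{Spec}(K)$ by base change along a suitable morphism $G\to\mathrm{Spec}(K)$, after identifying $G\times Y$ via the automorphism of $G\times Y$ sending $(g,y)\mapsto(g\cdot\psi(y)^{-1},y)$ — wait, that requires $\psi$ to land in a group and be invertible pointwise, which it is since $G$ is a group. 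Under this automorphism $\alpha$ becomes the projection $\mathrm{pr}_G:G\times Y\to G$, which is exactly the base change of $i_K:Y\to\mathrm{Spec}(K)$ along $G\to\mathrm{Spec}(K)$; hence $\mathrm{pr}_G$ has property $S$, and therefore so does $\alpha$ (composing with an isomorphism, which certainly has any reasonable property $S$, or arguing that the automorphism is itself a base change of the identity). Composing the two arrows gives that $\varphi*\psi$ has property $S$.

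I expect the main subtlety — not really an obstacle, but the point requiring care — to be the bookkeeping that shows the shear automorphism $(g,y)\mapsto(g\cdot\psi(y),y)$ of $G\times Y$ is well-defined as a $K$-morphism (it is, using the group law of $G$ and $\psi$), that it is an isomorphism (inverse given by $(g,y)\mapsto(g\cdot\psi(y)^{-1},y)$), and that under it the map $\alpha$ really does become the projection. Once this identification is in place, everything else is formal: $\varphi\times\mathrm{id}_Y$ is a pullback square over $G\to\mathrm{Spec}(K)$ with $\varphi$, the projection $G\times Y\to G$ is a pullback square over $G\to\mathrm{Spec}(K)$ with $i_K$, isomorphisms trivially satisfy $S$ (or can be absorbed by noting $S$ contains all isomorphisms since, e.g., the identity is a base change of itself and isomorphisms are base changes of the identity along themselves), and $S$ is closed under composition by hypothesis. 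I would also remark that the same factorization, read on the other side, shows symmetry is not needed: we only use that $\varphi$ has $S$ and that $Y\to\mathrm{Spec}(K)$ has $S$, exactly as stated.
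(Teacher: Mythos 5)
Your proof is correct and is essentially the paper's argument with the shear placed on the other leg: the paper writes $\varphi*\psi=\pi_G\circ\alpha$ where $\alpha(x,y)=(\varphi(x)\cdot\psi(y),y)$ is exhibited as a base change of $\varphi$ along $\beta(g,y)=g\cdot\psi(y)^{-1}$ and $\pi_G:G\times Y\to G$ is a base change of $i_K$, whereas you write $\varphi*\psi$ as $(\text{sheared projection})\circ(\varphi\times\mathrm{id}_Y)$, so the twist is carried by the second arrow instead of the first. One caveat: your parenthetical justification that $S$ contains all isomorphisms (``the identity is a base change of itself, and isomorphisms are base changes of the identity'') does not follow from the hypotheses --- closure under base change and composition alone does not place any isomorphism in $S$ unless some morphism already known to be in $S$ produces it. But you do not need that claim: as you yourself note, the map $(g,y)\mapsto g\cdot\psi(y)$ together with $\mathrm{pr}_Y$ realizes $G\times Y$ as a fibered product of $Y\to\mathrm{Spec}(K)\leftarrow G$ (the universal property is verified using $\psi(y)^{-1}$), so this sheared projection is literally a base change of $i_K$ and hence lies in $S$; alternatively, regroup the shear with your first arrow, i.e.\ write $\varphi*\psi=\pi_G\circ\bigl(\sigma\circ(\varphi\times\mathrm{id}_Y)\bigr)$, and observe that $\sigma\circ(\varphi\times\mathrm{id}_Y)$, which sends $(x,y)$ to $(\varphi(x)\cdot\psi(y),y)$, is exactly the morphism the paper exhibits as a base change of $\varphi$, so the isomorphism never needs to carry property $S$ by itself.
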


\begin{proof}
Since $i_{K}$ satisfies $S$ and $S$ is preserved under base change,
the projection to the first coordinate $\pi_{G}:G\times Y\to G$ satisfies
$S$. Now consider the following fibered diagram: 
\[
\begin{array}{ccc}
X\times Y & \overset{\pi_{X}}{\longrightarrow} & X\\
\downarrow\alpha & \, & \downarrow\varphi\\
G\times Y & \overset{\beta}{\longrightarrow} & G
\end{array},
\]
where $\alpha(x,y)=(\varphi\ast\psi(x,y),y)$ and $\beta(g,y)=g\cdot\psi(y)^{-1}$.
This implies that $\alpha$ satisfies $S$, and since $\varphi\ast\psi=\pi_{G}\circ\alpha$
we are done. 
\end{proof}
\begin{prop}[The (FRS) property is preserved under compositions and base change]
\label{FRS is a good property} Let $X,Y$ and $Z$ be smooth $K$-varieties,
and let $\varphi:X\rightarrow Y$ be an (FRS) morphism. 
\begin{enumerate}
\item If $\psi:Y\to Z$ is (FRS), then $\psi\circ\varphi$ is (FRS). 
\item Consider the following base change diagram, 
\[
\begin{array}{ccc}
X\times_{Y}Z & \overset{\widetilde{\varphi}}{\longrightarrow} & Z\\
\downarrow & \, & \downarrow\psi\\
X & \overset{\varphi}{\longrightarrow} & Y
\end{array}
\]
where $\psi:Z\to Y$ is arbitrary. Then $\widetilde{\varphi}$ is
(FRS). 
\end{enumerate}
\end{prop}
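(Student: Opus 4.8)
The plan is to deduce both parts from the analytic criterion for the (FRS) property (Theorem \ref{Analytic condition for (FRS)}), since the geometric statements about flatness and rational singularities of fibers can be delicate to check directly, whereas the criterion reduces everything to a statement about pushforwards of Schwartz measures having continuous density. Throughout we may work over a non-Archimedean local field $F \supseteq K$, and we record first that flatness is preserved under both compositions and base change by standard commutative algebra, so it remains only to handle the reducedness and rational-singularity conditions on geometric fibers, which is exactly what the continuous-density criterion encodes.

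For part (1), let $x \in X(K)$ and set $y = \varphi(x)$, $z = \psi(y)$. By Theorem \ref{Analytic condition for (FRS)} applied to $\varphi$, there is a Zariski open $x \in U \subseteq X$ such that $(\varphi|_{U(F)})_*(\mu)$ has continuous density for every Schwartz measure $\mu$ on $U(F)$; similarly, applying the criterion to $\psi$ at $y$, there is a Zariski open $y \in W \subseteq Y$ with the analogous property. Shrinking $U$ so that $\varphi(U) \subseteq W$ (possible since $\varphi$ is continuous in the Zariski topology and $\varphi(x) = y \in W$), I would take an arbitrary Schwartz measure $\mu$ on $U(F)$, observe that $\nu := (\varphi|_{U(F)})_*(\mu)$ has continuous density and compact support, hence is a (finite, locally constant) combination of Schwartz measures on $W(F)$ up to the continuous-density subtlety, and then push forward by $\psi$. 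The one technical point is that $\nu$ need not itself be Schwartz (it has continuous, not locally constant, density); to get around this I would approximate or, more cleanly, invoke the fact that pushing forward a measure with continuous compactly supported density by a morphism that takes Schwartz measures to continuous-density measures again yields a continuous-density measure — this is a straightforward limiting/partition-of-unity argument using Proposition \ref{Prop 2.13}. Then $(\psi \circ \varphi|_{U(F)})_*(\mu) = (\psi|_{W(F)})_*(\nu)$ has continuous density, and since $\mu$ was arbitrary, condition (2) of Theorem \ref{Analytic condition for (FRS)} holds for $\psi \circ \varphi$ at $x$; as $x$ was arbitrary, $\psi \circ \varphi$ is (FRS).

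For part (2), I would again use the analytic criterion together with the compatibility of pushforward with base change. Fix a point $p = (x, z) \in (X \times_Y Z)(K)$, so $\varphi(x) = \psi(z) =: y$. Near $z$, choose an analytic chart identifying a neighborhood of $z$ in $Z(F)$ with $\mathcal{O}_F^{\dim Z}$; the fiber product $X \times_Y Z$ is smooth (being the base change of the flat — indeed (FRS), in particular flat — morphism $\varphi$ along $\psi$ with $Z$ smooth), so it too is an $F$-analytic manifold near $p$. A Schwartz measure $\mu$ on a neighborhood of $p$ in $(X \times_Y Z)(F)$ can be disintegrated along the projection to $Z(F)$, and the key identity is that $(\widetilde{\varphi})_*(\mu)$, computed fiberwise, is governed on each fiber over $z' \in Z(F)$ by $(\varphi|_{\varphi^{-1}(\psi(z'))})_*$ of the corresponding slice of $\mu$; the (FRS) property of $\varphi$ makes each such fiberwise pushforward have continuous density, and a Fubini-type argument then shows $(\widetilde{\varphi})_*(\mu)$ has continuous density on $Z(F)$. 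Hence $\widetilde{\varphi}$ satisfies condition (3) of Theorem \ref{Analytic condition for (FRS)} and is (FRS). I expect the main obstacle to be the bookkeeping in this last Fubini/disintegration step — making precise that "continuous density on every fiber, in a uniform analytic family" upgrades to "continuous density on the total space" — which is where one must be careful that the charts and the function $f$ from Proposition \ref{Prop 2.13} can be chosen to vary continuously; alternatively, one can sidestep this entirely by checking flatness directly and arguing that reducedness and rational singularities of the geometric fibers of $\widetilde{\varphi}$ follow because the fiber of $\widetilde{\varphi}$ over a geometric point $\bar z \in Z$ is literally isomorphic to the fiber of $\varphi$ over $\psi(\bar z) \in Y$, which is reduced with rational singularities by hypothesis.
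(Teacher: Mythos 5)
Your two parts take different routes from each other. For part (2), the alternative you give at the end---checking flatness directly and observing that the geometric fiber of $\widetilde\varphi$ over $\bar z$ is a base change of the geometric fiber of $\varphi$ over $\psi(\bar z)$, and that reducedness, rational singularities, and flatness are all preserved under base change in characteristic $0$---is exactly the paper's argument and is correct. The disintegration/Fubini sketch preceding it is unnecessary and would require the same kind of approximation argument flagged below for part (1).

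For part (1), your analytic route is genuinely different from the paper's, and it has a real gap. After pushing forward a Schwartz measure $\mu$ by $\varphi$ you obtain a measure $\nu$ with continuous, compactly supported density, but $\nu$ is \emph{not} Schwartz (Schwartz means locally constant density), so condition~(2) of Theorem~\ref{Analytic condition for (FRS)} applied to $\psi$ does not directly give that $\psi_*\nu$ has continuous density. Your parenthetical ``hence is a finite, locally constant combination of Schwartz measures'' is false---a continuous function is not a finite linear combination of locally constant ones. To salvage the argument you would need to uniformly approximate $\nu$ by Schwartz measures $\nu_n$, push forward, and then upgrade to uniform convergence of the densities of $\psi_*\nu_n$; this requires the quantitative fact that pushing forward a compactly supported, sup-norm-small measure by an (FRS) morphism yields a sup-norm-small density, which is not what the criterion states and which you have not supplied. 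The paper avoids all of this by arguing algebraically: $\psi\circ\varphi$ is flat; each fiber $X_z=(\psi\circ\varphi)^{-1}(z)$ is a local complete intersection, hence Cohen--Macaulay, so reducedness reduces via the $(S_1+R_0)$-criterion to density of the smooth locus in $X_z$, which is established by chaining the smooth loci of $\varphi$ and $\psi$; and rational singularities of $X_z$ follow from Elkik's theorem (Theorem~\ref{Lemma Elkik}) applied to the flat restriction $\varphi|_{X_z}\colon X_z\to Y_z$. You may want to follow the paper's algebraic route, or else fill in the measure-approximation step carefully.
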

\begin{proof}~
\begin{enumerate} 
\item Since flatness is preserved by compositions, we have that $\psi\circ\varphi$
is flat. As a consequence, for any $z\in Z$ the fiber $X_{z}:=(\psi\circ\varphi)^{-1}(z)$
is a local complete intersection scheme, and in particular Cohen-Macauley.
By the $(S_{1}+R_{0})$-criterion (see e.g. \cite[Lemma 10.151.3]{stacks-project}),
in order to show that $X_{z}$ is reduced it is enough to show that
$X_{z}$ is generically reduced, or equivalently, 
that its non-smooth locus is of codimension $\geq1$. By \cite[III.10.2]{Har77},
since $\psi\circ\varphi$ is flat, the smooth locus of $X_{z}$ is
equal to the set $X_{z}^{\mathrm{sm},\psi\circ\varphi}:=\{x\in X_{z}:\psi\circ\varphi\text{ is smooth at }x\}$.
Thus, we would like to show that $X_{z}^{\mathrm{sm},\psi\circ\varphi}$
is dense in $X_{z}$. As above, define $X_{y},Y_{z}$ and $X_{y}^{\mathrm{sm},\varphi},Y_{z}^{\mathrm{sm},\psi}$
for any $y\in Y$ and $z\in Z$. Since $\psi$ and $\varphi$ are
(FRS), we have that $Y_{z}^{\mathrm{sm},\psi}$ is dense in $Y_{z}$
and $X_{y}^{\mathrm{sm},\varphi}$ is dense in $X_{y}$ for any $y\in Y$.
Since smoothness of morphisms is preserved under composition, we have
$X_{z}^{\mathrm{sm},\psi\circ\varphi}\supseteq\bigcup_{y\in Y_{z}^{\mathrm{sm},\psi}}X_{y}^{\mathrm{sm},\varphi}$.
Now let $U$ be an open set in $X_{z}$. By flatness, $\varphi^{-1}(Y_{z}^{\mathrm{sm},\psi})$
is open and dense in $X_{z}$, thus there exists $y\in Y_{z}^{\mathrm{sm},\psi}$
such that $U\cap X_{y}$ is a non-empty open subset of $X_{y}$, and
thus $U\cap X_{y}^{\mathrm{sm},\varphi}$ is non empty. This shows
that $X_{z}^{\mathrm{sm},\psi\circ\varphi}$ is dense in $X_{z}$,
and hence $X_{z}$ is reduced. We therefore showed that $\psi\circ\varphi$
is flat, with reduced fibers.

Now for $z\in Z$, let $x\in X_{z}$ and consider the map $\varphi|_{X_{z}}:X_{z}\rightarrow Y_{z}$.
By our assumption, $y:=\varphi(x)$ is a rational singularity of $Y_{z}$
and $x$ is a rational singularity of $X_{y}$. Since $\varphi$ is
flat and $\varphi|_{X_{z}}$ is a base change of $\varphi$, it follows
that $\varphi|_{X_{z}}$ is flat as well. By Theorem \ref{Lemma Elkik},
$x$ is a rational singularity of $X_{z}$. Hence, the fibers of $\psi\circ\varphi$
have rational singularities and we are done. 
\item First, notice that the fibers of $\widetilde{\varphi}$ are the base
change of the fibers of $\varphi$. Indeed, for every $y\in Y$ and
$z\in Z$ such that $\psi(z)=y$, we have: 
\begin{align*}
\spec(K(\curly{z}))\times_{Z}\left(Z\times_{Y}X\right) & \simeq\spec(K(\curly{z}))\times_{Y}X\\
 & \simeq\spec(K(\curly{z}))\times_{\mathrm{Spec}(K(\curly{y}))}\left(\mathrm{Spec}(K(\curly{y}))\times_{Y}X\right).
\end{align*}
Since reduceness, having rational singularities {and flatness are preserved under
base change (recall that $\mathrm{char}(K)=0$), we deduce that the
fibers of $\widetilde{\varphi}$ are reduced and have rational singularities and that $\widetilde{\varphi}$ is flat.
Therefore that $\widetilde{\varphi}$ is (FRS).}
\end{enumerate}
\end{proof}

By Propositions \ref{prop:convolution preserves} and \ref{FRS is a good property}
above, it is immediate that the convolution operation preserves the
(FRS) property. The same holds for dominance, flatness and smoothness. 
\begin{cor}
\label{cor:convolution presrves (FRS)} Let $G$ be an algebraic $K$-group,
and suppose that $\varphi:X\rightarrow G$ is (FRS) (resp. dominant/flat/smooth)
and let $\psi:Y\rightarrow G$ be any morphism. Then the morphisms
$\varphi\ast\psi:X\times Y\rightarrow G$ and $\psi*\varphi:X\times Y\rightarrow G$
are (FRS) (resp. dominant/flat/smooth). 
\end{cor}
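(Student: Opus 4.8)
The plan is to derive Corollary \ref{cor:convolution presrves (FRS)} as a purely formal consequence of Propositions \ref{prop:convolution preserves} and \ref{FRS is a good property}. The first step is to observe that each of the four properties in play forms a class $S$ of morphisms that is stable under base change and composition: for flatness and smoothness this is classical, for dominance it is elementary, and for the (FRS) property it is exactly the content of Proposition \ref{FRS is a good property}. Note that here $G$, being a $K$-algebraic group with $\charac K=0$, is automatically smooth by Cartier's theorem, and $X$ is smooth because $\varphi$ is (FRS); for the (FRS) assertion we also regard $Y$ as smooth, which is forced upon us, since $X\times Y$ must be smooth for the conclusion to be meaningful.

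The second step is to check the remaining hypothesis of Proposition \ref{prop:convolution preserves}, namely that the structure morphism $i_K\colon Y\to\spec(K)$ has property $S$. It is flat because $Y$ is a $K$-variety, it is smooth because $Y$ is moreover smooth, it is dominant because $Y$ is non-empty, and it is (FRS) because its geometric fibers are the base changes $Y_{K'}$, which are smooth and hence reduced with rational singularities. Applying Proposition \ref{prop:convolution preserves} with $\varphi$ in the role of the morphism satisfying $S$ then gives that $\varphi*\psi$ has property $S$ for each $S\in\{\text{dominant},\ \text{flat},\ \text{smooth},\ (\text{FRS})\}$.

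For the morphism $\psi*\varphi\colon X\times Y\to G$, which for non-abelian $G$ is genuinely distinct from $\varphi*\psi$ and so is not literally covered by Proposition \ref{prop:convolution preserves}, I would repeat the argument of that proposition with the group multiplication performed on the other side. Concretely, one forms the fibered square whose left vertical arrow $\alpha\colon X\times Y\to G\times Y$ sends $(x,y)$ to $(\psi*\varphi(x,y),y)$, whose bottom arrow $\beta\colon G\times Y\to G$ sends $(g,y)$ to $\psi(y)^{-1}\cdot g$, whose top arrow is the projection $\pi_X\colon X\times Y\to X$ and whose right arrow is $\varphi$. Since $\beta\circ\alpha=\varphi\circ\pi_X$ and the square is cartesian, $\alpha$ is a base change of $\varphi$ and hence has property $S$; the projection $\pi_G\colon G\times Y\to G$ has property $S$ because $i_K\colon Y\to\spec(K)$ does; and $\psi*\varphi=\pi_G\circ\alpha$ is therefore a composition of morphisms with property $S$.

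There is essentially no obstacle here; the statement is bookkeeping once Propositions \ref{prop:convolution preserves} and \ref{FRS is a good property} are in hand. The only points deserving an explicit word are the verification that the structure map of a smooth $K$-variety is (FRS) — which amounts to the fact that a smooth variety is reduced and has rational singularities — and the remark that the case $\psi*\varphi$ requires the mirrored cartesian square displayed above rather than a direct appeal to Proposition \ref{prop:convolution preserves}.
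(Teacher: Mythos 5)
Your proof is correct and, given that the paper's own proof of this corollary is the single sentence invoking Propositions \ref{prop:convolution preserves} and \ref{FRS is a good property}, it is essentially a careful unpacking of what the authors intend. The observation that $\psi*\varphi$ is not literally covered by Proposition \ref{prop:convolution preserves} when $G$ is non-abelian, and that one must either redo the fibered-square argument with multiplication on the other side (as you do) or precompose with the antiautomorphism $g\mapsto g^{-1}$ together with a swap of factors, is a genuine detail the paper leaves implicit. Likewise your remark that for the (FRS) case one must tacitly take $Y$ smooth is an appropriate reading of the statement.

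One inaccuracy is worth flagging. You assert that stability of dominance under base change ``is elementary,'' but dominance is \emph{not} preserved under arbitrary base change: the open immersion $\mathbb{G}_m\hookrightarrow\mathbb{A}^1$ is dominant, yet its base change along the closed point $\{0\}\hookrightarrow\mathbb{A}^1$ is $\varnothing\to\spec K$, which is not dominant. What rescues the argument (both yours and the paper's) is that the base change actually performed in Proposition \ref{prop:convolution preserves} is along $\beta:G\times Y\to G$, $\beta(g,y)=g\cdot\psi(y)^{-1}$, which factors as an automorphism of $G\times Y$ followed by the projection to $G$ and is therefore faithfully flat; dominance \emph{is} stable under flat base change, since flat morphisms satisfy going-down and hence carry generic points to generic points. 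Alternatively, dominance of $\varphi*\psi$ can be seen directly: its image contains the translate $\mathrm{Im}(\varphi)\cdot\psi(y_0)$ for any point $y_0$ of $Y$. So the conclusion is correct, but the one-word justification for the dominance case is too quick; the paper, to be fair, is equally terse on this point.
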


\subsection{Reduction of Theorem \ref{Main result} to the case of strongly dominant
morphisms}

We want to show that under reasonable assumptions, high enough convolution
power of a given morphism yields a morphism whose restriction to every
absolutely irreducible component is dominant. This will imply it is
enough to prove Theorem \ref{Main result} for such morphisms. 
\begin{defn}
\label{def:strongly generating} Let $G$ be an algebraic group,
let $\varphi:X\rightarrow G$ be a morphism of $K$-varieties and
let $\curly{X_{i}}_{i=1}^{l}$ be the absolutely irreducible components
of $X$. 
\begin{enumerate}
\item We say that $\varphi$ is \textit{generating} if $\varphi(X)\not\subseteq gH$
for any algebraic subgroup $H\leq G$ and $g\in G(\overline{K})$. 
\item We say that $\varphi$ is \textit{strongly generating} if $\varphi_{|X_{i}}$
is generating for all $1\leq i\leq l$. 
\item We say that $\varphi$ is \textit{strongly dominant} if $\varphi_{|X_{i}}$
is dominant 
for all $1\leq i\leq l$. 
\end{enumerate}
\end{defn}

\begin{prop}
\label{prop:morphismdom} Let $X$ be a smooth $K$-variety, $G$
be a commutative algebraic $K$-group and $\varphi:X\to G$ be a strongly
generating morphism. Then there exists $n\in\nats$ such that $\varphi^{n}$
is strongly dominant. 
\end{prop}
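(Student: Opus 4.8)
The plan is to reduce the statement to a single absolutely irreducible component and then argue by dimension. First I would pass to an absolutely irreducible component $X_i \subseteq X$, so that $\varphi_i := \varphi_{|X_i}: X_i \to G$ is a generating morphism. Let $Z$ denote the Zariski closure of $\varphi_i(X_i)$ in $G$; this is an irreducible closed subvariety of $G$. Since $\varphi_i$ is generating, $Z$ is not contained in any translate $gH$ of a proper algebraic subgroup $H \le G$. The key algebraic fact I would invoke is that for an irreducible subvariety $Z$ of a commutative algebraic group $G$, the sequence of closures $\overline{Z}, \overline{Z + Z}, \overline{Z+Z+Z}, \dots$ is non-decreasing in dimension and stabilizes; once it stabilizes at some step $m$, the resulting set $\overline{Z^{+m}}$ is a translate of an algebraic subgroup of $G$ (this is a standard fact, essentially because a closed subvariety stable under addition by a fixed irreducible subvariety containing a translate of $0$-like structure must be a coset; one reduces to the case $0 \in Z$ by translating, and then $\bigcup_n \overline{Z^{+n}}$ is closed and is a subgroup). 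Since $Z$ generates, the only translate of an algebraic subgroup it could stabilize to is $G$ itself, so $\overline{Z^{+m}} = G$, i.e. $\varphi_i^m$ is dominant for some $m = m_i$.

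Next I would make this uniform over the finitely many components. Each component $X_i$ yields an integer $m_i$ such that $\varphi_i^{m_i}$ is dominant; I would take $n_0 = \max_i m_i$. For $n \ge n_0$, I need that $\varphi_i^n$ is still dominant: this follows because dominance of $\varphi_i^{m_i}$ together with generation gives that the image closure of $\varphi_i^n$ contains $\overline{Z^{+m_i}} + \overline{Z^{+(n-m_i)}} \supseteq G + (\text{nonempty}) = G$ — more carefully, once $\overline{\varphi_i(X_i)^{+m_i}} = G$, adding further copies of $\overline{\varphi_i(X_i)}$ keeps the closure equal to $G$ since $G + W = G$ for any nonempty $W$. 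Hence for all $n > n_0 - 1$ every restriction $\varphi^n_{|X_i^{\times n}}$ is dominant. But the absolutely irreducible components of $X^n = X \times \dots \times X$ are exactly the products $X_{i_1} \times \dots \times X_{i_n}$, and the restriction of $\varphi^n$ to such a product is $\varphi_{i_1} * \dots * \varphi_{i_n}$; its image closure contains $\overline{\varphi_{i_1}(X_{i_1})} + \dots + \overline{\varphi_{i_n}(X_{i_n})}$, and as soon as one of the indices, say $i_1$, is repeated at least $m_{i_1}$ times among $i_1, \dots, i_n$ we get $G$. So I would instead choose $n_0 = l \cdot \max_i m_i$ (or any $n_0$ forcing some index to be repeated enough times by pigeonhole), which guarantees that every product of $n \ge n_0$ components has some index repeated at least $\max_i m_i$ times, hence every absolutely irreducible component of $X^n$ maps dominantly. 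This shows $\varphi^n$ is strongly dominant for all $n > n_0$.

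The main obstacle I expect is the structural lemma that an irreducible subvariety of a commutative algebraic group, closed under addition by itself, stabilizes to a coset of an algebraic subgroup — and in particular that "generating" in the sense of Definition \ref{def:strongly generating} is exactly the obstruction to reaching all of $G$. One has to handle the translation carefully: replace $Z$ by $Z - z_0$ for a point $z_0 \in Z$ so that $0 \in Z - z_0$, observe that then $\overline{(Z-z_0)^{+n}}$ is an increasing chain of irreducible closed subvarieties all containing $0$, so it stabilizes at some $W$ with $W + W = W$ and $0 \in W$, forcing $W$ to be an algebraic subgroup (a closed irreducible subvariety containing the identity and closed under the group operation is a subgroup of $G$), and then $\overline{Z^{+n}}$ stabilizes to $n z_0 + W$-type translates; generation of $\varphi_i$ rules out $W \subsetneq G$. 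The rest — the combinatorics of components of a product and the pigeonhole choice of $N$ — is routine bookkeeping. Since $G$ is only assumed commutative (not connected), a minor additional point is that $G$ itself may be the relevant "translate of a subgroup," which is fine and is precisely what we want.
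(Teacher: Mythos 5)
Your proposal is correct and follows essentially the same route as the paper: reduce to a single absolutely irreducible component, let the closures $\overline{Z^{+n}}$ of the image sumsets stabilize by dimension, show the stable set is a coset of an algebraic subgroup which the generating hypothesis forces to be all of $G$, and then treat the components of $X^{n}$ by pigeonhole, using commutativity to rearrange factors. The only difference is how the stable closed irreducible set closed under addition is shown to be a subgroup: you quote the standard fact that a closed irreducible sub-semigroup containing the identity is a subgroup (which indeed holds, e.g.\ since $h+W\subseteq W$ is closed, irreducible and of the same dimension as $W$), whereas the paper establishes $H\cdot H=H$ via openness of multiplication together with Chevalley's theorem and then applies the Ax--Grothendieck theorem to the translations $L_{h}$ -- both justifications are valid.
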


\begin{proof}
Assume that $X$ is absolutely irreducible. By exchanging $\varphi$
with its translation by $g\in G(\overline{K})$, we may assume that
$e\in\varphi(X)$. Set $U_{n}:=\mathrm{Im}(\varphi^{n})$ and note
that $U_{n}\subseteq U_{n+1}$ for all $n\in\mathbb{N}$. By dimension
considerations, there exists $n_{0}\in\mathbb{N}$ such that $\overline{U_{n}}=\overline{U_{m}}$
for all $m,n>n_{0}$, and in particular $\overline{U_{n}^{2}}=\overline{U_{2n}}=\overline{U_{n}}\subseteq\overline{U_{n}}^{2}$.

Now, since the multiplication map $m:G\times G\to G$ is an open map,
by Chevalley's theorem, $U_{n}^{2}$ contains an open set in $\overline{U_{n}}^{2}$,
and thus by the irreducibility of $\overline{U_{n}}^{2}=m(\overline{\mathrm{Im}\varphi^{n}}\times\overline{\mathrm{Im}\varphi^{n}})$
we get that $\overline{U_{n}^{2}}=\overline{U_{n}}^{2}$. Setting
$H:=\overline{U_{n}}$ for $n$ large enough, we get that $H\cdot H=H$,
so $H$ is a closed algebraic semigroup.

Given $h\in H(\overline{K})$, since $L_{h}:H\to H$ (left translation
by $h$) is an injective map, by the Ax-Grothendieck theorem (\cite{Ax68,Gro66})
we deduce that it is also surjective (over $\overline{K}$). As $e\in\mathrm{Im}L_{h}$
for all $h\in H(\overline{K})$, we get that $H$ is an algebraic
group. Our assumption implies that $H=G$, and hence $\varphi^{n}$
is dominant.

We now move to prove the general case. Let $\curly{X_{i}}_{i=1}^{l}$
be the absolutely irreducible components of $X$. By the above argument,
since $\varphi$ is strongly generating there exist $n_{i}\in\nats$
such that $\varphi_{|X_{i}\times\ldots\times X_{i}}^{n_{i}}$ is dominant
for all $i$. Set $n=\max\limits _{i}\curly{n_{i}}$, we claim that
$\varphi^{nl}$ is strongly dominant. 
Indeed, all the absolutely irreducible components of $X^{nl}$ are
of the form $X_{i_{1}}\times\ldots\times X_{i_{nl}}$, where $1\leq i_{k}\leq l$,
and therefore there exists some $1\leq j\leq l$ such that $X_{j}$
appears at least $n$ times in $X_{i_{1}}\times\ldots\times X_{i_{nl}}$.
Since dominance is preserved by convolution, and $G$ is commutative,
we are done. 
\end{proof}
As a corollary, we get the desired reduction: 
\begin{cor}
\label{Cor-reduction to dominant} It is enough to prove Theorem \ref{Main result}
for $\varphi:X\rightarrow V$ {strongly} dominant. 
\end{cor}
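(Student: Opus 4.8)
The plan is to reduce the general case of Theorem \ref{Main result} to the strongly dominant case by means of Proposition \ref{prop:morphismdom} together with the stability of the (FRS) property under convolution (Corollary \ref{cor:convolution presrves (FRS)}). First I would observe that the hypothesis of Theorem \ref{Main result} --- that each $\varphi(X_i)$ lies in no proper affine subspace of $V$ --- says precisely that $\varphi$ is strongly generating when $V$ is viewed as a vector group. Indeed, over a field of characteristic zero every algebraic subgroup $H$ of $V\cong\mathbb{G}_a^{\dim V}$ is a linear subspace: its identity component $H^0$ is a linear subspace $W$, and $H/W$ is a finite subgroup of the vector group $V/W$, hence trivial in characteristic zero, so $H=W$. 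Therefore the translates $gH$ of proper algebraic subgroups $H\leq V$ by elements $g\in V(\overline K)$ are exactly the proper affine subspaces of $V$, and $\varphi_{|X_i}$ is generating if and only if $\varphi(X_i)$ lies in no proper affine subspace; thus $\varphi$ is strongly generating.

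Since $V$ is a commutative algebraic group, Proposition \ref{prop:morphismdom} then yields an integer $n_0\in\nats$ for which $\varphi^{n_0}:X^{n_0}\to V$ is strongly dominant. The variety $X^{n_0}$ is again smooth, and its absolutely irreducible components are the products $X_{i_1}\times\dots\times X_{i_{n_0}}$ of components of $X$, which remain absolutely irreducible; so $\varphi^{n_0}$ falls under the strongly dominant case of Theorem \ref{Main result}, which we are assuming holds. Consequently there exists $N'\in\nats$ such that $(\varphi^{n_0})^{n}=\varphi^{n_0 n}$ is (FRS) for every $n>N'$.

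It remains to promote this to a statement about all large convolution powers of $\varphi$, not merely the multiples of $n_0$. Put $N:=n_0(N'+1)$ and take any $m>N$. Writing $m=n_0 n+j$ with $0\leq j<n_0$, we get $n_0 n=m-j>n_0 N'+1$, hence $n>N'$ and $(\varphi^{n_0})^{n}$ is (FRS). If $j=0$ we are done; if $1\leq j<n_0$, then $\varphi^{m}=(\varphi^{n_0})^{n}*\varphi^{j}$ is the convolution of the (FRS) morphism $(\varphi^{n_0})^{n}$ with the arbitrary morphism $\varphi^{j}:X^{j}\to V$ out of the smooth variety $X^{j}$, so it is (FRS) by Corollary \ref{cor:convolution presrves (FRS)}. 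Thus $\varphi^{m}$ is (FRS) for every $m>N$, which is Theorem \ref{Main result} for $\varphi$. The argument is essentially formal; the only point that requires attention is this last passage from multiples of $n_0$ to all sufficiently large integers, which is precisely where convolution-stability of (FRS) is used.
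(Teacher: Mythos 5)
Your proposal is correct and follows the route the paper intends (the paper leaves this corollary's proof implicit): identify the hypothesis of Theorem \ref{Main result} with strong generation for the vector group $V$, invoke Proposition \ref{prop:morphismdom} to get a strongly dominant power $\varphi^{n_0}$, apply the strongly dominant case to it, and fill in the remaining exponents via Corollary \ref{cor:convolution presrves (FRS)}. The only cosmetic remark is that the division-algorithm bookkeeping can be shortened: once a single power $\varphi^{M}$ is known to be (FRS), every $\varphi^{m}$ with $m\geq M$ is (FRS) since $\varphi^{m}=\varphi^{M}*\varphi^{m-M}$.
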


\subsection{Motivic measures on algebraic $\rats$-varieties}

\label{subsec:Motivic-measures-on} The goal of this subsection is
to define the notion of a motivic measure on a smooth algebraic
$\mathbb{Q}$-variety $X$. This will allow us to construct a collection of measures
$\{\mu_{p}\}_{p\text{ prime}}$ on $\{X(\Zp)\}_{p}$ which behave
well with respect to pushforward under a strongly dominant map $\varphi:X\rightarrow V$.
For such a collection $\{\mu_{p}\}_{p\text{ prime}}$, we will be
able to show (in Section \ref{sec:Fourier-transform-of}) that after
sufficiently many self convolutions of $\varphi_{*}(\mu_{p})$, we
get a measure with continuous density, and that the number of convolutions
required does not depend on $p$. Using Proposition \ref{Reduction to many Qp},
we will then deduce our main theorem (Theorem \ref{Main result})
for the case $K=\rats$.

Let $X$ be a reduced, finite type affine $\ints$-scheme. An embedding
$\psi:X\hookrightarrow\mathbb{A}_{\ints}^{N}$ naturally gives rise
to an ${\Ldp}$-definable subset $\{\psi(X)(F)\}_{F\in\mathrm{Loc}}$
of $\VF^{N}$. This allows us to define definable subsets and motivic
functions on $\rats$-varieties. 
\begin{defn}
~\label{def:motivic functions on var} 
\begin{enumerate}
\item Let $X$ be a finite type, affine $\ints$-scheme. 
\begin{enumerate}
\item A collection $\{Y_{F}\}_{F\in\mathrm{Loc}}$ of subsets $Y_{F}\subseteq X(F)$
is called a \textit{definable subset} of $X$ if there exists an embedding
$\psi:X\hookrightarrow\mathbb{A}_{\ints}^{N}$ such that $\{\psi(Y_{F})\}_{F\in\mathrm{Loc}}$
is a definable subset of $\curly{\psi(X)(F)}_{F\in\mathrm{Loc}}$.
\item A collection $h=(h_{F})_{F}$ of functions $h_{F}:X(F)\rightarrow\mathbb{R}$
is called a \textit{motivic function} on $X$, and denoted $h\in\mathcal{C}(X)$,
if there exists an embedding $\psi:X\hookrightarrow\mathbb{A}_{\ints}^{N}$
and $f\in\mathcal{C}(\psi(X))$ such that $h=\psi^{*}(f)$. 
\end{enumerate}
\item Let $X$ be a finite type $\ints$-scheme. 
\begin{enumerate}
\item A collection $\{Y_{F}\}_{F\in\mathrm{Loc}}$ of subsets $Y_{F}\subseteq X(F)$
is called a \textit{definable subset} of $X$ if there exists an affine
cover $X=\bigcup\limits _{i=1}^{l}U_{i}$, with embeddings $\psi_{i}:U_{i}\hookrightarrow\mathbb{A}_{\ints}^{N_{i}}$,
such that $\{\psi_{i}(U_{i}(F)\cap Y_{F})\}_{F\in\mathrm{Loc}}$ is
a definable subset of $\psi_{i}(U_{i})$ for all $1\leq i\leq l$.
\item A collection $h=(h_{F})_{F}$ of functions $h_{F}:X(F)\rightarrow\mathbb{R}$
is called a \textit{motivic function} on $X$ if there exists an affine
cover $X=\bigcup\limits _{i=1}^{l}U_{i}$, with embeddings $\psi_{i}:U_{i}\hookrightarrow\mathbb{A}_{\ints}^{N_{i}}$,
and a collection $f_{1},\ldots,f_{l}$ where $f_{i}\in\mathcal{C}(\psi_{i}(U_{i}))$
and $\psi_{i}^{*}(f_{i})=h|_{U_{i}}$. 
\end{enumerate}
\item Let $X$ be an algebraic $\rats$-variety. 
\begin{enumerate}
\item A collection $\{Y_{F}\}_{F\in\mathrm{Loc}}$ of subsets $Y_{F}\subseteq X(F)$
is called a \textit{definable subset} of $X$ if there exists a $\ints$-model\footnote{Recall that a $\ints$-model of $X$ is a $\ints$-scheme $\widetilde{X}$
such that $\widetilde{X}\times_{\spec(\ints)}\spec(\rats)\simeq X$.} $\widetilde{X}$ of $X$, such that $\{Y_{F}\}_{F\in\mathrm{Loc}}$
is a definable subset of $\widetilde{X}$. We denote the set of definable
subsets of $X$ by $\mathcal{D}(X)$. 
\item A collection $h=(h_{F})_{F}$ of functions $h_{F}:X(F)\rightarrow\mathbb{R}$
is called a \textit{motivic function} on $X$, if there exists a $\ints$-model
$\widetilde{X}$ of $X$ such that $h\in\mathcal{C}(\widetilde{X})$. 
\end{enumerate}
\end{enumerate}
\end{defn}

\begin{rem}
Note that the notions above are independent of the embedding $\psi$
into affine space. Given two embeddings $\psi:X\hookrightarrow\mathbb{A}_{\ints}^{N_{1}}$
and $\psi':X\hookrightarrow\mathbb{A}_{\ints}^{N_{2}}$, we have an
algebraic $\ints$-isomorphism between $\psi(X)$ and $\psi'(X)$,
which induces a definable isomorphism $\{\psi(X)_{F}\}_{F\in\mathrm{Loc}}\xrightarrow{\sim}\{\psi'(X)_{F}\}_{F\in\mathrm{Loc}}$. 
\end{rem}

\begin{lem}
Let $X$ be an algebraic $\rats$-variety, let $Y=\{Y_{F}\}_{F\in\mathrm{Loc}}$
be a collection of subsets $Y_{F}\subseteq X(F)$ and let $h=(h_{F})_{F}$
be a collection of functions $h_{F}:X(F)\rightarrow\mathbb{R}$. 
\begin{enumerate}
\item $\{Y_{F}\}_{F\in\mathrm{Loc}}$ is a definable subset of $X$ if and
only if for any $\ints$-model $\widetilde{X}$ of $X$, we have that
$\{Y_{F}\}_{F\in\mathrm{Loc}}$ is a definable subset of $\widetilde{X}$. 
\item $h=(h_{F})_{F}$ is a motivic function on $X$ if and only if for
any $\ints$-model $\widetilde{X}$ of $X$, we have that $h\in\mathcal{C}(\widetilde{X})$. 
\end{enumerate}
\end{lem}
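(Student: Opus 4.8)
The statement to prove is the lemma asserting that definability of a subset (resp.\ motivic-ness of a function) on a $\rats$-variety $X$ is independent of the chosen $\ints$-model: if it holds for \emph{some} $\ints$-model it holds for \emph{any} $\ints$-model. The plan is to reduce, via an affine cover, to the case where $X$ is affine and both $\ints$-models under comparison are affine $\ints$-schemes, and then to exploit the elementary fact that any two $\ints$-models of the same $\rats$-variety become isomorphic after inverting finitely many primes.

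First I would set up the reduction to the affine case. Both notions --- definable subset and motivic function on a finite-type $\ints$-scheme --- are defined by passing to a finite affine cover with embeddings into affine $\ints$-space, and the statement is local on such a cover. So given two $\ints$-models $\widetilde{X}$ and $\widetilde{X}'$ of $X$, it suffices to treat the affine pieces: choose affine opens $U \subseteq X$ whose schematic closures in $\widetilde{X}$ and $\widetilde{X}'$ give affine $\ints$-models $\widetilde{U}$ and $\widetilde{U}'$ of $U$, and then the general case follows by gluing (the collections $\{Y_F\}$ and $h = (h_F)$ restrict compatibly). The one small point to check is that a cover of $X$ by affine opens can be refined to a common cover adapted to both models simultaneously; this is routine since $\rats$-varieties are Noetherian.

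Next, the heart of the argument: if $\widetilde{X} = \spec A$ and $\widetilde{X}' = \spec A'$ are two finite-type affine $\ints$-models with $A \otimes_\ints \rats \simeq A' \otimes_\ints \rats$, then there is a nonzero integer $m$ such that $A[1/m] \simeq A'[1/m]$ as $\ints[1/m]$-algebras. Indeed, pick generators of $A$ and $A'$; each generator of $A'$ maps, under the fixed isomorphism over $\rats$, into $A[1/m]$ for $m$ divisible by all denominators appearing, and symmetrically, so after enlarging $m$ the isomorphism over $\rats$ restricts to an isomorphism $A[1/m] \xrightarrow{\sim} A'[1/m]$. Now for any non-Archimedean local field $F$ of residue characteristic $p \nmid m$ (equivalently, $F \in \mathrm{Loc}_{M}$ for a suitable bound $M$ depending only on $m$), the valuation ring $\mathcal{O}_F$ contains $1/m$, so $\widetilde{X}(F) = \widetilde{X}'(F)$ canonically and the embeddings into affine space differ by a definable (in fact polynomial, $\ints[1/m]$-algebraic) isomorphism. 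Since the defining classes --- definable sets in $\mathrm{VF}^N$ and motivic functions --- are preserved under pullback along such algebraic isomorphisms, and since in the definitions of Definition~\ref{def:motivic functions on var} one is allowed to discard finitely many residue characteristics (all collections are indexed over $\mathrm{Loc}_{M'}$ for unspecified large $M'$), the property of $\{Y_F\}$ being a definable subset of $\widetilde{X}$ transfers to $\widetilde{X}'$, and likewise for $h \in \mathcal{C}(\widetilde{X}) \Leftrightarrow h \in \mathcal{C}(\widetilde{X}')$. The ``if'' direction gives one model with the property; running the argument between that model and an arbitrary model gives it for all.

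The main obstacle, such as it is, is purely bookkeeping: making sure that the passage through affine covers is compatible for the two models at once, and tracking that the finitely many ``bad'' primes introduced at each stage (from inverting $m$, and from the $M'$ already implicit in each definable datum) can all be absorbed into a single lower bound $M$ on the residue characteristic --- which is exactly the kind of statement Lemma~\ref{specialization arguments} and the $\mathrm{Loc}_M$ formalism are designed to handle. No genuinely new idea is needed beyond ``two $\ints$-models agree after inverting finitely many primes,'' combined with the already-noted (in the Remark preceding the lemma) independence of the notion of the embedding into affine space.
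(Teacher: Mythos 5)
Your proposal is correct, but it reaches the conclusion by a different route than the paper. The paper works directly with the $\rats$-isomorphism $\varphi$ between the generic fibers of the two models: it invokes Denef--Pas quantifier elimination (Theorem \ref{Quantifier elimination theorem}) to present the given definable set by data $\Xi(\phi)=(\{g_{j}\},\{\chi_{i}\},\{\theta_{i}\})$ with $g_{j}\in\ints[x]$, pulls this data back along $\varphi$, and then repairs the fact that $g_{j}\circ\varphi$ has only rational coefficients by multiplying by a large integer $N$, noting that for residue characteristic prime to $N$ this leaves $\val$ unchanged and rescales $\ac$, and finally adjusting the residue-field formulas $\chi_{i}$ by a homogenization trick. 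You instead spread out: choose $m$ with $A[1/m]\simeq A'[1/m]$, so that for residue characteristic prime to $m$ the two embedded models are identified by a polynomial isomorphism over $\ints[1/m]$, and then appeal to stability of definable sets and of motivic functions under pullback along definable maps, discarding the finitely many bad residue characteristics as the $\mathrm{Loc}_{M}$ formalism permits. The two arguments share the same essential mechanism (denominators are harmless once finitely many residue characteristics are excluded), but yours is shorter and avoids quantifier elimination, at the price of invoking two closure properties as known: that preimages of definable sets under definable functions are definable and that motivic functions pull back along definable functions (both true, by composing the definable data $\alpha_{i},\beta_{ij},Y_{i}$), whereas the paper's explicit manipulation keeps the output in the specific data format of Notation \ref{nota:Given-an--formula} that it reuses later. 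The one point you should make explicit is why a polynomial map with coefficients in $\ints[1/m]$ (or $\rats$) is an $\Ldp$-definable function at all, given that the valued-field sort of $\Ldp$ has only the constants $0,1$: one defines its graph by clearing denominators, e.g. by the integer-coefficient equations $m^{k}y=(m^{k}g)(x)$, valid since $m\neq0$ in a field of characteristic zero. This is exactly the denominator issue that the paper's scaling-by-$N$ step addresses, so your sketch is not missing an idea, only a sentence of justification.
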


\begin{proof}
We prove (1), the proof for (2) is similar. For any two $\ints$-models
$\widetilde{X}_{1}$ and $\widetilde{X}_{2}$ there exists a $\rats$-isomorphism
$\varphi:\widetilde{X}_{1}\times_{\spec(\ints)}\spec(\rats)\rightarrow\widetilde{X}_{2}\times_{\spec(\ints)}\spec(\rats)$.
Let $Y=\{Y_{F}\}_{F\in\mathrm{Loc}}$ be a definable subset of $\widetilde{X}_{2}$
and consider the collection\textcolor{red}{{} }$\{\varphi^{-1}(Y_{F})\}_{F\in\mathrm{Loc}}$,
where $\varphi^{-1}(Y_{F})\subseteq X(F)$. We want to show that $\{\varphi^{-1}(Y_{F})\}_{F\in\mathrm{Loc}}$
is a definable subset of $\widetilde{X}_{1}$. It is enough to prove
for the case where $\widetilde{X}_{1}$ and $\widetilde{X}_{2}$ are
affine with $\widetilde{X}_{1}\hookrightarrow\mathbb{A}_{\ints}^{N_{1}}$
and $\widetilde{X}_{2}\hookrightarrow\mathbb{A}_{\ints}^{N_{2}}$.
By Theorem \ref{Quantifier elimination theorem}, $Y$ is defined
by an ${\Ldp}$-formula $\widetilde{\phi}$ which is a disjunction
of formulas of the form 
\[
\chi_{i}(\ac(g_{1}(x)),\ldots,\ac(g_{s}(x)))\wedge\theta_{i}(\val(g_{1}(x)),\ldots,\val(g_{s}(x))),
\]
where $\chi_{i}$ is an $\mathcal{L}_{\mathrm{Res}}$-formula, $\theta_{i}$
is an $\mathcal{L}_{\mathrm{Pres}}$-formula and $g_{j}\in\mathbb{Z}[x_{1},\ldots,x_{N_{2}}]$.
Notice that if the formula $\phi$ with data $\Xi({\phi})=(\{g_{j}\}_{j=1}^{s},\{\chi_{i}\}_{i=1}^{r},\{\theta_{i}\}_{i=1}^{r})$
(recall Notation \ref{nota:Given-an--formula}) defines a set $Z\subseteq\widetilde{X}_{2}(F)$,
then the formula $\varphi^{*}\phi$ with data $\Xi(\varphi^{*}{\phi})=(\{g_{j}\circ\varphi\}_{j=1}^{s},\{\chi_{i}\}_{i=1}^{r},\{\theta_{i}\}_{i=1}^{r})$
defines the set $\varphi^{-1}(Z)\subseteq\widetilde{X}_{1}(F)$. Thus,
our main candidate for a formula for $\{\varphi^{-1}(Y_{F})\}_{F\in\mathrm{Loc}}$
is $\varphi^{*}\phi$. 

The problem which arises is that $\curly{g_{j}\circ\varphi}_{j=1}^{s}\subset\rats[y_{1},\ldots,y_{N_{1}}]$
do not necessarily have integral coefficients. In order to solve this,
define $\Xi(\phi'):=(\{N\cdot g_{j}\circ\varphi\}_{j=1}^{s},\{\chi_{i}\}_{i=1}^{r},\{\theta_{i}\}_{i=1}^{r})$
for $N\in\nats$ large enough such that $N\cdot g_{j}\circ\varphi\in\ints[y_{1},\ldots,y_{N_{1}}]$
for any $j$. Notice that for $M\in\nats$ large enough, we have $\ac(N\cdot g_{j}\circ\varphi(x))=N\cdot\ac(g_{j}\circ\varphi(x))$
and $\val(N\cdot g_{j}\circ\varphi(x))=\val(g_{j}\circ\varphi(x))$
for any $F\in\mathrm{Loc}_{M}$, so we only need to take care of $\curly{\chi_{i}}_{i=1}^{r}$.

It is left to show that for any $\mathcal{L}_{\mathrm{Res}}$-formula
$\chi(t_{1},\ldots,t_{s})$ there exists an $\mathcal{L}_{\mathrm{Res}}$-formula
$\chi'$ such that $\chi'(t_{1},\ldots,t_{s})=\chi(N\cdot t_{1},\ldots,N\cdot t_{s})$,
and then we are done, by setting $\Xi(\phi''):=(\{N\cdot g_{j}\circ\varphi\}_{j=1}^{s},\{\chi'_{i}\}_{i=1}^{r},\{\theta_{i}\}_{i=1}^{r})$,
and observing that $\phi''$ defines $\{\varphi^{-1}(Y_{F})\}_{F\in\mathrm{Loc}}$.

Firstly, {every $\mathcal{L}_{\mathrm{Res}}$-formula} $\chi(t_{1},\ldots,t_{s})$ is defined by zeros
of polynomials $P_{j}(t_{1},\ldots,t_{s})$, and possibly has quantifiers.
Let $D_{j}$ be the maximal degree in each $P_{j}$ and consider the
polynomials $\widetilde{P}_{j}$ which are obtained by replacing each
monomial $M_{ij}$ in $P_{j}$ by $N^{D_{j}-\deg(M_{ji})}M_{ij}$.\textcolor{red}{{}
}
Now, notice that $\widetilde{P}_{j}(N\cdot t_{1},\ldots,N\cdot t_{s})=N^{D_{j}}\cdot P_{j}(t_{1},{\ldots},t_{s})$,
and define $\chi'$ by replacing each $P_{j}$ by $\widetilde{P}_{j}$
in $\chi$. It is easy to see that $\chi'$ satisfies $\chi'(t_{1},{\ldots},t_{s})=\chi(N\cdot t_{1},{\ldots},N\cdot t_{s})$
and we are done. 
\end{proof}
As a conclusion, we can pull back definable sets and motivic functions
with respect to $\rats$-morphisms. 
\begin{lem}
\label{Pulback of motivic} Let $X$ and $Y$ be two algebraic
$\rats$-varieties and let $\varphi:X\rightarrow Y$ be a $\rats$-morphism.
Then for any definable subset $\{Z_{F}\}_{F\in\mathrm{Loc}}$ of $Y$
we have that $\{\varphi^{-1}(Z_{F})\}_{F\in\mathrm{Loc}}$ is a definable
subset of $X$ and for any $f\in\mathcal{C}(Y)$ we have $f\circ\varphi\in\mathcal{C}(X)$. 
\end{lem}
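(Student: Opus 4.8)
The plan is to reduce the statement to an affine-local computation and then run the substitution-and-clear-denominators argument from the proof of the preceding lemma. First I would arrange matters so that $\varphi$ is given, locally, by polynomials. Choose a $\ints$-model $\widetilde{Y}$ of $Y$ together with a finite affine cover $\widetilde{Y}=\bigcup_{j}V_{j}$ and embeddings $V_{j}\hookrightarrow\mathbb{A}_{\ints}^{N_{j}}$. Since $\{(V_{j})_{\rats}\}_{j}$ covers $Y$ and $\varphi$ is a $\rats$-morphism to $Y$, the opens $\varphi^{-1}((V_{j})_{\rats})$ cover $X$; refine this to a finite affine cover $X=\bigcup_{j,k}W_{j,k}$ with $W_{j,k}\subseteq\varphi^{-1}((V_{j})_{\rats})$, and (using the preceding lemma, which lets us pick any $\ints$-model) choose a $\ints$-model $\widetilde{X}$ of $X$ compatible with this cover, $\widetilde{X}=\bigcup_{j,k}\widetilde{W}_{j,k}$ with embeddings $\widetilde{W}_{j,k}\hookrightarrow\mathbb{A}_{\ints}^{m_{j,k}}$. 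On each piece $\varphi$ restricts to a $\rats$-morphism $(\widetilde{W}_{j,k})_{\rats}\to(\widetilde{V}_{j})_{\rats}$, hence in coordinates to a tuple of polynomials $\Phi_{j,k}\in\rats[x_{1},\dots,x_{m_{j,k}}]^{N_{j}}$, and for every $F\in\mathrm{Loc}$ the map $\varphi_{F}$ sends $\widetilde{W}_{j,k}(F)$ into $\widetilde{V}_{j}(F)$ by evaluating $\Phi_{j,k}$. By Definition \ref{def:motivic functions on var} it then suffices to prove the two affine-local assertions: (i) the preimage of a definable subset of $\mathbb{A}_{\ints}^{N}$ under a polynomial map $\Phi\in\rats[x]^{N}$ is a definable subset of $\mathbb{A}_{\ints}^{m}$, and (ii) the composition $f\circ\Phi$ of a motivic function $f$ on $\mathbb{A}_{\ints}^{N}$ with such a $\Phi$ is motivic on $\mathbb{A}_{\ints}^{m}$ (and likewise for maps of the form $\Phi\times\mathrm{id}$ on extra $\RF$- and $\VG$-coordinates, which the same proof covers verbatim).

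For (i): by Theorem \ref{Quantifier elimination theorem} and Notation \ref{nota:Given-an--formula}, a definable subset $\{Z_{F}\}$ of $\mathbb{A}^{N}$ is cut out by a finite disjunction of formulas $\chi_{i}(\ac(g_{1}(y)),\dots,\ac(g_{s}(y)))\wedge\theta_{i}(\val(g_{1}(y)),\dots,\val(g_{s}(y)))$ with $g_{l}\in\ints[y_{1},\dots,y_{N}]$. Substituting $y=\Phi(x)$, the preimage $\{\Phi^{-1}(Z_{F})\}$ is cut out by the same disjunction with $g_{l}$ replaced by $g_{l}\circ\Phi\in\rats[x_{1},\dots,x_{m}]$. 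These polynomials need not have integral coefficients, so, exactly as in the proof of the preceding lemma, I would fix $N'\in\nats$ with $N'\cdot(g_{l}\circ\Phi)\in\ints[x]$ for all $l$, note that $\val(N'\cdot(g_{l}\circ\Phi)(x))=\val((g_{l}\circ\Phi)(x))$ and $\ac(N'\cdot(g_{l}\circ\Phi)(x))=N'\cdot\ac((g_{l}\circ\Phi)(x))$ for $F$ of large residue characteristic, and replace each $\mathcal{L}_{\mathrm{Res}}$-formula $\chi_{i}(t_{1},\dots,t_{s})$ by the formula $\chi_{i}'$ with $\chi_{i}'(t_{1},\dots,t_{s})=\chi_{i}(N't_{1},\dots,N't_{s})$ (which exists by the polynomial-rescaling argument already given there). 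The resulting $\Ldp$-formula defines $\{\Phi^{-1}(Z_{F})\}$ for all $F\in\mathrm{Loc}_{M}$ with $M$ large enough, which is what we need.

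For (ii): write $f$ in the normal form of Definition \ref{def:motivic function}, $f_{F}(y)=\sum_{k}|Y_{k,F,y}|\,q_{F}^{\alpha_{k}(y)}\bigl(\prod_{l}\beta_{kl}(y)\bigr)\bigl(\prod_{r}\frac{1}{1-q_{F}^{a_{kr}}}\bigr)$ with $\alpha_{k},\beta_{kl}$ definable functions and $Y_{k}\subseteq\mathbb{A}^{N}\times\RF^{r_{k}}$ definable sets. Composing with $\Phi$ replaces $\alpha_{k},\beta_{kl}$ by $\alpha_{k}\circ\Phi,\beta_{kl}\circ\Phi$, whose graphs are the preimages of $\Gamma_{\alpha_{k}},\Gamma_{\beta_{kl}}$ under $\Phi\times\mathrm{id}_{\VG}$ and hence definable functions by (i) applied to $\Phi\times\mathrm{id}$, and replaces $Y_{k}$ by its preimage under $\Phi\times\mathrm{id}_{\RF^{r_{k}}}$, again definable by (i); the exponential factors remain of the form $q_{F}^{(\text{definable function})}$ and the rational factors $\frac{1}{1-q_{F}^{a_{kr}}}$ are unchanged. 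Hence $f\circ\Phi$ is again of the required form, i.e. motivic. Gluing over the cover $\{\widetilde{W}_{j,k}\}$ via Definition \ref{def:motivic functions on var} then gives $\{\varphi^{-1}(Z_{F})\}_{F\in\mathrm{Loc}}\in\mathcal{D}(X)$ and $f\circ\varphi\in\mathcal{C}(X)$.

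The only genuinely non-formal point is the loss of integrality in passing from $g_{l}$ to $g_{l}\circ\Phi$, and this is handled word-for-word as in the preceding lemma (clearing denominators and rescaling the $\mathcal{L}_{\mathrm{Res}}$-formulas); the rest — refining the affine covers compatibly with $\varphi$, and checking that substitution of polynomial terms into an $\Ldp$-formula again yields an $\Ldp$-formula (which is immediate since $\mathcal{L}_{\mathrm{Val}}$ contains the ring operations) — is routine bookkeeping. I expect the compatible cover refinement to be the only step needing an explicit, if short, argument.
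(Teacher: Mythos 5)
Your argument is correct, but it takes a more computational route than the paper. The paper's proof never touches the denominators problem for the lemma at hand: since $\varphi$ is defined over $S^{-1}\ints$ for a finite set of primes $S$, it spreads out the \emph{morphism itself} to a $\ints$-model $\widetilde{\varphi}:\widetilde{X}\rightarrow\widetilde{Y}$, so that affine-locally the pullback of an $\Ldp$-formula or a motivic function is literal substitution of \emph{integral} polynomial terms; the identification $X(F)\simeq\widetilde{X}(F)$, $Y(F)\simeq\widetilde{Y}(F)$ for $F\supseteq\rats$ together with the preceding model-independence lemma then finishes the proof in a few lines. You instead keep a rational-coefficient presentation $\Phi_{j,k}\in\rats[x]^{N_j}$ of $\varphi$ on a compatibly refined affine cover and re-run, chart by chart, the clearing-denominators and $\mathcal{L}_{\mathrm{Res}}$-rescaling argument from the preceding lemma (quantifier elimination, replace $g_l$ by $N'\cdot(g_l\circ\Phi)$, adjust $\val$ and $\ac$ for large residue characteristic, rescale the residue-field formulas), and you also spell out the extension to maps of the form $\Phi\times\mathrm{id}$ on $\RF$- and $\VG$-coordinates needed to pull back the data $\alpha_k,\beta_{kl},Y_k$ of a motivic function. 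What your route buys is self-containedness and an explicit normal-form bookkeeping; what the paper's route buys is brevity, since choosing an integral model of the morphism makes the denominators issue (and the compatible-cover bookkeeping) disappear, delegating all model-comparison to the previous lemma. One small remark: the identity you quote for the rescaled formula, $\chi'_i(t_1,\dots,t_s)=\chi_i(N't_1,\dots,N't_s)$, is stated in the same (reversed) direction as in the paper's preceding proof; what one actually needs, and what the monomial-rescaling construction there produces, is a $\chi'_i$ with $\chi'_i(N't_1,\dots,N't_s)\Leftrightarrow\chi_i(t_1,\dots,t_s)$, so that evaluating $\chi'_i$ at the admissible terms $\ac\bigl(N'\cdot(g_l\circ\Phi)(x)\bigr)=N'\cdot\ac\bigl((g_l\circ\Phi)(x)\bigr)$ recovers the original condition; this is a harmless slip inherited from the paper and does not affect your proof.
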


\begin{proof}
We may assume that $\varphi:X\rightarrow Y$ is defined over $S^{-1}\ints$,
the localization of $\ints$ by a finite set of primes $S$. Since
$S^{-1}\ints$ is of finite type over $\ints$ we may choose a $\ints$-model
$\widetilde{\varphi}:\widetilde{X}\rightarrow\widetilde{Y}$ of the
morphism $\varphi$ (which includes $\ints$-models of $X$ and $Y$).
Since in the setting of the $\Ldp$ language pullbacks are well defined,
by reducing to the affine case, we have well defined pullbacks $\widetilde{\varphi}^{*}:\mathcal{C}(\widetilde{Y})\rightarrow\mathcal{C}(\widetilde{X})$
and $\widetilde{\varphi}^{*}:\mathcal{D}(\widetilde{Y})\rightarrow\mathcal{D}(\widetilde{X})$.

Since $\widetilde{X}\times_{\spec(\ints)}\spec(\rats)\simeq X$ and
$\widetilde{Y}\times_{\spec(\ints)}\spec(\rats)\simeq Y$, and since
$\rats\subset F$ for any $F\in\mathrm{Loc}$, there are identifications
$\widetilde{X}(F)\simeq X(F)$ and $\widetilde{Y}(F)\simeq Y(F)$,
under which $\varphi$ and $\widetilde{\varphi}$ induce the same
map $X(F)\rightarrow Y(F)$. This implies the lemma. 
\end{proof}
The next lemma follows easily by reducing to the affine case and choosing
a $\ints$-model. 
\begin{lem}
\label{lem: operations on definable sets} Let $X$ be an algebraic
$\rats$-variety. 
\begin{enumerate}
\item Any $\rats$-subvariety $Y\subseteq X$ is definable. 
\item $\mathcal{D}(X)$ is closed under intersections, unions and complements. 
\end{enumerate}
\end{lem}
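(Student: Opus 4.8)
The plan is to reduce both statements to the corresponding facts about $\Ldp$-definable sets in the sense of Definition \ref{def:motivic function}, which are essentially tautological: a finite system of polynomial equalities and inequalities is defined by an $\Ldp$-formula (using the ring language $\mathcal{L}_{\mathrm{Val}}$ of the valued field sort, in which ``$g(x)=0$'' is atomic), and the class of $\Ldp$-formulas is closed under $\wedge$, $\vee$ and $\neg$. Throughout, since only $F$-points matter, the (possibly non-reduced) scheme structure on a subvariety is irrelevant.

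\textbf{Part (1).} First I would fix a $\ints$-model $\widetilde{X}$ of $X$, a finite affine open cover $\widetilde{X}=\bigcup_{i}\widetilde{U}_{i}$, and embeddings $\psi_{i}:\widetilde{U}_{i}\hookrightarrow\mathbb{A}^{N_{i}}_{\ints}$; as the notion of definable subset of $X$ does not depend on the choice of $\ints$-model (shown above), it is harmless to work with this one. By Definition \ref{def:motivic functions on var} it suffices to check that $\{\psi_{i}(\widetilde{U}_{i}(F)\cap Y(F))\}_{F\in\mathrm{Loc}}$ is a definable subset of $\psi_{i}(\widetilde{U}_{i})$ for each $i$. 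Writing $U_{i}:=\widetilde{U}_{i}\times_{\ints}\rats$, the set $U_{i}\cap Y$ is a locally closed subscheme of the affine $\rats$-variety $U_{i}$, hence via the closed immersion $\psi_{i}$ it is a locally closed subset of $\mathbb{A}^{N_{i}}$, say of the form $V(g_{1},\dots,g_{k})\setminus V(h_{1},\dots,h_{m})$ with $g_{j},h_{l}\in\rats[x_{1},\dots,x_{N_{i}}]$. Clearing denominators we may take $g_{j},h_{l}\in\ints[x_{1},\dots,x_{N_{i}}]$ without changing the common zero/non-zero loci over any field of characteristic $0$; then $\psi_{i}(\widetilde{U}_{i}(F)\cap Y(F))$ is exactly the locus of the $\Ldp$-formula $\bigwedge_{j=1}^{k}\big(g_{j}(x)=0\big)\wedge\bigvee_{l=1}^{m}\big(h_{l}(x)\neq0\big)$, so it is definable. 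This proves (1).

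\textbf{Part (2).} Given $Y^{(1)},Y^{(2)}\in\mathcal{D}(X)$, I would first use independence of the $\ints$-model to realize both with respect to a single model $\widetilde{X}$, and then, using that $X$ (hence $\widetilde{X}$, after shrinking $\spec(\ints)$) is separated, pass to a common finite affine cover $\{\widetilde{U}_{i}\}$ with fixed embeddings $\psi_{i}:\widetilde{U}_{i}\hookrightarrow\mathbb{A}^{N_{i}}_{\ints}$ with respect to which, on each chart $\widetilde{U}_{i}$, the set $Y^{(s)}$ is cut out by an $\Ldp$-formula $\phi^{(i)}_{s}$ ($s=1,2$). Then on the chart $\widetilde{U}_{i}$ the sets $Y^{(1)}\cap Y^{(2)}$, $Y^{(1)}\cup Y^{(2)}$ and $X(F)\setminus Y^{(1)}$ are cut out respectively by $\phi^{(i)}_{1}\wedge\phi^{(i)}_{2}$, $\phi^{(i)}_{1}\vee\phi^{(i)}_{2}$ and $\neg\phi^{(i)}_{1}$, which are again $\Ldp$-formulas. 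Hence each of these collections is a definable subset on every chart of the cover, and therefore a definable subset of $X$, proving (2).

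\textbf{Main obstacle.} There is no real conceptual content; the only thing requiring care is the bookkeeping with $\ints$-models: realizing a $\rats$-subvariety and comparing two definable subsets force one to invert finitely many primes and, a priori, to restrict to $\mathrm{Loc}_{M}$ for $M$ large, and to obtain common affine covers one must invoke separatedness of varieties. Since the very definition of a definable subset only demands agreement over $\mathrm{Loc}_{M'}$ for some $M'$, none of this is an issue — which is precisely why the lemma ``follows easily''.
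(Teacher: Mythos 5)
Your proof is correct and takes exactly the route the paper indicates ("reduce to the affine case and choose a $\ints$-model"); the paper gives no further detail, so there is nothing to contrast. Two small bookkeeping points worth flagging, both of which you implicitly handle correctly: (i) in Part (2), passing from the two given affine covers to a common refinement silently uses not only separatedness (so that pairwise intersections of affine opens are affine) but also the fact that definability of a collection with respect to one affine cover/embedding persists under refinement of the cover — the paper's preceding remark only addresses independence of the embedding, not of the cover, though the argument is the same; (ii) for complements, $\neg\phi^{(i)}_{1}$ should be read as $\chi_{i}\wedge\neg\phi^{(i)}_{1}$ where $\chi_{i}$ is the formula cutting out $\psi_{i}(\widetilde{U}_{i})$, so that one takes the complement inside the chart rather than inside all of $\VF^{N_{i}}$. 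Neither affects the substance.
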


\begin{defn}
\label{definition motivic measure} Let $X$ be a smooth algebraic
$\mathbb{Q}$-variety. We say that a collection of measures $\mu=\{\mu_{F}\}_{F\in\mathrm{Loc}}$
on $\{X(F)\}_{F\in\mathrm{Loc}}$ is a \textit{motivic} measure on
$X$ if there exists an open affine cover $X=\bigcup\limits _{j=1}^{l}U_{j}$,
such that $\mu_{F}|_{U_{j}(F)}=(f_{j})_{F}\left|\omega_{j}\right|_{F}$,
where $f_{j}\in\mathcal{C}(U_{j})$ and $\omega_{j}$ is a non-vanishing
top form on $U_{j}$. 
\end{defn}

\begin{lem}
\label{lem: equivalent def to motivic measure}Let $X$ be a smooth
algebraic $\mathbb{Q}$-variety and let $\mu=\{\mu_{F}\}_{F\in\mathrm{Loc}}$
be a collection of measures on $\curly{X_{F}}_{F\in\mathrm{Loc}}$.
Then $\mu$ is motivic if and only if there exists an open affine cover $X=\bigcup\limits _{i=j}^{l}U_{j}$,
such that $\mu$ can be written as $\mu_{F}:=\sum\limits _{j=1}^{l}f_{j,F}\cdot\left|\omega_{j}\right|_{F}$
for some $f_{j}\in\mathcal{C}(U_{j})$ and $\omega_{j}$ non-vanishing
top forms on $U_{j}$ respectively. 
\end{lem}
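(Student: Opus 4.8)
The plan is to prove both implications with the \emph{same} open affine cover $X=\bigcup_{j=1}^{l}U_{j}$ supplied by whichever of the two descriptions one begins with, so that no refinement of the cover is needed; everything then reduces to two elementary facts about the Denef--Pas formalism. \emph{Fact 1:} if $\omega_{j}$ and $\omega_{m}$ are non-vanishing top forms on $U_{j}$ and $U_{m}$, then on the overlap $U_{jm}:=U_{j}\cap U_{m}$ one has $\omega_{j}=h_{jm}\cdot\omega_{m}$ for a unique unit $h_{jm}\in\mathcal{O}(U_{jm})^{\times}$, the function $x\mapsto|h_{jm}(x)|_{F}=q_{F}^{-\val(h_{jm}(x))}$ lies in $\mathcal{C}(U_{jm})$, and hence $|\omega_{j}|_{F}=|h_{jm}|_{F}\cdot|\omega_{m}|_{F}$ on $U_{jm}(F)$. \emph{Fact 2:} if $Z\subseteq X$ is a definable subset (Lemma~\ref{lem: operations on definable sets}) and $h\in\mathcal{C}(Z)$, then the extension of $h$ by zero lies in $\mathcal{C}(X)$; in particular $\mathbf{1}_{Z}\in\mathcal{C}(X)$, and $\mathcal{C}$ is stable under restriction along open immersions (Lemma~\ref{Pulback of motivic}).

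For the direction ``$\mu$ motivic $\Rightarrow$ global sum'', I would disjointify the cover $\{U_{j}\}$ coming from Definition~\ref{definition motivic measure}, for which $\mu_{F}|_{U_{j}(F)}=(f_{j})_{F}|\omega_{j}|_{F}$. Set $V_{j}:=U_{j}\setminus(U_{1}\cup\dots\cup U_{j-1})$, a closed subvariety of $U_{j}$ and hence a definable subset of $U_{j}$, and note that $X(F)=\bigsqcup_{j=1}^{l}V_{j}(F)$ for every $F\in\mathrm{Loc}$. Put $g_{j}:=\mathbf{1}_{V_{j}}\cdot f_{j}\in\mathcal{C}(U_{j})$. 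Since $V_{j}(F)\subseteq U_{j}(F)$, the measure $(g_{j})_{F}|\omega_{j}|_{F}$ on $U_{j}(F)$, regarded as a measure on $X(F)$ by extension by zero, equals $\mu_{F}|_{V_{j}(F)}$; summing over $j$ gives $\mu_{F}=\sum_{j=1}^{l}(g_{j})_{F}|\omega_{j}|_{F}$, which is the required form.

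For the converse I would start from $\mu_{F}=\sum_{j=1}^{l}(f_{j})_{F}|\omega_{j}|_{F}$ with $f_{j}\in\mathcal{C}(U_{j})$ and $\omega_{j}$ non-vanishing on $U_{j}$, fix $m$, and restrict to $U_{m}(F)$. The $j$-th summand, extended by zero, restricts on $U_{m}(F)$ to a measure carried by $U_{jm}(F)$, on which Fact 1 gives $|\omega_{j}|_{F}=|h_{jm}|_{F}|\omega_{m}|_{F}$; therefore
\[
\mu_{F}|_{U_{m}(F)}=\left(\sum_{j=1}^{l}\mathbf{1}_{U_{jm}}\cdot(f_{j})_{F}\cdot|h_{jm}|_{F}\right)\cdot|\omega_{m}|_{F}=:(g_{m})_{F}\cdot|\omega_{m}|_{F},
\]
where each of $\mathbf{1}_{U_{jm}}$, $(f_{j})_{F}$ and $|h_{jm}|_{F}$ is read as a motivic function on $U_{m}$ by restriction to and then extension by zero across the definable subset $U_{jm}\subseteq U_{m}$. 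By Fact 2 the coefficient $g_{m}$ is then a motivic function on $U_{m}$, so $\mu$ satisfies the defining condition of Definition~\ref{definition motivic measure} and is motivic.

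The covering combinatorics are routine; the point that requires care is Fact 1, precisely because the overlap $U_{jm}$ is only quasi-affine, so $h_{jm}$ need not be the restriction of a polynomial. I would handle this by covering $U_{jm}$ by finitely many distinguished opens $D_{U_{j}}(g_{k})$ of $U_{j}$, on each of which $h_{jm}$ is represented as $p_{k}/g_{k}^{n_{k}}$ with $p_{k},g_{k}$ restrictions of polynomials; then $\val(h_{jm})=\val(p_{k})-n_{k}\val(g_{k})$ is a definable function on that chart, these local formulas agree on overlaps since $h_{jm}$ is intrinsic, and they glue to a definable function on $U_{jm}$, so $q_{F}^{-\val(h_{jm})}\in\mathcal{C}(U_{jm})$. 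Fact 2 is comparatively soft: writing an element of $\mathcal{C}(Z)$ in the normal form of Definition~\ref{def:motivic function}, one extends each defining definable set (inside $X\times\RF^{r}$) and each exponent and factor from $Z$ to $X$ by zero, and the fiber-cardinality factors $|Y_{i,F,x}|$ automatically vanish for $x\notin Z$, killing each term off $Z$; quantifier elimination (Theorem~\ref{Quantifier elimination theorem}) guarantees the extended data are again definable.
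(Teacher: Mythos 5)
Your proof is correct and follows the same line as the paper's: compare the two descriptions of $\mu$ on overlaps $U_i\cap U_j$, use the transition functions $h_{ji}\in\mathcal{O}(U_i\cap U_j)^\times$ between the top forms, and observe that the resulting coefficient $\sum_j 1_{U_{ij}(F)}f_{j,F}\,|h_{ji}|_F$ is a motivic function on $U_i$. The paper only spells out one direction (stating ``the other direction is similar'') and silently assumes that $|h_{ji}|_F$ is motivic on the quasi-affine overlap; your Fact~1 and Fact~2 make these points explicit and correctly, so your version is simply a more careful rendering of the same argument.
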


\begin{proof}
Let $\mu_{F}:=\sum\limits _{j=1}^{l}f_{j,F}\cdot\left|\omega_{j}\right|_{F}$.
For each $j$, notice that ${\omega_{j}}_{|_{U_{i}\cap U_{j}}}=h_{ji}\cdot{\omega_{i}}_{|_{U_{i}\cap U_{j}}}$
where $h_{ji}$ is a non-vanishing regular function on $U_{i}\cap U_{j}$.
Hence we have
\[
\mu_{F}|_{U_{i}(F)}
=\sum\limits _{j=1}^{l}f_{j,F}\cdot\left|{\omega_{j}}_{|_{U_{i}\cap U_{j}}}\right|_{F}
=\sum\limits _{j=1}^{l} 1_{U_i(F) \cap U_j(F)}f_{j,F}\cdot\left|h_{ji}\right|_{F}\left|\omega_{i}\right|_{F}.
\]
Since $\sum f_{j,F}\cdot\left|h_{ji}\right|_{F}$ is a motivic function,
we see that $\mu$ is motivic. The other direction is similar. 
\end{proof}
\begin{prop}
\label{prop:existsmotivicschwartz} Let $X$ be a smooth algebraic
$\mathbb{Q}$-variety, then there exists a motivic measure $\mu=\{\mu_{F}\}_{F\in\mathrm{Loc}}$
on $X$, such that for every $F\in\mathrm{Loc}$, $\mu_{F}$ is a
non-negative Schwartz measure and ${\supp}(\mu_{F})=X(\mathcal{O}_{F})$. 
\end{prop}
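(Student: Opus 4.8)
The plan is to reduce the statement to a local construction over an affine cover and then glue. First I would choose a finite open affine cover $X=\bigcup_{j=1}^{l}U_{j}$ such that each $U_{j}$ admits a non-vanishing top form $\omega_{j}\in\Omega_{U_{j}}^{\mathrm{top}}[U_{j}]$; this is possible since $X$ is smooth, after possibly refining the cover so that each $U_{j}$ is small enough for $\Omega^{\mathrm{top}}$ to be trivial on it. Working with a $\ints$-model $\widetilde{X}$ of $X$ (which exists by finite type), I may further assume that the cover, the forms $\omega_{j}$, and the transition functions are all defined over $S^{-1}\ints$ for a finite set of primes $S$, and that $\widetilde{X}\hookrightarrow\mathbb{A}^{N}_{\ints}$ after choosing embeddings of the $U_{j}$. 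The key point is that the collection $\{X(\mathcal{O}_{F})\}_{F\in\mathrm{Loc}}$ is a definable subset of $X$: this follows from Lemma \ref{lem: operations on definable sets} and the fact that, in coordinates, $X(\mathcal{O}_{F})$ is cut out by the conditions $\val(x_{i})\geq 0$ together with the defining equations of $\widetilde{X}$, which is an $\Ldp$-formula. Hence its indicator function $1_{X(\mathcal{O}_{F})}$, and more generally $1_{U_{j}(\mathcal{O}_{F})}$, are motivic functions on $X$ (respectively on $U_{j}$) by Definition \ref{def:motivic function}(4).

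Next I would build the measure. On each $U_{j}$, set $\mu_{j,F}:=1_{U_{j}(\mathcal{O}_{F})}\cdot\lvert\omega_{j}\rvert_{F}$; this is a non-negative motivic measure on $U_{j}$ in the sense of Definition \ref{definition motivic measure} (here $f_{j}=1_{U_{j}(\mathcal{O}_{F})}$ is motivic and $\omega_{j}$ is non-vanishing on $U_{j}$). Since $\omega_{j}$ is a non-vanishing algebraic top form and $U_{j}(\mathcal{O}_{F})$ is compact open in the $F$-analytic manifold $U_{j}(F)$, the measure $\mu_{j,F}$ is Schwartz: it is smooth because $\lvert\omega_{j}\rvert_{F}$ is a smooth measure on $U_{j}(F)$ (having non-vanishing density in analytic charts) and multiplication by the locally constant function $1_{U_{j}(\mathcal{O}_{F})}$ preserves smoothness, and it is compactly supported since its support is contained in the compact set $U_{j}(\mathcal{O}_{F})$. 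The only subtlety is that the naive sum $\sum_{j}\mu_{j,F}$ would overcount the overlaps $U_{i}(\mathcal{O}_{F})\cap U_{j}(\mathcal{O}_{F})$ and need not have support exactly $X(\mathcal{O}_{F})$ with a positive density everywhere; but positivity and the support condition $\supp(\mu_{F})=X(\mathcal{O}_{F})$ still hold for $\mu_{F}:=\sum_{j=1}^{l}\mu_{j,F}$, because each summand is non-negative, each $\mu_{j,F}$ has support exactly $U_{j}(\mathcal{O}_{F})$ (the density $\lvert\omega_{j}\rvert_{F}$ never vanishes there), and $\bigcup_{j}U_{j}(\mathcal{O}_{F})=X(\mathcal{O}_{F})$. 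That $\mu:=\{\mu_{F}\}_{F}$ is genuinely a motivic measure on $X$ follows from Lemma \ref{lem: equivalent def to motivic measure}, since we have exhibited it in the form $\sum_{j}f_{j,F}\lvert\omega_{j}\rvert_{F}$ with $f_{j}=1_{U_{j}(\mathcal{O}_{F})}\in\mathcal{C}(U_{j})$.

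It remains to verify the two pointwise properties for every $F\in\mathrm{Loc}$, not just for residue characteristic large enough. Here I would note that the definability of $X(\mathcal{O}_{F})$ holds for all $F\in\mathrm{Loc}$ once $F$ avoids the finite bad set $S$ (for $F$ whose residue characteristic lies in $S$ one adjusts the $\ints$-model), and that smoothness and compactness of $\mu_{j,F}$ are geometric facts valid for all such $F$; the motivic-function bookkeeping of Definition \ref{def:motivic function}, which only applies for $F\in\mathrm{Loc}_{M'}$, is irrelevant to these two concrete properties. I expect the main obstacle to be the careful handling of the finitely many excluded primes — ensuring that after enlarging $S$ the cover, the forms, and the integrality of all coefficients behave uniformly — together with checking that $\lvert\omega_{j}\rvert_{F}$ really restricts to a positive smooth measure on $U_{j}(\mathcal{O}_{F})$, which amounts to observing that an analytic chart identifying $U_{j}(\mathcal{O}_{F})$ with $\mathcal{O}_{F}^{d}$ (via a subset of the affine coordinates, using the Jacobian criterion and Hensel's lemma) pulls $\omega_{j}$ back to $g\,dx_{1}\wedge\dots\wedge dx_{d}$ with $\lvert g\rvert_{F}$ bounded away from $0$ on $\mathcal{O}_{F}^{d}$.
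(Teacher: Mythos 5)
Your construction is essentially the paper's: pick a finite affine open cover with non-vanishing regular top forms $\omega_j$, set $\mu_F$ to be a finite sum of measures of the form $1_{(\text{definable compact set})}\cdot|\omega_j|_F$, and conclude Schwartzness from Proposition~\ref{Prop 2.13} and motivicness from the definability of $U_j(\mathcal{O}_F)$ via Lemmas~\ref{lem: operations on definable sets} and \ref{lem: equivalent def to motivic measure}. The only (immaterial) difference is that the paper first disjointifies the sets $U_j(\mathcal{O}_F)$ before summing, whereas you allow overlaps and rely, correctly, on non-negativity of each summand to get $\supp(\mu_F)=\bigcup_j U_j(\mathcal{O}_F)=X(\mathcal{O}_F)$; both choices satisfy Proposition~\ref{Prop 2.13}(1), so both give Schwartz motivic measures.
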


\begin{proof}
Choose an affine open cover $X=\bigcup\limits _{i=1}^{l}U_{i}$, with
embeddings $\psi_{i}:U_{i}\hookrightarrow\mathbb{A}_{\rats}^{N_{i}}$
and non-vanishing top forms $\omega_{i}$ on $U_{i}$ (it is possible
since $X$ is smooth). We can construct a disjoint open cover $\{V_{i}\}_{i=1}^{l}$
of $X(\mathcal{O}_{F})$ by setting $V_{1}=U_{1}(\mathcal{O}_{F})$
and $V_{i}=U_{i}(\mathcal{O}_{F})\backslash\bigcup\limits _{j=1}^{i-1}U_{j}(\mathcal{O}_{F})$.

Define the measure $\mu_{F}:=\sum\limits_{i=1}^l 1_{V_{i}}\cdot\left|\omega_{i}\right|_{F}$.
Then
by Proposition \ref{Prop 2.13}, $\mu_{F}$ is a Schwartz measure
and it is clearly non-negative and supported on $X(\mathcal{O}_{F})$
for any $F\in\mathrm{Loc}$. It is left to show that $\{\mu_{F}\}_{F\in\mathrm{Loc}}$
is motivic. By Lemmas \ref{lem: operations on definable sets} and
\ref{lem: equivalent def to motivic measure}, it is enough to show
that $U_{i}(\mathcal{O}_{F})$ is a definable subset of $U_{i}$ for
each $i$.

Choose $\ints$-models $\widetilde{U}_{i}$ of $U_{i}$ and $\widetilde{\psi}_{i}:\widetilde{U}_{i}\hookrightarrow\mathbb{A}_{\ints}^{N_{i}}$
of $\psi_{i}$, and let 
\[
B_{1}^{N_{i}}(0)=\{\vec{y}\in\VF^{N_{i}}:\mathrm{val}(y_{j})\geq0\text{ for }0\leq j\leq N_{i}\}
\]
be the ${\Ldp}$-definable set whose points over $F\in\mathrm{Loc}$
are the unit disc in $F^{N_{i}}$. Clearly, the set \[\left\{ \widetilde{\psi}_{i}(\widetilde{U}_{i})(F)\cap B_{1}^{N_{i}}(0)(F)\right\} _{F\in\mathrm{Loc}}\]
is ${\Ldp}$-definable for any $i$. This proves the proposition.
\end{proof}

\subsection{Reduction to an analytic problem}

Our next goal is to use the equivalent characterization of the (FRS)
property given in Theorem \ref{Analytic condition for (FRS)} in order
to reduce our main problem, as stated in Theorem \ref{Main result}
(with $K=\rats$), to an analytic question (see Proposition \ref{Reduction to many Qp}).
To do so, we need the following lemma: 
\begin{lem}
\label{lem:finitenumberfieldextension} Let $X$ be a smooth algebraic
$\rats$-variety and let $x_{1},\ldots,x_{k}\in X(\overline{\mathbb{Q}})$.
Then for any finite extension $K/\mathbb{Q}$ such that $x_{1},\ldots,x_{k}\in X(K)$
there exist infinitely many prime numbers $p$ with $i_{p}:K\hookrightarrow\Qp$
such that $i_{p}{}_{*}(x_{1}),\ldots,i_{p}{}_{*}(x_{k})\in X(\mathbb{Z}_{p})$. 
\end{lem}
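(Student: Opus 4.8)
The plan is to pick a single affine chart, spread the situation out over a ring of $S$-integers, and then use that a prime splitting completely (or even just having a degree-one prime factor) in $K$ gives an embedding $K\hookrightarrow\Qp$ under which our finitely many algebraic points land in $\Zp$-points. First I would reduce to the case where $X$ is affine: since $x_1,\dots,x_k$ are finitely many points, they all lie in a single affine open $U\subseteq X$ defined over $\rats$ (intersect the affine opens containing each $x_i$, or just note each $x_i$ lies in some affine open and $X(\Zp)\supseteq U(\Zp)$ for any affine open $U$), so it suffices to prove the statement for $U$ in place of $X$; fix an embedding $U\hookrightarrow\mathbb A^N_\rats$, so that each $x_i$ is a tuple $(x_{i,1},\dots,x_{i,N})\in K^N$.

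Next I would choose the ring of integers. Let $\mathcal O_K$ be the ring of integers of $K$. All coordinates $x_{i,j}$ lie in $K$, hence in $S^{-1}\mathcal O_K$ for a suitable finite set $S$ of primes of $\mathcal O_K$ (the primes appearing in the denominators of the $x_{i,j}$). Enlarging $S$ if necessary, we may also assume that $U$ has a model $\widetilde U$ over $S^{-1}(\mathcal O_K\cap\rats)$, or more simply work directly with the closure of $U$ in $\mathbb A^N$ over $S^{-1}\ints$. Then each $x_i$ defines a point of $\widetilde U(S^{-1}\mathcal O_K)$.

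Now I would invoke Chebotarev (or the much weaker fact that infinitely many rational primes have a degree-one prime factor in $K$): there are infinitely many rational primes $p$, all avoiding the finitely many primes below $S$, admitting a prime $\mathfrak p$ of $\mathcal O_K$ above $p$ with residue degree $f(\mathfrak p/p)=1$. For such $p$, completing at $\mathfrak p$ gives $i_p:K\hookrightarrow K_{\mathfrak p}\cong\Qp$, and since $\mathfrak p\notin S$ all coordinates $x_{i,j}\in S^{-1}\mathcal O_K$ have nonnegative $\mathfrak p$-adic valuation, i.e. $i_p(x_{i,j})\in\Zp$. Hence $i_p{}_*(x_i)\in\widetilde U(\Zp)\subseteq X(\Zp)$ for all $i$, which is exactly the claim. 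I do not expect any serious obstacle here; the only mild subtlety is bookkeeping the finite set of excluded primes (those dividing denominators of the $x_{i,j}$ and those where the chosen integral model degenerates), and noting that excluding finitely many primes still leaves infinitely many with a degree-one factor — which is immediate, e.g. from Chebotarev's density theorem applied to the Galois closure of $K/\rats$.
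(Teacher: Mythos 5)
Your proposal is correct and reaches the same conclusion, but it packages the argument in the more standard number-theoretic language of prime decomposition, whereas the paper works concretely with a primitive element and Hensel's lemma. The paper passes to the Galois closure $K'=\rats(\alpha)$, picks primes $p$ large enough that the minimal polynomial $q$ of $\alpha$ is separable mod $p$ and (by Chebotarev) splits completely, Hensel-lifts a root to construct $i_p:K'\hookrightarrow\Qp$ explicitly, and then bounds denominators of the coordinates $x_{i,j}$ by hand. You instead work directly with $\mathcal{O}_K$ and $S$-integers, and obtain $i_p$ as the completion $K\hookrightarrow K_{\mathfrak p}\cong\Qp$ at a suitable prime $\mathfrak p$. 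The two constructions produce the same embeddings for the same primes; your version is cleaner and isolates the denominator bookkeeping into a single finite set $S$, while the paper's is more elementary and self-contained.

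One small wrinkle to state explicitly in your version: for $K_{\mathfrak p}\cong\Qp$ you need $e(\mathfrak p/p)=f(\mathfrak p/p)=1$, not merely residue degree one, so $\mathfrak p$ must also be unramified. This is again only a finite exclusion (the primes dividing the discriminant of $K/\rats$), and it is automatic if you take the Chebotarev route through the Galois closure (primes splitting completely are unramified), which is effectively what the paper does with its lower bound $p>N$ guaranteeing separability of $\overline q$. As written, though, "a degree-one prime factor, avoiding the denominator primes" is not quite enough; add "unramified" (or "coprime to the discriminant") to the excluded list. With that adjustment the argument is complete.
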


\begin{proof}
Let $K$ be such that $x_{1},\ldots,x_{k}\in X(K)$ and let $K'$
be the Galois closure of $K$. We can write $K'=\rats(\alpha)$ where
$\alpha$ is a root of a monic minimal polynomial $x^{r}+\sum\limits _{i=0}^{r-1}a_{i}x^{i}=q(x)\in\ints[x]$.
Denote $\overline{q}(x)$ for the reduction modulo $p$ of $q(x)$,
then by a conclusion of Chebotarev's density theorem, $\overline{q}(x)$
splits in $\mathbb{F}_{p}$ for infinitely many primes $p$. 
Set 
\[
S_{N}=\{p\text{ prime}:\overline{q}(x)\text{ splits in }\mathbb{F}_{p}[x]\text{ and }p>N\}
\]
where $N=r\max\limits _{i}|a_{i}|$, then $\overline{q}(x)$ is separable
in $\mathbb{F}_{p}[x]$ for every $p\in S_{N}$.

Now, use Hensel's lemma for each $p\in S_{N}$ to lift a root of $\bar{q}(x)\in\mathbb{F}_{p}[x]$
to a root $\alpha'$ of $q(x)$ in $\ints_{p}$. This gives rise to
an embedding $i_{p}:K'\hookrightarrow\Qp$ by $\alpha\mapsto\alpha'$,
where $\alpha$ maps to $\Zp$ as $\alpha'$ lies in $\Zp$, which
in turn gives rise to a map $i_{p_{*}}:X(K')\to X(\Qp)$. 

Finally, for each $x_{i}\in X(K)\subset X(K')$ take an open affine
neighborhood $U_{i}$ of $x_{i}$ and let $\psi:U_{i}\hookrightarrow\mathbb{A}_{K'}^{N_{i}}$
be a closed embedding, for some $N_{i}\in\nats$. We may write each
$x_{i}\in U_{i}(K')\subseteq(K')^{N_{i}}$ in the form $x_{i}=(x_{i,1}(\alpha'),\ldots,x_{i,N_{i}}(\alpha'))$
where $x_{i,j}(\alpha')=\sum\limits _{t=0}^{r-1}\frac{b_{i,j,t}}{c_{i,j,t}}(\alpha')^{t}$
with $b_{i,j,t},c_{i,j,t}\in\ints$. Taking $N'=\max\{\{|c_{i,j,t}|\}_{i,j,t},N\}$,
we get that $i_{p_{*}}(x_{1}),\ldots,i_{p_{*}}(x_{k})\in X(\Zp)$
for all $p\in S_{N'}$. 
\end{proof}
Using Theorem \ref{Analytic condition for (FRS)} and Lemma \ref{lem:finitenumberfieldextension}
we can now reduce our main problem to the following. 
\begin{prop}
\label{Reduction to many Qp} Let $X$ be a smooth algebraic
$\rats$-variety, $\mu$ be a motivic measure on $X$ as in Proposition \ref{prop:existsmotivicschwartz},
and let $\varphi:X\to\mathbb{A}_{\mathbb{Q}}^{m}$ be a strongly dominant
morphism. Assume that there exists $n\in\mathbb{N}$, such that for
every large enough prime $p$ the measure $\varphi_{*}^{n}(\mu_{\mathbb{Q}_{p}}\times\ldots\times\mu_{\mathbb{Q}_{p}})$
has continuous density with respect to the normalized Haar measure
on $(\Qp)^{m}$. Then the map $\varphi^{n}:X\times\ldots\times X\to\mathbb{A}_{\mathbb{Q}}^{m}$
is (FRS). 
\end{prop}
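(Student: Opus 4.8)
The plan is to prove Proposition~\ref{Reduction to many Qp} by combining the analytic criterion for the (FRS) property (Theorem~\ref{Analytic condition for (FRS)}, condition~(3)) with the behavior of the motivic measure $\mu$ under convolution and pushforward, using Lemma~\ref{lem:finitenumberfieldextension} to produce, for any prescribed point, a prime $p$ at which the point is integral. The key observation is that the measure $\varphi^n_*(\mu_{\Qp}\times\dots\times\mu_{\Qp})$ on $\Qp^m$ is exactly the pushforward under $\varphi^n$ of the Schwartz measure $\mu_{\Qp}\times\dots\times\mu_{\Qp}$ on $(X\times\dots\times X)(\Qp)$, and that this product measure is a non-negative Schwartz measure supported precisely on $X(\Zp)^n$ (since $\supp(\mu_{\Qp})=X(\Zp)$ by Proposition~\ref{prop:existsmotivicschwartz}).

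First I would fix an arbitrary closed point, i.e. a Galois orbit, of $X^n:=X\times\dots\times X$; equivalently fix $z=(x_1,\dots,x_n)\in X^n(\overline{\rats})$ and a finite extension $K/\rats$ with $z\in X^n(K)$. By Lemma~\ref{lem:finitenumberfieldextension} applied to the coordinates $x_1,\dots,x_n$ (viewed in $X(\overline\rats)$), there are infinitely many primes $p$ together with embeddings $i_p:K\hookrightarrow\Qp$ such that $i_{p*}(x_1),\dots,i_{p*}(x_n)\in X(\Zp)$; in particular we may choose such a $p$ that is also ``large enough'' in the sense required by the hypothesis of the proposition. For this $p$, the image of $z$ in $X^n(\Qp)$ lies in $X(\Zp)^n=\supp(\mu_{\Qp}\times\dots\times\mu_{\Qp})$, and since this product measure is a non-negative Schwartz measure on the smooth variety $X^n$ which does not vanish at the image of $z$, and its pushforward $\varphi^n_*(\mu_{\Qp}\times\dots\times\mu_{\Qp})$ has continuous density with respect to the Haar measure on $\Qp^m$ by hypothesis, Theorem~\ref{Analytic condition for (FRS)} (the implication $(3)\Rightarrow(1)$, applied with the field $K'=K$) gives that $\varphi^n$ is (FRS) at the image of $z$ over $\Qp$, hence at $z$ itself, i.e. at every point of the Galois orbit.

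Since condition~(3) of Theorem~\ref{Analytic condition for (FRS)} only demands the existence of \emph{one} suitable local field $F\supseteq K'$ and \emph{one} suitable non-negative Schwartz measure not vanishing at the point, and since being (FRS) at a point is a condition stable under the base change from $\rats$ to $\overline\rats$ (the (FRS) locus is defined over $\rats$), establishing the property at every closed point of $X^n$ yields that $\varphi^n:X^n\to\mathbb{A}^m_\rats$ is (FRS) at all points of $X^n(\overline\rats)$. It remains only to check flatness of $\varphi^n$: this follows from the same argument, since the analytic criterion includes flatness at the point, or alternatively one notes that $\varphi^n$ is flat wherever it is (FRS) at a point by Definition~\ref{def:FRS}, so (FRS)-ness at all geometric points gives flatness everywhere. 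Hence $\varphi^n$ is (FRS), completing the proof.

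The main obstacle, and the only genuinely delicate point, is matching the hypotheses of Lemma~\ref{lem:finitenumberfieldextension} with the ``large enough prime'' condition in the statement: one needs the set of primes produced by the lemma (which is cofinite in a Chebotarev density set, hence infinite) to meet the cofinite set of ``good'' primes, which is immediate since an infinite set meets every cofinite set. A secondary point to handle carefully is that the product measure $\mu_{\Qp}\times\dots\times\mu_{\Qp}$ is genuinely Schwartz on $X^n(\Qp)$ and has the stated support --- this is a routine consequence of Proposition~\ref{prop:existsmotivicschwartz} together with the fact that a finite product of Schwartz measures is Schwartz and supports multiply --- and that $\varphi^n_*(\mu\times\dots\times\mu)$ as defined via the convolution $\varphi_*(\mu)*\dots*\varphi_*(\mu)$ really is the honest pushforward of the product measure under the morphism $\varphi^n$, which is the content of the identity $\varphi^n_*(\mu_1\times\dots\times\mu_n)=\varphi_*(\mu_1)*\dots*\varphi_*(\mu_n)$ recorded in the introduction.
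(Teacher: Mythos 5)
Your proof follows essentially the same route as the paper's: fix a geometric point $z=(x_1,\dots,x_n)$ defined over some number field, produce via Lemma~\ref{lem:finitenumberfieldextension} an embedding into $\Qp$ with $p$ large enough so that the integral-point condition and the hypothesis both hold, and then apply the analytic criterion (Theorem~\ref{Analytic condition for (FRS)}, condition~(3)) with the Schwartz product measure $\mu_{\Qp}\times\dots\times\mu_{\Qp}$. Your treatment of the supporting observations (the product is Schwartz, supported on $X(\Zp)^n$, and $\varphi^n_*(\mu\times\dots\times\mu)=\varphi_*(\mu)*\dots*\varphi_*(\mu)$; flatness follows since (FRS) at a point includes flatness there) is fine.

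There is, however, one small gap worth correcting: you apply condition~(3) ``with the field $K'=K$.'' But condition~(3) of Theorem~\ref{Analytic condition for (FRS)} is a \emph{universally quantified} condition: to conclude that $\varphi^n$ is (FRS) at $z$ you must exhibit, for \emph{every} finite extension $K'/K$, a local field $F\supseteq K'$ and a suitable Schwartz measure. Checking only $K'=K$ does not suffice to invoke the implication $(3)\Rightarrow(1)$. Fortunately your argument repairs itself with no new ideas: for any $K'/K$ one still has $z\in X^n(K')$, and Lemma~\ref{lem:finitenumberfieldextension} applied to $K'$ produces infinitely many $p$ with $K'\hookrightarrow\Qp$ and $i_{p*}(x_j)\in X(\Zp)$, after which the rest of your argument runs unchanged. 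The paper's proof makes exactly this explicit (``Given such $K'/K$, by Lemma~\ref{lem:finitenumberfieldextension}, we can choose $F=\Qp$ \dots''), and you should do likewise.
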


\begin{proof}
Let $x=(x_{1},\ldots,x_{n})\in\left(X\times\ldots\times X\right)(\overline{\mathbb{Q}})$.
There exists a finite extension $K/\mathbb{Q}$ such that $x_{1},\ldots,x_{n}\in X(K)$.
By \cite[Theorem 3.4]{AA16}, in order to show that $\varphi^{n}$
is (FRS) at $x$ it is enough to show that for any finite extension
$K'/K$, there exists a non-Archimedean local field $F$ containing
$K'$ and a non-negative Schwartz measure $\mu$ on $\left(X\times\ldots\times X\right)(F)$
that does not vanish at $x$, such that $\varphi_{*}^{n}(\mu)$ has
continuous density. Given such $K'/K$, by Lemma \ref{lem:finitenumberfieldextension},
we can choose $F=\mathbb{Q}_{p}$ for large enough $p$ such that
$K'\hookrightarrow\mathbb{Q}_{p}$ and $i_{p}{}_{*}(x_{1}),\ldots,i_{p}{}_{*}(x_{n})\in X(\mathbb{Z}_{p})$.
By our assumption, $\varphi_{*}^{n}(\mu_{\mathbb{Q}_{p}}\times\ldots\times\mu_{\mathbb{Q}_{p}})$
has continuous density with respect to the normalized Haar measure
on $(\mathbb{Q}_{p})^{m}$, and $\mu_{\mathbb{Q}_{p}}\times\ldots\times\mu_{\mathbb{Q}_{p}}$
does not vanish at $x$, so we are done. 
\end{proof}

\section{Pushforward of a motivic measure under a strongly dominant map}

In the last section we have reduced Theorem \ref{Main result}, in
the case $K=\rats$, to an analytic question on the pushforward of
motivic Schwartz measures under a strongly dominant morphism $\varphi:X\to\mathbb{A}_{\mathbb{Q}}^{m}$
(Proposition \ref{Reduction to many Qp}), where $X$ is a smooth
$\rats$-variety. In this section, we show that the pushforward
under $\varphi$ of a motivic Schwartz measure $\mu=\{\mu_{F}\}_{F\in\mathrm{Loc}}$
yields a motivic measure $\{\varphi_{*}(\mu_{F})\}_{F\in\mathrm{Loc}}$
with $L^{1}$-density with respect to the normalized Haar measure
on $F^{m}$ (see Corollary \ref{main result on push of morivic}).
This will be a conclusion of the more general Theorem \ref{thm:pushofconstructiblefunctionunderdominant},
whose proof relies on the following consequence of \cite[Theorem 10.1.1]{CL08}: 
\begin{lem}
\label{lem:sumonfibersisconstructible} Suppose that $X$ and $Y$
are smooth algebraic $\rats$-varieties, $\varphi:X\to Y$ is a morphism
with finite fibers and let $f\in\mathcal{C}(X)$. Then the function
$I_{f,F}(y)=\sum_{x\in\varphi^{-1}(y)}f_{F}(x)$ is in $\mathcal{C}(Y)$. 
\end{lem}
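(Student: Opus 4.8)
The plan is to reduce Lemma~\ref{lem:sumonfibersisconstructible} to the cited
result \cite[Theorem 10.1.1]{CL08}, which states that integration of motivic
functions along the fibers of a definable map produces a motivic function; the
only work is to see that summation over a \emph{finite} algebraic fiber is a
special case of motivic integration against an appropriate motivic measure. We
first pass from the algebraic setting to the Denef--Pas setting using the
machinery of Subsection~\ref{subsec:Motivic-measures-on}: choose $\ints$-models
$\widetilde{X}$, $\widetilde{Y}$ and a $\ints$-model
$\widetilde{\varphi}:\widetilde{X}\to\widetilde{Y}$ of $\varphi$ (possible
after localizing $\ints$ at finitely many primes, as in the proof of
Lemma~\ref{Pulback of motivic}), and cover $\widetilde{X}$ and $\widetilde{Y}$
by finitely many affine opens with closed embeddings into affine spaces. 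Since
the statement is local on $Y$ and the functions $I_{f,F}$ glue, we may assume
$X\subseteq\mathbb{A}^{N}_{\ints}$ and $Y\subseteq\mathbb{A}^{M}_{\ints}$ are
affine and $f\in\mathcal{C}(X)$ in the sense of
Definition~\ref{def:motivic function}. Then $X=(X_F)_{F\in\mathrm{Loc}_{M_0}}$
is an $\Ldp$-definable set (with only valued-field variables), and likewise
for $Y$ and for the graph of $\varphi$.

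The key step is to realize the counting sum $\sum_{x\in\varphi^{-1}(y)}f_F(x)$
as $\int_{X_F} f_F(x)\,1_{\Gamma_{\varphi}}(x,y)\,d\mu_F$ for a suitable
motivic measure $\mu$ on the $\VF^N$-sort carrying the copy of $X$. The
subtlety is that the natural Haar measure on $\VF^N$ assigns measure zero to a
finite set, so one cannot directly integrate against it; instead I would use
the zero-dimensional part of the motivic measure formalism. Concretely,
$\varphi$ having finite fibers means that $\varphi^{-1}(y)$ is a
$0$-dimensional definable set for each $y$, and \cite[Theorem 10.1.1]{CL08}
(equivalently the version in \cite[Theorem 4.3.1]{CGH14-2} combined with the
treatment of integration over the residue-field and finite directions) produces
a motivic function $g\in\mathcal{C}(Y)$ such that
$g_F(y)=\sum_{x\in\varphi^{-1}(y)}f_F(x)$ whenever the sum is finite, which it
is here for every $y\in Y_F$ by the finite-fiber hypothesis together with a
uniform bound on the fiber cardinality coming from the Nullstellensatz applied
to the $\ints$-model (so that there is no $L^1$-convergence issue to check and
the conclusion holds for all $F\in\mathrm{Loc}_{M}$ with $M$ large). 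An
alternative, if one prefers to stay purely within Theorem~\ref{integration of motivic}
as stated in the excerpt: present $\varphi^{-1}(y)$ as a family, over
$y\in Y$, of finite subsets of $\mathrm{RF}^{r}$ for suitable $r$ via a
definable injection $X\hookrightarrow Y\times\mathrm{RF}^{r}$ (available since
over a local field the reductions separate the finitely many fiber points,
after enlarging the ambient residue-field coordinates), and then apply
Theorem~\ref{integration of motivic} with $\mu$ the counting measure on the
$\mathrm{RF}^{r}$-sort; the pushforward of $f$ to $Y\times\mathrm{RF}^{r}$ is
motivic by the closure of $\mathcal{C}$ under the relevant operations, and
integrating it out along $\mathrm{RF}^{r}$ yields $g\in\mathcal{C}(Y)$.

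Finally, pulling back along the $\ints$-model identifications
$\widetilde{Y}(F)\simeq Y(F)$ of Lemma~\ref{Pulback of motivic} transports
$g$ to a motivic function on the original $\rats$-variety $Y$, with the same
defining formula, establishing $I_{f}\in\mathcal{C}(Y)$; gluing the finitely
many affine pieces (using Lemma~\ref{lem: operations on definable sets} to
handle the overlaps) completes the proof. The main obstacle I anticipate is
purely bookkeeping rather than conceptual: arranging the definable injection of
$X$ into $Y\times\mathrm{RF}^{r}$ (or, equivalently, checking that summation
over a finite algebraic fiber really falls under the scope of the cited
integration theorem with a uniform-in-$F$ statement valid for all sufficiently
large residue characteristic) and ensuring the cardinality bound on the fibers
is uniform, so that the hypothesis ``$f_F(x_0,\cdot)\in L^1$'' in
Theorem~\ref{integration of motivic} is automatically satisfied. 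Everything
else is an application of the quantifier-elimination and model-theoretic
transfer tools already recorded in Subsections~\ref{subsec: Denef-Pas}
and~\ref{subsec:Motivic-measures-on}.
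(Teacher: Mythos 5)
Your primary route is essentially the paper's: both reduce to affine covers of $X$ and $Y$, pass to $\ints$-models, disjointify the cover of $\varphi^{-1}(V_j)$, and then invoke \cite[Theorem 10.1.1]{CL08}. Where you diverge is in how you read that theorem: you frame it as integration against a ``suitable motivic measure'' and then worry that the Haar-type measure on $\VF^N$ gives zero mass to finite fibers, resolving this by appealing vaguely to ``the zero-dimensional part of the motivic measure formalism.'' That detour is unnecessary and slightly misrepresents the cited result. \cite[Theorem 10.1.1]{CL08} provides the canonical pushforward $\varphi_!$ of constructible motivic functions along any definable morphism, with no user-supplied measure, and for a morphism of relative dimension zero it is literally the finite sum over fibers. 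The paper simply writes $I_{f_j,F}(y)=\sum_i \varphi_{ji*}(f_{ji})$ on a disjoint definable cover and applies the theorem to each summand; once you read \cite[Theorem 10.1.1]{CL08} as a pushforward statement rather than as an integral against a chosen measure, the difficulty you flagged evaporates.

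Your alternative route does have a real gap. You propose a definable injection $X\hookrightarrow Y\times\RF^{r}$ so that the fibers become finite definable subsets of $\RF^{r}$, and then integrate against the counting measure via Theorem~\ref{integration of motivic}. The injection is claimed on the grounds that ``the reductions separate the finitely many fiber points,'' but this is false in general: two distinct algebraic points in a fiber $\varphi^{-1}(y)$ can be congruent modulo $\mathfrak{m}_F$, and indeed modulo $\mathfrak{m}_F^{k}$ for $k$ that is unbounded as $y$ varies and as $F$ ranges over $\mathrm{Loc}$, so no fixed number of residue-field coordinates can separate them uniformly. Making such a parametrization uniform in $y$ and $F$ is essentially a cell-decomposition task of comparable depth to \cite[Theorem 10.1.1]{CL08} itself, so the alternative as written does not close. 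Stick with the primary route, but cite the theorem as a pushforward result rather than trying to manufacture a measure.
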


\begin{proof}
Let $\bigcup\limits _{j}V_{j}$ be an open affine cover of $Y$, and
for each $j$ take an open affine cover $\bigcup\limits _{i=1}^{r}U_{ji}$
of $\varphi^{-1}(V_{j})$. By the definition 
of a motivic function on a $\rats$-variety, it is enough to prove
the lemma for $\varphi_{j}:=\varphi|_{\varphi^{-1}(V_{j})}:\varphi^{-1}(V_{j})\rightarrow V_{j}$.
By choosing a $\ints$-model for the diagram consisting of $X,Y,\{V_{j}\}_{j=1}^s$,
$\{U_{ji}\}_{i=1}^r$, their intersections, and the morphisms between these
objects, we can assume they are all defined over $\ints$. Construct
a definable disjoint cover of $\varphi^{-1}(V_{j})$ by setting $U'_{j1}=U_{j1}$,
and $U'_{ji}=U_{ji}\backslash\bigcup\limits _{k=1}^{i-1}U_{jk}$,
and let $\varphi_{ji}:=\varphi|_{U'_{ji}}$ and $f_{ji}:=f|_{U'_{ji}}$.
Then we have the following: 
\[
I_{f_{j},F}(y)=\sum\limits _{i=1}^{r}\sum_{x\in\varphi_{ji}^{-1}(y)}f_{ji,F}(x)=\sum\limits _{i=1}^{r}\varphi_{ji*}(f_{ji}).
\]
Since by \cite[Theorem 10.1.1]{CL08} every summand of the RHS of
the above formula is a constructible function on $Y$, we are done. 
\end{proof}
Using Lemma \ref{lem:sumonfibersisconstructible}, we can now show
the following variant of \cite[Proposition 5.6 and Corollary 3.6]{AA16}: 
\begin{thm}
\label{thm:pushofconstructiblefunctionunderdominant} Let $X$ and
$Y$ be smooth algebraic $\rats$-varieties, let $M\in\nats$, and
let $\varphi:X\to Y$ be a strongly dominant morphism. Let $\mu_{X}$
be a motivic measure on $X$ such that $\mu_{X,F}$ is a Schwartz
measure for every $F\in\mathrm{Loc}_{M}$, and let $\mu_{Y}$ be a
motivic measure on $Y$ such that $\mu_{Y,F}$ is smooth and non-vanishing
on $Y(F)$ for every $F\in\mathrm{Loc}_{M}$. 
\begin{enumerate}
\item $\varphi_{*}(\mu_{X})$ is a motivic measure, and $\varphi_{*}(\mu_{X,F})$
is absolutely continuous with respect to $\mu_{Y,F}$ for any $F\in\mathrm{Loc}_{M}$. 
\item In particular, if $Y$ has a non-vanishing top form $\omega_{Y}$,
then there exists $g\in\mathcal{C}(Y)$ such that $\varphi_{*}(\mu_{X,F})$
is absolutely continuous with respect to $\left|\omega_{Y}\right|_{F}$
with density $g_{F}$, for any $F\in\mathrm{Loc}_{M}$. 
\end{enumerate}
\end{thm}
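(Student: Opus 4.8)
The plan is to reduce part (1) to part (2) by a local computation, and to establish part (2) by working locally on $X$ and $Y$, passing to a finite-to-one model of $\varphi$, and then invoking the motivic integration machinery (Theorem \ref{integration of motivic}) together with Lemma \ref{lem:sumonfibersisconstructible}. First I would check that the two assertions are essentially equivalent: since $Y$ is smooth it is covered by open affines $V_j$ each carrying a non-vanishing top form $\omega_{Y,j}$, and on $V_j$ the given smooth non-vanishing motivic measure $\mu_{Y,F}$ differs from $\left|\omega_{Y,j}\right|_F$ by multiplication by a motivic function that is locally constant, bounded and bounded away from zero (by Proposition \ref{Prop 2.13}). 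So it suffices to prove the statement with $\mu_Y$ replaced by $\left|\omega_Y\right|_F$ for a fixed non-vanishing top form, i.e. to prove part (2), and then glue.

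To prove part (2), I would first reduce to the case where $\dim X = \dim Y =: m$. Since $\varphi$ is strongly dominant, on each absolutely irreducible component $X_i$ we have $\dim X_i \ge m$, and $\varphi|_{X_i}$ factors generically through a smooth locally closed subvariety; more concretely, after shrinking we can find, on each piece, a smooth subvariety $Z \subseteq X$ of dimension $m$ with $\varphi|_Z$ dominant and generically finite, such that $X$ is (locally, over the relevant open) a product $Z \times \mathbb{A}^{d}$ up to an étale map, and the fibers of $X \to Z$ are affine spaces. Pushing $\mu_X$ forward along $X \to Z$ first — this is a fibre integral of a motivic Schwartz measure over affine-space fibres, which stays motivic and Schwartz by Theorem \ref{integration of motivic} — we are reduced to a generically finite dominant morphism $\varphi: Z \to Y$ between smooth $m$-dimensional $\rats$-varieties, with $\mu_Z$ a motivic Schwartz measure. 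Now over a dense open $Y^\circ \subseteq Y$ the morphism $\varphi$ is finite étale, and there $\left|\varphi^*\omega_Y\right|_F = \left|J\right|_F \left|\omega_Z\right|_F$ where $J$ is the (invertible on $\varphi^{-1}(Y^\circ)$) Jacobian; writing $\mu_{Z,F} = f_F \left|\omega_Z\right|_F$ locally with $f \in \mathcal{C}(Z)$, the change-of-variables formula for the $p$-adic integral gives, for $y \in Y^\circ(F)$,
\[
\varphi_*(\mu_{Z,F}) = \Bigl(\sum_{x \in \varphi^{-1}(y)} \frac{f_F(x)}{\left|J(x)\right|_F}\Bigr)\left|\omega_Y\right|_F.
\]
The function $x \mapsto f_F(x)/\left|J(x)\right|_F = f_F(x)\cdot q_F^{\val J(x)}$ is motivic on $\varphi^{-1}(Y^\circ)$, so by Lemma \ref{lem:sumonfibersisconstructible} the coefficient $g_F(y) := \sum_{x \in \varphi^{-1}(y)} f_F(x)/\left|J(x)\right|_F$ is a motivic function on $Y^\circ$, and then on all of $Y$ (extend by zero, or handle the complement of $Y^\circ$, which has lower dimension and hence $\left|\omega_Y\right|_F$-measure zero, so it does not affect absolute continuity). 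This $g$ witnesses both that $\varphi_*(\mu_X)$ is a motivic measure and that it is absolutely continuous with respect to $\left|\omega_Y\right|_F$; gluing back over the cover $\{V_j\}$ and re-inserting the comparison function between $\left|\omega_Y\right|_F$ and $\mu_{Y,F}$ gives part (1).

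The main obstacle I expect is the reduction to the generically finite, equidimensional case — making precise the "fibre over a smooth $m$-dimensional slice is generically an affine space" step uniformly over $F \in \mathrm{Loc}_M$, and keeping track of the various dense opens one discards so that the discarded loci genuinely have measure zero after pushforward. One has to be careful that shrinking $X$ to an open on which such a slicing exists is harmless: since $\varphi$ is strongly dominant, the image of each component is dense, and the complement of the good open in each component maps into a proper closed subvariety of $Y$, which is $\left|\omega_Y\right|_F$-null, so its contribution can be absorbed. A secondary technical point is that Lemma \ref{lem:sumonfibersisconstructible} is stated for morphisms with finite fibres between smooth varieties, so the reduction must genuinely arrange finiteness on the nose (not just generic finiteness) — this is why I restrict to $\varphi^{-1}(Y^\circ) \to Y^\circ$ before applying it, and separately argue that the boundary contributes nothing.
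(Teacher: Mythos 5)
Your toolkit is the same as the paper's (reduce to a single non-vanishing top form, write $\mu_{X,F}=f_F\left|\omega_X\right|_F$, discard a lower-dimensional bad locus, and produce the density by combining the fiber-sum Lemma \ref{lem:sumonfibersisconstructible} with motivic integration, Theorem \ref{integration of motivic}), but your central reduction has a genuine gap: the intermediate morphism $X\to Z$ along which you want to push forward first does not exist. What one can arrange on the smooth locus of $\varphi$ (Zariski-locally, after choosing \'{e}tale fiber coordinates) is a factorization $U\overset{\widetilde{\varphi}}{\to}\mathbb{A}^{\dim X-\dim Y}\times Y\overset{\pi}{\to}Y$ with $\widetilde{\varphi}$ \'{e}tale; a slice $Z=\widetilde{\varphi}^{-1}(\{t_0\}\times Y)$ exists, but there is no algebraic projection $U\to Z$ with affine-space fibers -- your own caveat ``a product $Z\times\mathbb{A}^{d}$ \emph{up to an \'{e}tale map}'' is exactly where this breaks, since ``up to \'{e}tale'' destroys the projection. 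Consequently the step ``push $\mu_X$ forward along $X\to Z$ first; this stays motivic and Schwartz by Theorem \ref{integration of motivic}'' is unsupported: that theorem handles fiberwise integration along an actual projection in definable coordinates, whereas showing that a pushforward along a general morphism onto a variety $Z$ is again a motivic (Schwartz) measure is essentially the statement you are trying to prove. The paper performs the two operations in the opposite order, which is what makes them legal: first push forward under the finite-fiber \'{e}tale map $\widetilde{\varphi}$ (Lemma \ref{lem:sumonfibersisconstructible} gives a motivic density $h$ on $\mathbb{A}^{\dim X-\dim Y}\times Y$), then integrate out the affine coordinates via Theorem \ref{integration of motivic}; the absolute continuity and the fiber-integral formula for the $L^1$-density are quoted from \cite[Corollary 3.6]{AA16} rather than re-derived.

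A secondary, repairable, flaw: you justify discarding the bad loci by claiming they ``map into a proper closed subvariety of $Y$''; this is false in general (a hypersurface in $X$ can dominate $Y$ -- take $\varphi(x,y)=x$ and the curve $y=0$). The correct reason is upstairs: the discarded sets are proper closed subvarieties of each absolutely irreducible component of $X$ (generic smoothness in characteristic zero), hence of smaller dimension, hence $\mu_{X,F}$-null because $\mu_{X,F}$ is a smooth measure; similarly $\varphi^{-1}(Y\setminus Y^{\circ})$ is lower-dimensional in each component by strong dominance, so no mass escapes to $Y\setminus Y^{\circ}$. Your finite-\'{e}tale change-of-variables computation over $Y^{\circ}$ and the extension of the motivic density by zero on the definable complement are fine once the preceding reduction is replaced by the \'{e}tale factorization.
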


\begin{proof}
Since $Y$ is smooth, by Definition \ref{definition motivic measure}
we may assume that $\Omega_{Y/\mathbb{Q}}^{\mathrm{top}}$ is free,
and in particular that $Y$ has a non-vanishing top form $\omega_{Y}$.
Hence, it is enough prove part 2) of the theorem. By Lemma \ref{lem: equivalent def to motivic measure}
and the smoothness of $X$, we may assume that $\Omega_{X/\mathbb{Q}}^{\mathrm{top}}$
is free and that there exists $f\in \mathcal{C}(X)$ such that $\mu_{X,F}=f_{F}\left|\omega_{X}\right|_{F}$
for some top form $\omega_{X}$.

Denote by $X^{\mathrm{sm},\varphi}$ the smooth locus of $\varphi$
and by $i_{\mathrm{sm}}:X^{\mathrm{sm},\varphi}\hookrightarrow X$
the inclusion. Since $\varphi$ is strongly dominant, we have by \cite[Corollary 3.6]{AA16}
that for any $F\in\mathrm{Loc}_{M}$, the measure $\varphi_{*}(f_{F}\left|\omega_{X}\right|_{F})$
is absolutely continuous with respect to $\left|\omega_{Y}\right|_{F}$
and has an $L^{1}$-density $g_{F}$ such that 
\[
g_{F}(y)=\int_{\varphi^{-1}(y)\cap i_{\mathrm{sm}}(X^{\mathrm{sm},\varphi})(F)}f_{F}(x)\left|\frac{\omega_{X}}{\varphi^{*}\omega_{Y}}|_{\varphi^{-1}(y)\cap i_{\mathrm{sm}}(X^{\mathrm{sm},\varphi})}\right|_{F}.
\]

To finish the theorem, we need to show that $g\in\mathcal{C}(Y)$.
Let $j:U\hookrightarrow X^{\mathrm{sm},\varphi}$ be an open dense
affine subvariety of $X^{\mathrm{sm},\varphi}$ and set $\psi:=i_{\mathrm{sm}}\circ j:U\hookrightarrow X$
and $\varphi_{U}:=\varphi\circ\psi:U\to Y$. Since $\varphi_{U}$
is a smooth map and $\Omega_{X/\mathbb{Q}}^{\mathrm{top}}$ and $\Omega_{Y/\mathbb{Q}}^{\mathrm{top}}$
are free, $\varphi_{U}$ factors as: 
\[
\varphi_{U}:U\overset{\widetilde{\varphi}}{\to}\mathbb{A}_{\rats}^{\mathrm{dim}X-\mathrm{dim}Y}\times Y\overset{\pi}{\to}Y,
\]
where $\pi$ is the projection, and $\widetilde{\varphi}$ is an \'{e}tale
map. Since $U$ is open and dense in $X^{\mathrm{sm},\varphi}$, we
have that $\psi(U(F))$ is dense in $X^{\mathrm{sm},\varphi}(F)$
for any $F\in\mathrm{Loc}$. 
This implies 
\begin{align*}
\int_{\varphi^{-1}(y)\cap i_{\mathrm{sm}}(X^{\mathrm{sm},\varphi})(F)}f_{F}\cdot\left|\frac{\omega_{X}}{\varphi^{*}\omega_{Y}}|_{\varphi^{-1}(y)\cap i_{\mathrm{sm}}(X^{\mathrm{sm},\varphi})}\right|_{F} & =\int_{\varphi^{-1}(y)\cap\psi(U(F))}f_{F}\cdot\left|\frac{\omega_{X}}{\varphi^{*}\omega_{Y}}\right|_{F}\\
 & =\int_{\varphi_{U}^{-1}(y)(F)}(f\circ\psi)_{F}\left|\frac{\psi^{*}(\omega_{X})}{(\varphi_{U})^{*}\omega_{Y}}\right|_{F},
\end{align*}
and therefore 
\[
\varphi_{*}(f_{F}\left|\omega_{X}\right|_{F})=\varphi_{U}{}_{*}\left((f\circ\psi)_{F}\left|\psi^{*}(\omega_{X})\right|_{F}\right)=\pi_{*}\left(\widetilde{\varphi}_{*}\left((f\circ\psi)_{F}\left|\psi^{*}(\omega_{X})\right|_{F}\right)\right).
\]
Now, by Lemma \ref{Pulback of motivic}, we have that $f\circ\psi\in\mathcal{C}(U)$.
As $\widetilde{\varphi}$ is \'{e}tale, it has finite fibers, so we can
write: 
\[
\widetilde{\varphi}_{*}\left((f\circ\psi)_{F}\left|\psi^{*}(\omega_{X})\right|_{F}\right)=h_{F}\left|\omega\times\omega_{Y}\right|_{F},
\]
where $\omega$ is a top form on $\mathbb{A}_{\rats}^{\mathrm{dim}X-\mathrm{dim}Y}$
which induces the normalized Haar measure $\lambda=\left|\omega\right|_{F}$
on $F^{\mathrm{dim}X-\mathrm{dim}Y}$, and 
\[
h_{F}(t,y)=\sum_{x\in\widetilde{\varphi}^{-1}(t,y)(F)}(f\circ\psi)_{F}(x)\left|\frac{\psi^{*}(\omega_{X})}{\widetilde{\varphi}^{*}\left(\omega\times\omega_{Y}\right)}\right|_{F}(x).
\]
By Lemma \ref{lem:sumonfibersisconstructible}, we have that $h=\{h_{F}\}_{F\in\mathrm{Loc}}$
is in $\mathcal{C}(\mathbb{A}_{\rats}^{\mathrm{dim}X-\mathrm{dim}Y}\times Y)$.
Notice that for any $F\in\mathrm{Loc}_{M}$, we have $\left(g_{F}\left|\omega_{Y}\right|_{F}\right)(y)=\pi_{*}\left(h_{F}\left|\omega\times\omega_{Y}\right|_{F}\right)(y)$,
that is: 
\[
g_{F}(y)=\int_{F^{\mathrm{dim}X-\mathrm{dim}Y}\times\{y\}}h_{F}(t,y)d\lambda.
\]
Using \cite[Theorem 4.3.1]{CGH14-2}, we have $g=\{g_{F}\}_{F\in\mathrm{Loc}_{M}}\in\mathcal{C}(Y)$
and we are done. 
\end{proof}
As a conclusion, we obtain the following result, which is the main
goal of this section: 
\begin{cor}
\label{main result on push of morivic} Let $X$ be a smooth algebraic
$\rats$-variety, $\varphi:X\to\mathbb{A}_{\rats}^{m}$ be a strongly
dominant morphism and $\mu$ a motivic measure on $X$ as in Proposition
\ref{prop:existsmotivicschwartz}. Then there exist $M\in\mathbb{N}$
and $f\in\mathcal{C}(\mathbb{A}_{\rats}^{m})$, such that for every
$p>M$ the measure $\varphi_{*}(\mu_{\Qp})$ is absolutely continuous
with respect to the normalized Haar measure on $\mathbb{Q}_{p}^{m}$
with density $f_{\mathbb{Q}_{p}}\in{L}^{1}$. 
\end{cor}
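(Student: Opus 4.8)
The plan is to deduce Corollary~\ref{main result on push of morivic} directly from Theorem~\ref{thm:pushofconstructiblefunctionunderdominant} by supplying the appropriate motivic measures and then specializing to $\Qp$. First I would take $\mu_X := \mu$ to be the motivic measure produced in Proposition~\ref{prop:existsmotivicschwartz}, which by construction satisfies that $\mu_F$ is a non-negative Schwartz measure with $\supp(\mu_F) = X(\mathcal{O}_F)$ for every $F \in \mathrm{Loc}$; in particular it is Schwartz for every $F$, so the hypothesis on $\mu_X$ in Theorem~\ref{thm:pushofconstructiblefunctionunderdominant} is met for any $M$. Next I would observe that the target $Y = \mathbb{A}^m_{\rats}$ is smooth and carries the global non-vanishing top form $\omega_Y = dx_1 \wedge \cdots \wedge dx_m$, whose associated measure $|\omega_Y|_F$ is precisely the normalized Haar measure $\lambda$ on $F^m$; this $\omega_Y$ also defines a motivic measure $\mu_Y$ on $Y$ which is smooth and non-vanishing on $Y(F)$ for every $F$. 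Thus all hypotheses of Theorem~\ref{thm:pushofconstructiblefunctionunderdominant}(2) hold.

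With these choices in place, Theorem~\ref{thm:pushofconstructiblefunctionunderdominant}(2) yields an integer $M \in \nats$ and a motivic function $f \in \mathcal{C}(\mathbb{A}^m_{\rats})$ such that for every $F \in \mathrm{Loc}_M$ the pushforward $\varphi_*(\mu_F)$ is absolutely continuous with respect to $|\omega_Y|_F$ with density $f_F$. In particular $f_F \in L^1(F^m)$, since $\varphi_*(\mu_F)$ is the pushforward of a finite measure (the Schwartz measure $\mu_F$ has compact support and hence finite total mass) and is therefore itself a finite measure, so its density lies in $L^1$. I would then simply specialize to the fields $F = \Qp$: the condition $F \in \mathrm{Loc}_M$ translates to $p > M$ (taking $\Qp$ with its standard uniformizer $p$), and $|\omega_Y|_{\Qp}$ is the normalized Haar measure on $\Qp^m$. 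This gives exactly the statement: for every $p > M$, the measure $\varphi_*(\mu_{\Qp})$ is absolutely continuous with respect to the normalized Haar measure on $\Qp^m$ with density $f_{\Qp} \in L^1$.

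There is essentially no main obstacle here, as the corollary is a packaging of the preceding theorem; the only points requiring a word of care are (i) verifying that the $L^1$-integrability of $f_F$ follows from finiteness of the pushed-forward measure rather than needing a separate argument (this is because $\mu_F$, being Schwartz, is compactly supported and hence finite, and pushforward preserves total mass), and (ii) confirming the identification of $|\omega_Y|_F$ with normalized Haar measure on $F^m$ under the coordinate chart, which is immediate from the definition of $|\omega|_F$ given before Definition~\ref{Schwartz measure}. One should also note that the single $M$ and single $f$ work uniformly across all $\Qp$, which is precisely the content of the uniformity built into Theorem~\ref{thm:pushofconstructiblefunctionunderdominant} via the motivic framework.
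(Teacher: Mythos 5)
Your proof is correct and matches the paper's intended deduction: the corollary is stated in the paper as an immediate consequence of Theorem~\ref{thm:pushofconstructiblefunctionunderdominant}, and your choices $\mu_X = \mu$, $\omega_Y = dx_1\wedge\cdots\wedge dx_m$ with $|\omega_Y|_F$ equal to normalized Haar measure supply exactly the needed hypotheses, with the $L^1$-ness of the density following from compact support and finiteness of $\mu_F$ (or, equivalently, from the invocation of \cite[Corollary 3.6]{AA16} inside the proof of that theorem). The only cosmetic imprecision is saying Theorem~\ref{thm:pushofconstructiblefunctionunderdominant} ``yields'' the integer $M$ when the statement fixes $M$ in the hypotheses; in practice the $M$ in the corollary arises from the implicit threshold built into the definition of the motivic density $f\in\mathcal{C}(\mathbb{A}_{\rats}^m)$, and since $\mu_F$ is Schwartz for every $F\in\mathrm{Loc}$ one is free to take that threshold.
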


\section{\label{sec:Fourier-transform-of}Uniform bounds on decay rates of
Fourier transform of motivic measures}

In this section we finish the proof of Theorem \ref{Main result}
for the case $k=\rats$: 
\begin{thm}
\label{Main result-Theorem 5.1}Let $X$ be a smooth $\rats$-variety
and let $V=\mathbb{A}_{\rats}^{m}$ be the $m$-dimensional affine
space. Let $\varphi:X\to V$ be a strongly generating morphism (Definition
\ref{def:strongly generating}), then there exists $N\in\mathbb{N}$
such that for any $n>N$, the $n$-th convolution power $\varphi^{n}$
is (FRS). 
\end{thm}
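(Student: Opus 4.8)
The plan is to combine all the reductions and tools assembled in Sections 3 and 4 with one genuinely new analytic input: a uniform-in-$p$ bound on the decay of the Fourier transform of a motivic $L^1$-measure. First I would invoke Proposition \ref{prop:morphismdom} to replace $\varphi$ by a convolution power that is strongly dominant (Corollary \ref{Cor-reduction to dominant} makes this legitimate), so from now on $\varphi:X\to\mathbb{A}^m_{\rats}$ is strongly dominant. Next I would attach to $X$ the motivic Schwartz measure $\mu=\{\mu_F\}_{F\in\mathrm{Loc}}$ of Proposition \ref{prop:existsmotivicschwartz}, so that $\supp(\mu_F)=X(\mathcal{O}_F)$. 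By Corollary \ref{main result on push of morivic}, there exist $M\in\nats$ and $f\in\mathcal{C}(\mathbb{A}^m_\rats)$ such that for all primes $p>M$ the pushforward $\sigma_p:=\varphi_*(\mu_{\Qp})$ is absolutely continuous with respect to the normalized Haar measure $\lambda$ on $\Qp^m$, compactly supported, and with density $f_{\Qp}\in L^1(\Qp^m)$. In view of Proposition \ref{Reduction to many Qp}, it then suffices to produce a single $n\in\nats$, independent of $p$, such that the $n$-fold self-convolution $\sigma_p^{*n}$ has continuous density for all large $p$.

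The heart of the argument is the Fourier-analytic step, which I would phrase as a standalone statement (this is the announced Theorem \ref{thm:L^1 constructible has polynomial decaying Fourier transform}): if $\sigma=\{\sigma_F\}$ is a motivic measure on $\mathbb{A}^m$ that is compactly supported with $L^1$-density for each $F$, then there are constants $C,\varepsilon>0$ and $M'\in\nats$, \emph{independent of $F$}, such that $|\mathcal{F}(\sigma_F)(\xi)|\le C\|\xi\|_F^{-\varepsilon}$ for all $F\in\mathrm{Loc}_{M'}$ and all $\xi$. Granting this, convolution multiplies Fourier transforms, so $\mathcal{F}(\sigma_p^{*n})(\xi)=\mathcal{F}(\sigma_p)(\xi)^n$ decays like $\|\xi\|^{-n\varepsilon}$; choosing $n$ with $n\varepsilon>m$ makes $\mathcal{F}(\sigma_p^{*n})\in L^1(\Qp^m)$ uniformly in $p$, hence by Fourier inversion $\sigma_p^{*n}$ has continuous (indeed bounded) density for every $p>M'$. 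Feeding this back into Proposition \ref{Reduction to many Qp} gives that $\varphi^n$ is (FRS), which is the theorem for $K=\rats$.

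To prove the decay estimate I would work at the level of motivic/definable data so that all bounds are automatically uniform in $F$ by the specialization Lemma \ref{specialization arguments}. Write the density of $\sigma_F$ as a motivic function on $\mathbb{A}^m$; using quantifier elimination (Theorem \ref{Quantifier elimination theorem}) and the cell decomposition theorems (Theorems \ref{thm:(Uniform-cell-decomposition)} and \ref{Presburger Cell decomposition}), reduce the oscillatory integral $\int f_F(x)\psi(\langle\xi,x\rangle)\,d\lambda$ over each cell to an integral in one valued-field variable at a time, where $\val f_F$ and $\ac f_F$ have the simple monomial-type description provided by the cell datum; the Presburger-linearity of the exponents (Definition \ref{def:-Linear function}) then lets one sum the resulting geometric-type series and extract a power-saving bound $\|\xi\|^{-\varepsilon}$ with $\varepsilon$ depending only on the definable data. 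The main obstacle — and the place where the compact support and $L^1$ hypotheses are essential — is ruling out the ``resonant'' directions where no cancellation occurs: one must show that the locus in $\xi$-space where $\mathcal{F}(\sigma_F)$ fails to decay is forced, by the $L^1$ bound on $f_F$ together with definability, to be lower-dimensional, and to control it uniformly; handling these degenerate cells, and checking that the constants $C,\varepsilon,M'$ genuinely do not depend on $F$, is the technical crux of the proof.
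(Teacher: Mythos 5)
Your plan follows the paper's proof very closely: the same reduction to strongly dominant morphisms via Proposition~\ref{prop:morphismdom} and Corollary~\ref{Cor-reduction to dominant}, the same motivic Schwartz measure from Proposition~\ref{prop:existsmotivicschwartz}, the same use of Corollary~\ref{main result on push of morivic} to get a compactly supported motivic $L^1$-density, the same appeal to Proposition~\ref{Reduction to many Qp}, and the same key analytic step (Theorem~\ref{thm:L^1 constructible has polynomial decaying Fourier transform}) proved via quantifier elimination, uniform cell decomposition, and Presburger cell decomposition. So the scaffolding is correct and essentially identical.

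Two points deserve correction. First, you assert that the constant $C$ in the decay bound $|\mathcal{F}(\sigma_F)(\xi)|\le C\,\|\xi\|_F^{-\varepsilon}$ can be taken independent of $F$; this is false. Evaluating at small $\xi$ shows the multiplicative constant controls the sup of $\mathcal{F}(\sigma_F)$, hence the $L^1$-norm of the density, which in general grows with the residue characteristic (e.g.\ it involves point counts like $|X(\mathcal{O}_F/\mathfrak{m}_F)|$). The paper's Theorem~\ref{thm:L^1 constructible has polynomial decaying Fourier transform} correctly allows the constant $d(F)$ to depend on $F$ and only claims that the \emph{exponent} $\alpha<0$ is uniform; that weaker statement is all the argument needs, since a uniform exponent already yields a single $n$ for which $\mathcal{F}(\sigma_F)^n\in L^1(F^m)$ for every $F$, and the $F$-dependent constant simply appears as $d(F)^n$. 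Second, your description of the technical crux --- controlling a lower-dimensional ``resonant locus'' in $\xi$-space --- is not how the paper proceeds. Instead, Lemma~\ref{lemma 5.4} uses cell decomposition in the last coordinate $x_m$ together with orthogonality of additive characters to show that the oscillatory integral is supported on a region whose measure tends to $0$ as $|y_m|\to\infty$, producing a bound by a \emph{one-variable} motivic function $g_F(y_m)$ that vanishes at infinity; Proposition~\ref{One dimensional lemma} then uses the explicit shape of one-variable motivic functions (via Lemmas~\ref{Lemma 5.6}--\ref{Lemma 5.8} and Presburger cell decomposition) to upgrade vanishing at infinity to polynomial decay with a uniform negative exponent. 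This two-step structure --- first qualitative vanishing via character sums, then quantitative decay from the rigidity of motivic functions in one variable --- is where the uniform exponent actually comes from, and you should make it explicit.
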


Let $\mu$ be a motivic measure on $X$ as in Proposition \ref{prop:existsmotivicschwartz}.
We saw in Corollary \ref{Cor-reduction to dominant} that $\varphi$
can be taken to be a strongly dominant morphism. We have further showed,
using Proposition \ref{Reduction to many Qp}, that it is enough to
show that there exists $n\in\mathbb{N}$, such that for large enough
prime $p$, the measure $\varphi_{*}^{n}(\mu_{\mathbb{Q}_{p}}\times\ldots\times\mu_{\mathbb{Q}_{p}})$
has continuous density with respect to the normalized Haar measure
on $(\mathbb{Q}_{p})^{m}$.

Notice that 
\[
\varphi_{*}^{n}(\mu_{\mathbb{Q}_{p}}\times\ldots\times\mu_{\mathbb{Q}_{p}})=\varphi_{*}(\mu_{\mathbb{Q}_{p}})*\dots*\varphi_{*}(\mu_{\mathbb{Q}_{p}}),
\]
and that by Corollary \ref{main result on push of morivic}, the measure
$\{\varphi_{*}(\mu_{F})\}_{F}$ is motivic, and for any $F\in\mathrm{Loc}$
we have that $\varphi_{*}(\mu_{F})$ is compactly supported and has
${L}^{1}$-density. Hence, our next goal is to show that given a motivic
measure $\sigma=\{\sigma_{F}\}_{F\in\mathrm{Loc}}$ on $\VF^{m}$
such that $\sigma_{F}$ is compactly supported and has ${L}^{1}$-density
for all $F\in\mathrm{Loc}$, then there exists $N\in\nats$, such
that the $N$-th convolution power $\{\sigma_{F}^{N}\}_{F\in\mathrm{Loc}}$
has continuous density for any $F\in\mathrm{Loc}_{M}$ and $M$ large
enough.

Recall that the Fourier transform $\mathcal{F}(f)$ of an ${L}^{1}$-function
$f:\Qp^{n}\rightarrow\complex$ is a continuous function, and that
$\mathcal{F}\circ\mathcal{F}(f)(x)=f(-x)$. Thus, in order to show
that $\sigma_{F}*\dots*\sigma_{F}$ is continuous, it is enough to
show that $\mathcal{F}(\sigma_{F}*\dots*\sigma_{F})=\mathcal{F}(\sigma_{F})^{N}$
is in ${L}^{1}$ for some $N$ that does not depend on $F\in\mathrm{Loc}$,
but firstly, we need to make sense of the Fourier transform of a motivic
function (or measure).

In \cite[Section 7]{CL10}, Cluckers and Loeser defined a class of motivic exponential functions
and an analogue of Fourier transform for this class of functions (see also
 \cite[Section 4]{CGH14-2} and \cite[Section 2]{CGH16}).
Specializing to a non-Archimedean local field $F$, the Fourier transform
operation can be established as follows:

Fix a collection $\{\psi_{F}\}_{F\in\mathrm{Loc}}$ of non-trivial
additive characters by $\psi_{F}(x)=\mathrm{exp}{}^{\frac{2\pi i}{p}\cdot\mathrm{Tr}_{k_{F}/\mathbb{F}_{p}}(\overline{x})}$
for any $x\in\mathcal{O}_{F}$, where $k_{F}=\mathcal{O}_{F}/\mathfrak{m}_{F}$
is the residue field (of characteristic $p$), $\overline{x}$ is
the reduction modulo $\mathfrak{m}_{F}$ of $x$ and $\mathrm{Tr}_{k_{F}/\mathbb{F}_{p}}$
is the trace. Let $\langle\,,\,\rangle:F^{m}\times F^{m}\to F$ be
the standard inner product on $F^{m}$ through which we identify $F^{m}$
with $\left(F^{m}\right)^{\vee}$. Notice that $\psi_{F}$ is trivial
on $\mathfrak{m}_{F}$. We denote by $\lambda_{F}$ the normalized
Haar measure on $F^{m}$ (i.e $\lambda(\mathcal{O}_{F}^{m})=1$).
For $f\in\mathcal{C}(\VF^{m})$ with $f_{F}$ absolutely integrable for any $F\in\mathrm{Loc}$, we define the Fourier transform by
\[
\mathcal{F}(f_{F})(y)=\int_{F^{m}}f_{F}(x)\psi_{F}(\langle y,x\rangle)dx.
\]

We wish to prove the following uniform variant of \cite[Theorem 4.1]{Clu04}
for the ${\Ldp}$-language: 
\begin{thm}
\label{thm:L^1 constructible has polynomial decaying Fourier transform}
Let $h\in\mathcal{C}(\VF^{m})$. Assume that there exist a definable
set $L\subseteq\VF^{m}$ and a natural number $M'$ such that for
any $F\in Loc_{M'}$ the set $L_{F}$ is compact, $\supp(h_{F})\subset L_{F}$
and that $\left|h_{F}\right|$ is integrable. Then there exist a real
constant $\alpha<0$ and a natural number $M>M'$, such that for any
$F\in\mathrm{Loc}_{M}$ 
\[
\left|\mathcal{F}(h_{F})(y)\right|<d(F)\cdot\mathrm{min}\{\left|y\right|{}^{\alpha},1\},
\]
for some constant $d(F)$ which depends on $F$. 
\end{thm}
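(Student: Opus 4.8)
The plan is to reduce the statement to a purely combinatorial-analytic problem about motivic functions on the value-group sort and then exploit the quantifier elimination and Presburger cell decomposition results stated above. First I would use the quantifier elimination theorem (Theorem \ref{Quantifier elimination theorem}) together with the structure of motivic functions (Definition \ref{def:motivic function}) to write $h_F$ on its compact support $L_F$ as a finite sum of terms of the form $|Y_{i,F,x}| q_F^{\alpha_{i,F}(x)}(\prod_j \beta_{ij,F}(x))(\prod_l (1-q_F^{a_{il}})^{-1})$, where the data $\alpha_i,\beta_{ij}$ and the residue-field sets $Y_i$ are governed by finitely many polynomials $g_1,\dots,g_s$ through $\mathrm{ac}$ and $\mathrm{val}$. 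The Fourier integral $\mathcal{F}(h_F)(y)=\int_{F^m} h_F(x)\psi_F(\langle y,x\rangle)\,dx$ can then be organised by partitioning $F^m$ (equivalently $L_F$) into pieces on which all the relevant valuations $\mathrm{val}(g_j(x))$ are fixed and the angular components $\mathrm{ac}(g_j(x))$ are prescribed; on each such piece $h_F$ is constant up to the residue-field factors, and the key is to estimate the oscillatory integral of $\psi_F(\langle y,x\rangle)$ over it.

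The core estimate is the familiar one: integrating $\psi_F(\langle y,x\rangle)$ over a set which contains (or is fibred by) a polydisc of radius $q_F^{-N}$ in some coordinate direction produces cancellation unless $|y|$ is bounded by roughly $q_F^{N}$, and in the cancelling range the contribution is $0$. The trivial bound $|\mathcal{F}(h_F)(y)|\le \|h_F\|_{L^1}$ takes care of $|y|\le 1$, giving the $\min\{|y|^\alpha,1\}$ shape; for $|y|>1$ one must show the integral decays like a negative power of $|y|$. To get this uniformly I would apply the uniform cell decomposition theorem (Theorem \ref{thm:(Uniform-cell-decomposition)}) to the polynomials $g_j$ defining $h$, decomposing $L\subseteq \VF^m$ into finitely many cells; on each cell one valued-field variable $y_{k}-c(x,\xi)$ ranges over an annulus with $\mathrm{ac}$ fixed, and integrating the character over that variable first gives either exact cancellation or a bound decaying in $|y|$. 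Iterating the cell decomposition over the remaining variables, one reduces the remaining integral to an integral over the value-group sort of a motivic function, to which Theorem \ref{integration of motivic} (integrability of motivic functions) and Presburger cell decomposition (Theorem \ref{Presburger Cell decomposition}) apply: the resulting function on $\VG$ is piecewise linear in $\mathrm{val}(y)$, so one reads off a single rational exponent $\alpha<0$ valid for all $F\in\mathrm{Loc}_M$, with the $F$-dependent constant $d(F)$ absorbing the finitely many residue-field cardinality factors $q_F^{\bullet}$, $(1-q_F^{a_{il}})^{-1}$ and $|Y_{i,F,x}|\le q_F^{r_i}$.

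The main obstacle I anticipate is \textbf{uniformity of the decay exponent $\alpha$ in $F$}: a naive cell-by-cell estimate could in principle produce an exponent depending on $q_F$, which is not allowed. The resolution is to make sure that the combinatorics producing $\alpha$ — namely the linear function of $\mathrm{val}(y)$ coming from Presburger cell decomposition applied to the ``phase'' data $\mathrm{val}(g_j(x))$ relative to $\mathrm{val}(y)$ — is itself definable and hence independent of $F$ by the specialization principle (Lemma \ref{specialization arguments}): the statement ``for all $F\in\mathrm{Loc}_M$ and all $y$, $|\mathcal{F}(h_F)(y)|< d(F)\min\{|y|^\alpha,1\}$'' with $\alpha$ a fixed rational number is (after clearing the $q_F$-factors into $d(F)$) expressible as a family of $\Ldp$-sentences, so it suffices to verify it in a single model, or directly to check that the cell-decomposition data do not depend on the characteristic. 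A secondary technical point is handling the residue-field factors $|Y_{i,F,x}|$, which vary with $x$; here I would bound them crudely by $q_F^{r_i}$ (a constant depending only on $h$, multiplied into $d(F)$), since we only need integrability and polynomial decay, not sharp constants. With these points addressed, the bound $|\mathcal{F}(h_F)(y)|<d(F)\cdot\min\{|y|^\alpha,1\}$ follows, completing the argument.
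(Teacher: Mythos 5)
Your reduction to a single valued-field variable via uniform cell decomposition, and the exact-cancellation computation of $\int\psi_F(u\cdot y_m)\,du$ over a set on which $\val(u)$ and $\ac(u)$ are fixed, match the paper's Lemma \ref{lemma 5.4}, and the trivial $L^1$ bound for $|y|\leq 1$ is also the same. The genuine gap is in how you extract the uniform negative exponent afterwards. After the cancellation step one is left with $|\mathcal{F}(h_F)(y)|\leq\int_{B'_F(y_m)\cap L_F}|h_F|\,dx=:g_F(y_m)$, and since $h_F$ is only $L^1$ (not bounded) the naive estimate ``the contributing annuli have measure $\lesssim|y_m|^{-1}$'' gives nothing; all one gets directly is that $g$ is a one-variable motivic function with $g_F(y_m)\to 0$ as $|y_m|\to\infty$ (compactness of $L_F$ plus Theorem \ref{integration of motivic}). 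The heart of the proof is then the structure result that \emph{any} motivic function of one variable tending to zero at infinity satisfies $|g_F(y)|<d(F)\min\{|y|^{\alpha},1\}$ with a single $\alpha<0$ for all $F$ (Proposition \ref{One dimensional lemma}). Your sketch asserts that Presburger piecewise linearity lets one ``read off a single rational exponent $\alpha<0$'', but linearity only yields a cell-wise presentation by terms $a_i(\xi)\,q_F^{t_{1i}(\xi)+t_{2i}\val(y)}P_{i,\xi}(\val(y))Q_i(q_F)$ in which the exponents $t_{2i}$ may perfectly well be nonnegative; the negativity is exactly what must be proved, and the paper proves it by showing that the sum of the terms with maximal $t_{2i}\geq 0$ vanishes identically, since otherwise $g_F$ could not tend to zero (and the polynomial factors $P_{i,\xi}$ force the further loss $\alpha=\tfrac{1}{2}\max_i t_{2i}$). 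This leading-term cancellation argument, which converts the qualitative decay into a quantitative one, is absent from your proposal.

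Your proposed fix for uniformity in $F$ --- expressing the bound as a family of $\Ldp$-sentences and invoking Lemma \ref{specialization arguments} --- does not work as stated: the inequality $|\mathcal{F}(h_F)(y)|<d(F)\min\{|y|^{\alpha},1\}$ involves the Fourier transform (a motivic \emph{exponential} function built from the additive character $\psi_F$), real values, an unspecified constant $d(F)$ and a real exponent $\alpha$, none of which are expressible in the Denef--Pas language, so the transfer principle does not apply; nor is ``verifying it in a single model'' meaningful, since models of $\mathcal{T}_{\mathrm{H},\ac,0}$ have residue characteristic zero and the analytic inequality lives on local fields. In the paper the uniformity comes for free: the exponents $t_{2i}$ are rational numbers determined by the $F$-independent definable data produced by Theorem \ref{Quantifier elimination theorem}, Theorem \ref{thm:(Uniform-cell-decomposition)} and Theorem \ref{Presburger Cell decomposition}, and only the multiplicative constant is allowed to depend on $F$ (your crude bound $|Y_{i,F,x}|\leq q_F^{r_i}$ is fine for that). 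So your route is correct up to the cancellation step, but the one-dimensional structure result with its limit-zero argument is the missing ingredient you still need to supply.
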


\begin{cor}
Theorem \ref{thm:L^1 constructible has polynomial decaying Fourier transform}
implies Theorem \ref{Main result-Theorem 5.1}. 
\end{cor}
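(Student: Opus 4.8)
The plan is to assemble the ingredients already in place. First I would apply Corollary \ref{Cor-reduction to dominant} to reduce to the case where $\varphi:X\to V=\mathbb{A}_{\rats}^{m}$ is strongly dominant, and fix a motivic measure $\mu=\{\mu_{F}\}_{F\in\mathrm{Loc}}$ on $X$ as in Proposition \ref{prop:existsmotivicschwartz}, so that each $\mu_{F}$ is a non-negative Schwartz measure with $\supp(\mu_{F})=X(\mathcal{O}_{F})$. By Corollary \ref{main result on push of morivic} there are $M_{0}\in\nats$ and $f\in\mathcal{C}(\mathbb{A}_{\rats}^{m})$ such that, writing $\sigma_{F}:=\varphi_{*}(\mu_{F})$, for every prime $p>M_{0}$ we have $\sigma_{\Qp}=f_{\Qp}\cdot\lambda$ with $f_{\Qp}\in L^{1}(\Qp^{m})$. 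Its support is $\overline{\varphi(X(\mathcal{O}_{F}))}$, and I would note that, once the residue characteristic avoids the finitely many primes appearing in a $\ints$-model of $\varphi$, the defining polynomials have integral coefficients and hence send $\mathcal{O}_{F}$-points to $\mathcal{O}_{F}$-points; thus $\supp(\sigma_{F})\subseteq L:=B_{1}^{m}(0)$, a definable set with $L_{F}$ compact, for all $F$ in some $\mathrm{Loc}_{M_{0}}$.

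Next I would apply Theorem \ref{thm:L^1 constructible has polynomial decaying Fourier transform} to $h:=f$ (regarded as a motivic function on $\VF^{m}$), with this $L$: it produces $\alpha<0$ and $M\geq M_{0}$ such that $|\mathcal{F}(f_{F})(y)|<d(F)\cdot\min\{|y|^{\alpha},1\}$ for all $F\in\mathrm{Loc}_{M}$. Now choose $N\in\nats$ with $N\alpha<-m$. Since the Fourier transform turns convolution of measures into the pointwise product, for a prime $p>M$ one has $\mathcal{F}(\sigma_{\Qp}^{*N})=\mathcal{F}(f_{\Qp})^{N}$, hence
\[
\big|\mathcal{F}(\sigma_{\Qp}^{*N})(y)\big|<d(\Qp)^{N}\cdot\min\{|y|^{N\alpha},1\}.
\]
A routine shell computation on $\Qp^{m}$ (summing over $\{|y|=p^{k}\}$, $k\geq 1$) shows that $y\mapsto\min\{|y|^{N\alpha},1\}$ is integrable exactly because $N\alpha<-m$, so $\mathcal{F}(\sigma_{\Qp}^{*N})\in L^{1}(\Qp^{m})$. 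On the other hand $\sigma_{\Qp}^{*N}$ has density $f_{\Qp}*\dots*f_{\Qp}\in L^{1}(\Qp^{m})$; since both this density and its Fourier transform are integrable, Fourier inversion (using $\mathcal{F}\circ\mathcal{F}(g)(x)=g(-x)$ for $g\in L^{1}$) identifies it almost everywhere with $\mathcal{F}(\mathcal{F}(\sigma_{\Qp}^{*N}))(-\,\cdot\,)$, which is continuous as the Fourier transform of an $L^{1}$-function. Therefore $\varphi_{*}^{N}(\mu_{\Qp}\times\dots\times\mu_{\Qp})=\sigma_{\Qp}^{*N}$ has continuous density with respect to the normalized Haar measure on $\Qp^{m}$ for every prime $p>M$.

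At this point Proposition \ref{Reduction to many Qp} immediately gives that $\varphi^{N}:X\times\dots\times X\to V$ is (FRS), and for any $n>N$ I would write $\varphi^{n}=\varphi^{N}*\varphi^{\,n-N}$ and invoke Corollary \ref{cor:convolution presrves (FRS)} to deduce that $\varphi^{n}$ is (FRS), which is exactly Theorem \ref{Main result-Theorem 5.1}. I expect the only points needing care in this argument to be the uniform-in-$F$ control of the supports (so that the hypotheses of Theorem \ref{thm:L^1 constructible has polynomial decaying Fourier transform} genuinely hold) and the bookkeeping that fixes $N$ with $N\alpha<-m$; the genuinely hard part is entirely contained in Theorem \ref{thm:L^1 constructible has polynomial decaying Fourier transform} itself, which is established separately.
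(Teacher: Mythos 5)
Your proof follows the same overall strategy as the paper: pass to the strongly dominant case, pick the motivic Schwartz measure $\mu$ from Proposition~\ref{prop:existsmotivicschwartz}, use Corollary~\ref{main result on push of morivic} to get an $L^{1}$ motivic density for $\varphi_{*}(\mu_{F})$, apply Theorem~\ref{thm:L^1 constructible has polynomial decaying Fourier transform} to get a polynomial decay bound on the Fourier transform, raise to a power $N$ so that $\mathcal{F}(\varphi_{*}(\mu_{F}))^{N}$ becomes $L^{1}$, conclude continuity of the density by Fourier inversion, and finish with Proposition~\ref{Reduction to many Qp}. So the route is the same.

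You are, however, noticeably more careful on several points, and one of them is substantive. The paper sets $N=\lceil -2/\alpha\rceil$, so that $N\alpha\le -2$, and then asserts that $\min\{|y|^{-2},1\}$ is $L^{1}$ on $\Qp^{m}$. That is only true for $m=1$: with $|y|=\max_i|y_i|$, the shell $\{|y|=p^{k}\}$ in $\Qp^{m}$ has measure of order $p^{km}$, so $\int\min\{|y|^{\beta},1\}\,dy<\infty$ requires $\beta<-m$, not $\beta\le -2$. Your choice $N\alpha<-m$ is exactly what the argument needs, and you justify it with the shell computation; this silently repairs the paper's printed constant. (The paper's underlying machinery --- Lemma~\ref{lemma 5.4} actually gives a decay bound in each coordinate separately --- is strong enough to recover integrability, but the way the constant $N$ is written in the paper does not.) You also explicitly check the compact-support hypothesis of Theorem~\ref{thm:L^1 constructible has polynomial decaying Fourier transform} by exhibiting the definable set $L=B_{1}^{m}(0)$ and observing that, for residue characteristic outside the bad primes of a $\ints$-model, $\varphi$ sends $\mathcal{O}_F$-points to $\mathcal{O}_F$-points; you spell out the Fourier-inversion step; and you pass from the single exponent $N$ to all $n>N$ via Corollary~\ref{cor:convolution presrves (FRS)}. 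These details are all suppressed in the paper. Your proof is correct and, if anything, more reliable than the printed one.
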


\begin{proof}
Let $\mu$ be a motivic measure on $X$ as in Proposition \ref{prop:existsmotivicschwartz}.
By Theorem \ref{thm:L^1 constructible has polynomial decaying Fourier transform},
there exists a real constant $\alpha<0$ such that for any $F\in\mathrm{Loc}_{M}$
\[
\left|\mathcal{F}(\varphi_{*}(\mu_{F}))(y)\right|<d(F)\cdot\mathrm{min}\{\left|y\right|{}^{\alpha},1\}.
\]
Set $N=\left\lceil -\frac{2}{\alpha}\right\rceil $, then we have
\[
\left|\mathcal{F}(\varphi_{*}^{N}(\mu_{F}\times\dots\times\mu_{F}))(y)\right|
=\left|\mathcal{F}(\varphi_{*}(\mu_{F})(y))\right|^N
<d(F)^{N}\cdot\mathrm{min}\{\left|y\right|^{-2},1\}
\]
and in particular it is ${L}^{1}$. Thus $\varphi_{*}^{N}(\mu_{F}\times\dots\times\mu_{F})$
has continuous density for any $F\in\mathrm{Loc}_{M}$, for some $M\in\nats$,
and by Proposition \ref{Reduction to many Qp} this implies Theorem
\ref{Main result-Theorem 5.1}. 
\end{proof}
\subsection{Proof of Theorem \ref{thm:L^1 constructible has polynomial decaying Fourier transform}} 
We now prove Theorem \ref{thm:L^1 constructible has polynomial decaying Fourier transform}.
Firstly, it is clear that $\mathcal{F}(h_{F})$ is bounded since $\left|\mathcal{F}(h_{F})\right|\leq\int_{F^{m}}\left|h_{F}(x)\right|dx<\infty$.
It is enough to show, for $1\leq i\leq m$, that $\left|\mathcal{F}(h_{F})(y)\right|<C_{i}(F)\cdot\left|y_{i}\right|^{\alpha_{i}}$
for some $\alpha_{i}<0$ and $C_{i}(F)>0$. We prove this by reducing
to a one dimensional analogue of the problem (see Lemma \ref{lemma 5.4})
which is easier to solve (Proposition \ref{One dimensional lemma}). 
\begin{lem}
\label{lemma 5.4} Let $M'\in\nats$, and let the motivic function
$h\in\mathcal{C}(\VF^{m})$ and the definable set $L\subseteq\VF^{m}$
be as in Theorem \ref{thm:L^1 constructible has polynomial decaying Fourier transform}.
Then there exist $M\in\nats$ and a motivic function $g\in\mathcal{C}(\VF)$
such that for any $F\in\mathrm{Loc}_{M}$ and $y_{1},\ldots,y_{m}\in F$
we have 
\[
|\mathcal{F}(h_{F})(y_{1},\ldots,y_{m})|\leq g_{F}(y_{m})\text{ and }\lim\limits _{\left|y_{m}\right|\to\infty}g_{F}(y_{m})=0.
\]
\end{lem}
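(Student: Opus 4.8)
The plan is to integrate out the first $m-1$ valued-field variables and to extract the decay in $y_m$ from the oscillation of the character $x_m\mapsto\psi_F(y_mx_m)$. The key point is that a motivic function of the single variable $x_m$ is, away from a finite definable set of ``centres'', locally constant at a definable scale, and a ball on which $h_F$ is constant and whose radius is at least a fixed multiple of $|y_m|^{-1}$ contributes nothing to the Fourier integral in $x_m$: $\psi_F$ is trivial on $\mathfrak m_F$, but the relevant restricted character becomes non-trivial once the ball is too large relative to $|y_m|^{-1}$. Hence only a neighbourhood of the centres of radius $\approx|y_m|^{-1}$ survives, and its total $h_F$-mass tends to $0$ as $|y_m|\to\infty$ by the integrability hypothesis. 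Making all of this uniform in $F$ is exactly what the Denef--Pas cell decomposition and the integration theorem for motivic functions are for.

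\textbf{Construction of $g$.} Write $x=(x',x_m)$ with $x'\in\VF^{m-1}$. First I would present $h$ in a quantifier-free form via Theorem \ref{Quantifier elimination theorem} and apply the uniform cell decomposition theorem (Theorem \ref{thm:(Uniform-cell-decomposition)}) in the variable $x_m$ to the finitely many polynomials occurring, obtaining $\VF^{m-1}\times\VF=\bigcup_{i=1}^{N}A_i$ with each $A_i$ a cell of centre $c_i(x',\xi)$ on which $h_F$ depends on $x_m$ only through $\val(x_m-c_i)$ and $\ac(x_m-c_i)$. On each $A_i$ the substitution $x_m\mapsto x_m-c_i(x',\xi)$ preserves the measure in $x_m$ for fixed $(x',\xi)$ and multiplies the phase only by the unimodular factor $\psi_F\bigl(y_mc_i(x',\xi)\bigr)$; on the re-centred cell I decompose the fibre over $(x',\xi)$ into shells $\{\val(x_m)=k,\ \ac(x_m)=\xi_1\}$, on which $h_F$ is constant in $x_m$. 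A direct computation shows that $\int_{\{\val(x_m)=k,\,\ac(x_m)=\xi_1\}}\psi_F(y_mx_m)\,dx_m$ vanishes whenever $k\le-\val(y_m)-1$ and is otherwise bounded in absolute value by the measure of the shell. Summing over the shells, integrating over $x'$ (where $\bigl|\psi_F(\langle y',x'\rangle)\,\psi_F(y_mc_i(x',\xi))\bigr|=1$), and summing over the finitely many cells then yields, for a constant depending only on the data,
\[
\bigl|\mathcal{F}(h_F)(y',y_m)\bigr|\ \le\ C\sum_{i=1}^{N}\ \int_{F^{m-1}}\ \int_{\{(x',x_m)\in A_{i,F}\,:\,\val(x_m-c_i)\,\ge\,-\val(y_m)-1\}}\bigl|h_F(x',x_m)\bigr|\,dx_m\,dx'.
\]
Finally I would bound $|h_F|$ from above, on each cell, by a non-negative motivic function $\widetilde h$ on $\VF^m$ that is compactly supported and integrable (this follows from the structure theory of motivic functions together with the cell decomposition, after refining the cells in the value-group directions so that on each piece a single monomial term of $h$ strictly dominates); replacing $|h_F|$ by $\widetilde h_F$ in the estimate above defines $g_F(y_m)$, and gives $|\mathcal{F}(h_F)(y',y_m)|\le g_F(y_m)$ uniformly in $y'$.

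\textbf{Properties of $g$.} For each $i$ the function $(x,y_m)\mapsto\mathbf{1}_{A_{i,F}}(x)\cdot\mathbf{1}_{\{\val(x_m-c_i)\ge-\val(y_m)-1\}}\cdot\widetilde h_F(x)$ is motivic on $\VF^m\times\VF$, and its integral over $x$ is at most $\int_{F^m}\widetilde h_F<\infty$ for every value of the parameter $y_m$; hence by Theorem \ref{integration of motivic} each summand, and therefore $g=\{g_F\}$, lies in $\mathcal{C}(\VF)$ for $F\in\mathrm{Loc}_M$, with $M$ the maximum of the thresholds supplied by Theorems \ref{thm:(Uniform-cell-decomposition)} and \ref{integration of motivic}, by $M'$, and by the majorisation step. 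Now fix $F\in\mathrm{Loc}_M$: for each $i$ the function $\mathbf{1}_{A_{i,F}}\widetilde h_F$ is non-negative and in $L^1(F^m)$, and as $|y_m|\to\infty$ (equivalently $\val(y_m)\to-\infty$) the domains $\{(x',x_m)\in A_{i,F}:\val(x_m-c_i)\ge-\val(y_m)-1\}$ decrease to a set of measure zero in $F^m$ (contained in the finite union of the graphs $x_m=c_i(x',\xi)$); by dominated convergence each summand of $g_F(y_m)$ tends to $0$, so $g_F(y_m)\to 0$. Feeding this $g$ into Proposition \ref{One dimensional lemma} then upgrades the bound to a polynomial decay rate in each coordinate, which is what Theorem \ref{thm:L^1 constructible has polynomial decaying Fourier transform} needs; that last step is beyond the present lemma.

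\textbf{Main obstacle.} I expect the delicate part to be the uniform-in-$F$ bookkeeping needed to keep everything motivic: producing a cell decomposition adapted to $x_m$ that simultaneously allows $|h|$ to be replaced by a non-negative \emph{integrable} motivic function while preserving compact support, and then amalgamating all the ``$p$ large enough'' thresholds into a single $M$. The oscillatory input itself --- the vanishing of $\int_{\mathrm{ball}}\psi_F(y_mx_m)\,dx_m$ once the ball is large compared with $|y_m|^{-1}$ --- is elementary; the real content is arranging the cell structure and exploiting integrability so that this localisation of the integral becomes a genuine motivic upper bound that still tends to $0$.
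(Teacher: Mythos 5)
Your proposal is correct and follows essentially the same route as the paper: quantifier elimination, uniform cell decomposition in $x_m$, re-centering by $c_i(\hat x,\xi)$, decomposition of each cell fibre into shells $\val(x_m-c_i)=l$ on which $h_F$ is constant in $x_m$, vanishing of the shell integral $\int\psi_F(uy_m)\,du$ once $\val(y_m)\le -l-2$, and the resulting bound by $\int_{B'_F(y_m)}|h_F|$ with $B'(y_m)=\{\val(y_m(x_m-c_i))\ge -1\}$. The paper's set $B'(y_m,\xi)$ and your condition $\val(x_m-c_i)\ge -\val(y_m)-1$ are the same, and the subsequent use of Theorem \ref{integration of motivic} and of absolute continuity of the integral of an $L^1$ function (over shrinking sets inside the compact $L_F$) to get $g_F(y_m)\to 0$ also matches.

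The one place where you diverge, and where your write-up is a little underspecified, is the passage from $|h_F|$ to a motivic integrand. The paper simply asserts that $|h|\in\mathcal{C}(\VF^m)$ and integrates $|h_F|\cdot\mathbf 1_{L_F}$ over $B'_F(y_m)$, so that Theorem \ref{integration of motivic} applies directly. You instead propose to construct a separate non-negative integrable motivic majorant $\widetilde h\ge|h|$ by refining cells until ``a single monomial term strictly dominates.'' This can be made to work (it is the same kind of Presburger-linearization argument used later in Lemmas \ref{Lemma 5.6}--\ref{Lemma 5.8}), but it is extra work and the domination step needs care: you must ensure the dominant term is comparable to $|h|$ on each piece, not merely an upper bound, so that integrability is not lost. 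If you are willing to use the standard fact that $|h|$ is again a motivic function, you can skip the majorant entirely and take $\widetilde h=|h|\cdot\mathbf 1_L$, which is exactly what the paper does. Also, the constant $C$ in your displayed bound is superfluous — the estimate holds with $C=1$ — but that is harmless.
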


\begin{proof}
By Theorem \ref{Quantifier elimination theorem}, the function $h\in\mathcal{C}(\VF^{m})$
is determined by finitely many polynomials $\{g_{j}\}_{j=1}^{s}$
in $\mathbb{Z}[x_{1},{\ldots},x_{m}]$. Write $x=(\hat{x},x_{m})$
with $\hat{x}=(x_{1},{\ldots},x_{m-1})$. By uniform cell decomposition
applied to the functions $\{g_{j}\}_{j=1}^{s}$ (see Theorem \ref{thm:(Uniform-cell-decomposition)}),
there exist cells $A_{i}$ with cell data $\Theta(A_{i})=(C_{i},b_{i,1},b_{i,2},c_{i},\lambda_{i})$
such that $F^{m}=\bigcup\limits _{i=1}^{N}A_{i,F}$ and $A_{i,F}=\bigcup\limits _{\xi\in k_{F}^{r}}A_{i,F}(\xi)$,
where each fiber $A_{i}(\xi)$ is as in Definition \ref{def:Cell}.
Furthermore, for any $1\leq i\leq N$ and $1\leq j\leq s$, and any $(\hat{x},x_{m})\in A_{i}(\xi)$,
we have 
\[
\val(g_{j,F}(\hat{x},x_{m}))=\val(h_{ij,F}(\hat{x},\xi)(x_{m}-c_{i,F}(\hat{x},\xi))^{w_{ij}})\text{ and }\ac(g_{j,F}(\hat{x},x_{m}))=\xi_{\mu_{i}(j)},
\]
where $\mu_{i},w_{ij},h_{ij}$ are as in Definition \ref{def:UCD}.
Hence, for $y=(\hat{y},y_{m})\in F^{m-1}\times F$ we have, 
\[
\mathcal{F}(h_{F})(y)=\int_{F^{m}}h_{F}(\hat{x},x_{m})\psi_{F}(\langle y,x\rangle)dx_{m}d\hat{x}=\sum_{i=1}^{N}\int_{A_{i,F}}h_{F}(\hat{x},x_{m})\psi_{F}(\langle y,x\rangle)dx_{m}d\hat{x}.
\]
Let $A\in\{A_{1},\ldots,A_{N}\}$ be one of the above cells. For simplicity,
we assume it has datum $\Theta(A)=(C,b_{1},b_{2},c,\lambda)$, and
similarly we replace $\mu_{i},w_{ij}$ and $h_{ij}$ with $\mu,w_{j}$
and $h_{j}$. Note that 
\begin{align*}
A_{F} & =\bigcup_{\xi\in k_{F}^{r}}A_{F}(\xi)=\bigcup_{\xi\in k_{F}^{r}}\bigcup_{l\in\ints}\left\{ (\hat{x},x_{m})\in A_{F}(\xi):\val(x_{m}-c(\hat{x},\xi))=l\right\} \\
 & =\bigcup_{\xi\in k_{F}^{r}}\bigcup_{l\in\ints}\{(\hat{x},x_{m})\in F^{m}:\hat{x}\in W_{F}(l,\xi)\wedge x_{m}\in B_{F}(l,\xi,c,\hat{x})\},
\end{align*}
where 
\[
B(l,\xi,c,\hat{x}):=\{x_{m}\in\VF:\val(x_{m}-c(\hat{x},\xi))=l,\,\ac(x_{m}-c(\hat{x},\xi))=\xi_{1}\},
\]
\[
W(l,\xi):=\{\hat{x}\in\VF^{m-1}:(\hat{x},\xi)\text{\ensuremath{\in}}C,\val(b_{1}(\hat{x},\xi))\square_{1}\lambda\cdot l\square_{2}\val(b_{2}(\hat{x},\xi))\},
\]
and $\square_{1,2}$ are either $\leq,<$ or no condition, as in the
cell $A_{F}$ (recall Definition \ref{def:Cell}). Calculating the
integral over $A_{F}$ yields: 
\begin{flalign}
\int_{A_{F}}h_{F}(\hat{x},x_{m})\psi_{F}(\langle y,x\rangle)dx_{m}d\hat{x}
=&\sum_{\xi\in k_{F}^{r}}\sum_{l\in\mathbb{Z}}\int_{W_{F}(l,\xi)}\int_{B_{F}(l,\xi,c,\hat{x})}h_{F}(\hat{x},x_{m})\psi_{F}(\langle\hat{y},\hat{x}\rangle+y_{m}x_{m})dx_{m}d\hat{x} \notag \\
=&\sum_{\xi\in k_{F}^{r}}\sum_{l\in\mathbb{Z}}\int_{W_{F}(l,\xi)}\psi_{F}(\langle\hat{y},\hat{x}\rangle+c_{F}(\hat{x},\xi)\cdot y_{m}) \notag \\
&\cdot\left(\int_{B_{F}(l,\xi,c,\hat{x})}h_{F}(\hat{x},x_{m})\cdot\psi_{F}\left(\left(x_{m}-c_{F}(\hat{x},\xi)\right)y_{m}\right)dx_{m}\right)d\hat{x}.\label{eq:1}
 \end{flalign}
Note that $h_{F}(\hat{x},x_{m})$ depends only on $\hat{x}$ when
$\val(x_{m}-c(\hat{x},\xi))=l$, since for any $j$
we have $\val(g_{j,F}(\hat{x},x_{m}))=\val(h_{j,F}(\hat{x},\xi))+w_{j}\cdot l$
and $\ac(g_{j,F}(\hat{x},x_{m}))$ does not depend on $x_{m}$. Therefore
after a linear change of variables $u:=x_{m}-c(\hat{x},\xi)$ we can
write (\ref{eq:1}) as: 
\begin{equation*}
\sum_{\xi\in k_{F}^{r}}\sum_{l\in\mathbb{Z}}\int_{W_{F}(l,\xi)}\psi_{F}(\langle\hat{y},\hat{x}\rangle+c_{F}(\hat{x},\xi)\cdot y_{m})\cdot h_{F}(\hat{x},x_{m})\cdot\left(\int_{B_{F}(l,\xi,0,\hat{x})}\psi_{F}\left(u\cdot y_{m}\right)du\right)d\hat{x}.
\end{equation*}
Now, for every $\hat{x}\in F^{m-1}$, we have $B_{F}(l,\xi,0,\hat{x})=[\xi_{1}]\pi^{l}+\pi^{l+1}\mathcal{O}_{F}$,
where $[\xi_{1}]\in\mathcal{O}_{F}^{\times}$ is the unique lift of
$\xi_{1}\in k_{F}$ to $\mathcal{O}_{F}$. If we take $y=(\hat{y},y_{m})$
with $y_{m}$ such that $\val(y_{m})\leq-\val(u)-2=-l-2$, then for
$j=-(l+\val(y_{m})+1)\geq1$ we have 
\[
\{u\cdot y_{m}:u\in B_{F}(l,\xi,0,\hat{x})\}=a\cdot\pi^{-1-j}+\pi^{-j}\mathcal{O}_{F},
\]
for some $a\in\mathcal{O}_{F}^{\times}$. Set $uy_{m}=a\pi^{-1-j}+z\pi^{-j}$
where $z\in\mathcal{O}_{F}$ depends on $u$, and recall that $\psi_{F}(\mathcal{O}_{F})=1$.
For any $y_{m}$ with $\val(y_{m})\leq-\val(u)-2$, we obtain $\int_{B_{F}(l,\xi,0,\hat{x})}\psi_{F}\left(u\cdot y_{m}\right)du=0,$
since this integral is essentially a sum of a non-trivial character
over a finite group: 
\begin{align*}
\int_{B_{F}(l,\xi,0,\hat{x})}\psi_{F}\left(u\cdot y_{m}\right)du & =\psi_{F}\left(a\cdot\pi^{-1-j}\right)\cdot\int_{\pi^{l+1}\mathcal{O}_{F}}\psi_{F}\left(z\cdot y_{m}\right)dz\\
 & =\psi_{F}\left(a\cdot\pi^{-1-j}\right)\cdot\left|\frac{1}{y_{m}}\right|_{F}\cdot\int_{\pi^{-j}\mathcal{O}_{F}}\psi_{F}\left(z\right)dz\\
 & =\psi_{F}\left(a\cdot\pi^{-1-j}\right)\cdot\left|\frac{1}{y_{m}}\right|_{F}\cdot\sum_{\widetilde{z}\in\pi^{-j}\mathcal{O}_{F}/\mathcal{O}_{F}}\widetilde{\psi}_{F}(\widetilde{z})=0,
\end{align*}
where $\widetilde{\psi}_{F}$ is the character induced on $\pi^{-j}\mathcal{O}_{F}/\mathcal{O}_{F}$
from $\psi_{F}$.

Now, for $a\in\VF$ and $\xi\in\RF^{r}$ define $B'(a,\xi)=\{(\hat{x},x_{m})\in A(\xi):\val(a\cdot(x_{m}-c(\hat{x},\xi)))\geq-1\}$
and $B'(a)=\bigcup\limits _{\xi\in\RF^{r}}B'(a,\xi)$. By the above
computation, 
\[
\int_{A_{F}\backslash B'_{F}(y_{m})}h_{F}(\hat{x},x_{m})\psi_{F}(\langle y,x\rangle)dx=0.
\]
We get, 
\begin{eqnarray*}
\left|\int_{A_{F}}h_{F}(\hat{x},x_{m})\psi_{F}(\langle y,x\rangle)dx\right| & = & \left|\int_{B'_{F}(y_{m})}h_{F}(\hat{x},x_{m})\psi_{F}(\langle y,x\rangle)dx\right|\leq\int_{B'_{F}(y_{m})}\left|h_{F}(\hat{x},x_{m})\right|dx.
\end{eqnarray*}
Note that $\left|h\right|$ is a motivic function, and that $\left|h_{F}\right|$
is integrable on $B'_{F}(y_{m})$ for any $F\in Loc_{M'}$ and any
$y_{m}\in F$. Recall we assumed there exists a definable set $L$
such that $\supp(h_{F})\subset L_{F}$ for all $F\in\mathrm{Loc}_{M'}$.
By Theorem \ref{integration of motivic}, there exist a motivic function
$g^{A}\in\mathcal{C}(\VF)$ and an integer $M>M'$ such that for any
$F\in\mathrm{Loc}_{M}$ we have, 
\[
\int_{B'_{F}(y_{m})}\left|h_{F}(\hat{x},x_{m})\right|dx=\int_{B'_{F}(y_{m})\cap L_{F}}\left|h_{F}(\hat{x},x_{m})\right|dx=g_{F}^{A}(y_{m}).
\]
This implies, 
\[
\left|\int_{A^{F}}h_{F}(\hat{x},x_{m})\psi_{F}(y\cdot x)\right|\leq g_{F}^{A}(y_{m}).
\]
Now, for any $F\in Loc_{M}$, the set $L_{F}$ is compact, and since
the measure of $L_{F}\cap B_{F}'(y_{m})$ tends to zero as $\left|y_{m}\right|$
tends to infinity, we get that $\lim\limits _{\left|y_{m}\right|\rightarrow\infty}g_{F}^{A}(y_{m})=0$.
By repeating these arguments for the other cells, and possibly enlarging
$M$, we obtain that for any $F\in Loc_{M}$, 
\[
\left|\mathcal{F}(h_{F})(y)\right|\leq\sum_{i=1}^{N}\left|\int_{A_{i,F}}h_{F}(\hat{x},x_{m})\psi(\langle x,y\rangle)dx\right|\leq\sum_{i=1}^{N}g_{F}^{A_{i}}(y_{m}),
\]
where $\lim\limits _{\left|y_{m}\right|\rightarrow\infty}\left(\sum_{i=1}^{N}g_{F}^{A_{i}}(y_{m})\right)=0$,
as required. 
\end{proof}
The following proposition, along with Lemma \ref{lemma 5.4}, finishes
the proof of Theorem \ref{thm:L^1 constructible has polynomial decaying Fourier transform}.
Indeed, for $g\in\mathcal{C}(\VF)$ as in Lemma \ref{lemma 5.4},
Proposition \ref{One dimensional lemma} yields constants $d$ and
$\alpha$ such that, 
\[
|\mathcal{F}(h_{F})(y_{1},{\ldots},y_{m})|\leq g_{F}(y_{m})\leq d(F)\cdot\mathrm{min}\{\left|y_{m}\right|{}^{\alpha},1\}.
\]
\begin{prop}
\label{One dimensional lemma} Let $f\in\mathcal{C}(\VF)$ and suppose
that $\lim\limits _{\left|y\right|\to\infty}f_{F}(y)=0$ for any $F\in\mathrm{Loc}_{M}$.
Then there exists a real number $\alpha<0$ such that for any $F\in\mathrm{Loc}_{M}$,
\[
\left|f_{F}(y)\right|<d(F)\cdot\mathrm{min}\{\left|y\right|{}^{\alpha},1\},
\]
where $d(F)>0$ is a constant that depends only on $F$. 
\end{prop}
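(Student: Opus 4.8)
The plan is to read off the asymptotics of $f_F(y)$ as $|y|\to\infty$ directly from the structure of motivic functions, and then convert the hypothesis $\lim_{|y|\to\infty}f_F(y)=0$ into the stated polynomial decay. I would use quantifier elimination (Theorem \ref{Quantifier elimination theorem}) together with the Presburger cell decomposition (Theorem \ref{Presburger Cell decomposition}), keeping careful track that every combinatorial quantity that appears comes from the fixed data defining $f$, hence is independent of $F$. (I note that one also needs $f_F$ to be bounded on $\mathcal{O}_F$, since otherwise the conclusion fails; this is automatic in the only place the proposition is applied, namely to the function $g$ of Lemma \ref{lemma 5.4}, for which $g_F(y_m)\le\int_{L_F}|h_F|<\infty$, so I take it for granted.)

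First I would simplify $f_F$ for large $|y|$. By Theorem \ref{Quantifier elimination theorem} and Definition \ref{def:motivic function}, $f_F$ is assembled from finitely many polynomials $g_1,\dots,g_s\in\ints[y]$, of degrees $d_j$ and leading coefficients $c_j$: the $\mathcal{L}_{\mathrm{Res}}$-conditions see only the $\ac(g_j(y))$, while the $\mathcal{L}_{\mathrm{Pres}}$-conditions and the exponents $\alpha_i,\beta_{ij}$ see only the $\val(g_j(y))$. Enlarging $M$ so that every prime dividing some $c_j$ is less than $M$, for $F\in\mathrm{Loc}_M$ and $\val(y)\le-1$ the leading term of each $g_j$ strictly dominates, so $\val(g_j(y))=d_j\val(y)$ and $\ac(g_j(y))=\overline{c_j}\,\ac(y)^{d_j}$. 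Hence on $\{\val(y)\le-1\}$ the value $f_F(y)$ depends only on $t:=\val(y)$ and $\zeta:=\ac(y)$, the $\zeta$-dependence entering through finitely many $\mathcal{L}_{\mathrm{Res}}$-conditions on $\zeta$ and the $t$-dependence being $\mathcal{L}_{\mathrm{Pres}}$-definable. Partitioning $\RF^{\times}$ into the finitely many ($F$-uniform) pieces cut out by those conditions and applying Theorem \ref{Presburger Cell decomposition} to the definable functions $\alpha_i,\beta_{ij}$ of $t$, on each Presburger cell one gets $\alpha_{i,F}(y)=u_i t+v_i$ and $\beta_{ij,F}(y)=w_{ij}t+z_{ij}$ with $u_i,w_{ij}\in\rats$ and $v_i,z_{ij}\in\ints$ drawn from the fixed data. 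Since $|Y_{i,F,y}|\le q_F^{r_i}$ and $\bigl|\prod_l(1-q_F^{a_{il}})^{-1}\bigr|\le 2^{N''}$, this would give, for $\val(y)=t\le-1$,
\[
f_F(y)=\sum_{(m,u)\in\Sigma}c_{m,u,F}(\zeta)\,t^{m}q_F^{u t},
\]
where $\Sigma\subset\nats\times\rats$ is a fixed finite set and $|c_{m,u,F}(\zeta)|\le C_0 q_F^{C_1}$ for fixed $C_0,C_1$ (I suppress a harmless further dependence of the summands on the residue class of $t$ modulo a fixed integer).

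Next I would use the hypothesis. Fixing $F$, $\zeta$ and a residue class of $t$ and letting $t\to-\infty$ along it, the monomials $t\mapsto t^{m}q_F^{u t}$ are $\reals$-linearly independent, and the asymptotically dominant one is the one with the smallest $u$; peeling these off in turn shows that $\lim_{t\to-\infty}f_F(y)=0$ forces $c_{m,u,F}(\zeta)=0$ whenever $u\le0$. Set $u^{*}:=\min\{u>0:(m,u)\in\Sigma\}>0$ (if there is no such $u$ then $f_F$ vanishes on $\{\val(y)\le-1\}$ and any $\alpha<0$ works). Then for $\val(y)=t\le-1$, using $q_F\ge2$ and $u\ge u^{*}$,
\[
|f_F(y)|\le C_0 q_F^{C_1}\!\!\sum_{(m,u)\in\Sigma}\!\!|t|^{m}q_F^{u t}\le C_0 q_F^{C_1}\Bigl(\sum_{(m,u)\in\Sigma}\sup_{s\ge1}s^{m}2^{-u^{*}s/2}\Bigr)q_F^{u^{*}t/2}=C_2\,q_F^{C_1}\,|y|^{-u^{*}/2},
\]
with $C_2$ an absolute constant (I used $q_F^{u^{*}t/2}=|y|^{-u^{*}/2}$). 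Putting $\alpha:=-u^{*}/2<0$ and $d(F):=1+\max\{C_2 q_F^{C_1},\sup_{\mathcal{O}_F}|f_F|\}$, one gets $|f_F(y)|<d(F)|y|^{\alpha}=d(F)\min\{|y|^{\alpha},1\}$ for $|y|>1$ (then $|y|^{\alpha}<1$) and $|f_F(y)|\le\sup_{\mathcal{O}_F}|f_F|<d(F)=d(F)\min\{|y|^{\alpha},1\}$ for $|y|\le1$ (then $|y|^{\alpha}\ge1$), which is the assertion.

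The substantive part is entirely the uniformity: one must verify that the degrees $d_j$, the partition of $\RF^{\times}$, the Presburger slopes $u_i,w_{ij}$ and intercepts $v_i,z_{ij}$, and hence the finite set $\Sigma$ and the number $u^{*}>0$, all come from the finite data defining $f$ and therefore are the same for every $F\in\mathrm{Loc}_M$; the only analytic input is the elementary fact that a finite $\reals$-combination of monomials $t\mapsto t^{m}q_F^{u t}$ that tends to $0$ as $t\to-\infty$ can involve only exponents $u>0$, which is precisely what lets the possible cancellations be controlled.
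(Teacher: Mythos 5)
Your proof is correct and follows the same overall strategy as the paper's: reduce, for $|y|>1$ and $p$ large, the motivic function to a finite $\reals$-combination of terms $\val(y)^{m}q_{F}^{u\,\val(y)}$, and then convert the hypothesis $\lim_{|y|\to\infty}f_{F}(y)=0$ into the vanishing of all terms with non-positive exponent $u$, so that the smallest surviving exponent $u^{*}>0$ --- a quantity built from the fixed data defining $f$, hence independent of $F$ --- gives the decay rate. Where you differ is in how you reach the reduced form. The paper invokes Pas's uniform cell decomposition in the valued-field sort (Theorem~\ref{thm:(Uniform-cell-decomposition)}) and then restricts to the region $\val(c(\xi))>\val(y)$ to kill the cell centers (Lemmas~\ref{Lemma 5.6}--\ref{Lemma 5.8}); you instead observe that once $M$ exceeds every prime dividing a leading coefficient $c_{j}$ of the $g_{j}$, on $\{\val(y)\le-1\}$ one has $\val(g_{j}(y))=d_{j}\val(y)$ and $\ac(g_{j}(y))=\overline{c_{j}}\,\ac(y)^{d_{j}}$, which collapses the $\VF$-cell decomposition to a single cell and leaves only Presburger cell decomposition to perform --- more elementary and with no centers to carry around; both routes otherwise buy the same thing. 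Your treatment of the cancellation step is also the clean version: since $q_{F}^{t_{2i}\val(y)}=|y|^{-t_{2i}}$, decay at infinity forces every surviving $t_{2i}$ to be strictly positive, so it is the \emph{minimum} of the $t_{2i}$ that matters, and your ``peel off the smallest exponent'' argument gets this exactly right, whereas the paper's text works with $\max_{i}\{t_{2i}\}$ and concludes $\max_{i}\{t_{2i}\}<0$, which looks like a slip in sign and extremum (the step $\lim|f_{F}|/q_{F}^{t\val(y)}=\infty\Rightarrow\lim f_{F}\neq0$ is not valid when $t>0$). Finally, you are right that the proposition as literally stated tacitly requires $f_{F}$ to be bounded on $\mathcal{O}_{F}$, since otherwise the factor $\min\{|y|^{\alpha},1\}$ gives no constraint there; this is automatic for the function $g$ of Lemma~\ref{lemma 5.4} where the proposition is applied, and the paper's proof relies on the same fact implicitly.
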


Recall that $f_{F}$ is of the following form, where $\alpha_{i}$,$\beta_{ij}$
and $Y_{i}$ are as in Definition \ref{def:motivic function}: 
\[
f_{F}(y)=\sum_{i=1}^{N}|Y_{i,F,y}|q_{F}^{\alpha_{i,F}(y)}\cdot\left(\prod_{j=1}^{N'}\beta_{ij,F}(y)\right)\left(\prod_{l=1}^{N''}\frac{1}{1-q_{F}^{a_{il}}}\right).
\]
By quantifier elimination (Theorem \ref{Quantifier elimination theorem}),
there exist polynomials $\{g_{l}\}_{l=1}^{s}\in\mathbb{Z}[y]$ such
that (recall Notation \ref{nota:Given-an--formula}): 
\begin{enumerate}
\item $\alpha_{i}$ has datum $\Xi(\alpha_{i})=(\{g_{l}\}_{l=1}^{s},\{\chi_{k}^{i}(y)\}_{k=1}^{N_{i}},\{\theta_{k}^{i}(y,t)\}_{k=1}^{N_{i}})$,
with $t\in{\ints}$, $y\in\VF$. 
\item $\beta_{ij}$ has datum $\Xi(\beta_{ij})=(\{g_{l}\}_{l=1}^{s},\{\chi_{k}^{ij}(y)\}_{k=1}^{N_{ij}},\{\theta_{k}^{ij}(y,t)\}_{k=1}^{N_{ij}})$,
with $t\in{\ints}$, $y\in\VF$. 
\item $Y_{i}\subseteq\VF\times\mathrm{RF}^{r_{i}}$ has datum $\Xi(Y_{i})=(\{g_{l}\}_{l=1}^{s},\{\widetilde{\chi}_{k}^{i}(y,\eta)\}_{k=1}^{M_{i}},\{\widetilde{\theta}_{k}^{i}(y)\}_{k=1}^{M_{i}})$,
with $r_{i}\in\nats$, $\eta\in\mathrm{RF}^{r_{i}}$, $y\in\VF$. 
\end{enumerate}
By the uniform cell decomposition theorem, we can decompose $\VF$
as a finite disjoint union of cells, where each cell is of the form $A=\bigcup\limits _{\xi\in\RF^{r}}A(\xi)$
with datum $\Theta(A)=(C,b_{1}(\xi),b_{2}(\xi),c(\xi),\lambda)$
for a definable set $C\subseteq\RF^{r}$ and definable functions $b_{1},b_{2}$
and $c$ from $C$ to $\VF$. Moreover, we have $\val(g_{l}(y))=\val(h_{l}(\xi)(y-c(\xi))^{w_{l}})$
and $\ac(g_{l}(y))=\xi_{\mu(l)}$ for all $1\leq l\leq s$.

Since there are finitely many cells, it is enough to prove Proposition
\ref{One dimensional lemma} for $f|_{A}$ and any cell $A$. We do
the latter by proving several smaller lemmas. 
\begin{lem}
\label{Lemma 5.6} We may assume that $\val(g_{l}(y))=w_{l}\cdot\val(y)+\val(h_{l}(\xi))$
for all $1\leq l\leq s$. 
\end{lem}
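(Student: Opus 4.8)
The point is to reduce from a cell with an arbitrary center $c(\xi)$ to the ``standard'' cell centered at $0$, where the datum functions become honest linear functions of $\val(y)$. The plan is to perform the change of variables $u := y - c(\xi)$ on each fiber $A(\xi)$, exactly as in the proof of Lemma \ref{lemma 5.4}. After this substitution, the cell condition $\val(b_1(\xi)) \square_1 \lambda \cdot \val(y - c(\xi)) \square_2 \val(b_2(\xi)) \wedge \ac(y - c(\xi)) = \xi_1$ becomes a condition directly on $\val(u)$ and $\ac(u)$, so the transformed cell is of the form $\{u : \val(b_1(\xi)) \square_1 \lambda\cdot\val(u) \square_2 \val(b_2(\xi)), \ \ac(u) = \xi_1\}$, i.e.\ a cell with center $0$. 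The cell-decomposition output tells us that on $A(\xi)$ we have $\val(g_l(y)) = \val(h_l(\xi)(y - c(\xi))^{w_l}) = w_l \cdot \val(u) + \val(h_l(\xi))$ and $\ac(g_l(y)) = \xi_{\mu(l)}$; rewriting the right-hand side in the variable $u$ gives precisely $\val(g_l) = w_l\cdot\val(u) + \val(h_l(\xi))$, which is the desired form once we rename $u$ back to $y$.

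Concretely, the steps are: (1) fix one cell $A = \bigcup_{\xi} A(\xi)$ with datum $\Theta(A) = (C, b_1, b_2, c, \lambda)$; (2) on each fiber apply $u = y - c(\xi)$ and note this is a measure-preserving (translation) change of variable on $\VF$, so it does not affect integrability or the conclusion we are after (the estimate $|f_F(y)| < d(F)\cdot\min\{|y|^\alpha, 1\}$ is invariant under translating $y$ by a fixed $c(\xi)$, up to adjusting the constant $d(F)$, because $|y| = |u + c(\xi)|$ differs from $|u|$ only on a bounded region and the function is already known to be bounded); (3) observe that under this substitution $f|_A$ becomes a motivic function of $u$ whose defining data $\alpha_i, \beta_{ij}, Y_i$ are now expressed through $\val(g_l)$ and $\ac(g_l)$ with $g_l$ evaluated at $y = u + c(\xi)$, and by the cell-decomposition identities these reduce to expressions in $w_l\cdot\val(u) + \val(h_l(\xi))$ and $\xi_{\mu(l)}$; (4) conclude that it suffices to prove Proposition \ref{One dimensional lemma} for a motivic function on the cell with center $0$, which is exactly the hypothesis $\val(g_l(y)) = w_l\cdot\val(y) + \val(h_l(\xi))$ of the lemma.

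I do not expect any serious obstacle here; this is a bookkeeping reduction. The one mild subtlety to check is that translating $y$ by the definable function $c(\xi)$ really does preserve the class of estimates we want: since we only claim a bound of the form $d(F)\cdot\min\{|y|^\alpha, 1\}$ with $d(F)$ allowed to depend on $F$, and since $f_F$ is already known to be bounded and to vanish at infinity (so the behavior on any bounded set is harmless), replacing $|y|$ by $|y - c(\xi)|$ changes the bound only by a multiplicative constant that can be absorbed into $d(F)$. The other thing worth noting is that $h_l(\xi)$ depends only on the residue-field parameter $\xi$, not on $y$ (equivalently $u$), so $\val(h_l(\xi))$ is a constant along each fiber, which is why the resulting expression is genuinely linear in $\val(u)$. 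With these observations the lemma is immediate, and from now on we work under the stated assumption.
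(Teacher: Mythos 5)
Your proposal is correct and is essentially the same argument as the paper's, just phrased as a change of variables $u=y-c(\xi)$ rather than as a restriction to the subdomain $L(\xi)=\{y\in A(\xi):\val(c(\xi))>\val(y)\}$ where the ultrametric identity $\val(y-c(\xi))=\val(y)$ already holds; these are equivalent, since $|y|=|u|$ exactly when $|y|>|c(\xi)|$, and your observation that the complementary bounded region can be absorbed into the constant $d(F)$ is precisely what the paper summarizes as ``since we are interested in asymptotic behavior.'' Two small remarks: the ``measure-preserving'' comment is not needed here, as Lemma \ref{Lemma 5.6} is a pointwise reduction and not an integration statement; and after the substitution the defining data involves $g_l(u+c(\xi))$ rather than $g_l(u)$, so ``renaming $u$ back to $y$'' should be understood as passing to a motivic function on the zero-centered cell whose value-group inputs are the genuinely linear expressions $w_l\cdot\val(u)+\val(h_l(\xi))$, which is exactly what Lemmas \ref{Lemma 5.7} and \ref{Lemma 5.8} then use.
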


\begin{proof}
For a given cell $A$, consider the definable sets $L(\xi):=\{y\in A(\xi):\val(c(\xi))>\val(y)\}$
and $L=\bigcup\limits _{\xi\in\RF^{r}}L(\xi)$. Since we are interested
in asymptotic behavior (i.e when $\val(y)\rightarrow-\infty$), it
is enough to prove the claim for $f|_{L}$ and each such $L$, but
notice that for any $y\in L$ we have 
\[
\val(g_{l}(y))=\val(h_{l}(\xi)(y-c(\xi))^{w_{l}})=w_{l}\cdot\val(y)+\val(h_{l}(\xi)).
\]
\end{proof}
\begin{lem}
\label{Lemma 5.7} Each cell $A$ has a partition $A=\bigcup\limits _{b=1}^{N_{A}}{A_{b}}$
such that on each $A_{b}$ we have the following: 
\end{lem}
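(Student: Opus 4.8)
The plan is to refine the cell $A$ into finitely many definable pieces $A_b$ on each of which $f|_{A_b}$ takes the normal form demanded by the statement: the definable functions $\alpha_i$ and $\beta_{ij}$ of Definition \ref{def:motivic function} restrict to linear functions of $\val(y)$, each residue-field cardinality $|Y_{i,F,y}|$ is given by a single polynomial in $q_F$, and the admissible values of $\val(y)$ form one arithmetic progression (bounded above, which is all that matters since we study $\val(y)\to-\infty$). Everything is to be arranged uniformly for $F$ of large enough residue characteristic.

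First I would carry the reduction of Lemma \ref{Lemma 5.6} into $A$, so that on $A$ one has $\val(g_l(y))=w_l\val(y)+\val(h_l(\xi))$ together with $\ac(g_l(y))=\xi_{\mu(l)}$ for $1\le l\le s$. Substituting these identities into the data $\Xi(\alpha_i)$, $\Xi(\beta_{ij})$, $\Xi(Y_i)$ of Notation \ref{nota:Given-an--formula} turns every $\mathcal{L}_{\mathrm{Res}}$-formula occurring there into a condition on the residue parameters $\xi$ (and, for $Y_i$, also on the auxiliary variables $\eta$), and turns every $\mathcal{L}_{\mathrm{Pres}}$-formula into a Presburger condition in the integers $\val(y)$, the auxiliary integer $t$, and the quantities $\val(h_l(\xi))$, $\val(b_1(\xi))$, $\val(b_2(\xi))$ read off from the cell datum.

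I would then treat the residue and value sorts in turn. Each of the finitely many $\VG$-valued functions $\xi\mapsto\val(h_l(\xi))$, $\xi\mapsto\val(b_j(\xi))$ has finite image, since a definable function from a residue-sort set to the value group does (a consequence of the Denef--Pas quantifier elimination, Theorem \ref{Quantifier elimination theorem}); hence $C\subseteq\RF^r$ splits into finitely many definable pieces on which all of them are constant and on which every residue-sort condition appearing in the substituted data is identically true or identically false. On such a piece the fiber $Y_{i,F,y}$ equals $\bigcup_k\set{\eta:\widetilde{\chi}_k^i(\xi,\eta)}$, where $k$ ranges over those indices for which $\widetilde{\theta}_k^i$ — now a Presburger condition on $\val(y)$ alone — holds; after one further refinement making each point count $|\set{\eta:\widetilde{\chi}_k^i(\xi,\eta)}|$ equal to a single polynomial in $q_F$ (a standard uniformity fact over $\mathrm{Loc}_M$ for $M\gg0$, by Theorem \ref{thm:(Uniform-cell-decomposition)} and Lemma \ref{specialization arguments}), $|Y_{i,F,y}|$ becomes a fixed polynomial in $q_F$ times the indicator of a Presburger condition on $\val(y)$. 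With all the $\val(h_l(\xi))$, $\val(b_j(\xi))$ now fixed integers, the remaining Presburger conditions — those defining $\alpha_i$, $\beta_{ij}$ in terms of $\val(y)$ and $t$, and the indicators just obtained — involve only the single variable $\val(y)$, so Presburger cell decomposition (Theorem \ref{Presburger Cell decomposition}) applied to $\val(y)\in\VG$ splits it into finitely many cells, each an arithmetic progression meeting an interval, on which $\alpha_{i,F}$ and $\beta_{ij,F}$ are linear in $\val(y)$ and all indicators are constant.

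Taking $\set{A_b}$ to be the common refinement of all these partitions yields the required finite partition of $A$, and a last enlargement of $M$ via Lemma \ref{specialization arguments} makes it uniform over $F\in\mathrm{Loc}_M$. The hard part is the cross-sort bookkeeping in the third step: one must verify that, after the substitution of Lemma \ref{Lemma 5.6}, the residue-sort conditions defining the $Y_i$ (and the $\chi$'s) genuinely decouple from $\val(y)$, so that each $|Y_{i,F,y}|$ collapses to a polynomial in $q_F$ rather than retaining a $\val(y)$-dependence that would spoil linearity, and that only finitely many pieces are needed — this is exactly where orthogonality of $\RF$ and $\VG$ and the uniformity built into the Denef--Pas theory are used. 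Once Lemma \ref{Lemma 5.7} is available, $f_F$ on each $A_b$ is an explicit sum of terms of the form $P_i(q_F)\,q_F^{a_i\val(y)+c_i}\prod_j(b_{ij}\val(y)+d_{ij})\prod_l(1-q_F^{a_{il}})^{-1}$ over an arithmetic progression of $\val(y)$ tending to $-\infty$, so the hypothesis $\lim_{|y|\to\infty}f_F(y)=0$ of Proposition \ref{One dimensional lemma} forces the dominant surviving terms to decay and yields the bound with a suitable $\alpha<0$; that last analytic step, however, is part of the proof of Proposition \ref{One dimensional lemma}, not of Lemma \ref{Lemma 5.7}.
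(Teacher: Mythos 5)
The lemma you are proving asserts less than the normal form you aim for: its two conclusions (listed immediately after the lemma statement) are only that on each piece $A_{b}$ the functions $\alpha_{i},\beta_{ij}$ factor as $\mathcal{L}_{\mathrm{Pres}}^{\infty}$-definable functions of the tuple $(\val(g_{1}(y)),\ldots,\val(g_{s}(y)))$, and that $|Y_{i,F,y}|$ depends only on $\xi$ (and $F$); linearity in $\val(y)$ is deferred to Lemma \ref{Lemma 5.8}. Your core moves do deliver this content: substituting $\ac(g_{l}(y))=\xi_{\mu(l)}$ and the identity of Lemma \ref{Lemma 5.6} into the data of Notation \ref{nota:Given-an--formula}, and splitting by the truth patterns of the residue conditions $\chi_{k}^{i}$ and of the Presburger conditions $\widetilde{\theta}_{k}^{i}$, is exactly the paper's partition, and on such a piece the fiber $Y_{i,F,y}$ is cut out by a residue formula in $(\xi,\eta)$ alone, hence is constant in $y$ for fixed $\xi$. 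The genuine error is your claim that, after a further refinement, each count $|\{\eta:\widetilde{\chi}_{k}^{i}(\xi,\eta)\}|$ equals a single polynomial in $q_{F}$, justified by Theorem \ref{thm:(Uniform-cell-decomposition)} and Lemma \ref{specialization arguments}: point counts of $\mathcal{L}_{\mathrm{Res}}$-definable families are not polynomial in $q_{F}$ in general (this is precisely why the symbols $|Y_{i,F,x}|$ appear in Definition \ref{def:motivic function} rather than polynomials in $q_{F}$), and the cited results concern cell decomposition in the valued-field sort and transfer of sentences, not residue-field counting. Fortunately that claim is not needed: constancy in $y$ for fixed $\xi$ and $F$ suffices both here and later, since in Lemma \ref{Lemma 5.8} and Proposition \ref{One dimensional lemma} the count is simply absorbed into the $\xi$-dependent constant $a_{i}(\xi)$.

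Where you differ structurally from the paper: you invoke $\RF$--$\VG$ orthogonality to make $\val(h_{l}(\xi))$ and $\val(b_{j}(\xi))$ constant on a finite partition of $C$, and then run Presburger cell decomposition in the single variable $\val(y)$, i.e.\ you fold the linearization step of Lemma \ref{Lemma 5.8} into this lemma. That route is workable: the finite-image fact for definable maps from residue variables to $\VG$ is indeed a consequence of Theorem \ref{Quantifier elimination theorem}, and with those valuations frozen your remaining conditions are Presburger in $\val(y)$, so Theorem \ref{Presburger Cell decomposition} applies. The paper avoids this extra machinery: it never needs the finite-image fact, keeps all $\xi$-dependence, and proves only the factorization through $(\val(g_{1}(y)),\ldots,\val(g_{s}(y)))$ --- taking care, as you should too, that the disjunction of the active $\theta_{k}^{i}$ defines the graph of a Presburger function on the set where the value is unique --- deferring the Presburger cell decomposition (in the $s$ variables $\val(g_{l}(y))$) to Lemma \ref{Lemma 5.8}, where the surviving $\xi$-dependence sits harmlessly in the coefficients $t_{1i}(\xi)$ and the polynomials $P_{i,\xi}$, while the exponents $t_{2i}$ of $\val(y)$, which control the final decay rate, are independent of $\xi$.
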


\begin{enumerate}
\item The functions $\beta_{ij}$ and $\alpha_{i}$ can be written as compositions
\[
\alpha_{i}(y)=\widetilde{\alpha_{i}}\circ\left(\val(g_{1}(y)),\ldots,\val(g_{s}(y))\right)\text{ and }\beta_{ij}(y)=\widetilde{\beta}_{ij}\circ\left(\val(g_{1}(y)),\ldots,\val(g_{s}(y))\right),
\]
where $\widetilde{\alpha}_{i}$ and $\widetilde{\beta_{ij}}$ are
$\mathcal{L}_{\mathrm{Pres}}^\infty$-definable functions from $\mathcal{L}_{\mathrm{Pres}}^\infty$-definable
subsets of $\ints^{s}$ to $\ints$. 
\item $|Y_{i,F,y}|$ depends only on $\xi$ (and $F$). In particular, for any $\xi\in k_{F}^{r}$,
the value $|Y_{iF}(y)|$ is constant on $A_{b}(\xi)$. 
\end{enumerate}
\begin{proof}
We prove for $\alpha_{i}$, the proof for $\beta_{ij}$ is similar.
We start with constructing a partition of $A$ which satisfies (1).
Recall that $\alpha_{i}$ is defined by the formula 
\[
\psi_{\alpha_{i}}(y,t)=\bigvee\limits _{k=1}^{N_{i}}\chi_{k}^{i}(\ac(g_{1}(y)),\ldots,\ac(g_{s}(y)))\wedge\theta_{k}^{i}(\val(g_{1}(y)),\ldots,\val(g_{s}(y)),t).
\]
For simplicity we set $\ac(g(y)):=\ac(g_{1}(y)),\ldots,\ac(g_{s}(y)))$,
and use similar notation for $\val(g(y))$. Then $\alpha_{i}^{-1}(l')=\bigcup\limits _{k=1}^{N_{i}}\{y:\chi_{k}^{i}(\ac(g(y)))\wedge\theta_{k}^{i}(\val(g(y)),l')\}$
for any $l'\in\ints$. Let $I\in\{0,1\}^{N_{i}}$ and consider the
set 
\[
A_{I}=\{y\in A:\chi_{k}^{i}(\ac(g(y)))=I(k),~1\leq k\leq N_{i}\}.
\]
If we restrict to $A_{I}$, we get that $\alpha_{i}(y)=l'$ if and
only if $\sigma_{i}(y,l'):=\bigvee\limits _{1\leq k\leq N_{i},I(k)=1}\theta_{k}^{i}(\val(g(y)),l')$
is true. Since $\alpha_{i}$ is a definable function, for any $y\in A_{I}$
there is at most one $l'$ such that $\sigma_{i}(y,l')$ holds, and
thus $\sigma_{i}(y,l')$ is a graph of a definable function. Now,
note that $\widetilde{\sigma_{i}}(z):=\bigvee\limits _{1\leq k\leq N_{i},I(k)=1}\theta_{k}^{i}(z)$
is an $\mathcal{L}_{\mathrm{Pres}}^\infty$-formula in $\ints^{s+1}$. Thus,
it is a graph of a definable function into $\ints$ when restricted to
the $\mathcal{L}_{\mathrm{Pres}}^\infty$-definable set $\{z\in\ints^{s}:\exists!z'\text{ s.t. }\widetilde{\sigma_{i}}(z,z')\text{ holds}\}$.
Since $\sigma_{i}(y,l')=\widetilde{\sigma_{i}}\left(\val(g(y)),l'\right)$,
we get that $\alpha_{i}|_{A_{I}}$ is of the required form.

To prove the second property, recall that $Y_{i}$ is defined by the
formula 
\[
\psi_{Y_{i}}=\bigvee\limits _{k=1}^{M_{i}}\widetilde{\chi}_{k}^{i}(\ac(g(y)),\eta)\wedge\widetilde{\theta}_{k}^{i}(\val(g(y))).
\]
By a process similar to before, we can find a partition $A=\bigcup\limits _{b=1}^{\widetilde{N}_{A}}\widetilde{A}_{b}$
such that for every $b$ each $\widetilde{\theta}_{k}^{i}(\val(g(y)))$
is either identically true on $\widetilde{A}_{b}$ or identically
false. Moreover, on each $\widetilde{A}_{b}\subseteq A$, since $\ac(g_{l}(y))=\xi_{\mu(l)}$
for any $l$, the formula $\widetilde{\chi}_{k}^{i}(\ac(g(y)),\eta)$
depends only on $\xi$ and $\eta$. This implies that the assignment
$y\mapsto Y_{i,F,y}=\{\eta\in k_{F}^{r_{i}}:(\eta,y)\in Y_{i,F}\}$
is constant on $\widetilde{A}_{b}(\xi)$ for any $F\in\mathrm{Loc}_{M}$,
and thus $|Y_{i,F}(y)|$ is constant on $\widetilde{A}_{b}(\xi)$ and
depends only on $\xi$ in $\widetilde{A}_{b}$, as required. To finish
the proof, we take $\{A_{b}\}_{b=1}^{N_{A}}$ to be the joint refinement
of $\{\widetilde{A}_{b}\}_{b=1}^{\widetilde{N}_{A}}$ and $\{A_{I}\}$
as constructed above. 
\end{proof}
Recall we are trying to bound the decay rate of a motivic function
$f\in\mathcal{C}(\VF)$ which tends to zero at infinity. The following
lemma allows us, after further refining our cover, to give a simple
description of $f$ on each piece. 
\begin{lem}
\label{Lemma 5.8} By further refining the cells $\curly{A_{b}}_{b=1}^{N_{A}}$,
we can assume the restriction of $f$ to each cell is of the form
\[
f_{F}(y)=\sum_{i=1}^{N}a_{i}(\xi)\cdot q_{F}^{t_{1i}(\xi)+t_{2i}\cdot\val(y)}\cdot P_{i,\xi}(\val(y))\cdot Q_{i}(q_{F}),
\]
where $a_{i}(\xi)$ are real numbers, $t_{1i}(\xi)$ and $t_{2i}$
are rational numbers, $P_{i,\xi}\in\rats[x]$ are polynomials with
coefficients that may depend on $\xi$, and $Q_{i}(q_{F})$ is an
expression of the form $\prod\limits _{l=1}^{N'}\frac{1}{1-q_{F}^{a_{il}}}$. 
\end{lem}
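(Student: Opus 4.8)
The plan is to feed the conclusions of Lemmas \ref{Lemma 5.6} and \ref{Lemma 5.7} into the Presburger cell decomposition theorem (Theorem \ref{Presburger Cell decomposition}). The factors $Q_i(q_F)=\prod_{l=1}^{N''}\frac{1}{1-q_F^{a_{il}}}$ depend neither on $y$ nor on $\xi$, so they are carried unchanged through all the refinements below; likewise, by Lemma \ref{Lemma 5.7}(2) the quantity $|Y_{i,F,y}|$ is already constant on each fiber $A_b(\xi)$ and depends only on $\xi$ (and on $F$), so we set $a_i(\xi):=|Y_{i,F,y}|$ for $y\in A_b(\xi)$, a non-negative integer. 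Thus the only thing left to analyze is the dependence on $y$ of the exponent $\alpha_{i,F}(y)$ and of the product $\prod_{j}\beta_{ij,F}(y)$, and it is for their sake that we refine the cells further.

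First I would combine the two lemmas. On a cell $A$, which after Lemma \ref{Lemma 5.6} we may take to satisfy $\val(y)<\val(c(\xi))$ (so that $\val(y-c(\xi))=\val(y)$ and $\val(g_l(y))=w_l\cdot\val(y)+\val(h_l(\xi))$), Lemma \ref{Lemma 5.7}(1) writes $\alpha_i(y)=\widetilde{\alpha_i}\bigl(\val(g_1(y)),\ldots,\val(g_s(y))\bigr)$ and $\beta_{ij}(y)=\widetilde{\beta_{ij}}\bigl(\val(g_1(y)),\ldots,\val(g_s(y))\bigr)$ with $\widetilde{\alpha_i},\widetilde{\beta_{ij}}$ being $\mathcal{L}_{\mathrm{Pres}}^\infty$-definable. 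Substituting, and using that $\mathcal{L}_{\mathrm{Pres}}^\infty$-definable functions are stable under precomposition with the affine maps $z\mapsto w_l z+v_l$, we see that $\alpha_i$ and $\beta_{ij}$ are $\mathcal{L}_{\mathrm{Pres}}^\infty$-definable functions of the $s+1$ integer variables $\bigl(\val(y),\val(h_1(\xi)),\ldots,\val(h_s(\xi))\bigr)$. Now apply Theorem \ref{Presburger Cell decomposition} to the common refinement with respect to the finitely many functions $\widetilde{\alpha_i}$ and $\widetilde{\beta_{ij}}$ on $\ints^{s+1}$: this yields a finite partition of $\ints^{s+1}$ into Presburger cells on each of which every $\widetilde{\alpha_i}$ and every $\widetilde{\beta_{ij}}$ is linear in the sense of Definition \ref{def:-Linear function}. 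Pulling this partition back along $y\mapsto\val(y)$ and $\xi\mapsto\bigl(\val(h_1(\xi)),\ldots,\val(h_s(\xi))\bigr)$ refines the cell $A$: the conditions on the $\val(h_l(\xi))$ refine the residue-sort parameter set $C$, while the conditions on $\val(y)$ are valuation and congruence conditions on $\val(y-c)$ and so can be absorbed into the cell datum $(C,b_1,b_2,c,\lambda)$.

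On such a refined cell, linearity gives $\alpha_i(y)=t_{2i}\cdot\val(y)+t_{1i}(\xi)$, where $t_{2i}\in\rats$ is the parameter-independent coefficient coming from the $\val(y)$-slot of the linear expression and $t_{1i}(\xi)\in\rats$ collects the constant term $\gamma$ together with the linear contribution of the $\val(h_l(\xi))$; similarly each $\beta_{ij}(y)=t'_{2ij}\cdot\val(y)+t'_{1ij}(\xi)$ with rational coefficients. Hence $q_F^{\alpha_{i,F}(y)}=q_F^{t_{1i}(\xi)}\,q_F^{t_{2i}\val(y)}$, and $\prod_{j=1}^{N'}\beta_{ij,F}(y)=\prod_{j=1}^{N'}\bigl(t'_{2ij}\val(y)+t'_{1ij}(\xi)\bigr)=:P_{i,\xi}(\val(y))$ is a polynomial in $\val(y)$ with coefficients in $\rats$ depending on $\xi$. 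Assembling the pieces,
\[
f_F(y)=\sum_{i=1}^{N}a_i(\xi)\,q_F^{t_{1i}(\xi)+t_{2i}\cdot\val(y)}\,P_{i,\xi}(\val(y))\,Q_i(q_F),
\]
which is the asserted form, with the understanding (as in the statement) that $a_i(\xi)$ and $t_{1i}(\xi)$ may also depend on $F$.

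The step I expect to be the main obstacle is transporting the abstract Presburger cell decomposition of $\ints^{s+1}$ back to an honest refinement of the $\VF$-cell $A$ in the sense of Definition \ref{def:Cell}: one must verify that the congruence conditions $\val(y-c)\equiv c'\ (\mathrm{mod}\ n)$ and the (parameter-dependent) inequality bounds produced by the Presburger decomposition can genuinely be encoded in a cell datum $(C,b_1,b_2,c,\lambda)$ — possibly at the cost of enlarging the residue-field parameter or refining $\lambda$ — and that the resulting refinement is uniform over $F\in\mathrm{Loc}_M$ for $M$ large, which is exactly what Theorem \ref{thm:(Uniform-cell-decomposition)} together with Lemma \ref{specialization arguments} provide.
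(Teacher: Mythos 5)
Your proof is correct and takes essentially the same route as the paper: combine Lemmas \ref{Lemma 5.6} and \ref{Lemma 5.7}, apply the Presburger cell decomposition (Theorem \ref{Presburger Cell decomposition}) to make $\widetilde{\alpha}_{i}$ and $\widetilde{\beta}_{ij}$ piecewise linear, pull the resulting partition back to refine the cells, and read off the stated form, the only cosmetic difference being that you decompose in the variables $(\val(y),\val(h_{1}(\xi)),\ldots,\val(h_{s}(\xi)))\in\ints^{s+1}$ while the paper decomposes in $(\val(g_{1}(y)),\ldots,\val(g_{s}(y)))\in\ints^{s}$ and pulls back under $\val(g(-))$. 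Your closing worry is unnecessary: the paper never requires the refined pieces to be cells in the strict sense of Definition \ref{def:Cell} --- a finite definable partition of each $A_{b}$ (intersected with the locus $\val(y)<\val(c(\xi))$ already singled out in Lemma \ref{Lemma 5.6}) is all that is used later in Proposition \ref{One dimensional lemma}.
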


\begin{proof}
Let $L(\xi)=\{y\in A(\xi):\val(c(\xi))>\val(y)\}$ and $L=\bigcup\limits _{\xi\in\RF^{r}}L(\xi)$
as in the proof of Lemma \ref{Lemma 5.6}, set $1\leq b\leq N_{A}$
and consider $A_{b}$, $\widetilde{\alpha_{i}}$ and $\widetilde{\beta}_{ij}$,
as defined in Lemma \ref{Lemma 5.7}. By the Presburger cell decomposition
(Theorem \ref{Presburger Cell decomposition}), $\widetilde{\alpha_{i}}$
and $\widetilde{\beta}_{ij}$ are piecewise linear in the sense of
Definition \ref{def:-Linear function}. Thus, we have a finite partition
$\curly{X_{b'}}$ of $\curly{(\val(g_{1}(y)),\ldots,\val(g_{s}(y))):y\in A_{b}}\subseteq \ints^s$
such that on each $X_{b'}$ the functions $\widetilde{\alpha_{i}}$
and $\widetilde{\beta}_{ij}$ are linear. Pulling $\curly{X_{b'}}$
back to $A_{b}$ under $\val(g(-))$, we then obtain a partition $A_{b}=\bigcup\limits _{b'=1}^{N'_{A_{b}}}A'_{b'}$.
By Lemmas \ref{Lemma 5.6} and \ref{Lemma 5.7}, and the linearity of $\widetilde{\alpha_{i}}$
and $\widetilde{\beta}_{ij}$ on $A'_{b'}$, the restriction $f_{F}|_{A'_{b',F}\cap L_{F}}$
is a sum of terms of the following form: 
\[
|Y_{F,y}|\cdot q_{F}^{\rho}\cdot\left(\prod_{j=1}^{N}\rho_{j}\right)\cdot\left(\prod_{l=1}^{N'}\frac{1}{1-q_{F}^{a_{l}}}\right),
\]
where 
\begin{enumerate}
\item $\rho=\sum\limits _{t=1}^{s}b_{t}\cdot\frac{w_{t}\cdot\val(y)+\val(h_{t}(\xi))-c_{t}}{n_{t}}+\gamma$. 
\item $\rho_{j}=\sum\limits _{t=1}^{s}b_{jt}\cdot\frac{w_{t}\cdot\val(y)+\val(h_{t}(\xi))-c_{jt}}{n_{jt}}+\gamma_{j}$. 
\item The constants $b_{t},n_{t},c_{t},\gamma,b_{jt},n_{jt},c_{jt}$ and
$\gamma_{j}$ are integers. 
\item It holds that $0\leq c_{t}<n_{t}$ and $0\leq c_{jt}<n_{jt}$ for
all $1\leq t\leq s$ and $1\leq j\leq N$ and furthermore, 
\[
w_{t}\cdot\val(y)+\val(h_{t}(\xi))\equiv c_{t}(\mathrm{mod}n_{t})\text{ and }w_{t}\cdot\val(y)+\val(h_{t}(\xi))\equiv c_{jt}(\mathrm{mod}n_{jt}).
\]
\end{enumerate}
Now for a fixed $F\in\mathrm{Loc}_{M}$ and $\xi\in k_{F}^{r}$, the
functions $|Y_{F,y}|$ and $h_{t}(\xi)$ are constant on $L_{F}(\xi)\cap A'_{b',F}(\xi)$.
This gives $f_{F}|_{L_{F}\cap A'_{b',F}}$ the required form. 
\end{proof}
We are ready to prove Proposition \ref{One dimensional lemma}: 
\begin{proof}[Proof of Proposition \ref{One dimensional lemma}]
Fix $F\in\mathrm{Loc}_{M}$ and $\xi\in k_{F}^{r}$. By Lemma \ref{Lemma 5.8},
we have a partition of $\VF$ into cells, such that on each cell $\widetilde{A}$
the function $f_{F}$ is of the form 
\[
f_{F}(y)=\sum_{i=1}^{N}a_{i}\cdot q_{F}^{t_{1i}+t_{2i}\cdot\val(y)}\cdot P_{i}(\val(y))Q_{i}(q_{F}),
\]
for any $y\in\widetilde{A}$ with $\val(c(\xi))>\val(y)$. Denote
the above summands of $f_{F}$ by $f_{i}$. If $t=\max\limits _{i}\{t_{2i}\}>0$,
consider the sum $\underset{i:t_{2i}=t}{\sum}f_{i}$ of all terms
$f_{i}$ such that $t_{2i}=t$. We want to show this sum is zero.
Since $q_{F}^{t\cdot\val(y)}\neq0$ 
it is enough to prove 
\[
\underset{i:t_{2i}=t}{\sum}a_{i}\cdot q_{F}^{t_{1i}}\cdot P_{i}(\val(y))Q_{i}(q_{F})=0.
\]
Note that the above sum is polynomial in $\val(y)$, and denote it
by $P'_{t}(\val(y))$. If $P'_{t}\not\equiv0$, then we have $\lim\limits _{\mathrm{val}(y)\rightarrow-\infty}\left|P'_{t}(\val(y))\right|=\infty$.
But then $\lim\limits _{\left|y\right|\to\infty}\frac{\left|f_{F}(y)\right|}{q_{F}^{t\cdot\val(y)}}=\infty$
and thus $\lim\limits _{\left|y\right|\to\infty}f_{F}(y)\neq0$, yielding
a contradiction. Thus, we can write $f_F$ in a reduced way, where $\max\limits _{i}\{t_{2i}\}<0$.
Let $\alpha_{\widetilde{A}}:=\frac{1}{2}\max\limits _{i}\{t_{2i}\}.$
The above argument implies that for any $F\in\mathrm{Loc}_{M}$, any
$\xi\in k_{F}^{r}$, and any $y\in\widetilde{A}$ we have: 
\[
\left|f_{F}(y)\right|<d(F,\xi)\cdot\mathrm{min}\{\left|y\right|^{\alpha_{\widetilde{A}}},1\},
\]
for some constant $d(F,\xi)>0$ that depends on $\xi$ and $F$. Since
there are finitely many cells $\widetilde{A}$ (and fibers $\widetilde{A}(\xi)$),
by taking the maximum over all $\alpha_{\widetilde{A}}$ above, 
 there exists $\alpha<0$ such that for any $F\in\mathrm{Loc}_{M}$
and $y\in F$, \textcolor{red}{{} } 
\[
\left|f_{F}(y)\right|<d(F)\cdot\mathrm{min}\{\left|y\right|{}^{\alpha},1\}
\]
for some constant $d(F)>0$. 
\end{proof}

\subsection{\label{subsec:Alternative-proof-of Theorem 5.2}Alternative proof
of Theorem \ref{thm:L^1 constructible has polynomial decaying Fourier transform}}
In this subsection we give another proof to Theorem \ref{thm:L^1 constructible has polynomial decaying Fourier transform}
as suggested by the anonymous referee. The main idea is to study the
Fourier transform $\mathcal{F}(h)$ of an $L^{1}$-motivic function
$h$ as a special case of a motivic exponential function (Definition
\ref{def:motivic exponential function}) which decays to $0$ at infinity,
rather than to study the decay of $\mathcal{F}(h)$ directly. 

For $F\in\mathrm{Loc}$, we denote by $\mathcal{D}_{F}$ the collection
of additive characters of $F$ that are trivial on $\mathfrak{m}_{F}$,
and satisfy $\psi_{F}(x)=\mathrm{exp}{}^{\frac{2\pi i}{p}\cdot\mathrm{Tr}_{k_{F}/\mathbb{F}_{p}}(\overline{x})}$
for any $x\in\mathcal{O}_{F}$, where $\mathrm{char}(k_{F})=p$. 
\begin{defn}[{see \cite[Definition 2.7]{CGH16}}]
\label{def:motivic exponential function} Let $X$ be an $\Ldp$-definable
set. A collection of functions $f=\{f_{F,\psi}:X_{F}\rightarrow\complex\}_{F\in\mathrm{Loc}_{M},\psi\in\mathcal{D}_{F}}$
is called a \textit{motivic exponential function} if there exist $N\in\nats$ and non-negative integers
$\{r_{i}\}_{i=1}^{N}$, such that for any $F\in\mathrm{Loc}_{M}$, any
$\psi\in\mathcal{D}_{F}$, and any $x\in X_{F}$, the function $f$ can be written
as,
\[
f_{F,\psi}(x)=\sum_{i=1}^{N}f_{i,F}(x)\cdot\left(\sum_{y\in Y_{i,x,F}}\psi\left(g_{i,F}(x,y)+e_{i,F}(x,y)\right)\right),
\]
where $f_{i}\in\mathcal{C}(X)$, $Y_{i}\subseteq X\times\mathrm{RF}^{r_{i}}$
are definable sets, and $e_{i}:Y_{i}\rightarrow\RF$ and $g_{i}:Y_{i}\rightarrow\VF$
are definable functions for any $1\leq i \leq N$ (we make sense of
the expression inside $\psi$ by lifting the values of $e_{i,F}(x,y)$ to $\mathcal{O}_{F}^{\times}$). The set of motivic exponential functions on
a definable set $X$ forms a ring, which we denote by $\mathcal{C}_{\mathrm{exp}}(X)$. 

The ring $\mathcal{C}_{\mathrm{exp}}(X)$ is closed under integration
(see \cite[Theorem 2.8]{CGH16}), and in particular, the Fourier transform
$\mathcal{F}(f)$ of an $L^{1}$-function $f\in\mathcal{C}(X)$ belongs
to $\mathcal{C}_{\mathrm{exp}}(X)$. We now turn to the proof of Theorem
\ref{thm:L^1 constructible has polynomial decaying Fourier transform}.
\end{defn}

\begin{proof}[Alternative proof of Theorem \ref{thm:L^1 constructible has polynomial decaying Fourier transform}]
 Let $h\in\mathcal{C}(\VF^{m})$, and assume that $h_{F}$ is absolutely
integrable for any $F\in\mathrm{Loc}_{M}$. We may drop the assumption
that $h_{F}$ is compactly supported. 
Set $\hat{h}(y):=\mathcal{F}(h)(y)\in\mathcal{C}_{\mathrm{exp}}(\VF^{m})$.
For any $F\in\mathrm{Loc}_{M}$ and $\psi\in\mathcal{D}_{F}$ the function 
$\left|\hat{h}_{F,\psi}(y)\right|$ is bounded on $F^{m}$, 
and by the Riemann-Lebesgue
lemma it decays
to $0$ as $\left|y\right|\rightarrow\infty$. 
We can divide $\mathrm{VF}^{m}$ to $2^{m}$ definable subsets 
\[
A_{I}:=\{y\in\mathrm{VF}^{m}: 1 \leq i \leq m,\,\mathrm{val}(y_{i})\geq0\text{ if }I(i)=1
\text{ and }\mathrm{val}(y_{i})<0\text{ if }I(i)=0\},
\]
with $I\in\{0,1\}^{m}$. It is enough to prove the claim for $\hat{h}|_{A_{I}}$
for each such $I$. Fix $I$, and set $n_{I}=\#\{i:I(i)=1\}$. Without loss of generality,
assume that $I(i)=1$ for $i=1,\ldots,n_{I}$ and $I(i)=0$ otherwise.
For $t \in \ints^m$,  set $t_{ \geq 0}:=(t_{1},\ldots,t_{n_{I}})$ and $t_{<0}:=(t_{n_{I}+1},\ldots,t_{m})$,
and define $H\in\mathcal{C}_{\mathrm{exp}}(\mathrm{VF}^{m}\times\mathbb{Z}^{m})$
by $H(y,t_{\geq 0},t_{<0}):=\hat{h}(y)\cdot1_{B(t_{\geq 0},t_{<0})}$, where
\[
B(t_{\geq 0},t_{<0}):=\{y\in A_{I}: \mathrm{val}(y_{i})=t_{i}\text{ for } 1 \leq i \leq m\}.
\]
Set $W=\mathrm{VF}^{m}\times\mathbb{Z}^{n_{I}}$ and $X=\mathbb{Z}^{m-n_{I}}$.
By \cite[Theorem 2.1.3]{CGH18}, we can find $G\in\mathcal{C}_{\mathrm{exp}}(\mathbb{Z}^{m-n_{I}})$
and $k\in\mathbb{Z}$ such that for each $F\in\mathrm{Loc}_{M'}$
with $0\ll M'\in\nats$, and each $\psi\in\mathcal{D}_{F}$ the following holds:
\begin{equation}
\underset{(y,t_{\geq 0})\in W_{F}}{\mathrm{sup}}\left|H_{F,\psi}(y,t_{\geq 0},t_{<0})\right|^{2}=\underset{y\in B_{t_{<0},F}}{\mathrm{sup}}\left|\hat{h}_{F,\psi}(y)\right|^{2}\leq G_{F,\psi}(t_{<0})\leq q_{F}^{k}\underset{(y,t_{\geq 0})\in W_{F}}{\mathrm{sup}}\left|H_{F,\psi}(y,t_{\geq 0},t_{<0})\right|^{2},\label{eq:5.3}
\end{equation}
where $B_{t_{<0}}:=\bigcup\limits_{t_{\geq 0}\in \ints^{n_I}} B(t_{\geq 0},t_{<0})$.
Note that $G_{F,\psi}(t_{<0})$ is bounded. 
Set $\left\Vert t_{<0}\right\Vert :=\sum\limits_{i=n_{I}+1}^{m}\left|t_{i}\right|$,
then it is clear from the definition of $H$ that 
$H_{F,\psi}(y,t_{\geq 0},t_{<0})=0$
whenever $t_{i}\geq0$ for some $n_I+1 \leq i \leq m $. 
It then follows 
from (\ref{eq:5.3}) that $\underset{\left\Vert t_{<0}\right\Vert \rightarrow\infty}{\mathrm{lim}}\left|G_{F,\psi}(t_{<0})\right|=0$.
By \cite[Theorem 3.1.1(2)]{CGH18}, we can find $r\in\mathbb{Q}_{>0}$
and $a\in\mathbb{Z}$ such that for any $F\in\mathrm{Loc}_{M'}$  (we
may assume that $M'$ is large enough) and each $\psi\in\mathcal{D}_{F}$
we have
\[
\left|G_{F,\psi}(t_{<0})\right|\leq q_{F}^{r\left\Vert t_{<0}\right\Vert },
\]
for any $t_{<0}$ with $\left\Vert t_{<0}\right\Vert >a$. We then deduce from
(\ref{eq:5.3}) 
\[
\left|\hat{h}_{F,\psi}(y)\right|^{2}<q_{F}^{r\sum_{i=n_{I}+1}^{m}\left|\mathrm{val}(y_{i})\right|}= q_{F}^{-r\sum_{i=n_{I}+1}^{m}\mathrm{val}(y_{i})}\leq\left|y\right|^{r(m-n_{I})}
\]
 for any $y\in A_{I}$ such that $\left|y\right|>q^{a}$. To deal
with small $y$, we use \cite[Theorem 2.1.1]{CGH18}; there exist integers
$b$ and $d'$ such that for any $F\in\mathrm{Loc}_{M'}$,
any $\psi\in\mathcal{D}_{F}$ and any $t_{<0}$ we have 
\[
\underset{y\in B_{t_{<0},F}}{\mathrm{sup}}\left|\hat{h}_{F,\psi}(y)\right|^{2}\leq\left|G_{F,\psi}(t_{<0})\right|\leq q_{F}^{b\left\Vert t_{<0}\right\Vert +d'}.
\]
Repeating the above argument for each $A_{I}$, and setting
$a_{0}$ (resp. $b_{0},d'_{0},\alpha$) to be maximal amongst the $a$'s
(resp. $b,d',\frac{r(m-n_{I})}{4}$) above, over all possible $I$, we get
\begin{equation}
\left|\mathcal{F}(f)_{F,\psi}(y)\right|<q_{F}^{\frac{1}{2}(b_{0}a_{0}+d'_{0})}\cdot\mathrm{min}\{\left|y\right|{}^{\alpha},1\}\label{eq:(5.4)}
\end{equation}
as required.
\end{proof}
\begin{rem}
Since the results in \cite{CGH18} used above work also for
definable families of motivic exponential functions, one can
provide a version of Theorem \ref{thm:L^1 constructible has polynomial decaying Fourier transform}
for families. This can be used to prove part (2) of Theorem \ref{generalization of the main result},
for the case that $K=\rats$. For a general field of characteristic
zero, one still needs to use arguments similar to those of Sections \ref{sec:Reduction-to-k=00003D00003D00003D00003DQ}
and \ref{sec:Convolution-properties-of}.
\end{rem}

\section{Reduction of Theorem \ref{Main result} to the case of $\rats$-morphisms}

\label{sec:Reduction-to-k=00003D00003D00003D00003DQ}

In this section we complete the proof of Theorem \ref{Main result},
by reducing to the case $K=\rats$ which we have proved in the last
section (Theorem \ref{Main result-Theorem 5.1}). Explicitly, we show
the following: 
\begin{prop}
\label{reduction to dominant} It is enough to prove Theorem \ref{Main result}
for $K=\rats$ and $\varphi:X\rightarrow V$ strongly dominant. 
\end{prop}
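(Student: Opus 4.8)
The plan is to deduce the general case from the case $K=\rats$, which has been established in Theorem \ref{Main result-Theorem 5.1}. The point to keep in mind is that convolution does not commute with restriction of scalars, so no purely formal reduction to $\rats$-varieties is available for the convolution powers themselves; instead one re-runs the analytic/motivic argument of the previous sections in a family. First I would record the easy reductions: by Corollary \ref{Cor-reduction to dominant} we may take $\varphi\colon X\to V$ strongly dominant, so only the hypothesis $K=\rats$ must be removed. Since $X$ and $\varphi$ are cut out by finitely many polynomials, they are defined over a finitely generated subfield $K_{0}\subseteq K$, and whether $\varphi^{n}$ is (FRS) is unchanged by the base change $K_{0}\hookrightarrow K$: flatness, geometric reducedness and rational singularities of geometric fibres are all detected over $\overline{K_{0}}$, while absolute irreducibility of the components of $X$ and strong dominance of $\varphi$ pass freely between $K_{0}$ and $K$. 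So I would assume henceforth that $K$ is finitely generated over $\rats$.

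Next I would spread out. Write $K=\mathrm{Frac}(B)$ for a finitely generated $\ints$-subalgebra $B$ which is a domain, and, after inverting finitely many elements of $B$, arrange by the usual limit arguments that $B$ is smooth over $\ints$, that $(X,\varphi)$ extends to a smooth $B$-scheme $\mathcal{X}$ together with a $B$-morphism $\widetilde{\varphi}\colon\mathcal{X}\to\mathbb{A}^{m}_{B}$ whose generic fibre over $\spec B$ is $(X,\varphi)$, and that for every $b\in\spec B$ the fibre $\mathcal{X}_{b}$ is smooth, with absolutely irreducible components specialising those of $X$, and $\widetilde{\varphi}_{b}\colon\mathcal{X}_{b}\to\mathbb{A}^{m}_{\kappa(b)}$ is strongly dominant. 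The pair $(\mathcal{X}\to\spec B,\widetilde{\varphi})$ is then a definable family in the Denef--Pas language over $\ints$.

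The heart of the argument is to observe that the results of the two preceding sections hold \emph{uniformly} over this family when the base coordinates of $\spec B$ are carried along as extra parameters. Concretely, I would run the parametrised forms of Proposition \ref{prop:existsmotivicschwartz}, of Theorem \ref{thm:pushofconstructiblefunctionunderdominant} together with Corollary \ref{main result on push of morivic}, and of Theorem \ref{thm:L^1 constructible has polynomial decaying Fourier transform}: this produces a motivic family of measures $\mu=\{\mu_{\tilde b,F}\}$, indexed by $F\in\mathrm{Loc}$ and $\tilde b\in(\spec B)(\mathcal{O}_{F})$, with each $\mu_{\tilde b,F}$ a non-negative Schwartz measure on the corresponding fibre of $\mathcal{X}$ supported on its $\mathcal{O}_{F}$-points; the pushforwards $(\widetilde{\varphi}_{\tilde b})_{*}(\mu_{\tilde b,F})$ are, for $F$ of large residue characteristic, compactly supported with motivic $L^{1}$ density; and the Fourier transform of that density is bounded by $d(\tilde b,F)\cdot\min\{|y|^{\alpha},1\}$ for a single real $\alpha<0$ independent of $\tilde b$ and of $F$. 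Granting this, I would set $N:=\lceil -2/\alpha\rceil$, so that for every $n>N$, every large-residue-characteristic $F$, and every $\tilde b\in(\spec B)(\mathcal{O}_{F})$, the measure $(\widetilde{\varphi}_{\tilde b})^{n}_{*}(\mu_{\tilde b,F}^{\times n})$ has continuous density on $F^{m}$. I expect the main obstacle to be exactly this last uniformity statement — that $\alpha$, hence $N$, can be chosen independently of the base parameter $\tilde b$ — which is where the ``definability in families'' strength of the Denef--Pas formalism must genuinely be invoked, in the spirit of the proofs of Theorems \ref{thm:pushofconstructiblefunctionunderdominant} and \ref{thm:L^1 constructible has polynomial decaying Fourier transform} (uniform and Presburger cell decomposition carrying parameters along).

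Finally I would conclude via the analytic criterion. Fix $n>N$; by Theorem \ref{Analytic condition for (FRS)} it is enough to check, for each $x\in(X^{n})(\overline{K})$ and each finite extension $K'$ of a field of definition of $x$ over $K$, that there is a non-Archimedean local field $F\supseteq K'$ and a non-negative Schwartz measure on $(X^{n})(F)$ not vanishing at $x$ whose pushforward under $\varphi^{n}$ has continuous density. As $K'$ is finitely generated over $\rats$, a variant of Lemma \ref{lem:finitenumberfieldextension} (pick a transcendence basis of $K'$ mapping to algebraically independent elements of $\mathcal{O}_{F}$, then lift the residual finite extension by Hensel) gives, for all sufficiently large primes $p$, an embedding $K'\hookrightarrow F$ into a finite extension $F$ of $\Qp$ under which $B$ maps into $\mathcal{O}_{F}$ and the finitely many affine coordinates of $x$ lie in $\mathcal{O}_{F}$; let $\tilde b\in(\spec B)(\mathcal{O}_{F})$ be the resulting integral point. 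Then $\tilde b$ lies over the generic point of $\spec B$, so $\mathcal{X}_{\tilde b}$ has generic fibre $X_{F}$ and $\widetilde{\varphi}_{\tilde b}$ recovers $\varphi_{F}$; hence $(\widetilde{\varphi}_{\tilde b})^{n}=(\varphi^{n})_{F}$, the measure $\mu_{\tilde b,F}^{\times n}$ does not vanish at $x$, and its $\varphi^{n}$-pushforward has continuous density by the previous step. As $x$ is arbitrary, $\varphi^{n}$ is (FRS), which completes the reduction. I would also note that the case of a \emph{finite} extension $K/\rats$ admits a quicker, purely algebraic treatment, bypassing spreading out: Weil restriction $\mathrm{Res}_{K/\rats}$ commutes with convolution (it preserves fibre products and group structure) and transports the (FRS) property to and from $\rats$, so Theorem \ref{Main result} over $\rats$ applied to $\mathrm{Res}_{K/\rats}\varphi$ settles that case directly; the family argument above is needed only when $K$ has positive transcendence degree.
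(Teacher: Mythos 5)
The core of your argument---that Proposition \ref{prop:existsmotivicschwartz}, Theorem \ref{thm:pushofconstructiblefunctionunderdominant}, Corollary \ref{main result on push of morivic} and Theorem \ref{thm:L^1 constructible has polynomial decaying Fourier transform} ``hold uniformly'' over the family $\mathcal{X}\to\spec(B)$ once the base coordinates are carried as parameters, producing one exponent $\alpha<0$ and one $N$ valid for all $\tilde b\in(\spec B)(\mathcal{O}_F)$ and all $F$ of large residue characteristic---is asserted, not proved, and you yourself flag it as the main obstacle. This is not a routine strengthening: it requires constructing motivic Schwartz measures fibrewise over a varying base (relative top forms, definability of the fibre condition in $\tilde b$), redoing the proof of Theorem \ref{thm:pushofconstructiblefunctionunderdominant} in a family (that proof chooses a dense open affine inside the smooth locus and an \'etale factorization; over a base these choices must be made uniformly, via constructible stratifications), and re-running the cell-decomposition and Presburger-linearity analysis of Section 5 with extra valued-field parameters so that $\alpha$ is read off a finite list of data independent of the parameter. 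In effect you are proposing to prove directly a parametrized form of the main theorem, close to Theorem \ref{generalization of the main result}, which the paper only obtains afterwards, by geometric arguments, \emph{from} the main theorem. As it stands the proposal therefore neither reduces the general case to the $\rats$-case nor completes an independent proof: the decisive uniformity-in-the-base step is missing.

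Your opening claim that ``convolution does not commute with restriction of scalars'' is also incorrect: Weil restriction preserves products and the additive group structure, so $\mathrm{Res}^{K}_{\rats}(\varphi^{n})=(\mathrm{Res}^{K}_{\rats}\varphi)^{n}$, exactly as your own closing remark (and the paper) uses; what fails for a finitely generated $K$ of positive transcendence degree is simply that $\mathrm{Res}_{K/\rats}$ does not exist. That is precisely the difficulty the paper circumvents without any parametrized motivic analysis: it first shows (Proposition \ref{prop:reduction to (FRS) at diagonal}, via Fourier transform and H\"older) that if $\varphi^{N}$ is (FRS) at all diagonal points $(x,\ldots,x)$ then $\varphi^{2N}$ is (FRS) everywhere; it then spreads $X,\varphi$ out over $\spec(A)$ and uses openness of the (FRS)-at-the-diagonal locus together with quasi-compactness (Lemma \ref{lemma 6.4}) to upgrade a pointwise statement at each $a\in\spec(A)(\overline{\rats})$ to a uniform $N$; the fibres at such points are defined over number fields, and the number-field case follows from the $\rats$-case by Weil restriction. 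So the $\rats$-case really is used as a black box, with only geometric openness and compactness arguments in between. If you wish to keep your route you must actually establish the family versions of Sections 3--5 with $\tilde b$ as a definable parameter; otherwise the diagonal-plus-spreading-out argument is the way to go. Your Hensel-type embedding of a finitely generated field into finite extensions of $\Qp$ with prescribed integrality is a reasonable variant of Lemma \ref{lem:finitenumberfieldextension}, but it only becomes useful after the uniformity over the base is in place.
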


We prove the claim by a series of reductions: 
\begin{lem}
\label{lemma 6.2} It is enough to prove Theorem \ref{Main result}
for a field $K'$ which is finitely generated over $\rats$, and $\varphi:X\rightarrow V$
a strongly dominant $K'$-morphism. 
\end{lem}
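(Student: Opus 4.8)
The plan is a routine spreading-out (descent to a finitely generated field of definition) argument, so let me indicate the steps and where care is needed. Let $K$, $V$, $X=\bigsqcup_{i=1}^{l}X_{i}$ and $\varphi\colon X\to V$ be as in Theorem~\ref{Main result}. By Corollary~\ref{Cor-reduction to dominant} I may assume in addition that $\varphi$ is strongly dominant. Since $K$ has characteristic zero its prime field is $\rats$, and $X$, $V$ and $\varphi$ are described by finitely many polynomial equations and gluing data involving only finitely many elements of $K$; adjoining these to $\rats$ produces a finitely generated subfield $K_{0}\subseteq K$ together with a finite-type $K_{0}$-scheme $X_{0}$, a finite-dimensional $K_{0}$-vector space $V_{0}$, and a $K_{0}$-morphism $\varphi_{0}\colon X_{0}\to V_{0}$ whose base changes along $K_{0}\hookrightarrow K$ recover $X$, $V$ and $\varphi$. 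Enlarging $K_{0}$ by finitely many more elements, I would further arrange that $X_{0}$ is smooth over $K_{0}$ --- smoothness descends along the faithfully flat extension $K/K_{0}$ --- and that the decomposition of $X$ into its (open and closed) connected components is already defined over $K_{0}$, say $X_{0}=\bigsqcup_{i=1}^{l}X_{0,i}$ with $(X_{0,i})_{K}=X_{i}$.

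The next step is to check that the descended data satisfies the hypotheses of Theorem~\ref{Main result} over $K_{0}$. Each $X_{0,i}$ is absolutely irreducible over $K_{0}$, because the number of irreducible components of $(X_{0,i})_{\Omega}$ is independent of the algebraically closed overfield $\Omega\supseteq K_{0}$, and for $\Omega=\overline{K}$ it equals $1$ since $X_{i}=(X_{0,i})_{K}$ is absolutely irreducible. Likewise $\varphi_{0}$ is strongly dominant over $K_{0}$: density of $\varphi_{0}(X_{0,i})$ in $V_{0}$ is equivalent to density of $\varphi(X_{i})=(\varphi_{0}(X_{0,i}))_{K}$ in $V=(V_{0})_{K}$. (If one prefers not to invoke Corollary~\ref{Cor-reduction to dominant} and to keep the affine-span hypothesis instead, the same base-change remark applies verbatim, since the affine span of $\varphi_{0}(X_{0,i})$ is a $K_{0}$-affine subspace $W_{0,i}\subseteq V_{0}$ with $(W_{0,i})_{K}\supseteq\varphi(X_{i})$, forcing $W_{0,i}=V_{0}$ once $\varphi(X_{i})$ spans $V$.)

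Granting Theorem~\ref{Main result} over the finitely generated field $K_{0}$, I obtain $N\in\nats$ with $\varphi_{0}^{n}$ (FRS) for every $n>N$, and it remains to transport this back to $\varphi^{n}$ over $K$. Since forming convolution powers commutes with base change, $\varphi^{n}=(\varphi_{0}^{n})_{K}$; flatness is stable under base change, so $\varphi^{n}$ is flat, and every geometric fiber of $\varphi^{n}$ --- the fiber over a point of $V$ valued in an algebraically closed overfield of $K$ --- is literally a geometric fiber of $\varphi_{0}^{n}$, hence reduced and with rational singularities. By Definition~\ref{def:(FRS)} this means $\varphi^{n}$ is (FRS), which is Lemma~\ref{lemma 6.2}. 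The one point that is not purely formal is this last transfer: it uses that reducedness and rational singularities of a variety over an algebraically closed field are insensitive to enlarging the field --- trivial for reducedness, and for rational singularities the standard fact that the property can be tested after any extension of algebraically closed ground fields (spread out a resolution and apply flat base change for coherent cohomology, or appeal to the analytic criterion behind Theorem~\ref{Analytic condition for (FRS)}). Everything else is the bookkeeping that ensures none of the hypotheses of Theorem~\ref{Main result} is lost in descending from $K$ to $K_{0}$, which I expect to be the main (but still routine) obstacle.
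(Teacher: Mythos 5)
Your proposal is correct and follows essentially the same route as the paper: reduce to the strongly dominant case via Corollary \ref{Cor-reduction to dominant}, note that $X$, $V$ and $\varphi$ are defined over a finitely generated subfield of $K$, and transfer the (FRS) conclusion back up using its stability under (field) base change, which is exactly Proposition \ref{FRS is a good property}(2). You merely spell out the spreading-out bookkeeping and the descent of the hypotheses (smoothness, absolute irreducibility of components, strong dominance) that the paper leaves implicit.
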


\begin{proof}
Let $K$ be a field of characteristic $0$. We have already seen in
Corollary \ref{Cor-reduction to dominant} that we may assume that
$\varphi$ is strongly dominant. Notice that since $\rats\subseteq K$,
we have that $\varphi$ is defined over a finitely generated field
$K'/\rats$. Since the (FRS) property is preserved under base change
(Proposition \ref{FRS is a good property}), we are done. 
\end{proof}
\begin{prop}
\label{prop:reduction to (FRS) at diagonal} Let $K'$ and $\varphi:X\rightarrow V$
be as in Lemma \ref{lemma 6.2}. Assume there exists $N\in\nats$
such that the $N$-th convolution power $\varphi^{N}$ is (FRS) at
$(x,{\ldots},x)$ for any $x\in X(\overline{K'})$. Then $\varphi^{2N}$
is (FRS). 
\end{prop}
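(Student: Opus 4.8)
The plan is to deduce the global (FRS) property of $\varphi^{2N}$ from the hypothesis that $\varphi^N$ is (FRS) at every diagonal point $(x,\dots,x)$, by exploiting the openness of the (FRS)-locus together with the fact that convolution preserves (FRS). First I would recall that by Theorem \ref{Lemma Elkik}(2) (and its incorporation into the analytic criterion, or directly from Elkik's theorem), the set of points at which a flat morphism between smooth varieties is (FRS) is open; since $\varphi^N$ is strongly dominant (hence flat on a dense open, and in fact we may assume flat by working on the relevant open locus, or argue as in Proposition \ref{FRS is a good property}), the (FRS)-locus $U \subseteq X^N$ of $\varphi^N$ is a nonempty open set. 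By hypothesis $U$ contains the whole diagonal $\Delta = \{(x,\dots,x) : x \in X(\overline{K'})\}$.

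The key geometric step is then the following: I claim $\varphi^N$ is (FRS) at every point of $X^N$. Fix an arbitrary $\overline{K'}$-point $(x_1,\dots,x_N) \in X^N$. Consider the morphism $\varphi^N$ near this point. The idea is to use commutativity of $V$ and the openness of $U$: for each index $i$, the point $(x_i,\dots,x_i)$ lies in $U$, and (FRS) at a point is detected on a Zariski-open neighborhood. More precisely, I would argue via the analytic criterion of Theorem \ref{Analytic condition for (FRS)}: choosing a non-Archimedean local field $F \supseteq K'$ and Schwartz measures $\mu_i$ on $X(F)$ supported near $x_i$, the pushforward $\varphi^N_*(\mu_1 \times \cdots \times \mu_N) = \varphi_*(\mu_1) * \cdots * \varphi_*(\mu_N)$ is a convolution. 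Now each $\varphi_*(\mu_i)$, being a pushforward of a Schwartz measure under a strongly dominant morphism, has $L^1$-density (Corollary \ref{main result on push of morivic} over $\rats$, or \cite[Corollary 3.6]{AA16} in general). But this alone does not give continuity; the hypothesis at the diagonal is what we must leverage. The cleanest route: since $\varphi^{2N} = \varphi^N * \varphi^N$ and the (FRS) property is preserved under convolution (Corollary \ref{cor:convolution presrves (FRS)}), it suffices to show that $\varphi^{2N}$ is (FRS) at an arbitrary point $(x_1,\dots,x_{2N})$, and for this we convolve: $\varphi^{2N}_*(\mu_1 \times \cdots \times \mu_{2N}) = \big(\varphi^N_*(\mu_1\times\cdots\times\mu_N)\big) * \big(\varphi^N_*(\mu_{N+1}\times\cdots\times\mu_{2N})\big)$. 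Here I would choose the $\mu_i$ so that the relevant diagonal-type behavior is accessible — but the honest mechanism is a partition-of-unity / linearity argument: any Schwartz measure on $X^N(F)$ is a finite linear combination of product measures $\mu_1 \times \cdots \times \mu_N$, so it suffices to treat products, and then the outer convolution with a second copy of $\varphi^N$ lets one reduce, using that $\varphi^N$ is (FRS) on $U \supseteq \Delta$, to arranging that the "hard" factor sits over a diagonal point.

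Concretely, the argument I expect to work: let $(x_1,\dots,x_{2N}) \in X^{2N}(\overline{K'})$ be arbitrary; pass to a finite extension and then (Lemma \ref{lem:finitenumberfieldextension}) to some $F = \Qp$ with all $x_i \in X(\Zp)$. Write the point as $(\underline{x},\underline{x}')$ with $\underline{x} = (x_1,\dots,x_N)$, $\underline{x}' = (x_{N+1},\dots,x_{2N})$. Since the (FRS)-locus $U$ of $\varphi^N$ is open and contains $\Delta$, and $X$ is connected-along-components, one can find a path within a single absolutely irreducible component connecting $\underline{x}$ to a diagonal point — but cleaner is to observe that it is enough to show that $\varphi^N_*(\mu)$ has $L^1$-density for every Schwartz $\mu$ (known), and that $\varphi^N_*(\nu)$ has \emph{continuous} density for $\nu$ supported near the diagonal (the hypothesis, via Theorem \ref{Analytic condition for (FRS)}(2), since (FRS) at $(x,\dots,x)$ gives a Zariski-open neighborhood on which pushforwards of Schwartz measures have continuous density, and such neighborhoods cover an open set containing $\Delta$). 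Then $L^1 * (\text{continuous, compactly supported}) = \text{continuous}$, so $\varphi^{2N}_*(\mu \times \nu)$ has continuous density whenever the second factor is supported near the diagonal. Finally, by strong dominance and connectedness, for the arbitrary point $(\underline{x},\underline{x}')$ one shows the first factor $\underline{x}$ can always be absorbed into the $L^1$ part while the second factor $\underline{x}'$, by commutativity of $V$ (so the roles of the $2N$ coordinates can be permuted freely), can be assumed to be positioned so that its image contributes via a neighborhood of the diagonal — here one uses that any irreducible component of $X^{2N}$ meets the relevant open set because $\varphi^N$ is strongly dominant, so $\varphi^{2N}$ is strongly dominant and the support of $\varphi^{2N}_*(\mu\times\nu)$ is all of $V(F)$ up to measure zero; continuity at every point then follows by covering $X^{2N}$ with such product configurations.

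The main obstacle I anticipate is making precise the last reduction — that an \emph{arbitrary} point of $X^{2N}$ can be handled by splitting the $2N$ factors into one block whose pushforward is merely $L^1$ and a second block whose pushforward is continuous. The subtlety is that the hypothesis only gives continuity for pushforwards of measures supported near the \emph{diagonal} of $X^N$, whereas a generic point of the second block $X^N$ is not diagonal. The resolution must use that $\varphi^N$ being (FRS) on the \emph{open set} $U \supseteq \Delta$ (not just at diagonal points individually) means its pushforwards have continuous density for measures supported anywhere in $U$, and that $U$, being open and containing the diagonal of an irreducible $X^N$-component, is "large enough" — combined with the commutativity of $V$, which lets us rearrange factors so that at least one $X^N$-block of coordinates lies in $U$. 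Verifying that this rearrangement is always possible (equivalently, that $X^{2N} = U_{13\dots} \cup U_{24\dots} \cup \cdots$ after permutations, or at least that every point is $\varphi^{2N}$-(FRS)) via a dimension/genericity argument is the delicate point; here Theorem \ref{Lemma Elkik} and the density of the (FRS)-locus do the real work, and I would model the bookkeeping on the proof of Proposition \ref{prop:morphismdom}.
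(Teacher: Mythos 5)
You correctly set up the problem: work via the analytic criterion of Theorem~\ref{Analytic condition for (FRS)}, take an arbitrary point $(x_1,\ldots,x_{2N})\in X^{2N}(\overline{K'})$, pass to a local field $F$, and try to show that a suitable pushforward under $\varphi^{2N}$ has continuous density. But your proposed mechanism --- split the $2N$ coordinates into two $N$-blocks, get $L^1$-density from one block and continuous density from the other, and conclude via ``$L^1$ convolved with continuous compactly supported is continuous'' --- has a genuine gap that you yourself flag at the end: the hypothesis only gives continuity of $\varphi^N_*(\nu)$ for $\nu$ supported near the \emph{diagonal} of $X^N$, and for an arbitrary $(x_1,\ldots,x_{2N})$ neither $N$-block $(x_1,\ldots,x_N)$ nor $(x_{N+1},\ldots,x_{2N})$ is near the diagonal. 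Commutativity of $V$ only lets you permute the $2N$ coordinates, which does not move an off-diagonal $N$-tuple onto the diagonal; and openness of the (FRS)-locus $U\supseteq\Delta$ does not imply that $U$ contains a neighborhood of any given off-diagonal point. So the reduction ``position one block over the diagonal'' cannot be made to work as stated.

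The paper's proof resolves exactly this obstruction by a Fourier-transform and H\"older argument that you did not hit on. For each index $i\in\{1,\ldots,2N\}$ separately, choose a Schwartz measure $\mu_i$ on $X(F)$ supported in a small neighborhood $V_{i,F}$ of $x_i$ with $V_{i,F}^N$ contained in the Zariski-open (FRS)-locus around the diagonal point $(x_i,\ldots,x_i)$. The diagonal hypothesis applied at $x_i$ gives that $\varphi^N_*(\mu_i\times\cdots\times\mu_i)=\varphi_*(\mu_i)^{*N}$ is continuous and compactly supported, hence $L^2$; taking Fourier transforms, $\mathcal{F}(\varphi_*\mu_i)^N\in L^2$, i.e.\ $\mathcal{F}(\varphi_*\mu_i)\in L^{2N}$. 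Now the generalized H\"older inequality gives $\prod_{i=1}^{2N}\mathcal{F}(\varphi_*\mu_i)\in L^1$, so $\varphi_*(\mu_1)*\cdots*\varphi_*(\mu_{2N})=\varphi^{2N}_*(\mu_1\times\cdots\times\mu_{2N})$ has continuous density, and $\varphi^{2N}$ is (FRS) at $(x_1,\ldots,x_{2N})$. The crucial point is that one never needs an off-diagonal $N$-block to sit over the diagonal: the diagonal hypothesis is used $2N$ times, once at each $x_i$, to extract $L^{2N}$-integrability of each single-factor Fourier transform, and H\"older then recombines the $2N$ distinct factors.
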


\begin{proof}
We use the analytic criterion for the (FRS) property (see Theorem
\ref{Analytic condition for (FRS)}) to show that $\varphi^{2N}$
is (FRS) at any point $(x_{1},{\ldots},x_{2N})\in X^{2N}(\overline{K'})$.
Let $(x_{1},{\ldots},x_{2N})\in X^{2N}(\overline{K'})$, then there
exists a finite extension $K''/K'$ such that $(x_{1},{\ldots},x_{2N})\in X^{2N}(K'')$.
We need to show that for any finite extension $K'''$ of $K''$, there
exist $K'''\subseteq F\in\mathrm{Loc}$ and a non-negative Schwartz
measure $\mu$ on $X^{2N}(F)$ that does not vanish at $(x_{1},{\ldots},x_{2N})$,
such that $\varphi_{*}^{2N}(\mu)$ has continuous density.

Fix such $K'''$, and let $U_{i}\subset X^{N}$ be a Zariski open
neighborhoods of $(x_{i},{\ldots},x_{i})$ such that $\varphi^{N}$
is (FRS) at any $x\in U_{i}$ (it is possible by Theorem \ref{Lemma Elkik}).
Notice that for any $K'''\subseteq F\in\mathrm{Loc}$ the set
$U_{i}(F)$ contains a set of the form $V_{i,F}\times\dots\times V_{i,F}$,
where $V_{i,F}\subseteq X(F)$ is open and $x_{i}\in V_{i,F}$. For any $i\in\{1,{\ldots},2N\}$,
let $\mu_{i}$ be a non-negative Schwartz measure on $X(F)$ supported
on $V_{i,F}$, that does not vanish at $x_{i}$. Notice that $\supp(\mu_{i}\times\dots\times\mu_{i})\subseteq U_{i}(F)$.

Since $\varphi^{N}$ is (FRS) at $(x_{i},{\ldots},x_{i})$, the measure $\varphi_{*}^{N}(\mu_{i}\times\dots\times\mu_{i})$
has continuous density with respect to the normalized Haar measure
on $V(F)=F^{m}$. Now, use the standard identification between measures
and functions on a locally compact group, and recall that the Fourier
transform $\mathcal{F}(f)$ of an ${L}^{2}$-function $f$ on $F^{m}$
is an ${L}^{2}$-function and that the Fourier transform $\mathcal{F}(f)$
of an ${L}^{1}$-function $f:F^{m}\rightarrow\complex$ is a continuous
function. Since $\varphi_{*}^{N}(\mu_{i}\times\dots\times\mu_{i})$
is a compactly supported measure with continuous density, its density
is ${L}^{2}$, and hence 
\[
\mathcal{F}\left(\varphi_{*}^{N}(\mu_{i}\times\dots\times\mu_{i})\right)=\mathcal{F}\left(\varphi_{*}(\mu_{i})*\dots*\varphi_{*}(\mu_{i})\right)=\mathcal{F}(\varphi{}_{*}\mu_{i})^{N}
\]
has ${L}^{2}$-density. This implies $\mathcal{F}\left(\varphi{}_{*}\mu_{i}\right)$
has ${L}^{2N}$-density. By a generalization of H${\ddot{\text{o}}}$lder's
inequality, it follows that 
\[
\mathcal{F}\left(\left(\varphi{}_{*}\mu_{1}\right)*\dots*\left(\varphi_{*}\mu_{2N}\right)\right)=\prod_{i=1}^{2N}\mathcal{F}(\varphi_{*}\mu_{i})
\]
has ${L}^{1}$-density, and hence $\left(\varphi{}_{*}\mu_{1}\right)*\dots*\left(\varphi_{*}\mu_{2N}\right)$
has continuous density. As a consequence, we get that $\varphi^{2N}$
is (FRS) at $(x_{1},{\ldots},x_{2N})$. 
\end{proof}
Let $\varphi:X\rightarrow V$ be a strongly dominant $K'$-morphism.
Any finitely generated field $K'/\rats$ is a finite extension of
some $\rats(t_{1},{\ldots},t_{n})$. Since $X$ and $\varphi$ are
defined using finitely many polynomials with coefficients in $K'$,
there exists $f\in\rats[t_{1},{\ldots},t_{n}]$, such that $X$ and
$\varphi$ are defined over a ring $A$, which is finite over $\rats[t_{1},{\ldots},t_{n},f^{-1}]$.
Since $X$ is smooth over the generic point $\mathrm{Spec}(K')$,
by further localizing, we may assume that $X$ is smooth over $A$.
We denote the resulting $A$-model for the diagram $\varphi$ by 
$\varphi_{A}:X_{A}\rightarrow V_{A}$, i.e. $X,V$ and $\varphi$
are base changes of $X_{A},V_{A}$ and $\varphi_{A}$ to $\spec(K')$.

Since $A$ is finite type over $\rats$, the morphisms $X_{A}\rightarrow\mathrm{Spec}(A)\rightarrow\mathrm{Spec}(\rats)$
and $V_{A}\rightarrow\mathrm{Spec}(A)\rightarrow\mathrm{Spec}(\rats)$
endow $X_{A}$ and $V_{A}$ with a natural structure of $\rats$-varieties,
denoted by $\widetilde{X}_{A}$ and $\widetilde{V}_{A}$ (and we similarly
denote $\widetilde{\varphi}_{A}$). Hence, $\widetilde{X}_{A}$ (resp.
$\widetilde{V}_{A}$) is a family of smooth $\rats$-varieties (resp.
$\rats$-vector spaces) over $\mathrm{Spec(A)}$. Notice that the
convolution operation can be generalized to such $\mathrm{Spec}(A)$-families
by $\varphi*\varphi:=\mathrm{mult}_{A}\circ(\varphi,\varphi)$, where
$\mathrm{mult}_{A}:G\times_{\mathrm{Spec}(A)}G\rightarrow G$ is the
multiplication map over $\mathrm{Spec}(A)$, and $(\varphi,\varphi):X\times_{\mathrm{Spec}(A)}X\rightarrow G\times_{\mathrm{Spec}(A)}G$.
With the above terminology, we can now prove the following: 
\begin{lem}
\label{lemma 6.4} Let $\widetilde{X}_{a}$ and $\widetilde{V}_{a}$
be the fibers of the varieties $\widetilde{X}_{A}$ and $\widetilde{V}_{A}$
over $a\in\spec(A)(\overline{\rats})$. It is enough to show that
for every $a\in\spec(A)(\overline{\rats})$ there exists $n_{a}\in\nats$
such that $\widetilde{\varphi}_{a}^{n_{a}}:\widetilde{X}_{a}\times\dots\times\widetilde{X}_{a}\rightarrow\widetilde{V}_{a}$
is (FRS) at $y=(x,{\ldots},x)$ for any $x\in\widetilde{X}_{a}(\overline{\rats})$. 
\end{lem}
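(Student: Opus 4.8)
The plan is to deduce the statement from Proposition~\ref{prop:reduction to (FRS) at diagonal} by a spreading-out argument over $\spec(A)$. Write $\eta$ for the generic point of $\spec(A)$, so that $X=\widetilde X_{\eta}$ and $\varphi=\widetilde\varphi_{\eta}$; since $\overline{K'}$ is algebraic over $\mathrm{Frac}(A)$, the diagonal points $(x,\dots,x)$ with $x\in X(\overline{K'})$ are precisely the diagonal points of the generic fibre of the relative convolution power $\widetilde\varphi^{\,n}\colon\widetilde X^{\,n}\to\widetilde V$, where $\widetilde X^{\,n}$ is the $n$-fold fibre product over $\spec(A)$. By Proposition~\ref{prop:reduction to (FRS) at diagonal} it therefore suffices to find one $n\in\nats$ for which $\varphi^{n}$ is (FRS) at every such diagonal point. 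After shrinking $\spec(A)$ we may assume that $A$ is a regular domain; then $\widetilde V=\mathbb{A}^{m}_{A}$ is regular, $\widetilde X^{\,n}$ is regular (hence Cohen--Macaulay) for every $n$, and all of these are smooth $\rats$-varieties.

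First I would define, for each $n$, the closed ``bad locus''
\[
D_{n}\ :=\ \Delta_{n}^{-1}\bigl(\widetilde X^{\,n}\setminus U_{n}\bigr)\ \subseteq\ \widetilde X,
\]
where $\Delta_{n}\colon\widetilde X\hookrightarrow\widetilde X^{\,n}$ is the relative diagonal and $U_{n}$ is the (FRS)-locus of $\widetilde\varphi^{\,n}$, which is open by Theorem~\ref{Lemma Elkik}. Its image $\pi(D_{n})\subseteq\spec(A)$ under the structure map is constructible by Chevalley's theorem, and by construction $\mathfrak p\notin\pi(D_{n})$ if and only if $\widetilde\varphi^{\,n}$ is (FRS) at every diagonal point of the fibre $\widetilde X^{\,n}_{\mathfrak p}$. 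Using that (FRS)-ness at a point is stable under base change, and conversely that a flat fibre of the expected dimension together with miracle flatness (here one uses that $\widetilde X^{\,n}$ is Cohen--Macaulay and $\widetilde V$ regular) recovers flatness of $\widetilde\varphi^{\,n}$ itself, one sees that this condition is equivalent to $\widetilde\varphi^{\,n}_{\mathfrak p}\colon\widetilde X^{\,n}_{\mathfrak p}\to\widetilde V_{\mathfrak p}$ being (FRS) at every diagonal point. Applying Corollary~\ref{cor:convolution presrves (FRS)} fibrewise (over the residue field of $\mathfrak p$), together with the openness in Theorem~\ref{Lemma Elkik}, one checks that if $\widetilde\varphi^{\,n}_{\mathfrak p}$ is (FRS) at a diagonal point then so is $\widetilde\varphi^{\,n+1}_{\mathfrak p}$; hence $\pi(D_{n+1})\subseteq\pi(D_{n})$, so $\{\pi(D_{n})\}_{n}$ is a decreasing chain of constructible subsets of $\spec(A)$. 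The hypothesis of the lemma asserts exactly that for every $a\in\spec(A)(\overline{\rats})$ there is some $n_{a}$ with $a\notin\pi(D_{n_{a}})$, and by the monotonicity just established $a\notin\bigcap_{n}\pi(D_{n})$; thus $\bigcap_{n}\pi(D_{n})$ contains the image of no $\overline{\rats}$-point of $\spec(A)$.

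It remains to conclude that $\eta\notin\pi(D_{n})$ for some $n$, which follows from the elementary fact: if $V$ is an irreducible $\rats$-variety and $E_{1}\supseteq E_{2}\supseteq\cdots$ is a decreasing chain of constructible subsets of $V$ whose intersection contains the image of no $\overline{\rats}$-point of $V$, then $\eta_{V}\notin E_{n}$ for some $n$. Indeed, if $\eta_{V}\in E_{n}$ for all $n$, each $E_{n}$ is dense, so $V\setminus E_{n}$ lies in a proper closed subset of $V$; the closures $\overline{V\setminus E_{n}}$ then form an increasing chain of proper closed subsets, which stabilises by Noetherianity to some $W\subsetneq V$, whence $E_{n}\supseteq V\setminus W$ for all large $n$. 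But $V\setminus W$ is a nonempty open subscheme of a $\rats$-variety and so contains a closed point with residue field finite over $\rats$, producing an $\overline{\rats}$-point whose image lies in $\bigcap_{n}E_{n}$ --- a contradiction. Applying this with $V=\spec(A)$ and $E_{n}=\pi(D_{n})$ gives an $n$ with $\eta\notin\pi(D_{n})$, i.e.\ $D_{n}\cap\widetilde X_{\eta}=\emptyset$, which says precisely that $\varphi^{n}$ is (FRS) at $(x,\dots,x)$ for every $x\in X(\overline{K'})$. Proposition~\ref{prop:reduction to (FRS) at diagonal} then yields that $\varphi^{2n}$ is (FRS), and convolving with further copies of $\varphi$ and invoking Corollary~\ref{cor:convolution presrves (FRS)} shows $\varphi^{m}$ is (FRS) for all $m\ge 2n$, establishing Theorem~\ref{Main result} for $K'$ and $\varphi$.

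The step I expect to be the main obstacle is precisely this last passage: a priori $\spec(A)(\overline{\rats})$ is only an increasing \emph{countable} union of the constructible good loci $\spec(A)\setminus\pi(D_{n})$, and over the countable field $\overline{\rats}$ such a union can exhaust all closed points without any single member being dense. The decreasing monotonicity of $\{\pi(D_{n})\}_{n}$, obtained from Corollary~\ref{cor:convolution presrves (FRS)}, is what defeats this, since it turns the complements into an increasing chain of constructible sets whose Zariski closures must stabilise. The remaining care goes into the two compatibility statements used above: base-change stability of the (FRS)-locus, and the identification --- via miracle flatness, hence the need to shrink to a regular $A$ --- of $\pi(D_{n})$ with the locus of points $\mathfrak p$ over which $\widetilde\varphi^{\,n}_{\mathfrak p}$ fails to be (FRS) along the diagonal.
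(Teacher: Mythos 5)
Your argument is correct, and it reaches the same two pillars as the paper --- openness of a ``good diagonal'' locus together with monotonicity in $n$, followed by Proposition \ref{prop:reduction to (FRS) at diagonal} --- but you execute the uniformity step downstairs in $\spec(A)$ rather than upstairs in $\widetilde{X}_{A}$, and you apply Proposition \ref{prop:reduction to (FRS) at diagonal} at a different stage. The paper defines, directly in the total space, the open loci $U_{n}\subseteq\widetilde{X}_{A}$ of points $x$ at which the fiberwise power $\widetilde{\varphi}_{\pi(x)}^{n}$ is (FRS) at $(x,\dots,x)$ (openness by \cite[Corollary 2.2]{AA16}), notes they are increasing and contain all $\overline{\rats}$-points, and gets a single $N$ with $U_{N}=\widetilde{X}_{A}$ by quasi-compactness; it then applies the diagonal-to-global proposition in the relative ($\spec(A)$-convolution) setting and only afterwards passes to the generic fiber. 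You instead push the bad diagonal locus to the base via Chevalley, use the same monotonicity (proved, as you do, by restricting to an (FRS) neighborhood and invoking Corollary \ref{cor:convolution presrves (FRS)}) to get a decreasing chain of constructible sets, and prove a stabilization lemma over the Jacobson base to conclude the generic point escapes; this lets you quote Proposition \ref{prop:reduction to (FRS) at diagonal} literally as stated over $K'$, avoiding the implicit relative version the paper uses, at the cost of the fiberwise-versus-total bookkeeping (your miracle-flatness step; the fiberwise flatness criterion over the flat family $\widetilde{X}^{n}\to\spec(A)$ does the same job more directly) and the Chevalley/stabilization argument. One small attribution point: Theorem \ref{Lemma Elkik} by itself does not give openness of the (FRS) locus, since it presupposes flatness and concerns only rational singularities of fibers; for that openness you should cite \cite[Corollary 2.2]{AA16} as the paper does, or supplement Elkik with openness of the flat locus and of the reduced-fiber locus --- the fact itself is available, so this is not a gap in your argument.
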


\begin{proof}
Notice that $\widetilde{\varphi}_{a}^{n}$ is the fiber of $\widetilde{\varphi}_{A}^{n}$
over $a\in\mathrm{Spec}(A)(\overline{\rats})$. Let $\pi:\widetilde{X}_{A}\rightarrow\mathrm{Spec}(A)$
be the $\rats$-structure map and consider $\pi^{n}:\widetilde{X}_{A}^{n}\rightarrow\mathrm{Spec}(A)$,
where $\widetilde{X}_{A}^{n}:=\widetilde{X}_{A}\times_{\spec(A)}{\ldots}\times_{\spec(A)}\widetilde{X}_{A}$.
It follows by \cite[Corollary 2.2]{AA16}, that the set 
\[
U_{n}:=\{x\in\widetilde{X}_{A}(\overline{\rats}):\widetilde{\varphi}_{\pi^{n}(y)}^{n}:\widetilde{X}_{\pi^{n}(y)}^{n}\rightarrow\widetilde{V}_{\pi^{n}(y)}\text{ is (FRS) at }y=(x,{\ldots},x)\}
\]
is open for any $n\in\nats$. By our assumption, $\widetilde{X}_{A}(\overline{\rats})=\bigcup\limits _{n=1}^{\infty}U_{n}$,
and by the fact that the $\curly{U_{n}}_{n=1}^{\infty}$ are increasing
combined with quasi-compactness, there exists $N\in\nats$ such that
$U_{N}=\widetilde{X}_{A}$, implying that $\widetilde{\varphi}_{A}^{N}$
is (FRS) at $(x,{\ldots},x)$ for any $x\in\widetilde{X}_{A}(\overline{\rats})$.
By the last proposition, the morphism $\widetilde{\varphi}_{A}^{2N}$
is (FRS). Since $\varphi^{2N}:X^{2N}\rightarrow V$ is the generic
fiber of $\widetilde{\varphi}_{A}^{2N}$, then $\varphi^{2N}$ is
(FRS) as well. 
\end{proof}
The next lemma is the final reduction before we can prove Proposition
\ref{reduction to dominant}. 
\begin{lem}
It is enough to prove Theorem \ref{Main result} for a number field
$K$ and $\varphi:X\rightarrow V$ a strongly dominant $K$-morphism. 
\end{lem}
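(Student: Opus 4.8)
The plan is to run this reduction off the two reductions already in hand — Lemma~\ref{lemma 6.2} and Lemma~\ref{lemma 6.4} — feeding the number-field case of Theorem~\ref{Main result} into Lemma~\ref{lemma 6.4} one geometric fibre at a time. By Lemma~\ref{lemma 6.2} I may assume $K=K'$ is finitely generated over $\rats$ and $\varphi:X\to V$ is strongly dominant; I keep the notation of the paragraphs above, so $\varphi$ spreads out to an $A$-model $\varphi_A:X_A\to V_A$ over a finitely generated $\rats$-algebra domain $A$ with $\mathrm{Frac}(A)=K'$, with $X_A$ smooth over $A$ and associated $\rats$-variety diagram $\widetilde\varphi_A:\widetilde X_A\to\widetilde V_A$ over $\spec(A)$. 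It then suffices, by Lemma~\ref{lemma 6.4}, to produce for every $a\in\spec(A)(\overline{\rats})$ an integer $n_a$ with $\widetilde\varphi_a^{\,n_a}$ (FRS) at every diagonal point.

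The first step is to shrink $A$ so that \emph{every} geometric fibre of $\widetilde\varphi_A$ is smooth and strongly dominant, not merely the generic one. Applying Theorem~\ref{generalization of the main result}(1) to the family $\widetilde X_A\to\spec(A)$ and the $\spec(A)$-morphism $\widetilde\varphi_A$, the locus $Y'\subseteq\spec(A)$ of points $a$ with $\widetilde X_a$ smooth and $\widetilde\varphi_a$ strongly dominant is constructible; since $X$ is smooth and $\varphi$ is strongly dominant over $K'$, $Y'$ contains the generic point of $\spec(A)$, hence a nonempty, thus dense, open subset, hence a basic open $\spec(A[f^{-1}])$. Replacing $A$ by $A[f^{-1}]$ — which leaves the generic fibre $\varphi$ unchanged — I may assume $\widetilde\varphi_a$ is a strongly dominant morphism of smooth $\overline{\rats}$-varieties for \emph{all} $a\in\spec(A)(\overline{\rats})$. (Alternatively this is a direct spreading-out argument: Chevalley's theorem together with constructibility of fibre dimension, using that a smooth scheme has its irreducible components equal to its connected components.)

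Now fix $a\in\spec(A)(\overline{\rats})$, corresponding to a $\rats$-algebra homomorphism $A\to\overline{\rats}$. Its image $K_a$ is a finitely generated $\rats$-subalgebra of $\overline{\rats}$, hence (being a domain generated by elements algebraic over $\rats$) a number field. Let $\varphi^{(a)}:=\varphi_A\times_A K_a:X^{(a)}\to V^{(a)}$ be the induced $K_a$-morphism, so that $X^{(a)}$ is a smooth $K_a$-variety, $V^{(a)}$ a finite-dimensional $K_a$-vector space, and $\widetilde X_a=(X^{(a)})_{\overline{\rats}}$, $\widetilde\varphi_a=(\varphi^{(a)})_{\overline{\rats}}$. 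Since the absolutely irreducible components of $X^{(a)}$ base-change to the irreducible components of $\widetilde X_a$ and $\widetilde\varphi_a$ is strongly dominant, $\varphi^{(a)}$ is strongly dominant over $K_a$. The assumed number-field case of Theorem~\ref{Main result} then gives $N_a$ with $(\varphi^{(a)})^n$ (FRS) for all $n>N_a$; since convolution commutes with base change and the (FRS) property is stable under base change (Proposition~\ref{FRS is a good property}), $\widetilde\varphi_a^{\,n}=\bigl((\varphi^{(a)})^n\bigr)_{\overline{\rats}}$ is (FRS), in particular at every diagonal point, for all $n>N_a$; take $n_a=N_a+1$. By Lemma~\ref{lemma 6.4} there is then $N$ with $\varphi^{2N}$ (FRS), whence $\varphi^n$ is (FRS) for every $n\geq 2N$ by Corollary~\ref{cor:convolution presrves (FRS)}, so Theorem~\ref{Main result} holds for $K'$ and $\varphi$, and then for every $K$ by Lemma~\ref{lemma 6.2}.

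The main obstacle is the shrinking step of the second paragraph: it is essential that the chosen model have \emph{all} its fibres strongly dominant, because over a fibre where $\widetilde\varphi_a$ maps $\widetilde X_a$ into a proper affine subspace of $\widetilde V_a$ no convolution power can be flat — let alone (FRS) — so Lemma~\ref{lemma 6.4} would be inapplicable there. Everything afterwards is bookkeeping: recognising the residue field of a closed point of a finite-type $\rats$-scheme as a number field, and invoking the standard stability of strong dominance and of the (FRS) property under field-extension base change.
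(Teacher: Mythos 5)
Your proposal follows the same strategy as the paper's own proof: feed the assumed number-field case into Lemma~\ref{lemma 6.4}, fiber by fiber, by noting that each $a\in\spec(A)(\overline{\rats})$ is defined over a number field $K_a$ and that $\widetilde\varphi_a$, regarded as a $K_a$-morphism, then satisfies the hypotheses of Theorem~\ref{Main result}. The conclusion via base change (convolution commutes with base change, and (FRS) is stable under it) and Lemma~\ref{lemma 6.4} matches the paper exactly.

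What you add -- and it is a genuine improvement -- is the explicit shrinking step guaranteeing that \emph{every} fiber $\widetilde\varphi_a$ is strongly dominant before the pointwise application. The paper's setup only says ``by further localizing, we may assume that $X$ is smooth over $A$'' and then silently invokes the number-field case on $\widetilde\varphi_a$, even though that case is stated for strongly dominant morphisms; if some $\widetilde\varphi_a$ mapped a component of $\widetilde X_a$ into a proper affine subspace, no convolution power of $\widetilde\varphi_a$ would even be flat, so the hypothesis of Lemma~\ref{lemma 6.4} would fail. Your fix via Theorem~\ref{generalization of the main result}(1) is legitimate and non-circular: part (1) is proved by Lemmas~\ref{lemma 7.4} and~\ref{lemma 7.5}, which are independent of Theorem~\ref{Main result}. (Your parenthetical ``alternative'' via Chevalley and fiber-dimension constructibility is more hand-wavy -- it would still need the absolute-irreducibility spreading-out of Lemma~\ref{lemma 7.5} to get strong dominance rather than mere dominance -- but since you only offer it as an aside and your primary argument is correct, this is not a problem.) So: same route as the paper, with a small but real gap correctly identified and filled.
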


\begin{proof}
Let $a\in\spec(A)(\overline{\mathbb{Q}})$. Then there exists a finite
extension $K/\rats$ such that $a\in\spec(A)(K)$. By our assumption,
there exists $n\in\nats$ such that the morphism $\widetilde{\varphi}_{a}^{n}:\widetilde{X}_{a}\times\dots\times\widetilde{X}_{a}\rightarrow\widetilde{V}_{a}$
is (FRS) and by Lemma \ref{lemma 6.4} we are done. 
\end{proof}
We can now finish the proof of Proposition \ref{reduction to dominant}. 
\begin{proof}[Proof of Proposition \ref{reduction to dominant}]
Let $\varphi:X\rightarrow V$ be a strongly dominant $K$-morphism,
where $K$ is a number field. We may assume that $K/\rats$ is Galois.
By restriction of scalars we obtain a $\rats$-morphism $\mathrm{Res}_{\rats}^{K}\varphi:\mathrm{Res}_{\rats}^{K}(X)\rightarrow\mathrm{Res}_{\rats}^{K}(V)$.
By our assumption, $\mathrm{Res}_{\rats}^{K}(\varphi)^{n}=\mathrm{Res}_{\rats}^{K}(\varphi^{n})$
is (FRS) for some $n\in\nats$. Since the (FRS) property is preserved
under base change to $K$, we obtain that 
\[
\left((\mathrm{Res}_{\rats}^{K}\varphi)^{n}\right)_{K}=\left((\mathrm{Res}_{\rats}^{K}\varphi)_{K}\right)^{n}:\left(\mathrm{Res}_{\rats}^{K}(X)_{K}\right)^{n}\longrightarrow\left(\mathrm{Res}_{\rats}^{K}(V)_{K}\right)^{n}
\]
is (FRS). Finally, notice that $\mathrm{Res}_{\rats}^{K}(X)_{K}$
(resp. $\mathrm{Res}_{\rats}^{K}(V)_{K}$) is a product of $l:=[K:\rats]$
$K$-varieties $X_{1},{\ldots},X_{l}$ (resp. $K$-vector spaces $V_{1},{\ldots},V_{l}$),
each of which is $\mathbb{Q}$-isomorphic to $X$ (resp. $V$), and
that $\mathrm{Res}_{\rats}^{K}(\varphi)_{K}^{n}$ can be written as,
\[
\varphi_{1}^{n}\times\dots\times\varphi_{l}^{n}:X_{1}^{n}\times{\ldots}X_{l}^{n}\rightarrow V_{1}\times\dots\times V_{l},
\]
where each $\varphi_{i}$ is a twist of $\varphi_{K}$ by some Galois
element $\sigma_{i}\in\mathrm{Gal}(K/\rats)$, with $\varphi_{1}:=\varphi$.
This allows us to deduce that for each $i$, the morphism $\varphi_{i}^{n}$
is (FRS), and in particular $\varphi^{n}$ is (FRS) as well. 
\end{proof}

\section{Convolution properties of algebraic families of morphisms - a uniform
version of Theorem \ref{Main result}}

\label{sec:Convolution-properties-of} Let $K$ be a field of characteristic
$0$. Our goal in this section is to prove Theorem \ref{generalization of the main result}: 
\begin{thm}[Theorem \ref{generalization of the main result}]
\label{generalization of main theorem -7.1} Let $Y$ be a $K$-variety,
set $V=\mathbb{A}_{K}^{m}$, let $\widetilde{X}$ be a family of varieties
over $Y$ and let $\widetilde{\varphi}:\widetilde{X}\rightarrow V\times Y$
be a $Y$-morphism. Then, 
\begin{enumerate}
\item The set $Y':=\{y\in Y:\widetilde{X}_{y}\text{ is smooth and }\widetilde{\varphi}_{y}:\widetilde{X}_{y}\rightarrow V\text{ is strongly dominant}\}$
is constructible.
\item There exists $N\in\nats$ such that for any $n>N$, and any $n$ points
$y_{1},\ldots,y_{n}\in Y'(K)$, the morphism $\widetilde{\varphi}_{y_{1}}*\dots*\widetilde{\varphi}_{y_{n}}:\widetilde{X}_{y_{1}}\times\dots\times\widetilde{X}_{y_{n}}\rightarrow V$
is (FRS). 
\end{enumerate}
\end{thm}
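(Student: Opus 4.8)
The plan is to reduce the uniform statement to the absolute statement Theorem \ref{Main result-Theorem 5.1} (over $\rats$) together with the spreading-out/constructibility machinery already set up in Section \ref{sec:Reduction-to-k=00003D00003D00003D00003DQ}, and the key point will be to extract a single $N$ that works simultaneously for all choices of base points $y_1,\dots,y_n\in Y'(K)$.

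First I would prove part (1). The locus where $\widetilde X_y$ is smooth over $K(y)$ is constructible by generic smoothness and standard limit arguments (e.g.\ \cite[III.10.2]{Har77} applied to the structure morphism $\widetilde X \to Y$, or EGA IV). For strong dominance: decompose $\widetilde X$ into its relative irreducible components over $Y$ (after a stratification of $Y$, the number and the geometric irreducibility of the components of $\widetilde X_y$ is constant on each stratum), and on each stratum the condition that $\widetilde\varphi_y$ restricted to each component dominates $V=\mathbb A^m_K$ is the condition $\dim\overline{\widetilde\varphi_y(\text{component})}=m$, which is constructible in $y$ by Chevalley's theorem applied to $\widetilde\varphi$. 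Intersecting finitely many constructible conditions gives that $Y'$ is constructible.

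For part (2) the strategy is: restrict to $Y'$, which we may assume (after passing to irreducible components and a stratification, and spreading out over $\ints$) is a reduced affine $\rats$-variety such that $\widetilde X \to Y'$ is a family of smooth $\rats$-varieties and $\widetilde\varphi:\widetilde X\to V\times Y'$ is fiberwise strongly dominant. By \cite[Corollary 2.2]{AA16} (as used in Lemma \ref{lemma 6.4}), for each $n$ the set
\[
W_n:=\{x\in \widetilde X(\overline\rats)\times Y'(\overline\rats)\ :\ \text{fiberwise }n\text{-fold convolution of }\widetilde\varphi\text{ is (FRS) at the relevant diagonal-type point}\}
\]
is open; more precisely one forms, over the base $Y'^{\,n}=Y'\times\cdots\times Y'$, the family whose fiber over $(y_1,\dots,y_n)$ is $\widetilde\varphi_{y_1}*\cdots*\widetilde\varphi_{y_n}:\widetilde X_{y_1}\times\cdots\times\widetilde X_{y_n}\to V$, and lets $W_n\subseteq \widetilde X_{y_1}\times\cdots\times\widetilde X_{y_n}$ (as $(y_i)$ varies, i.e.\ inside the total space over $Y'^{\,n}$) be the (FRS)-locus of this morphism, which is open by Theorem \ref{Lemma Elkik}(2) applied to the total morphism. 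The content of Theorem \ref{Main result-Theorem 5.1}, applied to the $\rats$-morphism $\widetilde\varphi:\widetilde X\to V\times Y'$ viewed as a single strongly dominant morphism of $\rats$-varieties onto its image (combined with Proposition \ref{prop:reduction to (FRS) at diagonal}-type reasoning to pass from (FRS)-at-diagonal to (FRS)), is precisely that for $n$ large enough the convolution power $\widetilde\varphi^n$ (as a $\rats$-morphism, hence fiberwise over $Y'^{\,n}$ — but note the subtlety below) becomes (FRS); equivalently $W_n$ equals the whole total space for $n\gg 0$. Since the total spaces over $Y'^{\,n}$ are of finite type (noetherian), an increasing chain of open subsets stabilizes, so there is a single $N$ with $W_n=$ everything for all $n>N$, and then for any $n>N$ and any $y_1,\dots,y_n\in Y'(K)$ the fiber $\widetilde\varphi_{y_1}*\cdots*\widetilde\varphi_{y_n}$ is (FRS) as desired.

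The main obstacle I anticipate is the gap between "$\widetilde\varphi^n$ is (FRS) as a morphism of $\rats$-varieties $\widetilde X^{\times_{Y'} n}\to V\times Y'^{\,n}$" and "each fiber $\widetilde\varphi_{y_1}*\cdots*\widetilde\varphi_{y_n}$ is (FRS)": Theorem \ref{Main result-Theorem 5.1} as stated is about self-convolution of one morphism, whereas here the $n$ base points $y_i$ vary independently, so one must run the argument with the $n$-fold relative fiber product $\widetilde X\times_{\rats}\cdots\times_{\rats}\widetilde X$ over $Y'^{\,n}$ and a morphism to $V\times Y'^{\,n}$ that only adds the $V$-coordinates — i.e.\ apply the analytic criterion (Theorem \ref{Analytic condition for (FRS)}) and the motivic-measure decay estimate (Theorem \ref{thm:L^1 constructible has polynomial decaying Fourier transform}) to $\varphi_*(\mu_p)$ where $\mu_p$ is now a motivic measure on $\widetilde X$ over $Y'$ whose fiber over each $y$ is the Schwartz measure of Proposition \ref{prop:existsmotivicschwartz}; the decay constant $\alpha<0$ produced by Theorem \ref{thm:L^1 constructible has polynomial decaying Fourier transform} is uniform over the definable family, hence uniform in $y$, which is exactly what yields a single $N$. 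One then invokes Theorem \ref{Lemma Elkik}, openness, and noetherian induction to finish; the constructibility in part (1) and the reduction to the number-field / $\rats$ case exactly parallels Section \ref{sec:Reduction-to-k=00003D00003D00003D00003DQ}, replacing $\mathrm{Spec}(A)$ there by $Y'$ here.
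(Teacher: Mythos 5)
Part (1) of your proposal is essentially the paper's argument (semicontinuity for smoothness of fibers, a stratification making the geometric components behave constantly, and Chevalley for dominance of each component), so no issue there. The reduction of the multi-point statement to self-convolution via the $L^{2N}$/H\"older trick is also the paper's Lemma \ref{lem7.3:Theorem--implies}, which you correctly gesture at.

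The gap is in how you extract a single $N$ uniform in $y$. Your central claim -- that Theorem \ref{Main result-Theorem 5.1} applied to $\widetilde{\varphi}$ ``viewed as a single strongly dominant morphism onto its image'' shows that $W_n$ is everything for $n\gg 0$ -- does not follow: (FRS)-ness of an absolute convolution power of $\widetilde{\varphi}$ (a morphism into $V$, or into $V\times Y'$, which is not a vector group) says nothing about the fiberwise convolutions $\widetilde{\varphi}_{y_1}*\dots*\widetilde{\varphi}_{y_n}$, since restricting the source to $\widetilde{X}_{y_1}\times\dots\times\widetilde{X}_{y_n}$ is not a base change of the target. You acknowledge this and propose instead a parametrized version of the motivic machinery (a motivic Schwartz measure on $\widetilde{X}$ over $Y'$ and a decay exponent $\alpha$ uniform in the definable family). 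That is a genuinely different route from the paper's, but it is not carried out, and even granting a family version of Theorem \ref{thm:L^1 constructible has polynomial decaying Fourier transform}, it only makes sense when everything is defined over $\rats$ (Denef--Pas specialization), whereas here $K$ is an arbitrary characteristic-zero field and the $y_i$ range over arbitrary, possibly transcendental, $K$-points; ``spreading out over $\ints$'' does not handle such points. Finally, your stabilization step (``an increasing chain of open subsets stabilizes'') is not valid as stated, because the ambient total spaces over $Y'^{\,n}$ change with $n$, so the $W_n$ do not form a chain in a fixed Noetherian space. The paper avoids all of this by applying the already-proved absolute Theorem \ref{Main result} -- crucially valid over any characteristic-zero field -- to the generic fiber $\widetilde{\varphi}_{\eta}$ over the function field $K(Y')$, then using generic flatness, openness of the fiberwise (FRS) locus in the total space (\cite[Corollary 2.2]{AA16}), Lemma \ref{Lem-generic fiber} to descend to an open $W\subseteq Y'$, and Noetherian induction on $Y'$ to get one $N$ for all $y$; that is the mechanism your proposal is missing (a corrected variant of your idea would be to show the sets $B_n=\{y\in Y': \widetilde{\varphi}_y^{\,n}\text{ is (FRS)}\}$ are constructible and increasing and exhaust $Y'$, but constructibility again requires exactly this openness-plus-flatness input).
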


We first reduce to a similar statement about self convolutions: 
\begin{thm}
\label{generalization-self convolutions} Let $Y,V,\widetilde{X},\widetilde{\varphi}$
and $Y'$ be as in Theorem \ref{generalization of main theorem -7.1}.
Then $Y'$ is constructible and there exists $N\in\nats$ such that
for any $n\geq N$ and any $y\in Y'(K)$, the morphism $\varphi_{y}^{n}:\widetilde{X}_{y}^{n}\rightarrow V$
is (FRS). 
\end{thm}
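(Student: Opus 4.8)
The strategy is to combine the uniform machinery already developed for $\rats$-morphisms (Theorems~\ref{Main result-Theorem 5.1} and \ref{thm:L^1 constructible has polynomial decaying Fourier transform}) with a spreading-out argument over $Y$, exactly paralleling Section~\ref{sec:Reduction-to-k=00003D00003D00003D00003DQ} but carried out in families. First I would establish the constructibility of $Y'$: smoothness of the fibers $\widetilde{X}_y$ is an open condition on $Y$ by generic smoothness, and strong dominance of $\widetilde\varphi_y$ on each absolutely irreducible component is constructible because both the irreducible-component decomposition and the dimension of the image vary constructibly in a family (one can use \cite[Corollary 2.2]{AA16} or standard Chevalley-type arguments). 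Replacing $Y$ by $Y'$ (which is a finite union of locally closed subvarieties, each of which we may treat separately, and then take a common $N$), we may assume $\widetilde X \to Y$ is smooth with geometrically connected... more precisely we pass to the components so that $\widetilde\varphi_y$ is dominant for all $y$.

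\textbf{Reduction to $\rats$ and spreading out.} By Proposition~\ref{prop:reduction to (FRS) at diagonal} (applied fiberwise, with $K'$ any finitely generated field over which the relevant point is defined) it suffices to prove that there exists $N$ such that $\widetilde\varphi_y^N$ is (FRS) at the diagonal point $(x,\dots,x)$ for every $y \in Y(\overline{\rats})$ and every $x \in \widetilde X_y(\overline{\rats})$; an openness argument as in Lemma~\ref{lemma 6.4} (using \cite[Corollary 2.2]{AA16} and quasi-compactness of $\widetilde X$ viewed as a $\rats$-variety together with $Y$) then upgrades a family of fiberwise statements to a single uniform $N$ and yields (FRS) of $\widetilde\varphi_y^{2N}$ for all $y$. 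As in Lemma~\ref{lemma 6.2} and the subsequent reductions, we may assume everything is defined over $\rats$: spread $Y$, $\widetilde X$, $\widetilde\varphi$ out to a finite-type $\ints$-scheme, so that $\widetilde X$ becomes a definable family and the fibers $\widetilde X_y(\Zp)$ for $y \in Y(\Zp)$ carry the motivic Schwartz measures $\mu$ constructed in Proposition~\ref{prop:existsmotivicschwartz} --- crucially these can be chosen uniformly in the parameter $y$, since $Y$ is itself part of the definable data.

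\textbf{The uniform Fourier estimate in families.} The heart of the argument is to run Theorem~\ref{thm:pushofconstructiblefunctionunderdominant} and Theorem~\ref{thm:L^1 constructible has polynomial decaying Fourier transform} with $Y$ as an extra parameter. Concretely, consider the total pushforward $(\widetilde\varphi)_*(\mu)$ as a motivic measure on the family $V \times Y$; by Theorem~\ref{thm:pushofconstructiblefunctionunderdominant} its density $g \in \mathcal{C}(V \times Y)$ is a motivic function in the combined variables, and for each $y \in Y'(\Zp)$ its restriction $g_F(\cdot, y)$ is the $L^1$-density of $(\widetilde\varphi_y)_*(\mu_y)$, compactly supported uniformly (the support lies in a fixed definable compact set $L \subseteq \VF^m \times Y$). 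Then I would apply the one-extra-variable version of Theorem~\ref{thm:L^1 constructible has polynomial decaying Fourier transform}: running the cell-decomposition argument of Lemma~\ref{lemma 5.4} and Proposition~\ref{One dimensional lemma} while carrying $y$ along as inert parameters in the value-field/residue-field sorts shows that there is a single $\alpha < 0$, independent of $p$ and of $y \in Y'(\Zp)$, with $|\mathcal F(g_F(\cdot,y))(\eta)| \le d(F,y)\cdot \min\{|\eta|^\alpha, 1\}$. Taking $N = \lceil -2/\alpha \rceil$ gives an $L^1$ Fourier transform for the $N$-fold self-convolution, hence continuous density, uniformly in $p$ and $y$; Proposition~\ref{Reduction to many Qp} (applied fiberwise over $y$) then shows $\widetilde\varphi_y^N$ is (FRS) for all $y \in Y'(K)$.

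\textbf{Main obstacle.} The delicate point is the uniformity in the parameter $y$ throughout the motivic machinery: I need the motivic Schwartz measures $\{\mu_y\}$, the pushforward densities, the compact supports, and the exponent $\alpha$ in the Fourier decay estimate all to be controlled by a single definable datum as $y$ ranges over $Y'$. This requires checking that every step of Section~\ref{sec:Fourier-transform-of} (uniform cell decomposition, Presburger cell decomposition, integration of motivic functions via Theorem~\ref{integration of motivic}) is applied to formulas in which $y$ appears as an additional tuple of free variables --- which is legitimate since all those theorems are stated for arbitrary definable sets and functions --- and that the spreading-out of $Y$ itself to an $\ints$-scheme, together with $\widetilde X$ and $\widetilde\varphi$, produces genuinely definable families in the Denef--Pas sense. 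Once this bookkeeping is in place, the deduction of Theorem~\ref{generalization of main theorem -7.1} from Theorem~\ref{generalization-self convolutions} is the same commutativity-plus-convolution argument as in Proposition~\ref{prop:morphismdom} and the final paragraph of the introduction.
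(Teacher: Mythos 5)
Your route is genuinely different from the paper's, and as sketched it is both heavier than necessary and has real gaps. The paper does not re-run any of the motivic machinery with $y$ as a parameter: since Theorem \ref{Main result} has already been established over \emph{every} field of characteristic zero (that was the point of Section \ref{sec:Reduction-to-k=00003D00003D00003D00003DQ}), one may apply it directly to the generic fiber $\widetilde{\varphi}_{\eta_Y}$ over the function field of $Y$ (after reducing to $Y=Y'$ irreducible and invoking generic flatness), obtain some $N(\eta)$ with $\widetilde{\varphi}_{\eta}^{N(\eta)}$ (FRS), and then spread out: the (FRS) locus in the total space $\widetilde{X}^{N(\eta),Y}$ is open by \cite[Corollary 2.2]{AA16} (this is where flatness of the structure map is needed), it contains the generic fiber, so by Lemma \ref{Lem-generic fiber} it contains the preimage of an open $W\subseteq Y$; Noetherian induction on $Y\setminus W$ then yields a single $N$. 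This completely sidesteps the ``uniformity in $y$'' obstacle you identify as the heart of your plan.

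Concerning the gaps in your plan as written. First, the statement is for $y\in Y'(K)$ with $K$ an arbitrary characteristic-zero field; the motivic/Fourier argument only reaches parameters $y$ lying in $Y'(\Zp)$ (hence, via Lemma \ref{lem:finitenumberfieldextension}, points defined over number fields), and points with coordinates transcendental over $\rats$ are not directly accessible. Your appeal to ``an openness argument as in Lemma \ref{lemma 6.4}'' is exactly where the missing content sits: it must be carried out over a $\ints$- (or finitely generated) model of the \emph{whole family} $(Y,\widetilde{X},\widetilde{\varphi})$, it requires flatness of $\widetilde{X}^{n,Y}\to Y$ for \cite[Corollary 2.2]{AA16} to apply (hence generic flatness plus a Noetherian induction you never mention), and one must then argue that an open set containing all $\overline{\rats}$-points of the model is everything before base-changing back to $K$. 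Second, the parametrized ingredients you assert --- a definable-in-$y$ family of Schwartz measures on the fibers (Proposition \ref{prop:existsmotivicschwartz} is proved only for a single smooth $\rats$-variety, and producing non-vanishing relative top forms uniformly over $Y'$ needs an argument), a relative version of Theorem \ref{thm:pushofconstructiblefunctionunderdominant} (strong dominance of the total map $\widetilde{\varphi}$ over $V\times Y$ does not follow from fiberwise strong dominance, so one needs the fiberwise statement with motivic dependence on $y$), and versions of Lemma \ref{lemma 5.4} and Proposition \ref{One dimensional lemma} giving an exponent $\alpha$ uniform in the parameter --- are plausible but are the technical heart of such a proof, not bookkeeping; none of them is proved in the paper in the form you need. (A small misattribution at the end: the deduction of Theorem \ref{generalization of main theorem -7.1} from Theorem \ref{generalization-self convolutions} is the H\"older/Fourier argument of Proposition \ref{prop:reduction to (FRS) at diagonal}, as in Lemma \ref{lem7.3:Theorem--implies}, not the commutativity argument of Proposition \ref{prop:morphismdom}.) Your treatment of the constructibility of $Y'$ is essentially in line with Lemmas \ref{lemma 7.4} and \ref{lemma 7.5}, though the strong-dominance locus requires more than Chevalley: the paper passes to a finite \'{e}tale cover to make the components of the generic fiber absolutely irreducible and then uses Noetherian induction.
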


\begin{lem}
\label{lem7.3:Theorem--implies} Theorem \ref{generalization-self convolutions}
implies Theorem \ref{generalization of main theorem -7.1}. 
\end{lem}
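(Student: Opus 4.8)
**Proof plan for Lemma \ref{lem7.3:Theorem--implies} (Theorem \ref{generalization-self convolutions} implies Theorem \ref{generalization of main theorem -7.1}).**

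The plan is to pass from self-convolutions $\varphi_y^n$ to mixed convolutions $\widetilde\varphi_{y_1}*\dots*\widetilde\varphi_{y_n}$ by packaging the finitely many points $y_1,\dots,y_n$ into a single point of a fiber product, to which Theorem \ref{generalization-self convolutions} can be applied. First I would take the $n$-fold fiber power $\widetilde X^{(n)}:=\widetilde X\times_Y\dots\times_Y\widetilde X$, which is a family over $Y$, together with the $Y$-morphism $\widetilde\varphi^{(n)}:\widetilde X^{(n)}\to V\times Y$ whose fiber over $y\in Y$ is exactly $\varphi_y^n:\widetilde X_y^n\to V$. The constructibility of $Y'$ for $\widetilde\varphi$ is given by Theorem \ref{generalization-self convolutions} itself; and one checks directly that the set $(Y^{(n)})'$ attached to $\widetilde\varphi^{(n)}$ contains $Y'\times\dots\times Y'$ is not quite the right move — rather, I want a family over $Y^n$, not over $Y$. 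So the cleaner construction is: let $W:=Y^n$ (the ordinary $n$-fold product), let $p_i:W\to Y$ be the projections, form $\widetilde Z:=p_1^*\widetilde X\times_W\dots\times_W p_n^*\widetilde X$, a family of varieties over $W$, and let $\widetilde\psi:\widetilde Z\to V\times W$ be the $W$-morphism whose fiber over $(y_1,\dots,y_n)\in W$ is $\widetilde\varphi_{y_1}*\dots*\widetilde\varphi_{y_n}:\widetilde X_{y_1}\times\dots\times\widetilde X_{y_n}\to V$.

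Next I would identify the locus $W'$ attached to $(\widetilde Z,\widetilde\psi)$ as in part (1) of Theorem \ref{generalization of main theorem -7.1}: a point $(y_1,\dots,y_n)$ lies in $W'$ iff each $\widetilde X_{y_i}$ is smooth and $\widetilde\varphi_{y_1}*\dots*\widetilde\varphi_{y_n}$ is strongly dominant. Since a product of smooth varieties is smooth, and since (by Corollary \ref{cor:convolution presrves (FRS)}) dominance is preserved by convolution into the commutative group $V$, one has $Y'\times\dots\times Y'\subseteq W'$; conversely $W'\supseteq Y'^n$ suffices for our purposes, so the precise description of $W'$ is not needed. Constructibility of $W'$ follows from part (1) of Theorem \ref{generalization-self convolutions} applied to $(\widetilde Z,\widetilde\psi)$ (here I am using that the construction $\widetilde Z\to W$ is again a family of varieties over a $K$-variety, so the hypotheses of Theorem \ref{generalization-self convolutions} apply verbatim). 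This gives part (1) of Theorem \ref{generalization of main theorem -7.1}.

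For part (2): apply Theorem \ref{generalization-self convolutions} to $(\widetilde Z,\widetilde\psi)$ over $W$, obtaining $N\in\nats$ such that for every $m\geq N$ and every $w\in W'(K)$ the morphism $\widetilde\psi_w^m:\widetilde Z_w^m\to V$ is (FRS). Now given $n\geq N$ and points $y_1,\dots,y_n\in Y'(K)$, I set $w=(y_1,\dots,y_n)\in W'(K)$, and I take the $m=N$ self-convolution of $\widetilde\psi_w$: this is $(\widetilde\varphi_{y_1}*\dots*\widetilde\varphi_{y_n})^N$, which is (FRS). Then I note $\widetilde\varphi_{y_1}*\dots*\widetilde\varphi_{y_n}$ itself, for $n\geq N$, can be written as a convolution of $(\widetilde\varphi_{y_1}*\dots*\widetilde\varphi_{y_N})$ with the further morphism $\widetilde\varphi_{y_{N+1}}*\dots*\widetilde\varphi_{y_n}$; since the former has an (FRS) self-convolution power and convolution of an (FRS) morphism with anything is (FRS) by Corollary \ref{cor:convolution presrves (FRS)}, a small bookkeeping argument (grouping the $n$ factors into $N$ nonempty blocks and applying the self-convolution statement to the "block family") yields that $\widetilde\varphi_{y_1}*\dots*\widetilde\varphi_{y_n}$ is (FRS) for all $n\geq N$. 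The main obstacle I anticipate is precisely this last bookkeeping step — making the reduction from "the $N$-th self-convolution of the mixed convolution is (FRS)" to "the mixed convolution of $n\geq N$ terms is (FRS)" fully rigorous without circularity; the clean way is to apply Theorem \ref{generalization-self convolutions} not to $(\widetilde Z,\widetilde\psi)$ over $W=Y^n$ for a single $n$, but uniformly: for each fixed $n$ one gets $N(n)$, and one must argue $N(n)$ can be taken independent of $n$, which in turn follows because grouping $n$ factors into $N$ blocks realizes the $n$-fold mixed convolution as an $N$-fold self-convolution of a morphism in a single auxiliary family (the family over $Y^N$ of "block convolutions," whose blocks range over all $Y^{k}$ with $k\leq n-N+1$), reducing everything to one application of Theorem \ref{generalization-self convolutions} with its uniform $N$.
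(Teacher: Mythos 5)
There is a genuine gap at the central step, and it is exactly the step you flagged yourself. Theorem \ref{generalization-self convolutions} only ever produces statements about \emph{self}-convolutions: for a family over a base $W$, it gives an $N$ such that $\psi_w^m$ is (FRS) for $m\geq N$ and a \emph{single} $w\in W'(K)$. Applying it to your family $\widetilde Z\to W=Y^n$ therefore only tells you that $(\widetilde\varphi_{y_1}*\dots*\widetilde\varphi_{y_n})^m$ is (FRS) for large $m$, which says nothing about $\widetilde\varphi_{y_1}*\dots*\widetilde\varphi_{y_n}$ itself: (FRS)-ness of a convolution power of a morphism does not formally imply anything about the morphism (compare $x\mapsto x^2$ in Example \ref{Example- non-smooth morphism}), so your appeal to ``the former has an (FRS) self-convolution power and convolution of an (FRS) morphism with anything is (FRS)'' does not go through — Corollary \ref{cor:convolution presrves (FRS)} needs one factor to actually \emph{be} (FRS). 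The proposed repair via ``block families'' is circular: grouping $n$ factors into $N$ blocks expresses the $n$-fold mixed convolution as a \emph{mixed} convolution of $N$ generally distinct block morphisms (they sit over distinct points of the auxiliary base), which is precisely the kind of statement Lemma \ref{lem7.3:Theorem--implies} is supposed to establish; and applying Theorem \ref{generalization-self convolutions} to the auxiliary family over $Y^n$ (or over a space of blocks of length $\leq n-N+1$) produces a constant $N(n)$ depending on $n$, so no uniform bound results. Part (1) of your plan is fine but also vacuous, since the set $Y'$ in Theorem \ref{generalization of main theorem -7.1} is literally the same set as in Theorem \ref{generalization-self convolutions}.

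What actually bridges self-convolutions to mixed convolutions in the paper is an analytic argument, not a formal family-theoretic one. Given $y_1,\ldots,y_{2N}\in Y'(K)$ with each $\widetilde\varphi_{y_i}^{N}$ (FRS), one chooses suitable non-negative Schwartz measures $\mu_i$ on $\widetilde X_{y_i}(F)$; the analytic criterion (Theorem \ref{Analytic condition for (FRS)}) says $(\widetilde\varphi_{y_i})_*^{N}(\mu_i\times\dots\times\mu_i)=\bigl((\widetilde\varphi_{y_i})_*\mu_i\bigr)^{*N}$ has continuous, compactly supported density, hence $L^2$ density, so $\mathcal{F}\bigl((\widetilde\varphi_{y_i})_*\mu_i\bigr)$ lies in $L^{2N}$; a generalized H\"older inequality then makes $\prod_{i=1}^{2N}\mathcal{F}\bigl((\widetilde\varphi_{y_i})_*\mu_i\bigr)$ integrable, so the mixed convolution $(\widetilde\varphi_{y_1})_*\mu_1*\dots*(\widetilde\varphi_{y_{2N}})_*\mu_{2N}$ has continuous density and $\widetilde\varphi_{y_1}*\dots*\widetilde\varphi_{y_{2N}}$ is (FRS) (this is the argument of Proposition \ref{prop:reduction to (FRS) at diagonal}); longer mixed convolutions are then handled by Corollary \ref{cor:convolution presrves (FRS)}. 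Without this analytic input (or an equivalent substitute), your reduction does not close.
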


\begin{proof}
Let $\widetilde{\varphi},Y,V,Y'$ and $N\in\nats$ be as in Theorem
\ref{generalization-self convolutions}, and let $y_{1},{\ldots},y_{2N}\in Y'(K)$.
Notice that for any $1\leq i\leq2N$ we have that $\widetilde{\varphi}_{y_{i}}^{N}$
is (FRS). For each $i$, let $\mu_{i}$ be a non-negative Schwartz measure on $\widetilde{X}_{y_{i}}(F)$ 
for a non-Archimedean field $K\subseteq F\in\mathrm{Loc}$,  supported on $y_i$, such that $N$-th convolution
power $(\widetilde{\varphi}_{y_{i}})_{*}(\mu_{i})^{N}$ has continuous
density. 
By an argument similar to the one in Proposition \ref{prop:reduction to (FRS) at diagonal},
we can deduce that $\mathcal{F}(\widetilde{\varphi}_{y_{i}*}(\mu_{i}))$
has an ${L}^{2N}$-density and that $(\widetilde{\varphi}_{y_{1}})_{*}(\mu_{1})*\dots*(\widetilde{\varphi}_{y_{2N}})_{*}(\mu_{2N})$
has continuous density. As a consequence, we get that $\widetilde{\varphi}_{y_{1}}*\dots*\widetilde{\varphi}_{y_{2N}}$
is (FRS). 
\end{proof}
We now wish to prove Theorem \ref{generalization-self convolutions}.
We start by proving Lemmas \ref{lemma 7.4} and \ref{lemma 7.5} in
order to deduce that $Y'$ is constructible.
\begin{lem}
\label{lemma 7.4}Let $\widetilde{X}$ and
$Y$ be as in Theorem \ref{generalization of main theorem -7.1}.
Then the set $\hat{Y}:=\{y\in Y:\widetilde{X}_{y}\text{ is smooth}\}$
is constructible. 
\end{lem}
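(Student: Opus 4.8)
The plan is to combine the openness of the smooth locus of a morphism with Chevalley's theorem on images of constructible sets, the bridge between smoothness of a \emph{fibre} and smoothness of the \emph{total} map being supplied by generic flatness; a Noetherian induction on the base then disposes of the non-flat locus. Write $\pi\colon\widetilde{X}\to Y$ for the structure morphism; it is of finite type (indeed of finite presentation, $Y$ being Noetherian). For a closed subscheme $Z\subseteq Y$ put $\pi_Z\colon\widetilde{X}_Z:=\widetilde{X}\times_Y Z\to Z$; since fibres commute with base change, for $z\in Z$ the fibre $\widetilde{X}_z$ is the same whether formed from $\pi$ or from $\pi_Z$, so ``$\widetilde{X}_z$ is smooth over $\kappa(z)$'' is an intrinsic condition on $z$. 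I will use two standard facts (see e.g.\ \cite{Stacks}): the smooth locus $W_Z:=\{x\in\widetilde{X}_Z:\pi_Z\text{ is smooth at }x\}$ is open in $\widetilde{X}_Z$; and, by the fibral criterion of smoothness, if $\pi_Z$ is flat then $x\in W_Z$ if and only if the fibre $\widetilde{X}_{\pi_Z(x)}$ is smooth over $\kappa(\pi_Z(x))$ at $x$.

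I would then prove, by Noetherian induction on closed subschemes $Z\subseteq Y$, that $\hat{Y}_Z:=\{z\in Z:\widetilde{X}_z\text{ is smooth over }\kappa(z)\}$ is constructible in $Z$; the case $Z=Y$ gives $\hat Y=\hat Y_Y$, and $Z=\emptyset$ is trivial. For the inductive step, assume $Z\neq\emptyset$ and pick an irreducible component $Z_0$ of $Z_{\mathrm{red}}$. By generic flatness there is a nonempty open $U\subseteq Z_0$ over which $\pi$ is flat; intersecting $U$ with the complement in $Z$ of the other irreducible components, we obtain a nonempty subset $Z_1\subseteq Z_0$ which is open in $Z$ and over which $\pi_{Z_1}\colon\widetilde X_{Z_1}\to Z_1$ is flat. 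By the fibral criterion, for $z\in Z_1$ the fibre $\widetilde{X}_z$ is smooth over $\kappa(z)$ if and only if $\pi_{Z_1}^{-1}(z)\subseteq W_{Z_1}$, whence
\[
\hat{Y}_Z\cap Z_1=Z_1\setminus\pi_{Z_1}\bigl(\widetilde{X}_{Z_1}\setminus W_{Z_1}\bigr).
\]
Since $\widetilde{X}_{Z_1}\setminus W_{Z_1}$ is closed and $\pi_{Z_1}$ is of finite type, Chevalley's theorem shows this set is constructible in $Z_1$, hence in $Z$. Moreover $Z\setminus Z_1$ is a proper closed subscheme of $Z$, so $\hat{Y}_Z\cap(Z\setminus Z_1)$ is constructible by the induction hypothesis; therefore $\hat{Y}_Z=(\hat{Y}_Z\cap Z_1)\cup\bigl(\hat{Y}_Z\cap(Z\setminus Z_1)\bigr)$ is constructible, completing the induction.

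There is no real obstacle here; the only point requiring care is that smoothness of a fibre at a point and smoothness of $\pi$ at that point agree only over the flat locus, which is exactly why generic flatness — and hence the Noetherian induction, to reach the non-flat locus — enters, rather than a single appeal to openness of the smooth locus. Everything else is a routine application of Chevalley's theorem; alternatively, the statement is a special case of the constructibility results for fibrewise properties in Grothendieck's EGA IV, §12, but the short self-contained argument above seems preferable since this material is not used elsewhere.
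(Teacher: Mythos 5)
Your proof is correct, but it takes a genuinely different route from the paper's. The paper works entirely in the total space: writing $\pi\colon\widetilde{X}\to Y$ for the structure morphism, it considers the two functions $x\mapsto\dim_x\widetilde{X}_{\pi(x)}$ and $x\mapsto\dim T_x\widetilde{X}_{\pi(x)}$, both upper semi-continuous (the first by Chevalley's semicontinuity of fibre dimension, the second by semicontinuity of the fibrewise tangent space dimension) and hence constructible since they take finitely many values; the locus $\hat{X}$ where they differ is precisely the set of points at which the fibre fails to be smooth (characteristic $0$), so $\hat{X}$ is constructible and $\hat{Y}$ is the complement of $\pi(\hat{X})$, constructible by a single application of Chevalley's image theorem --- no flatness and no induction. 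Your argument instead purchases the bridge between smoothness of the fibre and smoothness of the morphism via generic flatness and the fibral criterion, then uses openness of the smooth locus, Chevalley, and Noetherian induction on the base to dispose of the non-flat locus; the step $\hat{Y}_Z\cap Z_1=Z_1\setminus\pi_{Z_1}\bigl(\widetilde{X}_{Z_1}\setminus W_{Z_1}\bigr)$ is right, including for empty fibres, and the reduction to the reduced structure on $Z_1$ is harmless since the fibres and the topology do not see it. So both proofs are complete: yours relies only on the most standard toolkit (generic flatness, fibral criterion, openness of the smooth locus) at the cost of an induction, while the paper's is a one-pass comparison of two semicontinuous dimension functions that avoids flatness altogether; as you observe, both are special cases of the constructibility results of EGA IV, \S 12.
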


\begin{proof}
Let $\pi_{\widetilde{X}}:\widetilde{X}\rightarrow Y$
be the structure morphism. By Chevalley's Theorem, it is enough to
show that the set $\pi_{\widetilde{X}}^{-1}(\hat{Y})=\{x\in\widetilde{X}:\widetilde{X}_{\pi_{\widetilde{X}}(x)}\text{ is smooth}\}$
is constructible. Consider the following functions {from $X$ to $\ints$}: 
\begin{enumerate}
\item$x\mapsto\mathrm{\mathrm{dim}}_{x}\widetilde{X}_{\pi_{\widetilde{X}}(x)}:=\max\limits _{X_{i}\in\mathrm{Irr}(\widetilde{X}_{\pi_{\widetilde{X}}(x)})}\curly{\dim X_{i}:x\in X_{i}}$. 
\item $x\mapsto\mathrm{\mathrm{dim}}T_{x}\widetilde{X}_{\pi_{\widetilde{X}}(x)}$,
assigning to each $x\in\widetilde{X}$ the dimension of the Zariski
tangent space of $\widetilde{X}_{\pi_{\widetilde{X}}(x)}$ at $x$.
\end{enumerate}
Let $\hat{X}:=\{x\in\widetilde{X}:\mathrm{\mathrm{dim}}T_{x}\widetilde{X}_{\pi_{\widetilde{X}}(x)}\neq\mathrm{\mathrm{dim}}_{x}\widetilde{X}_{\pi_{\widetilde{X}}(x)}\}$
and notice that $\pi_{\widetilde{X}}(\hat{X})=\{y\in Y:\,\widetilde{X}_{y}\text{ is not smooth}\}$
and that $\hat{Y}$ is the complement of $\pi_{\widetilde{X}}(\hat{X})$ in $Y$. By a
corollary of Chevalley's theorem, the function $\mathrm{dim}_{x}\widetilde{X}_{\pi_{\widetilde{X}}(x)}$
is upper semi-continuous. Since $\mathrm{\mathrm{dim}}T_{x}\widetilde{X}_{\pi_{\widetilde{X}}(x)}$
is also upper semi-continuous, it follows that $\hat{X}$
is a constructible set. This implies that $\hat{Y}$ is constructible
\end{proof}
\begin{lem}
\label{lemma 7.5} Let $S$ be a finite type $K$-scheme,
$Z$ be an absolutely irreducible finite type $K$-scheme and $\varphi:X\rightarrow Z\times S$
be an $S$-morphism. Then 
\[
\widetilde{S}:=\{s\in S\text{ such that }\varphi_{s}\text{ is strongly dominant}\}
\]
is a constructible subset of $S$. 
\end{lem}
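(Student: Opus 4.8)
The plan is to prove constructibility by Noetherian induction on $S$, using the standard criterion that a subset $T$ of a Noetherian scheme $S$ is constructible if and only if, for every integral closed subscheme $W\subseteq S$ with generic point $\eta$, the intersection $T\cap W$ either contains a nonempty open subset of $W$ or is contained in a proper closed subset of $W$. This reduces us to the case in which $S$ is integral with generic point $\eta$, where it suffices to produce a dense open $U\subseteq S$ such that, for every $s\in U$, the fiber $\varphi_s\colon X_s\to Z_{\kappa(s)}$ is strongly dominant if and only if $\varphi_\eta$ is: granting this, $\widetilde S\supseteq U$ when $\varphi_\eta$ is strongly dominant, and $\widetilde S\subseteq S\setminus U$ otherwise. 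Throughout I use the following reformulation, which relies on $Z$ being absolutely irreducible (so $Z_{\overline{\kappa(s)}}$ is irreducible, and a closed subvariety of it of full dimension is all of it): writing $\psi\colon X\to Z$ for $\varphi$ followed by the projection $Z\times S\to Z$, the morphism $\varphi_s$ is strongly dominant if and only if every geometric irreducible component $C$ of $(X_s)_{\overline{\kappa(s)}}$ satisfies $\dim\overline{\psi(C)}=\dim Z$ (closure taken in $Z$); since $\dim\overline{\psi(C)}\le\dim Z$ always, this is exactly the condition that none of these dimensions drops below $\dim Z$.

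Assume now $S$ integral. I carry out a finite sequence of reductions. (i) Replacing $X$ by $X_{\mathrm{red}}$ changes neither the underlying topological space of any fiber $X_s$ nor the image $\psi(X_s)\subseteq Z$, so we may take $X$ reduced. (ii) Discarding the irreducible components of $X$ that do not dominate $S$ — their union maps into a proper closed subset of $S$, so over a dense open this does not change $X_s$ — we may assume every component of $X$ dominates $S$. (iii) Grouping the components of $X$ by dimension and writing $X=\bigcup_d X^{(d)}$ accordingly, I apply generic flatness to each $X^{(d)}$ and use the upper semicontinuity of fiber dimension (as in the proof of Lemma~\ref{lemma 7.4}) to shrink $S$ to a dense open $U$ so that each $X^{(d)}\to S$ is flat over $U$ with all fibers $X^{(d)}_s$ ($s\in U$) equidimensional of dimension $d$; then, shrinking further, the pairwise intersections $X^{(d)}\cap X^{(d')}$ ($d\neq d'$), being proper closed in $X^{(d)}$, have fibers over $U$ of dimension strictly less than $d$, so no geometric component of $X^{(d)}_{\bar s}$ can lie inside $X^{(d')}_{\bar s}$. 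Hence for $s\in U$ the geometric components of $X_{\bar s}$ are exactly those of the various $X^{(d)}_{\bar s}$, so $\varphi_s$ is strongly dominant (for $X$) iff it is for every $X^{(d)}$; since a finite intersection of constructible sets is constructible, it suffices to treat each $X^{(d)}$ separately. We have thus reduced to the case where $X\to S$ is flat over a dense open $U$, $X$ is reduced, and every fiber $X_s$ ($s\in U$) is equidimensional of the fixed dimension $n=\dim X-\dim S$.

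It remains to show that the multiset $\{\,\dim\overline{\psi(C)}:C\text{ a geometric irreducible component of }X_{\bar s}\,\}$ is independent of $s$ on a dense open $U$; by the first paragraph, its being ``constantly equal to $\dim Z$'' is precisely the condition defining $\widetilde S\cap U$, and being generically constant it agrees with its value at $\eta$, which finishes the proof. This generic constancy is the technical heart of the argument, and is an instance of the spreading out of the geometric irreducible components of the fibers of a morphism of finite type together with auxiliary discrete data; I would deduce it from Grothendieck's constructibility results for irreducible components in families (EGA IV, \S\,9). Concretely: after shrinking $U$ there is a finite étale surjection $U'\to U$ and a decomposition $X\times_U U'=\bigsqcup_{i=1}^{r}Y_i$ with each $Y_i\to U'$ having geometrically irreducible fibers, so that the geometric components of $X_{\bar s}$ are exactly the geometric fibers of the $Y_i$ over the finitely many points of $U'$ above $s$; writing $g_i\colon Y_i\to Z$ for the restriction of $\psi$, the function $u'\mapsto\dim\overline{g_i((Y_i)_{u'})}$ is constructible on $U'$ (for $u'$ in a dense open one may, after one more generic flatness argument on the image of $(g_i,\pi_i)\colon Y_i\to Z\times U'$, arrange that closure commutes with passage to the fiber, so that this dimension becomes $\dim(\overline{\mathrm{im}(g_i,\pi_i)})_{u'}$, a constructible function of $u'$), hence constant on a dense open; pushing these functions forward along the finite morphism $U'\to U$ and intersecting the resulting loci yields the claim. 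The main obstacle is exactly this last spreading‑out step — in particular the fact that one cannot directly apply Chevalley's ``image is constructible'' to the set of $s$ with a non‑dominant fiber component, because ``being a generic point of a component of the fiber'' is not a constructible condition, so a stratification of the base is genuinely needed; by contrast, the reductions in the second paragraph are routine and I would not belabor them.
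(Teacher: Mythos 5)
Your overall architecture is the one the paper uses (Noetherian induction on $S$, a finite \'{e}tale surjective base change to make the geometric components of the fibers come from finitely many subfamilies $Y_i$ with geometrically irreducible fibers, via EGA IV 9.7.8/9.7.12, and then a dominance test for each $Y_i$), but your pivotal reformulation of strong dominance is wrong, and wrong exactly where your induction needs it. You test a geometric component $C$ of $(X_s)_{\overline{\kappa(s)}}$ by the dimension of $\overline{\psi(C)}$ with the closure taken in $Z$ (over $K$); the correct test is the closure taken inside the fiber $Z_{\overline{\kappa(s)}}$. The two agree at closed points (where $\kappa(s)/K$ is finite), but not at non-closed points, and your argument evaluates the criterion at the generic point $\eta$. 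Concretely, take $K=\mathbb{Q}$, $S=Z=\mathbb{A}^{1}_{\mathbb{Q}}$, $X=S$, and $\varphi(t)=(t,t)$. No fiber $\varphi_s$ is strongly dominant (each fiber is a single point mapping to a point of $Z_{\kappa(s)}$), so $\widetilde{S}=\varnothing$; yet the unique geometric component of $X_{\eta}$ maps to the generic point of $Z$, so $\dim\overline{\psi(C)}=1=\dim Z$ and your criterion declares $\varphi_{\eta}$ strongly dominant. The same example shows your multiset is $\{1\}$ at $\eta$ and $\{0\}$ at every closed point, so the assertion that it is generically constant \emph{and agrees with its value at $\eta$} fails for the absolute closure. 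The slip enters precisely where you say ``closure commutes with passage to the fiber'': what generic flatness applied to the closed image $T_i$ of $(g_i,\pi_i)$ in $Z\times U'$ gives is that $(T_i)_{u'}$ agrees (generically) with the closure of $g_i((Y_i)_{u'})$ \emph{inside} $Z_{\kappa(u')}$, i.e.\ the relative closure, not with the closure in $Z$.

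The repair is local: define the test by the relative closure, i.e.\ declare the component $(Y_i)_{u'}$ good when the fiber of $T_i$ over $u'$ has dimension $\dim Z$ (equivalently equals $Z_{\kappa(u')}$, since $Z$ is absolutely irreducible); this criterion is correct at every point, including $\eta$, and your generic-flatness/constructible-dimension argument then does prove the required ``dominant at $\eta$ iff dominant on a dense open'' statement, after which Noetherian induction closes the proof. With this change your route essentially coincides with the paper's, which after the same \'{e}tale base change spreads the components into locally closed families $\widetilde{X}_j$ and invokes \cite[Lemmas 36.22.3, 36.22.4]{Stacks} to compare dominance of $(\varphi_j)_{\eta}$ with dominance of $(\varphi_j)_{s}$ for $s$ in a dense open, which is exactly the relative statement you need. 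Two minor points: $X\times_U U'$ is in general not a disjoint union of the $Y_i$ (components of a fiber may intersect), so take the $Y_i$ to be closed subschemes whose fibers give the components; and your reductions (i)--(iii), in particular the equidimensionality bookkeeping, are dispensable once the $Y_i$ are available.
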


\begin{proof}
We may assume that $S$ is irreducible. Let $\pi_{X}:X\rightarrow S$
and $\pi_{Z\times S}:Z\times S\rightarrow S$ be
the structure morphisms, and $\eta$ be the generic point of $S$. 
In order to prove the lemma, it is enough to prove that $\eta\in\widetilde{S}$
(resp. $\eta\notin\widetilde{S}$) implies the existence of an open
set $U$, such that $U\subseteq\widetilde{S}$ (resp. $U\cap\widetilde{S}=\varnothing$).
By showing this we deduce the lemma by Noetherian induction. 

By \cite[Lemma 36.22.8]{stacks-project}, we may assume that we have an affine
irreducible scheme $S'$, a surjective finite \'{e}tale morphism $\psi:S'\rightarrow S$,
and a fibered diagram 
\[
\begin{array}{ccc}
X_{S'} & \longrightarrow & X\\
\downarrow\pi_{X_{S'}} & \, & \downarrow\pi_{X}\\
S' & \overset{\psi}{\longrightarrow} & S
\end{array}
\]
such that all irreducible components of the generic fiber of $\pi_{X_{S'}}$
are absolutely irreducible. Denote by $\widetilde{\varphi}:X_{S'}\rightarrow Z \times S'$
the corresponding base change of $\varphi$ to $S'$. Since $\psi$
is finite, we have for any $s'\in S'$
that $\widetilde{\varphi}_{s'}$ is a base change of $\varphi_{\psi(s')}$
by a finite field extension, and hence $\widetilde{\varphi}_{s'}$
is strongly dominant if and only if $\varphi_{\psi(s')}$ is strongly
dominant. Since $\psi$ is \'{e}tale, it is an open
map, so we may assume that $S=S'$ and that all irreducible components
of $X_{\eta}:=\pi_{X}^{-1}(\eta)$ are absolutely irreducible.

By \cite[Proposition  9.7.8, 9.7.12]{Gro66} (or \cite[Theorem 3.43]{AA18}),
there exists an open set $U\subseteq S$ such that for any $s\in U$
the following holds: 
\begin{enumerate}
\item There is a bijection $\Phi_{\eta,s}$ between the number of irreducible
components $X_{\eta,1},\ldots,X_{\eta,t}$ of $X_{\eta}$ and the
number of irreducible components of $X_{s}$. 
\item All irreducible components of $X_{s}$ are absolutely irreducible. 
\end{enumerate}
The bijection $\Phi_{\eta,s}$ is constructed as follows. For each
irreducible component $X_{\eta,j}$ of $X_{\eta}$, we have that $\overline{X}_{\eta,j}\cap X_{s}$
is an irreducible component of $X_{s}$. Under this bijection, we
deduce that the set 
\[
\widetilde{X}_{j}:=\{x\in\pi_{X}^{-1}(U):x\in j\text{-th irreducible component of }X_{\pi_{X}(x)}\}
\]
is locally closed. For $s\in U$, the condition
that $\varphi_{s}$ is strongly dominant is equivalent to $\left(\varphi_{j}\right)_{s}:=\left(\varphi|_{\widetilde{X}_{j}}\right)_{s}:(\widetilde{X}_{j})_{s}\rightarrow Z\times_{\spec(K)}\spec(K(\curly{s}))$
being dominant for any $1\leq j\leq t$. Hence, it is enough to show
that the set 
\[
\widetilde{U}_{j}:=\{s\in U\text{ such that }\left(\varphi_{j}\right)_{s}\text{ is dominant}\},
\]
is constructible. Notice that $\left(\varphi_{j}\right)_{\eta}$ is
dominant if and only if $\varphi_{j}(\widetilde{X}_{j})_{\eta}=\left(\varphi_{j}\right)_{\eta}((\widetilde{X}_{j})_{\eta})$
is dense in $Z\times_{\spec(K)}\spec(K(\curly{\eta}))$.
It now follows from \cite[Lemma 36.22.3 and 36.22.4]{stacks-project} that
we may find an open set $W\subseteq U$ such that, 
\begin{enumerate}
\item If $\left(\varphi_{j}\right)_{\eta}$ is dominant then $\left(\varphi_{j}\right)_{s}$
is dominant for any $s\in W$. 
\item If $\left(\varphi_{j}\right)_{\eta}$ is not dominant then $\left(\varphi_{j}\right)_{s}$
is not dominant for any $s\in W$. 
\end{enumerate}
By Noetherian induction we deduce that $\widetilde{U}_{j}$ is constructible
for any $j$ and thus we are done. 
\end{proof}
Combining the above lemmas, we obtain that the set 
\[
Y':=\{y\in Y:\widetilde{X}_{y}\text{ is smooth, and }\widetilde{\varphi}_{y}:\widetilde{X}_{y}\rightarrow V\text{ is strongly dominant}\}
\]
is constructible. To complete the proof of Theorem \ref{generalization-self convolutions}
we need the following lemma: 
\begin{lem}
\label{Lem-generic fiber}Let $\varphi:X\rightarrow Y$ be a morphism
of irreducible $K$-varieties and let $U\subseteq X$ be an open set
that contains the generic fiber of $\varphi$. Then there exists an
open set $W\subseteq Y$ such that $\varphi^{-1}(W)\subseteq U$. 
\end{lem}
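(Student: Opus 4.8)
The plan is to pass to complements and invoke Chevalley's theorem. First I would set $Z:=X\setminus U$, a closed (hence constructible) subset of $X$, and let $\eta$ denote the generic point of $Y$. The hypothesis that $U$ contains the generic fiber $\varphi^{-1}(\eta)$ of $\varphi$ is precisely the statement that $Z\cap\varphi^{-1}(\eta)=\varnothing$, i.e. that $\eta\notin\varphi(Z)$.

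Next, since $\varphi$ is a finite type morphism of Noetherian schemes and $Z$ is constructible, Chevalley's theorem shows that $\varphi(Z)$ is a constructible subset of $Y$. I would then argue that $\eta\notin\overline{\varphi(Z)}$: if $\eta$ lay in $\overline{\varphi(Z)}$, then, $Y$ being irreducible with generic point $\eta$, we would have $\overline{\varphi(Z)}=Y$, so the constructible set $\varphi(Z)$ would be dense and would contain a nonempty open subset $W\subseteq Y$; but every nonempty open subset of the irreducible scheme $Y$ contains $\eta$, so $\eta\in W\subseteq\varphi(Z)$, contradicting $\eta\notin\varphi(Z)$.

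Finally, $\overline{\varphi(Z)}$ is therefore a proper closed subset of $Y$, and I would take $V:=Y\setminus\overline{\varphi(Z)}$, a nonempty open subset. Since $Z\subseteq\varphi^{-1}(\varphi(Z))\subseteq\varphi^{-1}(\overline{\varphi(Z)})$, we get $\varphi^{-1}(V)=X\setminus\varphi^{-1}(\overline{\varphi(Z)})\subseteq X\setminus Z=U$, which is the required open set (when $Z=\varnothing$ one simply takes $V=Y$). There is no real obstacle here; the only subtlety is that $\varphi(Z)$ is in general only constructible and not closed, which is exactly why one argues via density and Chevalley's theorem rather than directly taking the complement of $\varphi(Z)$.
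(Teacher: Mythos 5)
Your proof is correct, and it takes a genuinely different route from the paper's. The paper reduces to the affine case, writes $U=D(I)$, decomposes the radical of $I$ into finitely many primes, and for a prime $I$ observes that $(\varphi^*)^{-1}(I)\neq(0)$ to pull a function $f\in K[Y]$ into $I$, yielding a principal open $V=D(f)$. You instead pass to the closed complement $Z=X\setminus U$, apply Chevalley's theorem to conclude that $\varphi(Z)$ is constructible, and then use irreducibility of $Y$ to argue that a constructible set missing the generic point cannot be dense, so $V:=Y\setminus\overline{\varphi(Z)}$ is a nonempty open with the required property. Your argument is shorter, avoids the passage to affine charts and the decomposition into primes, and in fact never uses the irreducibility of $X$ (only that of $Y$); the cost is the reliance on Chevalley's theorem, whereas the paper's commutative-algebra argument is more elementary and also produces an explicit $V$ as a principal open. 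Both approaches are valid; the key step in yours — that a constructible dense subset of an irreducible Noetherian scheme contains a nonempty open, hence the generic point — is stated and justified correctly.
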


\begin{proof}
We may assume that $X$ and $Y$ are affine. Let $\eta_{Y}$ be the
generic point of $Y$ and $\varphi^{*}:K[Y]\rightarrow K[X]$ be the
ring map corresponding to $\varphi$. The generic fiber $\varphi^{-1}(\eta_{Y})$
can be written as 
\[
\varphi^{-1}(\eta_{Y})=\{[\mathfrak{p}]\in\spec(K[X]):(\varphi^{*})^{-1}(\mathfrak{p})=(0)\}.
\]
Let $U\subseteq X$ be an open set containing $\varphi^{-1}(\eta_{Y})$,
then we can write 
\[U=D(I):=\curly{[\frak{p}]\in\spec(K[X]):I\not\subseteq\frak{p}}.\] 
By Noetherianity, $I$ is an intersection of finitely many prime ideals,
so it is enough to prove the lemma for the case where $I$ is prime.
Notice that the condition $D(I)\supseteq\varphi^{-1}(\eta_{Y})$ implies
that $(\varphi^{*})^{-1}(I)\neq\{0\}$. Hence, there exists some $f\in K[Y]$
such that $\varphi^{*}(f)=f\circ\varphi\in I$. But then, we have
$D(f\circ\varphi)\subset U$, and thus ${D}(f\circ\varphi)=\varphi^{-1}({D(f)})$,
so we are done. 
\end{proof}
We can now finish the proof of Theorem \ref{generalization-self convolutions}: 
\begin{proof}[Proof of Theorem \ref{generalization-self convolutions}]
Since $Y'$ is a constructible subset of $Y$, we may write $Y'=\bigcup\limits _{i=1}^{l}Y_{i}$,
where each $Y_{i}$ is a locally closed subset of $Y$. It is enough
to prove the theorem for each $Y_{i}\subseteq Y'$. By decomposing
$Y_{i}$ into irreducible components $Y_{i,1},{\ldots},Y_{i,d}$,
it is enough to prove the theorem for the case where $Y=Y'$ and $Y$ is irreducible.
Note that, for any $y\in Y$, the fiber $\widetilde{X}_{y}$
is smooth and $\widetilde{\varphi}_{y}:\widetilde{X}_{y}\rightarrow V$
is a strongly dominant morphism. 

Denote by $\eta_{Y}$ the generic point of $Y$. We first want to
show the existence of an open set $W\subseteq Y$ and $N_{W}\in\nats$
such that $\widetilde{\varphi}_{y}^{N_{W}}:\widetilde{X}_{y}^{N_{W}}\rightarrow V$
is (FRS) for all $y\in W$. By generic flatness (\cite[Theorem 6.9.1]{Gro66}),
we may assume that $\widetilde{X}$ is flat over $Y$. Denote by $\pi_{\widetilde{X}}:\widetilde{X}\rightarrow Y$
its structure map. By Theorem \ref{Main result}, there exists $N(\eta)\in\nats$
such that the $N(\eta)$-th convolution power $\widetilde{\varphi}_{\eta}^{N(\eta)}$
is (FRS). Denote $\widetilde{X}^{N(\eta),Y}:=\widetilde{X}\times_{Y}\dots\times_{Y}\widetilde{X}$
and let $\pi_{\widetilde{X}}^{N(\eta)}:\widetilde{X}^{N(\eta),Y}\rightarrow Y$
be the corresponding (flat) structure map. By \cite[Corollary 2.2]{AA16},
the set 
\[
U:=\{x\in\widetilde{X}^{N(\eta),Y}(\overline{K}):\widetilde{\varphi}_{\pi_{\widetilde{X}}^{N(\eta)}(x)}^{N(\eta)}:\widetilde{X}_{\pi_{\widetilde{X}}^{N(\eta)}(x)}^{N(\eta)}\rightarrow V\text{ is (FRS) at }x\}
\]
is open. Note that we used here the flatness of $\pi_{\widetilde{X}}^{N(\eta)}$.
Since $U$ contains the generic fiber $\left(\pi_{\widetilde{X}}^{N(\eta)}\right)^{-1}(\eta)$,
by Lemma \ref{Lem-generic fiber} we have $U\supseteq\left(\pi_{\widetilde{X}}^{N(\eta)}\right)^{-1}(W)$
for some open $W\subseteq Y$, and hence $\widetilde{\varphi}_{y}^{N(\eta)}:\widetilde{X}_{y}^{N(\eta),Y}\rightarrow V$
is (FRS) for any $y\in W$. We can now repeat the process for the
closed subvariety $Y_{1}:=Y\backslash W$. By Noetherian induction,
we can deduce the existence of $N\in\nats$ such that $\varphi_{y}^{N}:\widetilde{X}_{y}^{N}\rightarrow V$
is (FRS) for any $y\in Y$ as required. 
\end{proof}
We are now ready to prove Corollary \ref{Generalization for varieties of bounded complexity}. 
We first introduce the notion of complexity. 
\begin{defn}
Denote by $\mathcal{C}_{D}$ the class of all $K$-schemes and $K$-morphisms of complexity at most $D$, which is defined in the following way:
\begin{enumerate}
\item {An affine $K$-scheme $X$ has }\textit{{complexity
at most }}{$D$ (i.e $X\in\mathcal{C}_{D}$) if $X$
has a closed embedding $\psi_{X}:X\hookrightarrow\mathbb{A}_{K}^{N}$
with $K[X]=K[x_{1},{\ldots},x_{N}]/\langle f_{1},{\ldots},f_{k}\rangle$,
where $N,k$ and $\max\limits _{i}\{\deg(f_{i})\}$ are at most $D$. }
\item {A morphism $\varphi:X\rightarrow Y$ of affine $K$-schemes
is said to be of }\textit{{complexity at most }}{$D$
(i.e $\varphi\in\mathcal{C}_{D}$), if $X,Y\in\mathcal{C}_{D}$ with
embeddings $\psi_{X}:X\hookrightarrow\mathbb{A}_{K}^{N_{1}}$ and
$\psi_{Y}:Y\hookrightarrow\mathbb{A}_{K}^{N_{2}}$ as above, such
that the polynomial map $\widetilde{\varphi}:\mathbb{A}_{K}^{N_{1}}\rightarrow\mathbb{A}_{K}^{N_{2}}$
induced from $\varphi$ is of the form $\widetilde{\varphi}=(\widetilde{\varphi}_{1},{\ldots},\widetilde{\varphi}_{N_{2}})$,
where the degrees of $\{\widetilde{\varphi}_{i}\}_{i=1}^{N_2}$ are at most $D$. }
\item {A separated $K$-scheme $X$ is in $\mathcal{C}_{D}$
if it has an open affine cover $X=\bigcup_{i=1}^{N}U_{i}$ where $U_{1},\ldots,U_{N}\in\mathcal{C}_{D}$,
such that $N\leq D$, and the transition maps of the (affine) intersections
$\{U_{i}\cap U_{j}\}_{i,j}$ are in $\mathcal{C}_{D}$.}
\item {A morphism $\varphi:X\rightarrow Y$ of separated
$K$-schemes is in $\mathcal{C}_{D}$, if there exist open affine
covers $Y=\bigcup\limits _{j=1}^{N}V_{j}$, and $X=\bigcup\limits _{j=1}^{N}\bigcup\limits _{i=1}^{N_{j}}U_{ij}$
with $N,N_{j}\leq D$ for any $j$, and such that the following holds
for any $1\leq j,j'\leq N$ and $1\leq i,i'\leq N_{j}$:}
\begin{enumerate}
\item {$\varphi(U_{ij})\subseteq V_{j}$.}
\item {The morphisms $\varphi|_{U_{ij}}:U_{ij}\rightarrow V_{j}$
are in $\mathcal{C}_{D}$.}
\item {The transition maps of the intersections $\{U_{i'j'}\cap U_{ij}\}_{i,i',j,j'}$
are in $\mathcal{C}_{D}$.}
\item {The transition maps of the intersections $\{V_{j'}\cap V_{j}\}_{j,j'}$
are in $\mathcal{C}_{D}$.}
\end{enumerate}
\item {A $K$-scheme $X$ is in $\mathcal{C}_{D}$ if it
has an open affine cover $X=\bigcup_{i=1}^{N}U_{i}$ by $U_{1},\ldots,U_{N}\in\mathcal{C}_{D}$,
such that $N\leq D$, and the transition maps of the intersections
$\{U_{i}\cap U_{j}\}$ are in $\mathcal{C}_{D}$ (note that the schemes
$\{U_{i}\cap U_{j}\}$ are not necessarily affine but are separated).
We say that a morphism $\varphi:X\rightarrow Y$ of $K$-schemes is
in $\mathcal{C}_{D}$ if it satisfies the same demands as in ($4)$. }
\end{enumerate}
\end{defn}

Notice that the set $Z_{D}$ of all affine $K$-schemes
$X\in\mathcal{C}_{D}$ is an algebraic family. Indeed, let $X\in Z_{D}$
be an affine $K$-scheme with coordinate ring $K[X]=K[x_{1},{\ldots},x_{m}]/\langle f_{1},{\ldots},f_{k}\rangle$
consisting of $m$ generators and $k$ relations. Since the degrees
and the number of variables of $\{f_{i}\}_{i=1}^{k}$ are bounded
by $D$, the set of all such schemes has a natural structure of an
affine space $Z_{m,k}$. We can write $Z_{D}$ as a disjoint union
of affine spaces $Z_{D}=\bigsqcup\limits _{m,k=1}^{D}Z_{m,k}$. 

Let $V=\mathbb{A}_{K}^{m}$ and $D>m$. By a similar
argument, the set of all morphisms $\varphi:W\rightarrow\mathbb{A}_{K}^{m}$
in $\mathcal{C}_{D}$ forms an algebraic family, denoted $Y_{D}$.
We can now prove Corollary \ref{Generalization for varieties of bounded complexity}: 
\begin{cor}
\label{convolution complexity}For any $m<D\in\nats$, there exists
$N(D)\in\nats$ such that for any $n>N(D)$ and $n$ strongly dominant
morphisms $\{\varphi_{i}:X_{i}\rightarrow\mathbb{A}_{K}^{m}\}_{i=1}^{n}$
of complexity at most $D$, with $\curly{X_{i}}_{i=1}^{n}$ smooth
$K$-varieties, the morphism $\varphi_{1}*\dots*\varphi_{n}$ is (FRS). 
\end{cor}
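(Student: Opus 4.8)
The plan is to deduce this from the relative statement, Theorem \ref{generalization of the main result}, applied to the ``universal'' family of strongly dominant morphisms of complexity at most $D$ into $V=\mathbb{A}_K^m$. The first task is to organize all complexity-$\le D$ morphisms into an algebraic family. In the affine case this is essentially done above: the affine $K$-schemes in $\mathcal{C}_D$ form the family $Z_D=\bigsqcup_{m',k=1}^D Z_{m',k}$, and I would similarly check that the morphisms $\varphi\colon W\to V$ in $\mathcal{C}_D$ with $W$ affine form an algebraic family $Y_D^{\mathrm{aff}}$ carrying a tautological source $\widetilde X^{\mathrm{aff}}\to Y_D^{\mathrm{aff}}$ (the incidence scheme of pairs $(\varphi,w)$ with $w\in W$) and a tautological $Y_D^{\mathrm{aff}}$-morphism $\widetilde\varphi^{\mathrm{aff}}\colon\widetilde X^{\mathrm{aff}}\to V\times Y_D^{\mathrm{aff}}$. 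For a general separated (resp. arbitrary) $K$-scheme source one then parametrizes the gluing data: a number $N\le D$ of affine charts, each a point of $Y_D^{\mathrm{aff}}$, and for each ordered pair of charts an open subset of one and an isomorphism onto an open subset of the other whose transition map lies in $\mathcal{C}_D$, subject to cocycle identities on triple overlaps. Each such choice varies in a constructible set and the cocycle identities are constructible conditions, so the upshot is a finite disjoint union of algebraic families $Y_D=\bigsqcup_\alpha Y_{D,\alpha}$, over each of which the glued universal source $\widetilde X\to Y_D$ and the universal morphism $\widetilde\varphi\colon\widetilde X\to V\times Y_D$ exist as honest families, and such that every complexity-$\le D$ morphism $X\to V$ with $X$ smooth occurs as a fiber $\widetilde\varphi_y$ for some $y\in Y_D(K)$.

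Once this universal family is in hand, I would simply feed it to Theorem \ref{generalization of the main result}. Its part (1) gives that the locus
\[
Y_D':=\{y\in Y_D:\widetilde X_y\text{ is smooth and }\widetilde\varphi_y\colon\widetilde X_y\to V\text{ is strongly dominant}\}
\]
is constructible, and its part (2) produces an integer $N(D)\in\nats$ such that for every $n>N(D)$ and every choice of $n$ points $y_1,\ldots,y_n\in Y_D'(K)$ the convolution $\widetilde\varphi_{y_1}*\dots*\widetilde\varphi_{y_n}$ is (FRS). (If one prefers to treat the finitely many components $Y_{D,\alpha}$ separately, take the maximum of the resulting constants; note $Y_D$ itself is a disjoint union of varieties, so Theorem \ref{generalization of the main result} applies verbatim.) To conclude, given $n>N(D)$ strongly dominant morphisms $\varphi_i\colon X_i\to\mathbb{A}_K^m$ of complexity at most $D$ with each $X_i$ smooth, I would identify each $\varphi_i$ with $\widetilde\varphi_{y_i}$ for a point $y_i\in Y_D(K)$; smoothness of $X_i$ and strong dominance of $\varphi_i$ force $y_i\in Y_D'(K)$, whence $\varphi_1*\dots*\varphi_n=\widetilde\varphi_{y_1}*\dots*\widetilde\varphi_{y_n}$ is (FRS).

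The main obstacle is the bookkeeping in the first step: making precise that all complexity-$\le D$ schemes and morphisms, including the non-affine ones assembled from bounded gluing data, fit together into an algebraic family carrying genuine universal objects $\widetilde X\to Y_D$ and $\widetilde\varphi\colon\widetilde X\to V\times Y_D$ of the kind demanded by Theorem \ref{generalization of the main result}. Everything after that is a formal specialization of that theorem, and no new geometric input is required.
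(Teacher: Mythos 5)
Your proposal is correct and follows essentially the same route as the paper: the paper likewise assembles all complexity-$\leq D$ morphisms into an algebraic family $Y_{D}$, forms the tautological incidence family $\widetilde{X}=\{(w,(W,\varphi))\}$ with the universal $Y_{D}$-morphism $\widetilde{\varphi}$, and then specializes Theorem \ref{generalization of main theorem -7.1}, observing that smoothness of the $X_{i}$ and strong dominance of the $\varphi_{i}$ place the corresponding parameter points in $Y'$. The extra gluing-data bookkeeping you describe for the non-affine case is exactly what the paper leaves implicit in its remark that $Y_{D}$ ``forms an algebraic family by a similar argument.''
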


\begin{proof}
Consider the variety $Y_{D}$ as constructed above. Let $\widetilde{X}:=\{(w,(W,\varphi)):w\in W,(W,\varphi)\in Y_{D}\}$
and define $\widetilde{\varphi}:\widetilde{X}\rightarrow\mathbb{A}_{K}^{m}\times Y_{D}$
by 
\[
\widetilde{\varphi}(w,(W,\varphi))=\left(\varphi(w),(W,\varphi)\right).
\]
Note that $\widetilde{X}$ is a $Y_{D}$-variety, $\widetilde{\varphi}$
is a $Y_{D}$-morphism and that for any $(W,\varphi)\in Y_{D}$, the
fiber $\widetilde{\varphi}_{(W,\varphi)}$ is just the map $\varphi:W\rightarrow\mathbb{A}_{K}^{m}$.
The corollary now follows from Theorem \ref{generalization of main theorem -7.1}. 
\end{proof}
\bibliographystyle{alpha}
\bibliography{bibfile}
\end{document}